\newcommand{\cosimo}[1]{{\color{purple} #1}}
\newcommand{\stefano}[1]{{\color{blue} #1}}
\DeclareMathAlphabet{\mathbb}{U}{msb}{m}{n}
\definecolor{red}{rgb}{1,0,0}
\definecolor{darkred}{RGB}{192,0,0}
\newcommand{\binomial}[2]{\left( \begin{array}{c}\hspace{-5pt} #1 \\ \hspace{-5pt}#2 \end{array}\hspace{-5pt}\right)}
\newcommand{\transpose}[1]{\tensor[^{\mathrm{t}}]{#1}{}}
\newcommand{\bfa}{\mathbf{a}}
\newcommand{\bfp}{\mathbf{p}}
\newcommand{\bfx}{\mathbf{x}}
\newcommand{\calD}{\mathcal{D}}
\newcommand{\calH}{\mathcal{H}}
\newcommand{\calI}{\mathcal{I}}
\newcommand{\calK}{\mathcal{K}}
\newcommand{\calL}{\mathcal{L}}
\newcommand{\calO}{\mathcal{O}}
\newcommand{\calR}{\mathcal{R}}
\newcommand{\bbC}{\mathbb{C}}
\newcommand{\bbN}{\mathbb{N}}
\newcommand{\bbP}{\mathbb{P}}
\newcommand{\bbR}{\mathbb{R}}
\newcommand{\bbT}{\mathbb{T}}
\newcommand{\mfS}{\mathfrak{S}}
\newcommand{\rmi}{\mathrm{i}}
\newcommand{\rmm}{\mathrm{m}}
\newcommand{\rmp}{\mathrm{p}}
\newcommand{\rmr}{\mathrm{r}}
\newcommand{\rms}{\mathrm{s}}
\newcommand{\rmt}{\mathrm{t}}
\newcommand{\bfbeta}{\boldsymbol{\beta}}
\renewcommand{\tilde}[1]{\widetilde{#1}}
\renewcommand{\hat}[1]{\widehat{#1}}
\newcommand{\eps}{\varepsilon}
\newcommand{\Lap}{\mathop{}\!\Delta}
\DeclareMathOperator{\Ann}{Ann}
\DeclareMathOperator{\Hom}{Hom}
\DeclareMathOperator{\irk}{irk}
\DeclareMathOperator{\Isot}{Isot}
\DeclareMathOperator{\Proj}{Proj}
\DeclareMathOperator{\rk}{rk}
\DeclareMathOperator{\Sym}{Sym}
\DeclareMathOperator{\expdim}{expdim}
\DeclareMathOperator{\sat}{sat}
\DeclarePairedDelimiter{\pint}{\lfloor}{\rfloor}
\DeclarePairedDelimiter{\pa}{\langle}{\rangle}
\DeclarePairedDelimiter{\pq}{[}{]}
\newcommand{\Mat}{\mathrm{Mat}}
\newcommand{\GL}{\mathrm{GL}}
\newcommand{\Oa}{\mathrm{O}}
\newcommand{\SO}{\mathrm{SO}}
\newcommand{\mfsl}{\mathfrak{sl}}
\newcommand{\mfso}{\mathfrak{so}}
\numberwithin{equation}{section}
\theoremstyle{definition}
\newtheorem{defn}[equation]{Definition}
\theoremstyle{plain}
\newtheorem{teo}[defn]{Theorem}
\newtheorem{prop}[defn]{Proposition}
\newtheorem{lem}[defn]{Lemma}
\newtheorem{cor}[defn]{Corollary}
\theoremstyle{remark}
\newenvironment{rem}
{\pushQED{\qed}\remark}
{\popQED\endremark}
\newenvironment{exam}
{\pushQED{\qed}\example}
{\popQED\endexample}
\title{Isotropic rank of harmonic polynomials}
\date{}
\author{Stefano Canino}
\author{Cosimo Flavi}
\address{{\normalfont (Stefano Canino)},
\normalfont \scshape\fontfamily{ptm}\selectfont
	Dipartimento di Matematica, Università degli Studi di Trento, \normalfont{Via Sommarive 14, 38123 Povo (Trento), Italy.}}
\email{stefano.canino@unitn.it}
\address{{\normalfont (Cosimo Flavi)},
\normalfont \scshape\fontfamily{ptm}\selectfont
	Wydział Matematyki, Informatyki i Mechaniki, Uniwersytet Warszawski, \normalfont{ul.~Stefana Banacha 2, 02-097 Warsaw, Poland.}}
\email{c.flavi@ew.edu.pl}
\begin{document}
\begin{abstract}
    Any homogeneous harmonic polynomial can be decomposed as a sum of powers of isotropic linear forms, that is, linear forms whose coefficients are the coordinates of isotropic points. The minimum size of such decompositions for a harmonic polynomial is called its isotropic rank. As with the Waring rank, the problem of determining the isotropic rank of a given harmonic form is very hard. We determine the isotropic rank of a general harmonic form providing a full classification of the dimensions of secant varieties of the variety of $d$-powers of isotropic linear forms in $n+1$ variables, for every $n,d\in\bbN$, thus obtaining the analogue of the widely-celebrated Alexander-Hirschowitz theorem. Moreover, we completely solve the problem of determining the isotropic rank for the following classes of harmonic forms: ternary forms, quadrics and monomials.
\end{abstract}

\maketitle

\section{Introduction}
Harmonic functions are commonly defined as the functions that are annihilated by the Laplace operator
\[\Lap=\pdv[2]{}{x_0}+\cdots+\pdv[2]{}{x_n}\] and play an important role in several branches of science. In particular, the Laplace equation $\Lap f=0$, whose solutions are exactly harmonic functions, is one of the most important differential equations in mathematics.
First appearing in the 18{\textsuperscript{th}} century in \cites{Eul61,dal61}, it became one of the pillars of modern physics thanks to the work of P.-S.~de Laplace in \cite{Lap99}. This work provided the basis for the theory of celestial mechanics and gravitational potential, giving rise to the theory of harmonic functions and potential theory, which developed in the subsequent centuries. Many applications of the Laplace equation are related to fluid dynamics (see \cite{Kli72}) and electrodynamics (see \cite{Jac99}). For more information on the theory of harmonic functions in analysis and mathematical physics, we refer the reader to \cites{Eva10,AW13}.

\subsection{Harmonic polynomials and their applications}
This paper focuses on homogeneous harmonic polynomials and their decompositions as sums of powers of isotropic linear forms. These are linear forms whose coefficients correspond to an isotropic point with respect to the standard quadratic form. We refer to these decompositions as \textit{isotropic decompositions}, and the minimum size of an isotropic decomposition of a given harmonic form $h$ is called the \textit{isotropic rank} of $h$ and it is denoted by $\irk h$.
We provide an overview of the isotropic rank of several classes of harmonic forms. The main result is given by the isotropic rank of a \textit{generic} harmonic form of any degree and in any number of variables. Furthermore, we determine the isotropic rank of binary and ternary harmonic forms, harmonic quadratic forms, corresponding to the family of trace-zero matrices, and harmonic monomials, providing the \textit{harmonic} version of the solution of the Waring problem for monomials, solved in \cite{CCG12}.

Harmonic polynomials form a finite-dimensional, rotation-invariant subspace of the space of all polynomials. This structure  gives them deep significance in several branches of modern mathematics (see \cite{ABR01}), so connecting Algebra, Analysis, Physical Mathematics, and Geometry.

From an algebraic and representation-theoretic perspective, the space of harmonic polynomials of a fixed degree forms an irreducible $\SO_{n+1}(\bbC)$-module, as shown in \cite{Wey97}. This property places harmonic polynomials at the heart of harmonic analysis on Euclidean spaces and spheres. There, they provide the angular part of homogeneous polynomials and enable the Fischer decomposition, which expresses any homogeneous polynomial as a sum of a harmonic component and a radial term. This decomposition is fundamental to many aspects of invariant theory, spherical designs, and zonal polynomials in multivariate analysis (see \cite{DX14}).

Harmonic polynomials have applications in potential theory and geophysics. For example, they are used to model fluid flow and gravitational fields (see \cite{SF14}), as well as in numerical analysis and approximation theory (see \cite{HMPS14}).

One of the most common uses concerns the \textit{spherical harmonics}, which are the restriction to the unit sphere of a homogeneous harmonic polynomial in $\bbR^3$ (see \cite{Jac99}). Spherical harmonics indeed represent the three-dimensional case of harmonic polynomials and find several applications. In real-time lighting and computer graphics, spherical harmonics are used to approximate irradiance and precomputed radiance transfer efficiently (see \cite{RH01}). In Geodesy and Earth sciences, spherical harmonics are used for models describing the Earth's gravitational and potential fields
(see \cite{GS23}). They are also crucial in diffusion magnetic resonance imaging (dMRI) and High Angular Resolution Diffusion Imaging (HARDI) (see \cite{TCC07}).
On the computational side, efficient algorithms introduced in \cites{DH94,Moh99} enable high-resolution forward and inverse transforms, making detailed analyses of spherical signals. 
In climate and atmospheric science, spherical harmonic expansions are widely used to represent global fields such as temperature, pressure, and wind velocity on the Earth’s surface (see \cites{Swa89,HJ92}). 
In cosmology, they are used to analyse the cosmic microwave background (CMB) radiation (see \cite{Teg97}).
Finally, in acoustics and wave theory, spherical harmonics provide the natural basis for representing radiating sources and modal decompositions (see \cites{KFCS82,MI86}).

Hyperspherical harmonics generalize spherical harmonics to higher dimensions and are widely used in few-body quantum and nuclear physics. Foundational work in \cite{Ave89} introduces the hyperspherical harmonic formalism, establishing it as a powerful tool for representing and solving multi-particle quantum systems. Later, in \cite{MDGKV20}, the method is applied to nuclear interactions, demonstrating its effectiveness for describing correlations and interactions in few-nucleon systems.
More recently, hyperspherical harmonics have found applications in both quantum few-body modeling and acoustic and engineering problems (see \cites{Tim23,Szw23}).

\subsection{Isotropic rank}
A classical notion of rank appearing in classical literature is the Waring rank of homogeneous polynomials. In particular, the Waring rank of a homogeneous polynomial of degree $d$ is defined as the minimum size of a decomposition of $f$ as a sum of $d$-th powers of linear forms. Given a homogeneous polynomial $f$ we denote by $\rk f$ its Waring rank.
From a more modern point of view, the concept of rank can be generalised to an arbitrary non-degenerate irreducible projective variety $X\subseteq\bbP^n$ in the context of \textit{secant varieties}. Considering the set
\begin{equation*}
\sigma_r^{\circ}(X)=\bigcup_{P_1,\dots,P_r\in X}\pa{P_1,\dots,P_r},
\end{equation*}
the \textit{$X$-rank} of a point $P\in \bbP^n$ is the minimum number $r$ such that $P\in\sigma_r^{\circ}(X)$. The \textit{$r$-th secant variety} of $X$ is the Zariski closure $\sigma_r(X)\coloneqq\smash{\overline{\sigma_r^{\circ}(X)}}\subseteq\bbP^n$. In particular, we define the \textit{generic $X$-rank} as the minimum number $r$ such that $\sigma_r(X)=\bbP^n$. Note that, if $\rk_{\mathrm{gen}}$ is the generic $X$-rank, then $\sigma_{\rk_{\mathrm{gen}}}(X)$ is an open dense Zariski subset of points having $X$-rank equal to $\rk_{\mathrm{gen}}$.
For more information on the Waring rank and, more in general, on secant varieties, we refer the reader to \cites{BC19,BCC+18,BGI11,CGO14,Lan12,LO13}.

For any $X\subset\bbP^n$ as above, there is an expected dimension of $\sigma_r(X)$ given by
$$\expdim\sigma_r(X)=\min\{r(\dim X+1)-1,n\}.$$
In general, we have $\dim\sigma_r(X)\leq\expdim \sigma_r(X)$ and, if the inequality is strict, we say that \textit{$\sigma_r(X)$ is defective}, or that \textit{$X$ is $r$-defective}. If $X$ is $r$-defective, the number $$\delta_r\coloneqq\expdim\sigma_r(X)-\dim\sigma_r(X)$$ is called the \textit{$r$-defect of $X$}. Determining the dimensions of the secant varieties of a projective variety is in general a difficult problem. In the literature, many cases are studied and several results have been provided. So far, one of the most complete classifications is given by the Alexander-Hirschowitz theorem (see \cite{AH95}). This famous result provides, for every $n,d\in\bbN_{\geq 1}$,  the dimension of all the secant varieties of the $(n,d)$-Veronese variety, i.e.~the embedding of $\bbP^n$ with $\calO_{\bbP^n}(d)$, corresponding to the variety of the $d$-th powers of linear forms in $n+1$ variables. In this case, the $X$-rank coincides with the Waring rank. Other results concerning the dimensions of secant varieties are given for the Grassmannian (\cites{AOP12,BC23,BO24,Bor13, BB11, CGG05}), for Segre varieties (\cites{AOP09,CGG05b,LM04}), for Segre-Veronese varieties (\cites{AB13,ABGO24,Bal25,LMR22}), for spinor varieties (\cites{Ang11,Man09}).

Although many of these results can be stated over an arbitrary field, in this paper we work over the field of complex numbers $\bbC$.
In the following, we define for any $n,d\in\bbN_{\geq 1}$ the variety $\Isot_{n,d}$ as the image, via the $(n,d)$-Veronese embedding, of the non-degenerate quadric  hypersurface in $\bbP^n$ corresponding to the equation $x_0^2+\cdots+x_n^2=0$, so that $\Isot_{n,d}$ is the set of $d$-th powers of isotropic linear forms in $n+1$ variables. We give the full classification of the dimensions of the secant varieties of $\Isot_{n,d}$ and this gives the analogue of the Alexander-Hirschowitz theorem for harmonic polynomials. Denoting by $\calH_{n,d}$ the space of harmonic forms of degree $d$ in $n+1$ variables, which is an irreducible $\SO_{n+1}(\bbC)$-module by \cite{GW98}*{Theorem 5.2.4}, it is easy to prove that $\calH_{n,d}=\langle\Isot_{n,d}\rangle$ (see \autoref{lem_isotropic_points_harmonic_forms}). Therefore, the notion of isotropic rank is indeed well defined. We also observe that the choice of the Laplace operator $\Delta$ is completely arbitrary and it could be changed with any other non-degenerate quadratic form. Consequently, the space of harmonic polynomials would change as well, but all the results would be equivalent. In light of that, for the sake of clarity and without loss of generality, here we state our main results for the specific case of the Laplace operator.
\begin{teo}\label{theo:AH_harmonics}
Let $n,d,r\in\bbN_{\geq 1}$. If $d\geq 3$, then
$$\dim \sigma_r(\Isot_{n,d})=\min\{rn-1,\dim\calH_{n,d}-1\},$$
while, if $d=2$, then
$$\dim\sigma_r(\Isot_{n,2})=\begin{cases}rn-\dbinom{r-1}{2}-1, & \text{ if $1\leq r\leq n$},\\[2ex]
\dim\calH_{n,2}-1, & \text{ if $r\geq n+1$}.\end{cases}$$
In particular, the isotropic rank of a generic harmonic polynomial of $\calH_{n,2}$ is $n+1$, and the isotropic rank of a generic harmonic polynomial of $\calH_{n,d}$, with $d\geq 3$, is 
$$\left\lceil\frac{1}{n}\Biggl(\binomial{n+d}{n}-\binomial{n+d-2}{n}\Biggr)\right\rceil.$$
\end{teo}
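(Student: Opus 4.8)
The plan is to translate the statement, via Terracini's lemma, into an interpolation problem for fat points on the quadric $Q\coloneqq V(x_0^2+\cdots+x_n^2)\subseteq\bbP^n$, and then to solve that problem by a Horace-type induction modelled on the proof of the Alexander--Hirschowitz theorem. \textbf{Reduction.} For general isotropic linear forms $\ell_1,\dots,\ell_r$, write $\ell_i=\ell_{a_i}$ with $a_i$ an isotropic point; the affine tangent space of $\Isot_{n,d}$ at $[\ell_i^{\,d}]$ is $\ell_i^{\,d-1}\cdot a_i^{\perp}$, a subspace of $\calH_{n,d}$ of dimension $n$ (indeed $\ell^{\,d-1}m$ is harmonic whenever $\ell$ is isotropic and $m$ is apolar to $a$, as one checks directly from the expression of $\Lap$). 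By Terracini's lemma,
\begin{equation*}
\dim\sigma_r(\Isot_{n,d})\;=\;\dim_{\bbC}\Bigl(\sum_{i=1}^{r}\ell_i^{\,d-1}\,a_i^{\perp}\Bigr)-1 .
\end{equation*}
Dualising through the apolarity pairing, under which $\calH_{n,d}^{\ast}$ is the degree-$d$ graded piece of the homogeneous coordinate ring of $Q$, the right-hand space is the annihilator of $H^{0}\!\bigl(Q,\calI_{2Z/Q}(d)\bigr)$, where $2Z=2p_1\cup\cdots\cup2p_r$ is the union of the first infinitesimal neighbourhoods, \emph{taken inside $Q$}, of $r$ general points of $Q$. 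Thus the theorem is equivalent to the assertion that, for $d\geq3$, these $r$ general $2$-fat points of the smooth quadric $Q_{n-1}$ impose independent conditions on $\lvert\calO_Q(d)\rvert$, while for $d=2$ their defect is exactly $\binom{r-1}{2}$ when $1\leq r\leq n$.

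\textbf{Transfer to $\bbP^n$ and induction.} The Koszul-type exact sequence
\begin{equation*}
0\longrightarrow\calO_{\bbP^n}(d-2)\xrightarrow{\ \cdot q\ }\calI_{W/\bbP^n}(d)\longrightarrow\calI_{2Z/Q}(d)\longrightarrow0,\qquad W\coloneqq\bigcup_i\bigl(2p_i\cap Q\bigr),
\end{equation*}
replaces $2Z$ on $Q$ by the ``flattened'' double points $W$ on $\bbP^n$ (each of length $n$), and since $H^{1}\!\bigl(\bbP^n,\calO_{\bbP^n}(d-2)\bigr)=0$ we get $h^{0}\!\bigl(Q,\calI_{2Z/Q}(d)\bigr)=h^{0}\!\bigl(\bbP^n,\calI_W(d)\bigr)-\binom{n+d-2}{n}$ and $h^{1}\!\bigl(Q,\calI_{2Z/Q}(d)\bigr)=h^{1}\!\bigl(\bbP^n,\calI_W(d)\bigr)$. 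The problem is now an Alexander--Hirschowitz-type interpolation statement in $\bbP^n$, which I would prove by a double induction on $(n,d)$. The inductive step specialises some of the flattened double points onto a general hyperplane $H$: the section $Q\cap H$ is a smooth quadric $\cong Q_{n-2}$, the trace on $H$ of a specialised point is a flattened double point of $Q\cap H$, and its residue with respect to $H$ is a simple point lying on $Q\cap H$; the Castelnuovo restriction sequence then reduces the case $(n,d)$ to $(n-1,d)$ and $(n,d-1)$ (the latter carrying some auxiliary simple points), with matching dimension count. The base cases are $n=1$, where $Q$ is a pair of reduced points and $\calH_{1,d}$ is two-dimensional; $n=2$, where $Q\cong\bbP^1$ and $\Isot_{2,d}$ is the rational normal curve of degree $2d$ in $\bbP^{2d}$, hence never defective; and the degree anchor $d=1$, where $\Isot_{n,1}=Q$ and $\sigma_2(Q)=\bbP^n$.

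\textbf{The quadratic case, exceptional configurations, and the generic rank.} The case $d=2$ is treated separately via the identification of $\calH_{n,2}$ with the space of trace-zero symmetric $(n+1)\times(n+1)$ matrices: $\Isot_{n,2}$ is then the locus of isotropic rank-one matrices, $\sigma_r(\Isot_{n,2})$ coincides for $1\leq r\leq n$ with the variety of trace-zero symmetric matrices of rank at most $r$, and a direct count of the dimension of the latter yields $rn-\binom{r-1}{2}-1$; it fills $\bbP(\calH_{n,2})$ at $r=n+1$. Because for $d\geq3$ we work with \emph{flattened} double points rather than ordinary ones, the classical Alexander--Hirschowitz exceptional quadruples do not transfer automatically, but the finitely many small configurations, together with the small values of $r$ not reached by the induction, must still be checked by hand. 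Once all the dimensions are known, the stated generic isotropic ranks are immediate: for $d\geq3$ one takes the least $r$ with $rn\geq\dim\calH_{n,d}$, and for $d=2$ the variety fills up exactly at $r=n+1$.

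The hard part is the inductive step. As in the original Alexander--Hirschowitz argument, a plain Horace degeneration is unlikely to handle all residual configurations, so one should expect to need the \emph{differential} Horace method, specialising the flattened double points with prescribed jets transverse to $Q\cap H$; and one must be careful that the specialised schemes remain sufficiently general \emph{on the quadric}, not merely in $\bbP^n$, for the induction to close. Controlling the numerics of lengths through the degenerations, and dispatching the finitely many exceptional small cases, is where the real work lies.
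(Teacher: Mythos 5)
Your overall strategy — Terracini, translation to an interpolation problem for double points of the quadric, then a Horace-type induction — is the same as the paper's, and your Koszul reformulation replacing $\calO_{Q}(d)$ by $\calI_{W/\bbP^n}(d)$ is a valid equivalent (the paper prefers to stay on $Q$ intrinsically, where the schemes are genuine double points of $Q$ and the hyperplane sections $Q_{n-1}\subset Q_n$ keep the induction inside the same class of quadrics). But there are two substantial gaps.

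\textbf{The $d=3$ case is the missing core.} Your induction reduces $(n,d)$ to $(n-1,d)$ and $(n,d-1)$ (and, since you invoke the differential Horace method, also $(n,d-2)$), so it must eventually land on $d=3$ and $d=2$. Anchoring at $d=1$ does not work: $d=2$ is genuinely defective for all $r\leq n$, so no version of the Horace step can manufacture good postulation for $d=3$ out of the defective case $d=2$ without a separate argument. The paper proves the $d=3$ case by an entirely different degeneration: the Brambilla--Ottaviani idea of specialising double points onto \emph{codimension-six linear sections} of $Q_n$ rather than hyperplane sections, controlled by the numerics $k_n,\delta_n$ of \Cref{lem:num-lem} and carried out over four lemmata with computer-verified base cases. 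Your remark that "finitely many small configurations must be checked by hand" does not cover this; $d=3$ is an infinite family in $n$, not a finite list of exceptions. Moreover, once $d\geq 4$, the differential Horace step (\Cref{teo:bigind}) uses $C_{n,d-1}(r-u)$ \emph{and} $C_{n,d-2}(r-u-\varepsilon)$; when $d=4$ the second touches $d=2$, and the paper has to prove (\Cref{lem:num2}(5)) that the relevant number of double points satisfies $r-u-\varepsilon\geq n+1$ so that the quadric case is in its non-defective range. This numerical safeguard is needed and is absent from your sketch.

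\textbf{The $d=2$ case needs more than the matrix identification.} You assert that $\sigma_r(\Isot_{n,2})$ coincides for $1\leq r\leq n$ with the locus of trace-zero symmetric matrices of rank $\leq r$. The containment $\subseteq$ is clear, but $\supseteq$ requires knowing that a generic trace-zero rank-$r$ matrix is actually a sum of $r$ rank-one \emph{isotropic} matrices (or at least a limit of such sums). That is essentially the content of \Cref{teo: quadric equiv}, whose proof (via Gantmacher's normal forms) is nontrivial and comes later in the paper; invoking it here would be circular. The paper's \Cref{prop:dim_isotropic_secants_quadrics} instead computes the sum of the tangent spaces directly from \Cref{prop:tangent_space_isotropic_powers} and \Cref{lem: Terracini}, which is self-contained and avoids the rank-locus detour.
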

The proof of \Cref{theo:AH_harmonics}, which is given below in its equivalent formulation of \Cref{teo:post_fin}, is based on the differential Horace method, introduced with a modern point of view in \cite{Hir85} and used by J.~Alexander and A.~Hirschowitz in \cite{AH95} their theorem. The differential Horace method is treated in detail in \Cref{horace}. A simplified proof of the Alexander-Hirschowitz theorem is provided in \cite{BO08}, where the authors also give a more elementary proof for the case $d=3$. We use indeed the same strategy for the case of cubic harmonic forms (see \autoref{thm:post-3}).

The dimension of the space of harmonic binary forms of degree $d$ is equal to $2$ for every $d\in\bbN$. In particular, the elements $(x_0+\rmi x_1)^d$ and $(x_0-\rmi x_1)^d$ are powers of isotropic linear forms and give a basis of $\calH_{1,d}$. This gives the following immediate statement for binary harmonic forms.
\begin{prop}\label{prop:harmonic_binary_forms}
    If $h\in\calH_{1,d}$ is not a power of an isotropic linear form, then $\irk h=2$.
\end{prop}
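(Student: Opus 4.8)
The plan is to leverage the observation recorded just above the statement, namely that $(x_0+\rmi x_1)^d$ and $(x_0-\rmi x_1)^d$ form a basis of the two-dimensional space $\calH_{1,d}$. First I would classify the isotropic linear forms in the variables $x_0,x_1$: a nonzero linear form $a_0x_0+a_1x_1$ is isotropic exactly when $a_0^2+a_1^2=0$, and since $a_0=0$ would force $a_1=0$, this occurs if and only if $a_1=\pm\rmi a_0$ with $a_0\neq 0$. Hence, up to a nonzero scalar, the only isotropic linear forms are $x_0+\rmi x_1$ and $x_0-\rmi x_1$, so the harmonic binary forms of isotropic rank $1$ are precisely the nonzero scalar multiples of $(x_0+\rmi x_1)^d$ and of $(x_0-\rmi x_1)^d$.

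Then, for an arbitrary $h\in\calH_{1,d}$, writing $h=\alpha(x_0+\rmi x_1)^d+\beta(x_0-\rmi x_1)^d$ with $\alpha,\beta\in\bbC$ exhibits an isotropic decomposition of size at most $2$, which gives $\irk h\leq 2$. On the other hand, the hypothesis that $h$ is not a power of an isotropic linear form ensures $h\neq 0$, hence $\irk h\geq 1$, and it also rules out $\irk h=1$: by the classification above, $\irk h=1$ would force $h$ to be a scalar multiple of $(x_0+\rmi x_1)^d$ or of $(x_0-\rmi x_1)^d$, i.e.\ a power of an isotropic linear form, contrary to the hypothesis. Combining the two bounds yields $\irk h=2$.

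I do not expect any genuine obstacle here; the statement is an immediate corollary of $\dim\calH_{1,d}=2$ together with the fact that $\Isot_{1,d}$ consists of exactly the two (noncollinear) points $[(x_0+\rmi x_1)^d]$ and $[(x_0-\rmi x_1)^d]$, which span $\calH_{1,d}$ by \autoref{lem_isotropic_points_harmonic_forms}. The only step that calls for a brief argument is the elementary determination of the isotropic linear forms in two variables carried out above; everything else is linear algebra.
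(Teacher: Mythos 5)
Your proof is correct and follows essentially the same route the paper takes: identify the two isotropic directions $x_0\pm\rmi x_1$, note that their $d$-th powers span the two-dimensional space $\calH_{1,d}$, and observe that the hypothesis rules out both the zero form and the rank-one case. The paper simply records this observation (with a small typo, writing $\calH_{1,2}$ where $\calH_{1,d}$ is meant) and declares the proposition immediate, so you have merely spelled out the same short argument in full.
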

The case of ternary harmonic forms is not trivial, but it is strictly connected to the Waring rank of binary forms. This is due to the fact that the Lie algebra $\mfso_3(\bbC)$ is canonically isomorphic to $\mfsl_2(\bbC)$. Using the uniqueness of the irreducible representations of $\mfsl_2(\bbC)$ (see e.g.~\cite{FH91}*{Section 11.1}), we have that the ring of ternary harmonic forms is isomorphic to the ring of binary forms. This strong connection, explained in detail in \Cref{sec:ternary_forms}, allows to determine the isotropic rank of every ternary harmonic form, by using the Sylvester algorithm (see \cite{Syl51}) in the improved version provided by G.~Comas and M.~Seiguer in \cite{CS11}*{Theorem 2}.
\begin{teo}\label{teo:harmonic_Comas_Seiguer}
    Let $d\in\bbN$. If $2\leq r\leq d$, then:
    \begin{enumerate}[label=(\arabic*), widest=*,topsep=2pt,itemsep=2pt]
\item $\sigma_r(\Isot_{2,d})\setminus\sigma_{r-1}(\Isot_{2,d})=\sigma_r^\circ(\Isot_{2,d})\cup\sigma_{2d-r+2}^\circ(\Isot_{2,d})$;
\item $\sigma_r^\circ(\Isot_{2,d})=\sigma_r(\Isot_{2,d})\setminus\sigma_{2d-r+2}(\Isot_{2,d})$;
\item $\sigma_{2d-r+2}^\circ(\Isot_{2,d})=\sigma_{2d-r+2}(\Isot_{2,d})\setminus \sigma_{r-1}(\Isot_{2,d})$.
    \end{enumerate}
If $r=d+1$, then:
    \begin{enumerate}[label=(\arabic*), widest=*,start=4,topsep=2pt,itemsep=2pt]
\item $\sigma_r^\circ(\Isot_{2,d})=\sigma_r(\Isot_{2,d})\setminus \sigma_{r-1}(\Isot_{2,d})$.
\end{enumerate}
\end{teo}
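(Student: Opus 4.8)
The strategy is to reduce the statement to the classical description, due to Sylvester and sharpened by Comas--Seiguer, of the rank stratification of a rational normal curve, by identifying $\Isot_{2,d}$ with the rational normal curve $C_{2d}$ in $\bbP^{2d}=\bbP(\calH_{2,d})$.

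The first ingredient is the algebraic reduction carried out in \Cref{sec:ternary_forms}, which I take as given: the exceptional isomorphism $\mfso_3(\bbC)\cong\mfsl_2(\bbC)$ together with the uniqueness of the irreducible $\mfsl_2(\bbC)$-module of each dimension identifies $\calH_{2,d}$, the irreducible $\SO_3(\bbC)$-module of dimension $2d+1$, with the space $\Sym^{2d}(\bbC^2)$ of binary forms of degree $2d$, in an $\SL_2(\bbC)$-equivariant way. The second ingredient is to pin down what $\Isot_{2,d}$ becomes under this identification. The quadric $\{x_0^2+x_1^2+x_2^2=0\}\subseteq\bbP^2$ is a smooth conic, hence a copy of $\bbP^1$ embedded with degree $2$; composing with the degree-$d$ Veronese re-embeds it as a rational curve of degree $2d$, nondegenerate in $\bbP^{2d}=\bbP(\calH_{2,d})$ by \autoref{lem_isotropic_points_harmonic_forms}, hence equal to the rational normal curve $C_{2d}$. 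Concretely, parametrizing the isotropic conic by $[s:t]\mapsto[s^2-t^2:\rmi(s^2+t^2):2st]$ and taking the $d$-th power of the corresponding linear form exhibits $\Isot_{2,d}$ as the image of a map $\bbP^1\to\bbP^{2d}$ whose components are degree-$2d$ forms in $s,t$; these span $\Sym^{2d}(\bbC^2)$ by \autoref{lem_isotropic_points_harmonic_forms}, so the map is given, up to a change of coordinates, by a basis of $\Sym^{2d}(\bbC^2)$ and its image is $C_{2d}$. Consequently the $\Isot_{2,d}$-rank of a point of $\bbP(\calH_{2,d})$ equals the $C_{2d}$-rank of the associated binary form of degree $2d$, i.e.\ its Waring rank, so that $\irk h=\rk\tilde h$ for the binary form $\tilde h$ of degree $2d$ associated to a harmonic ternary form $h$; likewise the secant varieties of $\Isot_{2,d}$ and of $C_{2d}$ correspond under this identification.

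The third step is to feed $N=2d$ into the classification of the rank of binary forms via Sylvester's algorithm \cite{Syl51} and its refinement \cite{CS11}*{Theorem 2}: for the degree-$N$ rational normal curve and $2\le r\le\lceil N/2\rceil$, every point of $\sigma_r(C_N)\setminus\sigma_{r-1}(C_N)$ has rank either $r$ or $N-r+2$, the locus of rank $r$ being open and dense in this stratum and the locus of rank $N-r+2$ its closed complement there; moreover a point of rank strictly larger than $\lceil(N+1)/2\rceil$ already lies in $\sigma_{\lceil N/2\rceil}(C_N)$, whence a point of $\bbP^N\setminus\sigma_{\lceil N/2\rceil}(C_N)$ has rank exactly $\lceil(N+1)/2\rceil$. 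Since $N=2d$ gives $\lceil N/2\rceil=d$, $N-r+2=2d-r+2$ and $\lceil(N+1)/2\rceil=d+1$, transporting these facts back through the second step yields the stratifications (1)--(3) for $2\le r\le d$ and (4) for $r=d+1$; the closure assertions are handled using that, for $2\le r\le d$, the rank-$(2d-r+2)$ locus is closed inside the stratum $\sigma_r\setminus\sigma_{r-1}$ and its closure contains $\sigma_{r-1}(\Isot_{2,d})$, this last point coming from Sylvester's description through degenerations of forms $\ell^{2d-1}m$ to $\ell^{2d}$ and their higher analogues.

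The main obstacle is the second step: one must establish the identification $\Isot_{2,d}\cong C_{2d}$ as \emph{embedded} projective varieties, not merely as abstract varieties, so that ranks and secant varieties genuinely transfer; and then, in the third step, one must keep careful track of which secant varieties of $C_{2d}$ fill $\bbP^{2d}$ and verify the closure relations among the rank loci, so as to obtain the precise equalities (1)--(4) rather than weaker inclusions. The representation-theoretic input of the first step is routine once $\mfso_3(\bbC)\cong\mfsl_2(\bbC)$ is set up, and the Comas--Seiguer theorem is quoted as a black box.
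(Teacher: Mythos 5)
Your proposal is correct and takes essentially the same route as the paper: identify $\calH_{2,d}$ with $\Sym^{2d}(\bbC^2)$ via the exceptional isomorphism $\mfso_3(\bbC)\cong\mfsl_2(\bbC)$, recognize $\Isot_{2,d}$ as the rational normal curve of degree $2d$, and then quote \cite{CS11}*{Theorem~2} with $N=2d$. The only difference is expository — you favor a geometric parametrization of the conic, whereas the paper spells out the explicit algebraic isomorphism $\bfbeta_d$ (via the substitution $u,v,z$ and divided powers) and verifies directly that isotropic forms correspond to perfect squares and that apolarity is preserved — but the mathematical content is the same.
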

Another class of harmonic forms which we analyse is the one of quadratic forms. While for the case of Waring rank the problem is easy, as the rank of a quadratic form is equal to the rank of the matrix associated, the harmonic case is not so immediate.  
A harmonic quadratic form corresponds to a symmetric trace-zero matrix. Inspired by the definition for harmonic forms, the isotropic rank of such a matrix is defined as the minimum number of symmetric trace-zero rank one matrices whose sum gives that matrix. Surprisingly, almost every trace-zero matrix has isotropic rank equal to its rank. Therefore, almost every trace-zero matrix of rank $r$ can be written as a sum of rank one matrices whose columns, or rows, correspond to isotropic points. We describe explictely what we mean by \lq\lq almost every\rq\rq\,: the only matrices having isotropic rank higher than their rank are the nilpotent matrices whose Jordan form consists of a $3$-order block and other blocks of order $2$ and $1$.
\begin{teo}\label{teo: quadric equiv}
Let $H\in\bbC^{(n+1)\times (n+1)}$ be a symmetric matrix such that $\tr(H)=0$. If $H$ is nilpotent and $\rk H^2=1$, then $\irk H=\rk H+2$. Otherwise, $\irk H=\rk H$.
\end{teo}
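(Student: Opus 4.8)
The plan is to recast the problem as a statement about symmetric matrices up to orthogonal congruence, settle that, and then compute the two constants. Throughout, $v^{\T}w$ is the standard bilinear pairing on $\bbC^{n+1}$, a vector is \emph{isotropic} when $v^{\T}v=0$, and an isotropic decomposition of $H$ of length $r$ is exactly a factorization $H=VV^{\T}$ with $V\in\bbC^{(n+1)\times r}$ whose Gram matrix $V^{\T}V$ has vanishing diagonal. From this one reads off immediately: $\irk H\ge\rk H$ (rank is subadditive and each $v_iv_i^{\T}$ has rank one); $\irk$ is invariant under $H\mapsto P^{\T}HP$ with $P\in\Oa_{n+1}(\bbC)$; $\irk(A\oplus B)\le\irk A+\irk B$ when $A,B$ sit on mutually orthogonal coordinate blocks; and $\irk(A+B)\le\irk A+\irk B$ for any trace-zero symmetric $A,B$ (concatenate decompositions). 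I will also use the Jordan-type normal form of complex symmetric matrices under orthogonal congruence: $H$ is orthogonally congruent to a direct sum of the indecomposable blocks $S_{k,\lambda}$ (the symmetric matrix of $\lambda I+J_k$ in an orthonormal basis), one per Jordan block, with $\rk S_{k,\lambda}=k$ if $\lambda\neq0$ and $k-1$ if $\lambda=0$. Since a trace-zero $H$ with $\rk H^2=1$ is automatically nilpotent (a nonzero eigenvalue would force, by $\tr H=0$, a second nonzero eigenvalue and hence $\rk H^2\ge2$), the exceptional matrices of the statement are precisely those with $\rk H^2=1$, and such an $H$ is orthogonally congruent to $S_{3,0}\oplus S_{2,0}^{\oplus s}\oplus 0$ with $\rk H=s+2$.

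Fix a full-column-rank factorization $H=V_0V_0^{\T}$ (possible over $\bbC$) and put $M_0:=V_0^{\T}V_0$. Every length-$(\rk H)$ isotropic decomposition has the form $V_0P$ with $P\in\Oa_{\rk H}(\bbC)$, so $\irk H=\rk H$ if and only if $M_0$ is orthogonally congruent to a zero-diagonal matrix. As $M_0$ is the matrix of the operator $H|_{\operatorname{col}H}$ in the basis of columns of $V_0$, we get $\tr M_0=\tr H=0$ and $\rk M_0=\dim\operatorname{col}(H^2)=\rk H^2$. Thus the whole ``$\rk H^2\neq1$'' case reduces to the matrix-theoretic fact that \emph{a complex symmetric matrix $M$ is orthogonally congruent to a zero-diagonal matrix iff $\tr M=0$ and $\rk M\neq1$}. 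Necessity is immediate ($\sum_k p_k^{\T}Mp_k=\tr M$ over an orthonormal basis, and a rank-one symmetric matrix $\pm zz^{\T}$ has nonzero diagonal). For sufficiency one builds an orthonormal basis of $M$-isotropic vectors by induction on the size $\rho$: at each step pick a unit vector $v$ with $v^{\T}Mv=0$ whose orthogonal complement $E$ is non-degenerate and on which $M|_E$ (again trace-zero) has rank $\neq1$. Failure to find an isotropic unit vector on a non-degenerate $E$ means exactly that $M|_E$ is a nonzero multiple of $q|_E$ — excluded by the trace as soon as $\dim E\ge2$ — or that $\dim E=2$ and $M|_E$ is a rank-one ``isotropic square'' $c\ell^2$; a dimension count on the quadric $\{v^{\T}Mv=0\}$ shows that when $\rk M\ge2$ the vector $v$ can always be chosen so that this last possibility is avoided, the cases $\rk M\le2$ being handled directly (e.g.\ $\mu(x^2-y^2)=\tfrac\mu2(x+iy)^2+\tfrac\mu2(x-iy)^2$). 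This sufficiency argument, together with the bookkeeping identifying $\rk M_0$ with $\rk H^2$, is where the work lies and is, I expect, the main obstacle.

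It remains to evaluate $\irk H$ when $\rk H^2=1$. For the upper bound, $S_{2,0}$ is itself an isotropic rank-one matrix, so $\irk S_{2,0}=1$, and a suitable orthogonal-congruence representative of $S_{3,0}$ is a sum $A+B$ of two rank-two, trace-zero, \emph{non}-nilpotent symmetric matrices, each of isotropic rank $2$ by the displayed fact (or by the identity above); hence $\irk H\le\irk S_{3,0}+s\cdot\irk S_{2,0}\le 4+s=\rk H+2$. For the matching lower bound, suppose $H=VV^{\T}$ with $V^{\T}V$ of zero diagonal and $V$ having $r\le\rk H+1$ columns; then $\rk V\in\{\rk H,\rk H+1\}$. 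If $\rk V=r=\rk H+1$, then $V$ is left-invertible, so $A:=V^{\T}V$ satisfies $VAV^{\T}=(VV^{\T})^2=H^2$ and hence $\rk A=\rk H^2=1$, contradicting that a symmetric matrix with zero diagonal has rank $\neq1$; the same contradiction covers $r\le\rk H$. If $\rk V=\rk H$, then $\operatorname{col}V=\operatorname{col}H$, so the columns of $V$ are isotropic vectors spanning $\operatorname{col}H$; but $q|_{\operatorname{col}H}$ has rank $\rk M_0=\rk H^2=1$, so its radical has codimension $1$ and — the quotient being one-dimensional and non-degenerate — contains every isotropic vector of $\operatorname{col}H$, forcing $\operatorname{col}H=\operatorname{col}(VV^{\T})$ into a proper subspace, a contradiction. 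Therefore $\irk H\ge\rk H+2$, and combined with the upper bound $\irk H=\rk H+2$.
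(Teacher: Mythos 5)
Your strategy is genuinely different from the paper's, and the core reduction is both correct and elegant: writing $H=V_0V_0^{\T}$ with $V_0$ of full column rank $\rk H$, you correctly observe that $\irk H=\rk H$ iff the Gram matrix $M_0:=V_0^{\T}V_0$ is orthogonally congruent to a hollow matrix, and the bookkeeping $\tr M_0=\tr H=0$, $\rk M_0=\rk H^2$ is right. The lower bound in the exceptional case is also clean and correct: the two subcases $\rk V=\rk H+1$ (forcing a rank-$1$ hollow symmetric matrix, impossible) and $\rk V=\rk H$ (forcing all columns of $V$ into the radical of $q|_{\operatorname{col}H}$, which has codimension $1$) are both watertight. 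The upper bound via $\irk S_{3,0}\le 4$ and $\irk S_{2,0}=1$ also works; the required splitting $S_{3,0}=A+B$ into two rank-$2$, trace-zero, non-nilpotent pieces can indeed be arranged (e.g.\ for the representative $(x_0-\rmi x_1)x_2$ one may take $A\propto x_1x_2$ and $B\propto x_0x_2$). By contrast, the paper does not pass through $M_0$ at all: it proves the non-degenerate case directly (their Lemma~\ref{lem:isoquaisola}) and then runs a lengthy case analysis on Gantmacher normal forms, producing explicit isotropic decompositions block by block. Your route, if completed, would replace most of that casework with a single matrix-theoretic statement.

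The genuine gap, which you yourself flag, is the key lemma: \emph{a complex symmetric $M$ with $\tr M=0$ and $\rk M\neq 1$ is orthogonally congruent to a hollow matrix}. The statement is correct (it is equivalent, via your reduction, to the theorem being proved, so it had better be), but your inductive argument is not complete. The inductive step needs a unit vector $v$ with $v^{\T}Mv=0$ and $\rk(M|_{v^{\perp}})\neq 1$; this is automatic when $\rk M\ge 4$ (restriction to a hyperplane drops rank by at most $2$), but for $\rk M\in\{2,3\}$ the asserted ``dimension count on the quadric $\{v^{\T}Mv=0\}$'' is not carried out, and it is not obvious: the bad locus $\{v:\rk(M|_{v^\perp})=1\}$ can a priori be as large as the whole quadric. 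The analysis is genuinely delicate precisely when $\ker M$ is totally isotropic (e.g.\ $M=S_3(0)$), since then one cannot split off $(\ker M)^{\perp}$ and reduce to the non-degenerate case handled by the paper's Lemma~\ref{lem:isoquaisola}. Note that the paper's inductive proof of that lemma relies on picking $v_0$ so that the hyperplane $\bbP(v_0^\perp)$ is tangent to neither of the two quadrics, which makes essential use of non-degeneracy of both; extending this to degenerate $M$ is exactly the step you leave open. To close the gap you would either have to make the ``dimension count'' precise (including the degenerate-kernel subcases), or fall back to a case analysis on normal blocks, which is essentially the paper's route. Until that lemma is established, the ``$\rk H^2\neq 1$'' half of the theorem is unproved in your argument, while the exceptional-case lower and upper bounds stand on their own.
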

Note that the condition on the Jordan blocks is equivalent to the one stated in the theorem. The strategy for obtaining the complete classification for the isotropic rank of trace-zero matrices is a full analysis on all the possible canonical Jordan forms assumed by a trace-zero matrix. The first case is the class of non-degenerate matrices, for which the two ranks are equal, coherently with \autoref{theo:AH_harmonics}. Successively, the normal forms provided by F.~R.~Gantmacher in \cite{Gan98b}, related to Jordan canonical forms, allow to analyse each of the possible cases.

A natural question is how much the isotropic rank of a harmonic form can be greater than its Waring rank. We answer this by proving that
\[
\rk h\leq\irk h\leq2\rk h
\]
for every $h\in\calH_{n,d}$ (see \autoref{teo:two_times_rank}). Furthermore, the upper bound is sharp for an infinite family of forms, as showed by the analysis of the isotropic rank of the harmonic monomials, which is the last case we deal with.
\begin{teo}\label{teo:harmonics_monomials}
    Let $\rmm=\ell_0^{a_0}\cdots\ell_r^{a_r}\in\calH_{n,d}$ be a harmonic monomial, where $1\leq a_0\leq\cdots\leq a_r$. If $a_0=1$, 
    $\ell_0$ is not isotropic, and $\ell_1,\dots,\ell_r$ are isotropic, then
\[
\irk \rmm=2\rk\rmm=2\prod_{i=2}^r(a_i+1).
\]
Otherwise,
\[
\irk\rmm=\rk\rmm=\prod_{i=2}^r(a_i+1).
\]
\end{teo}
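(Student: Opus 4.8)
I would organise the proof around three ingredients: the geometry that harmonicity forces on the $\ell_i$, the Carlini--Catalisano--Geramita solution of the Waring problem for monomials \cite{CCG12}, and an apolarity analysis of isotropic decompositions.

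\textbf{Step 1 (structure of harmonic monomials).} Writing $\langle\,,\rangle$ for the polarisation of $x_0^2+\cdots+x_n^2$, a direct computation gives
\[
\Delta\rmm=\Biggl(\sum_{i}a_i(a_i-1)\langle\ell_i,\ell_i\rangle\,\frac{\rmm}{\ell_i^{2}}+\sum_{i\neq j}a_ia_j\langle\ell_i,\ell_j\rangle\,\frac{\rmm}{\ell_i\ell_j}\Biggr),
\]
and since the $\ell_i$ are pairwise non-proportional the polynomials $\rmm/\ell_i^{2}$ and $\rmm/(\ell_i\ell_j)$ are linearly independent; hence $\Delta\rmm=0$ is equivalent to $\langle\ell_i,\ell_j\rangle=0$ for $i\neq j$ together with $\langle\ell_i,\ell_i\rangle=0$ whenever $a_i\ge2$. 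So after an orthogonal change of coordinates $\rmm$ is a scalar multiple of an honest monomial in $r+1$ of the variables, every non-isotropic factor of which has exponent $1$; as the exponents are non-decreasing, these factors sit at the front. The first alternative of the theorem is precisely the case of a single non-isotropic factor $\ell_0$; ``otherwise'' means either no non-isotropic factor, or at least two. In every case $\rmm$ is $\GL_{n+1}(\bbC)$-equivalent to a monomial in independent variables, so $\rk\rmm$ is the value furnished by \cite{CCG12}; I record that the minimal Waring decomposition of a monomial constructed there uses only linear forms supported on the variables actually occurring.

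\textbf{Step 2 (the ``otherwise'' case).} If all $\ell_i$ are isotropic their span is totally isotropic, so \emph{every} linear combination of the $\ell_i$ is again isotropic; the \cite{CCG12} decomposition is then already an isotropic one, and with $\rk\rmm\le\irk\rmm$ this forces $\irk\rmm=\rk\rmm$. If $\rmm$ has $s+1\ge2$ non-isotropic factors $\ell_0,\dots,\ell_s$ (exponent $1$, pairwise orthogonal) and an isotropic remainder $g$, I would rescale $\ell_0,\dots,\ell_s$ so that $\sum_{i=0}^{s}\langle\ell_i,\ell_i\rangle=0$; then all $2^{s}$ forms $w_\varepsilon:=\ell_0+\sum_{i\ge1}\varepsilon_i\ell_i$ ($\varepsilon\in\{\pm1\}^{s}$) are isotropic, the polarisation identity expresses $\ell_0\cdots\ell_s$ as a combination of the $w_\varepsilon^{\,s+1}$, and $\rmm$ becomes a combination of the harmonic monomials $w_\varepsilon^{\,s+1}g$, each of the isotropic type just treated. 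Aligning and amalgamating their minimal decompositions — whose lower-order tails cancel across the different $\varepsilon$, exactly as $(\ell_0+\rmi\ell_1+\ell_2)^3-(\ell_0+\rmi\ell_1-\ell_2)^3-(\ell_0-\rmi\ell_1+\ell_2)^3+(\ell_0-\rmi\ell_1-\ell_2)^3$ is a constant times $\ell_0\ell_1\ell_2$ — produces an isotropic decomposition of length $\rk\rmm$.

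\textbf{Step 3 (the first alternative).} Write $\rmm=u\,g$ with $u:=\ell_0$ non-isotropic, $\langle u,u\rangle=1$, and $g=\ell_1^{a_1}\cdots\ell_r^{a_r}$ a monomial in isotropic pairwise orthogonal forms of $u^{\perp}$. Every isotropic vector of $\langle u,\ell_1,\dots,\ell_r\rangle$ already lies in $\langle\ell_1,\dots,\ell_r\rangle$, so an isotropic decomposition of $\rmm$ is forced to leave this span — this is the source of the factor $2$. For the upper bound I would, starting from a minimal isotropic decomposition $g=\sum_j\mu_j n_j^{\,d-1}$ in $u^{\perp}$, build for each $j$ two isotropic forms of $\bbC^{n+1}$ (perturbations of $n_j$ by $u$ and by the hyperbolic partners of the $\ell_i$) whose $d$-th powers differ by a nonzero multiple of $u\,n_j^{\,d-1}$, so that summing recovers $\rmm$ with $2\rk\rmm$ isotropic $d$-th powers (or one simply invokes \autoref{teo:two_times_rank}). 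For the lower bound, given $\rmm=\sum_{j=1}^{k}\lambda_j m_j^{\,d}$ with the $m_j$ isotropic, write $m_j=n_j+c_j u$ with $n_j\in u^{\perp}$, so $c_j^{2}=-\langle n_j,n_j\rangle$, and equate the $u$-homogeneous components of $u\,g=\sum_j\lambda_j(n_j+c_j u)^{d}$:
\[
\textstyle\sum_j\lambda_j n_j^{\,d}=0,\qquad d\sum_j\lambda_j c_j n_j^{\,d-1}=g,\qquad \sum_j\lambda_j c_j^{\,t} n_j^{\,d-t}=0\ \ (2\le t\le d).
\]
Let $S=\{j:c_j\neq0\}$, so $n_j$ is isotropic for $j\notin S$. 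The middle relation gives $|S|\ge\rk g$; the remaining relations amount to $\sum_{j\in S}\lambda_j(n_j+c_j y)^{d}=F(x)+g(x)\,y$ for a single harmonic form $F$, whence $-F=\sum_{j\notin S}\lambda_j n_j^{\,d}$ is a sum of $|S^{c}|$ isotropic $d$-th powers; if one can show that any such $F$ has isotropic rank $\ge\rk g$, then $|S^{c}|\ge\rk g$ and $k=|S|+|S^{c}|\ge 2\rk g=2\rk\rmm$.

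\textbf{Main obstacle.} Everything except the last assertion of Step 3 is bookkeeping on top of \cite{CCG12}; the crux is showing that the ``$c_j=0$'' part of an isotropic decomposition of $\rmm$ is, on its own, at least as long as a minimal decomposition of $g$. The apolarity (Hilbert-function) bound is far too weak for monomials, so I expect one must replay the Carlini--Catalisano--Geramita counting argument — restricting the putative point scheme to coordinate subspaces and inducting on the number of variables — for the auxiliary form $F$, exploiting that $F$ is rigidly determined by the vanishing of every higher $u$-component and that its apolar ideal, like $\rmm^{\perp}$ after the normalisation of Step 1, is a monomial complete intersection.
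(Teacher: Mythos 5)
The central issue is in Step 3, and it is not only the gap you flag at the end; the target of your lower-bound strategy is wrong by a factor of $a_1+1$, so the argument cannot be salvaged by supplying the missing lemma.

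Concretely: from the middle $u$-component you get a Waring decomposition of $g=\ell_1^{a_1}\cdots\ell_r^{a_r}$ (a form of degree $d-1$) of size $|S|$, hence $|S|\geq\rk g$. By \cite{CCG12}, $\rk g=\prod_{i\geq 2}(a_i+1)$ (divide out the smallest exponent $a_1$), whereas $\rk\rmm=\prod_{i\geq 1}(a_i+1)=(a_1+1)\rk g$. So the scheme $|S|,|S^c|\geq\rk g$ only yields $k\geq 2\rk g=2\rk\rmm/(a_1+1)$, strictly below the claim $2\rk\rmm$ whenever $a_1\geq 1$. The identity $2\rk g=2\rk\rmm$ you write at the end of Step 3 is false. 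The smallest example makes the discrepancy stark: for $\rmm=vz$ with $v$ isotropic and $z$ not ($d=2$, $g=v$, $\rk g=1$), your strategy gives $k\geq 2$, yet $\irk(vz)=4$ as the paper computes in \autoref{exam:rank_vz}. The obstacle is therefore not the unproven assertion that $\irk F\geq\rk g$; even with it the bound is too weak.

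The paper's lower-bound argument for this case (\Cref{prop:monomi2}) is genuinely different and does close the gap. Normalising so that $(\rmm)^{-1}=(\omega_n)+(\rmm')^{-1}$, any apolar ideal $I_X\subset(\rmm)^{-1}$ containing $\omega_n$ has the form $(\omega_n)+I_X'$ with $I_X'$ not involving $\alpha_0$, so $V(I_X')$ is a cone of lines through the vertex $[1:0:\cdots:0]$, and each line contributes two points of $V(I_X)$ on $\Omega_n$ (reducedness forbids tangency; this is their Claim~1). They then bound the \emph{number of lines} not by $\rk\rmm'=\prod_{i\geq 2}(a_i+1)$, but by the full artinian length $\dim\calD_{n-1}/(\rmm')^{-1}=\prod_{i\geq 1}(a_i+1)=\rk\rmm$, using that (after the perturbation of Claim~2) $\alpha_n$ is a nonzerodivisor on $\calD_{n-1}/I_X'$, so the length of the point scheme equals the accumulated Hilbert function of $\calD_{n-1}/(I_X'+(\alpha_n))\twoheadrightarrow\calD_{n-1}/(\rmm')^{-1}$. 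This extra factor $(a_1+1)$ is exactly what your $|S|$-versus-$|S^c|$ split cannot see, because the middle relation alone cannot detect it; you would need to exploit the full system of $u$-components as a Hilbert-function constraint on the supporting scheme, which is in effect what the paper does.

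On Step 2: the observation that when all $\ell_i$ are isotropic the span is totally isotropic, so the \cite{CCG12} decomposition is automatically an isotropic one, is correct and a nice shortcut (the paper instead writes down an explicit apolar ideal containing $\omega_n$). But the ``at least two non-isotropic'' subcase is not established by your sketch. After polarising $\ell_0\cdots\ell_s$ into $2^s$ isotropic $(s+1)$-th powers $w_\varepsilon^{s+1}$, the naive amalgamation of minimal decompositions of the $w_\varepsilon^{s+1}g$ produces $2^s\cdot\rk(w_\varepsilon^{s+1}g)$ terms, which exceeds $\rk\rmm=2^s\prod_{i>s}(a_i+1)$ as soon as $s+1>a_{s+1}$ (e.g.\ $\rmm=\ell_0\ell_1\ell_2\ell_3$ with three non-isotropic factors of exponent one). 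The asserted ``cancellation of lower-order tails'' across the different $\varepsilon$ is precisely what is in question and is not justified. The paper handles this case cleanly in \Cref{prop:monomi1} by exhibiting a complete-intersection apolar ideal $I_X\supseteq(\omega_n)$ of the right length, which is both shorter and verifiable.

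Finally, a small aside: the product in the statement of \Cref{teo:harmonics_monomials} (and likewise in the last display of \Cref{prop:monomi2}) appears to be a typo for $\prod_{i=1}^r(a_i+1)$, which is what \Cref{prop:monomi1} and the Carlini--Catalisano--Geramita formula give; this is worth flagging so it does not obscure where the factor $(a_1+1)$ in your Step~3 went missing.
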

Also for monomials, we observe that their Waring rank equals the isotropic rank for almost all the cases. The only exception is given when one and only one of the linear forms appearing in the product forming the monomial is non-isotropic. While for the cases where the two ranks coincide it is sufficient to determine an explicit decomposition made of isotropic forms, the case where these are different is a bit more demanding. The argument is treated in detail in  \Cref{sec:monomials}.

The methodology introduced in this paper gives a new and strong correlation between representation theory and tensor theory. For this reason, it is thus desirable that what has been done here for harmonic polynomials can be done for other classes of tensors linked with other groups.

\section{Apolarity Theory and harmonic polynomials}
In this section, we briefly recall some basic facts about apolarity theory and we introduce the main topic of our investigation, i.e.~harmonic polynomials. Throughout the paper, by subscheme we always mean a closed subscheme and by variety we mean an irreducible projective integral separated scheme of finite type.
\subsection{Apolarity}
The apolarity theory is a classical topic, which was first introduced in \cite{Rey70} by T.~Reye, where the author uses the term \textit{apolar} for the
first time, and then by J.~Rosanes in \cite{Ros73} (see
also \cite{Dol12}*{p.~75} for historical details). W.~F.~Meyer further developed this theory in \cite{Mey83}.
For more recent references, the reader can consult \cites{DK93,IK99,Dol12}.

Let $V$ be a vector space over $\bbC$, with $\dim V=n+1$. 
We denote by
$\Sym V$ and $\Sym V^*$
the symmetric algebra of $V$ and its dual $V^*$, respectively. Recall that $\Sym V$ and $\Sym V^*$ are graded algebras via the following decompositions: 
$$\Sym V=\bigoplus_{d\in\bbN}S^dV,\qquad \Sym V^*=\bigoplus_{d\in\bbN}S^dV^*,$$
where $S^dV$ and $S^dV^*$ are the $d$-th symmetric powers of $V$ and $V^*$, respectively.
For every $d\in\bbN$, the bilinear map defined as 
\begin{equation}
\label{rel_contraction_pairing}
\begin{tikzcd}[row sep=0pt,column sep=1pc]
 \circ\colon S^dV^*\times S^dV\arrow{r} & \bbC \\
  {\hphantom{\circ\colon{}}}  (\eta_1\cdots\eta_d, v_1\cdots v_d) \arrow[mapsto]{r} & \displaystyle{\sum_{\sigma\in\mfS_d}} \eta_1(v_{\sigma(1)})\cdots\eta_d(v_{\sigma(d)})
\end{tikzcd}
\end{equation}
is a perfect pairing and it is called the \textit{contraction pairing} or \textit{polar pairing} (see, e.g.~\cite{Dol12}*{Section 1.1.1}). More generally, for every $k,d\in\bbN$, with $k\leq d$, the \textit{$(k,d)$-partial polarization map} is defined as
$$
\begin{tikzcd}[row sep=0pt,column sep=1pc]
 \circ\colon S^kV^*\times S^dV\arrow{r} & S^{d-k}V\hphantom{.} \\
  {\hphantom{\circ\colon{}}}  (\eta_1\cdots\eta_k, v_1\cdots v_d) \arrow[mapsto]{r} & \displaystyle{\sum_{1\leq i_1<\cdots<i_k\leq d}} (\eta_1\cdots\eta_k)\circ (v_{i_1}\cdots v_{i_k})\prod_{j\neq i_1,\dots,i_k}v_j.
\end{tikzcd}
$$
The symmetric algebras $\Sym V$ and $\Sym V^*$ can be identified with polynomial rings. Indeed,  denoting by $\bbC[V]$ the ring of polynomial functions on $V$, that is, the commutative $\bbC$-algebra generated by $V^*$,  we have the canonical isomorphisms 
\[
\bbC[V]\cong\Sym V^*,\qquad \bbC[V^*]\cong \Sym V,
\]
(see e.g.~\cite{CGLM08}*{section 3.1}).
Consider a basis $(x_0,\dots,x_n)$ of $V$ and its dual basis $(\alpha_0,\dots,\alpha_n)$ of $V^*$. 
We define the two rings
\[
\calR_n\coloneqq\bbC[x_0,\dots,x_n],\qquad \calD_n\coloneqq\bbC[\alpha_0,\dots,\alpha_n],
\]
endowed with the standard grading, and we denote by $\calR_{n,d}$ the degree $d$ part of $\calR_n$ and by $\calD_{n,d}$ the degree $d$ part of $\calD_n$. We have $\calR_{n,d}\cong S^dV$ and $\calD_{n,d}\cong  S^dV^*$ as vector spaces. 
These isomorphisms agree also with the multiplication and we have
$$
\calR_{n}\cong\Sym V\cong\bbC[V^*],\qquad \calD_{n}\cong\Sym V^*\cong\bbC[V].
$$
 In these terms, the apolar action of $\calD_{n,1}$ on $\calR_{n,1}$ is
\[
\begin{tikzcd}[row sep=0pt,column sep=1pc]
 \circ\colon \calD_{n,1}\times\calR_{n,1}\arrow{r} & \bbC\hphantom{.} \\
  {\hphantom{\circ\colon{}}} (\alpha_i,x_j) \arrow[mapsto]{r} & \displaystyle\pdv{x_j}{x_i}.
\end{tikzcd}
\]
The apolar action 
$\circ: \calD_n\times\calR_{n}\to\calR_{n}$ is obtained by extension and using the standard and formal properties of differentiation, i.e.~linearity and the Leibniz rule. In light of that, we think of $\calR_n$ as a ring of polynomials and of $\calD_n$ as a ring of polynomial derivations, and we will often denote a generic element of $\calR_n$ by $f$ and a generic element of $\calD_n$ by $\varphi$.
We recall now the definitions of perps and inverse systems and some of their properties.
{
\begin{defn}
For any subspace $V\subseteq \calR_{n,d}$, the \textit{perp} of $V$ is the space
$$V^\perp\coloneqq\Set{\varphi\in\calD_{n,d}| \varphi\circ f=0,\,\forall f\in V}.$$
For any subspace $W\subseteq \calD_{n,d}$, the \textit{perp} of $W$ is the space
$$W^\perp\coloneqq\Set{f\in\calR_{n,d}| \varphi\circ f=0,\,\forall\varphi\in W}.$$
\end{defn}
}
\noindent Note that, since $\circ$ is a perfect pairing, we have \[
\dim V^\perp=\dim\calR_{n,d}-\dim V,
\] 
and analogously for $W$.
\begin{defn}
For any ideal $I\subseteq\calR_n$, the \textit{inverse system} of $I$ is the $\calR_n$-submodule of $\calD_n$ defined as
$$I^{-1}\coloneqq\Set{\varphi\in\calD_n| \varphi\circ f=0,\,\forall f\in I}.$$
For any ideal $J\subseteq\calD_n$, the \textit{inverse system} of $J$ is the $\calD_n$-submodule of $\calR_n$ defined as
$$J^{-1}\coloneqq\Set{f\in\calR_n|\varphi\circ f=0,\,\forall \varphi\in J}.$$
\end{defn}
Perps and inverse systems behave well with respect to the intersection and the restriction to homogeneous degree. Throughout the paper, if $I$ is an ideal of a graded ring, we denote by $I_d$ its homogeneous part of degree $d$.
\begin{lem}[\cite{Ger96}*{Proposition 2.5 and Proposition 2.6}]
\label{lem:invprop}
Let $I$ and $J$ be two ideals of $\calR_n$ or of $\calD_n$. 
Then:
\begin{enumerate}
    \item $(I^{-1})_d=(I_d)^\perp$ for every $d\in\bbN$;
    \item $(I\cap J)^{-1}=I^{-1}+J^{-1}$.  
\end{enumerate}
\end{lem}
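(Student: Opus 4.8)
\textbf{Proof plan for \Cref{lem:invprop}.}
The plan is to verify each of the two identities directly from the definitions of inverse system and perp, exploiting that the apolar pairing $\circ$ is graded in the sense that $\calD_{n,k}\circ\calR_{n,d}\subseteq\calR_{n,d-k}$ (and symmetrically, with $\calR$ and $\calD$ swapped). I will treat the case of ideals of $\calR_n$ acting by derivations in $\calD_n$; the other case is identical after exchanging the roles of $\calR_n$ and $\calD_n$.

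For part (1): Fix $d\in\bbN$ and let $I\subseteq\calR_n$ be an ideal. An element $\varphi\in\calD_{n,d}$ lies in $(I^{-1})_d$ precisely when $\varphi\in I^{-1}$, i.e. $\varphi\circ f=0$ for every $f\in I$. I would first observe that it suffices to test this condition on homogeneous elements of $I$, since $I=\bigoplus_e I_e$ and $\circ$ is additive in the second argument. Next, for homogeneous $f\in I_e$, the product $\varphi\circ f$ lands in $\calR_{n,e-d}$, which is the zero space whenever $e<d$, and I claim it is automatically zero when $e>d$ as well once we know $\varphi\circ g=0$ for all $g\in I_d$: indeed write $f=\sum_j h_j g_j$ with $g_j\in I_d$ obtained by multiplying generators appropriately — more precisely, use that the degree-$d$ piece of the ideal generates, after multiplication by $\calR_n$, everything of degree $\ge d$ only if $I$ is generated in degree $\le d$, which is \emph{not} true in general. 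So this naive approach needs a fix. The correct argument: $\varphi\in\calD_{n,d}$ kills every $f\in I_e$ with $e<d$ trivially (the contraction has negative degree, hence is $0$), and kills every $f\in I_e$ with $e>d$ automatically once we impose nothing — wait, that is also false. The clean statement to use is the following: for $\varphi$ homogeneous of degree $d$ and $f$ homogeneous of degree $e$, one has $\varphi\circ f=0$ whenever $e\ne d$ only in the subt\emph{case} $e<d$; for $e>d$ the contraction $\varphi\circ f\in\calR_{n,e-d}$ is generally nonzero. Therefore the equality $(I^{-1})_d=(I_d)^\perp$ must instead be read as: $\varphi\in\calD_{n,d}$ satisfies $\varphi\circ f=0$ for all $f\in I$ \emph{if and only if} $\varphi\circ f=0$ for all $f\in I_d$. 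The nontrivial direction is ``$\Leftarrow$'': assume $\varphi\perp I_d$; for $f\in I_e$ with $e>d$ and any $\psi\in\calD_{n,e-d}$ we must show $\psi\circ(\varphi\circ f)=0$, and by associativity of the apolar action this equals $(\psi\varphi)\circ f$ where $\psi\varphi\in\calD_{n,e}$; but $f\in I_e$ while $\psi\varphi$ need not annihilate it. The resolution is to use the \emph{module} structure: $I^{-1}$ is a $\calD_n$-submodule of $\calD_n$, so once $\varphi$ kills $I_d$ we want to descend; the correct and standard fact (Geramita, \cite{Ger96}*{Prop.~2.5}) is that $I^{-1}$ in degree $d$ depends only on $I$ in degree $d$ because $(\calR_n)_{\ge 0}$ acts, and the pairing identifies $(\calR_{n,d})^*$ with $\calD_{n,d}$; annihilating the subspace $I_d$ of $\calR_{n,d}$ is exactly the perp condition, and for higher-degree elements of $I$ no further constraint arises on degree-$d$ derivations because multiplying up by $\calR_n$ keeps you inside $I$ and the induced conditions are in degrees $>d$ — I will cite \cite{Ger96}*{Proposition 2.5} for this and reproduce its short argument.

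For part (2): Let $I,J$ be ideals and show $(I\cap J)^{-1}=I^{-1}+J^{-1}$. The inclusion ``$\supseteq$'' is immediate: if $\varphi\in I^{-1}$ then $\varphi$ annihilates $I\supseteq I\cap J$, hence $\varphi\in(I\cap J)^{-1}$, and similarly for $J^{-1}$, so the sum is contained in $(I\cap J)^{-1}$. For ``$\subseteq$'' I would argue degree by degree using part (1): in degree $d$ we have $((I\cap J)^{-1})_d=((I\cap J)_d)^\perp=(I_d\cap J_d)^\perp$, and by linear algebra in the finite-dimensional space $\calR_{n,d}$ with its perfect pairing against $\calD_{n,d}$ one has $(I_d\cap J_d)^\perp=(I_d)^\perp+(J_d)^\perp=(I^{-1})_d+(J^{-1})_d=(I^{-1}+J^{-1})_d$. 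Summing over all $d$ gives the claim. This reduces everything to the elementary fact that for subspaces $U,W$ of a finite-dimensional space with a perfect pairing, $(U\cap W)^\perp=U^\perp+W^\perp$, which follows by taking perps of $U^\perp+W^\perp\supseteq$ nothing — concretely, $(U^\perp+W^\perp)^\perp=U^{\perp\perp}\cap W^{\perp\perp}=U\cap W$, and then perp again.

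The main obstacle is purely expository rather than mathematical: in part (1) one must be careful that the equality of a graded piece of the inverse system with the perp of a single graded piece of the ideal is not a triviality about the contraction having negative degree, but genuinely uses that passing to higher-degree elements of the ideal imposes no new conditions on degree-$d$ derivations — this is where the module-theoretic structure (or equivalently the cited Geramita proposition) does the work. Once part (1) is in hand, part (2) is a one-line reduction to linear algebra, and I would keep that argument short.
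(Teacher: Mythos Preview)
The paper does not prove this lemma; it is stated with a citation to Geramita. So there is no paper proof to compare against, and I assess your plan on its own.

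Part (2) is fine: the degree-by-degree reduction via part (1) to the linear-algebra identity $(U\cap W)^\perp = U^\perp + W^\perp$ under a perfect pairing is exactly the standard argument.

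Part (1) has a genuine gap. You chose to work with an ideal $I\subseteq\calR_n$ and a degree-$d$ derivation $\varphi\in\calD_{n,d}$, and you correctly isolate the nontrivial case: $f\in I_e$ with $e>d$, where $\varphi\circ f\in\calR_{n,e-d}$ is not automatically zero. You observe that testing against $\psi\in\calD_{n,e-d}$ gives $(\psi\varphi)\circ f$ and that nothing forces $\psi\varphi$ to annihilate $I_e$. Your fallback (``multiplying up by $\calR_n$ keeps you inside $I$ and the induced conditions are in degrees $>d$'') is not an argument; in fact, with the paper's literal definition of $I^{-1}$ via $\varphi\circ f$, the identity fails in this direction: for $I=(x^2)\subset\bbC[x]$ one has $(I_1)^\perp=\langle\alpha\rangle$ but $(I^{-1})_1=0$, since $\alpha\circ x^2=2x\neq 0$.

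The clean argument (the one in Geramita) treats the ideal in the \emph{acting} ring. Take $J\subseteq\calD_n$ and $f\in(J_d)^\perp\subseteq\calR_{n,d}$. For $\varphi\in J_e$ with $e>d$ the contraction $\varphi\circ f$ has negative degree and vanishes; for $e<d$, test $\varphi\circ f\in\calR_{n,d-e}$ against any $\psi\in\calD_{n,d-e}$: one gets $(\psi\varphi)\circ f$, and now $\psi\varphi\in J_d$ \emph{because $J$ is an ideal}, so this vanishes, and nondegeneracy of the pairing gives $\varphi\circ f=0$. The ideal property is used to \emph{raise} the degree of the operator into $J_d$, which is exactly the move your associativity computation was set up for but could not complete in the direction you picked. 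The case of ideals in $\calR_n$ then follows by symmetry, using the dual contraction $\calR_n\times\calD_n\to\calD_n$ in place of $\circ$; this is also how the paper's symmetric formulation should be read.
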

To conclude this section, we recall the apolarity lemma, which gives a tool for computing the Waring rank of a homogeneous polynomial. If $Y$ is a variety and $X\subseteq Y$ is a closed subscheme of $Y$, we denote by $I_{X,Y}$ the ideal of $X$ in $Y$; if $Y=\bbP^n$, we just write $I_X$.
\begin{lem}[Apolarity lemma \cite{IK99}*{Lemma 1.15}]
\label{Lem Apo}
Let $f\in\calR_{n,d}$ be a homogeneous polynomial and $(f)^{-1}\subseteq\calD_n$ its inverse system. Let $$\ell_i=a_{i,0}x_0+\dots+a_{i,n}x_n\in\calR_{n,1},\qquad\varphi_i=a_{i,0}\alpha_0+\dots+a_{i,n}\alpha_n\in\calD_{n,1}$$ for $i=1,\dots,r$. Then, there exist $c_1,\dots,c_r\in\bbC$ such that $f=c_1\ell_1^d+\dots+c_r\ell_r^d$ if and only if $I_X\subset(f)^{-1}$, where $X=\{\varphi_1,\dots,\varphi_r\}$. In particular, the Waring rank of $f$ is equal to the minimal length of a reduced 0-dimensional scheme $X\subset \bbP(\calD_{n,1})$ such that $I_X\subset(f)^{-1}$.
\end{lem}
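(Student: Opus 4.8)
The plan is to reduce the statement to elementary linear algebra in the perfectly paired graded pieces $\calD_{n,e}\times\calR_{n,e}$, together with one explicit computation describing how a pure power $\ell^d$ is annihilated by differential operators. I will write $\mathbf a_i=(a_{i,0},\dots,a_{i,n})$, so that $\ell_i=\sum_j a_{i,j}x_j$, $\varphi_i=\sum_j a_{i,j}\alpha_j$, and $[\varphi_i]\in\bbP(\calD_{n,1})$ is the point with homogeneous coordinates $\mathbf a_i$; its defining ideal is the homogeneous prime $\mfp_i\subseteq\calD_n$ consisting of the forms of $\calD_n$ vanishing at $\mathbf a_i$. Throughout I use that $(f)^{-1}$ is the apolar ideal $\Ann(f)=\{\varphi\in\calD_n:\varphi\circ f=0\}$: it is an ideal because $(\psi\varphi)\circ f=\psi\circ(\varphi\circ f)$, it is homogeneous because a homogeneous $\varphi$ of degree $e$ maps $f$ into $\calR_{n,d-e}$ (so contributions of different degrees cannot cancel); by perfectness of $\circ\colon\calD_{n,d}\times\calR_{n,d}\to\bbC$ its degree-$d$ piece is $\langle f\rangle^\perp$, and in degrees $e>d$ it is all of $\calD_{n,e}$.

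First I would record the computation that drives everything: for $\ell=\sum_j a_jx_j$ and a homogeneous $\varphi\in\calD_{n,e}$ one has
\[
\varphi\circ\ell^d=\begin{cases}\dfrac{d!}{(d-e)!}\,\varphi(\mathbf a)\,\ell^{d-e},&e\le d,\\0,&e>d,\end{cases}
\]
which is checked on monomials $\varphi=\alpha^I$ by iterated differentiation of $\ell^d$ and then extended by linearity. Taking $e=d$ gives $\varphi\circ\ell^d=d!\,\varphi(\mathbf a)$, so that $\langle\ell^d\rangle^\perp=\{\varphi\in\calD_{n,d}:\varphi(\mathbf a)=0\}$ is exactly the degree-$d$ piece of the homogeneous prime ideal of the point $[\varphi_\ell]\in\bbP(\calD_{n,1})$. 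Applying this to $\ell_1,\dots,\ell_r$ and using $(\sum_i W_i)^\perp=\bigcap_i W_i^\perp$, the subspace $W\coloneqq\langle\ell_1^d,\dots,\ell_r^d\rangle\subseteq\calR_{n,d}$ satisfies
\[
W^\perp=\bigcap_{i=1}^r\langle\ell_i^d\rangle^\perp=\bigcap_{i=1}^r(\mfp_i)_d=\Bigl(\bigcap_{i=1}^r\mfp_i\Bigr)_d=(I_X)_d.
\]

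The two implications then fall out. For the forward direction, suppose $f=c_1\ell_1^d+\cdots+c_r\ell_r^d$ and let $\varphi\in I_X$ be homogeneous of degree $e$; then $\varphi\circ\ell_i^d=0$ for every $i$ — immediate if $e>d$, and for $e\le d$ because $\varphi\in(\mfp_i)_e$ forces $\varphi(\mathbf a_i)=0$ in the formula above — so $\varphi\circ f=\sum_i c_i(\varphi\circ\ell_i^d)=0$, i.e.\ $I_X\subseteq(f)^{-1}$. For the converse, from $I_X\subseteq(f)^{-1}$ I restrict to degree $d$ to obtain $W^\perp=(I_X)_d\subseteq\langle f\rangle^\perp$; since in a perfect pairing the perp is inclusion-reversing and satisfies $(\,\cdot\,)^{\perp\perp}=\id$, this yields $\langle f\rangle=\langle f\rangle^{\perp\perp}\subseteq W^{\perp\perp}=W$, so $f=c_1\ell_1^d+\cdots+c_r\ell_r^d$ for suitable $c_i\in\bbC$. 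Finally, for the rank assertion I would note that a minimal power-sum decomposition of $f$ may be taken with pairwise non-proportional $\ell_i$ (merge proportional terms and drop zero coefficients otherwise), so its data is precisely a reduced $0$-dimensional subscheme $X=\{\varphi_1,\dots,\varphi_r\}\subseteq\bbP(\calD_{n,1})$ of length $r=\rk f$ with $I_X\subseteq(f)^{-1}$; conversely any reduced $0$-dimensional $X$ with $I_X\subseteq(f)^{-1}$ produces, by the equivalence just proved, a decomposition of $f$ of length at most the length of $X$, and combining the two inequalities gives the claimed formula for $\rk f$.

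I do not expect any deep obstacle here: this is the classical apolarity lemma, and once the formula for $\varphi\circ\ell^d$ and the identification $\langle\ell^d\rangle^\perp=(\mfp_{[\varphi_\ell]})_d$ are in hand, the equivalence is two one-line perp manipulations. The only point requiring genuine care is bookkeeping — keeping straight the three avatars of a linear form ($\ell\in\calR_{n,1}$, the operator/coordinate $\varphi_\ell\in\calD_{n,1}$, and the point $[\varphi_\ell]\in\bbP(\calD_{n,1})$ whose homogeneous ideal is generated by the linear forms of $\calD_n$ apolar to $\ell$) — and using the perfect pairings $\calD_{n,e}\times\calR_{n,e}\to\bbC$ consistently throughout (inclusion-reversing perps, double perp the identity, perp turning sums into intersections, and the fact that taking the degree-$d$ part commutes with intersection of homogeneous ideals).
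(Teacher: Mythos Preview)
Your proof is correct and is essentially the standard argument for the classical apolarity lemma. Note, however, that the paper does not actually prove this statement: it is quoted from \cite{IK99}*{Lemma 1.15} and used as a known result, so there is no proof in the paper to compare against. Your write-up is a faithful reconstruction of the classical proof (the key formula $\varphi\circ\ell^d=\frac{d!}{(d-e)!}\varphi(\mathbf a)\,\ell^{d-e}$, the identification $\langle\ell^d\rangle^\perp=(\mfp_{[\varphi_\ell]})_d$, and the perp manipulations) and would serve perfectly well as a self-contained proof.
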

 \subsection{Harmonic forms}
 Using the notions of apolarity introduced above, we can give a definition of harmonic polynomials, which is analogous to the classical notion of harmonic functions with the Laplace operator.
\begin{defn}
Let $\omega_n\in\calD_{n,2}$ be a non-degenerate quadratic form, that is, $\rk{\omega_n}=n+1$. A form $f\in\calR_{n,2}$ is called \textit{$\omega_n$-harmonic} if $\omega_n\circ f=0$. The inverse system
$$\calH_{n}^{\omega_n}\coloneqq(\omega_{n})^{-1}=\Set{f\in\calR_n|\omega_{n}\circ f=0}\subseteq \calR_n$$
is called the \textit{space of $\omega_n$-harmonic polynomials}.
\end{defn}
We denote by $\calH_{n,d}^{\omega_n}$ the degree $d$ part of $\calH_n^{\omega_n}$, so that
\[
\calH_{n}^{\omega_n}=\bigoplus_{d\in\bbN}\calH_{n,d}^{\omega_n}.
\]
Since 
$$\calD_{n,2}\cong S^2(V^*)\subset V^*\otimes V^*\cong\Hom(V,V^*),$$
and the two isomorphisms are canonical, any non-degenerate quadratic form $\omega_n\in\calD_{n,2}$ canonically induces an isomorphism of vector spaces
$$L_{\omega_n}\colon\calR_{n,1}\to\calD_{n,1},$$
which can also be extended to a ring isomorphism $L_{\omega_n}\colon\calR_{n}\to\calD_{n}$. The quadratic form $L_{\omega_n}^{-1}(\omega_n)$ is called the \textit{dual quadratic form of $\omega_n$}. Moreover, there is also an induced non-degenerate bilinear form
$$
\begin{tikzcd}[row sep=0pt,column sep=1pc]
 \langle\ ,\,\rangle_{\omega_n}\colon \calR_{n,1}\times\calR_{n,1}\arrow[r] & \bbC\hphantom{.} \\
  {\hphantom{\langle\ ,\,\rangle_{\omega_n}\colon{}}}  (\ell_1, \ell_2) \arrow[mapsto]{r} & L_{\omega_n}(\ell_1)\circ\ell_2.
\end{tikzcd}
$$
For more details on this isomorphism and its consequence, we refer the reader to \cite{Dol12}*{Section 1.4.5}. We define $$\calK_n^{\omega_n}\coloneqq L_{\omega_n}(\calH_n^{\omega_n}).$$
Even though $\calK_n^{\omega_n}$ and $\calH_n^{\omega_n}$ are canonically isomorphic, we prefer to keep them separate for the sake of clarity and, for this reason, we refer to $\calK_n^{\omega_n}$ as the space of \emph{$\omega_n$-harmonic derivations}.
Note that an equivalent definition for $\calK_n^{\omega_n}$ is $$\calK_{n}^{{\omega_n}}=\bigl(L_{\omega_n}^{-1}(\omega_n)\bigr)^{-1}=\Set{\varphi\in\calD_n|\varphi\circ L_{\omega_n}^{-1}(\omega_n)=0}\subseteq \calD_n.$$
As for $\calH_n^{\omega_n}$, we denote by $\calK_{n,d}^{\omega_n}$ the degree $d$ part of $\calK_n^{\omega_n}$. When it is clear from the context, we omit the symbol of the considered quadric form, just writing $\calH_{n,d}$, or $\calK_{n,d}$, respectively, and we say \textit{harmonic} instead of \textit{$\omega_n$-harmonic}.
\begin{rem}
Fix a non-degenerate quadratic form $\omega_n\in\calD_{n,2}$ and let $q_n\in\calR_{n,2}$ its dual quadratic form. A classical result states that
$$\calR_{n,d}=q_n\calR_{n,d-2}\oplus\calH_{n,d}$$
see \cite{GW98}*{Corollary 5.2.5} or \cite{Fla24}*{Proposition 3.11} for a direct proof. As a consequence, we have that
$$\calH_n\cong\calR_n/(q_n)$$
and $\calH_n$ is a graded ring whose grading is given by
$$\calH_n=\bigoplus_{d=0}^\infty\calH_{n,d}.$$
Note that $\calH_n$ is the coordinate ring of a non-degenerate quadric $Q_n=\{q_n=0\}\subseteq\bbP^n$. Analogous formulas also hold for $\calD_{n}$ and $\calK_n$.
\end{rem}
We want to understand how the apolarity action restricts to harmonic polynomials. For this purpose, we observe that, for any $h\in\calH_{n,d}$ and for any $\varphi\in\calD_{n,d}$, written as  $\varphi=\psi_{d-2}\omega_n+\psi_d$ with $\psi_{d-2}\in\calD_{n,d-2}$ and $\psi_{d}\in\calD_{n,d}$,
we have
$$\varphi\circ h=(\psi_{d-2}\omega_n+\psi_d)\circ h=\psi_{d-2}\circ(\omega_{n}\circ h)+\psi_d\circ h=\psi_d\circ h.$$
As an immediate consequence 
we get the following theorem.
\begin{teo}\label{teo:polar_harmonic}
The apolar action of $\calD_n$ on $\calR_d$ induces a map
$$
\circ\colon\calK_{n}\times\calH_{n} \to  \calH_n$$
such that, for any $d\in\bbN$, $\circ\colon\calK_{n,d}\times\calH_{n,d}\to\bbC$ is a perfect pairing. 
\end{teo}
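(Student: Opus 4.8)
The plan is to handle the two assertions in turn. First I would check that $\circ$ restricts to a map $\calK_n\times\calH_n\to\calH_n$: since $\calR_n$ is a module over the commutative ring $\calD_n$ via the apolar action, for $\varphi\in\calK_{n,k}$ and $h\in\calH_{n,d}$ with $k\leq d$ one computes
\[
\omega_n\circ(\varphi\circ h)=(\omega_n\varphi)\circ h=(\varphi\omega_n)\circ h=\varphi\circ(\omega_n\circ h)=0,
\]
so that $\varphi\circ h\in\calH_{n,d-k}$ (the case $k>d$ being trivial). This already shows $\calD_n\circ\calH_n\subseteq\calH_n$, and the statement only needs the restriction to $\calK_n$. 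In particular, for $k=d$ we get $\varphi\circ h\in\calH_{n,0}=\bbC$, so the degree-$d$ part is a bilinear form $\circ\colon\calK_{n,d}\times\calH_{n,d}\to\bbC$.

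To prove this last pairing is perfect, I would deduce it from the perfectness of the contraction pairing $\circ\colon\calD_{n,d}\times\calR_{n,d}\to\bbC$. Writing $q_n=L_{\omega_n}^{-1}(\omega_n)\in\calR_{n,2}$ for the dual quadratic form and using the equivalent description $\calK_n=(q_n)^{-1}$ together with \Cref{lem:invprop}(1), one has $\calK_{n,d}=\bigl((q_n)_d\bigr)^{\perp}=\bigl(q_n\calR_{n,d-2}\bigr)^{\perp}\subseteq\calD_{n,d}$, the perp being taken with respect to the contraction pairing. Now a perfect pairing $A\times B\to\bbC$ of finite-dimensional vector spaces restricts, for any subspace $B'\subseteq B$, to a perfect pairing $B'^{\perp}\times(B/B')\to\bbC$: the induced map $B'^{\perp}\to(B/B')^{\ast}$ is injective because an element of $B'^{\perp}$ pairing trivially with all of $B$ must vanish, and $\dim B'^{\perp}=\dim B-\dim B'=\dim(B/B')$ then forces it to be an isomorphism. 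Taking $B'=q_n\calR_{n,d-2}$ gives a perfect pairing $\calK_{n,d}\times\bigl(\calR_{n,d}/q_n\calR_{n,d-2}\bigr)\to\bbC$.

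It remains to identify the quotient with $\calH_{n,d}$. By the Fischer decomposition $\calR_{n,d}=q_n\calR_{n,d-2}\oplus\calH_{n,d}$ recalled above, the composite $\calH_{n,d}\hookrightarrow\calR_{n,d}\twoheadrightarrow\calR_{n,d}/q_n\calR_{n,d-2}$ is an isomorphism; transporting the pairing of the previous step along it, and unwinding the identifications, yields a perfect pairing $\calK_{n,d}\times\calH_{n,d}\to\bbC$ sending $(\varphi,h)$ to $\varphi\circ h$, which is exactly the restriction of the apolar action. I do not expect a genuine obstacle here: the only points requiring care are not to confuse the $\calD_n$-module structure of $\calR_n$ with its ring multiplication when expanding $\omega_n\circ(\varphi\circ h)$, and to pass correctly between the ``perp in a single degree'' and ``inverse system of an ideal'' viewpoints through \Cref{lem:invprop}, so that $\calK_{n,d}$ is identified with the annihilator of $q_n\calR_{n,d-2}$.
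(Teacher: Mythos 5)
Your proof is correct, and its two main ingredients — the Fischer decomposition $\calR_{n,d}=q_n\calR_{n,d-2}\oplus\calH_{n,d}$ and the perfectness of the contraction pairing $\calD_{n,d}\times\calR_{n,d}\to\bbC$ — are exactly the ones the paper relies on. The route you take is the dual one, though: the paper's (very terse) argument decomposes a general $\varphi\in\calD_{n,d}$ as $\psi_{d-2}\omega_n+\psi_d$ with $\psi_d\in\calK_{n,d}$ and observes $\varphi\circ h=\psi_d\circ h$, i.e.\ it uses the Fischer decomposition on the $\calD_n$-side and concludes by a dimension count, whereas you keep $\calK_{n,d}$ as the annihilator $(q_n\calR_{n,d-2})^{\perp}$ and quotient on the $\calR_n$-side, invoking the general fact that a perfect pairing $A\times B\to\bbC$ induces a perfect pairing $B'^{\perp}\times(B/B')\to\bbC$. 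These are mirror images of each other and equally elementary. One small point in your favour: your computation $\omega_n\circ(\varphi\circ h)=\varphi\circ(\omega_n\circ h)=0$ cleanly establishes $\calD_n\circ\calH_n\subseteq\calH_n$ (a fortiori $\calK_n\circ\calH_n\subseteq\calH_n$) in all degrees, a step the paper leaves implicit; it correctly uses only that $\calR_n$ is a module over the commutative ring $\calD_n$, and you are right that no special property of $\calK_n$ is needed there.
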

We refer to $\circ\colon\calK_{n}\times\calH_{n} \to  \calH_n$ as \emph{harmonic apolar action}. Note that, as for the classic apolar action, it is possible to define inverse systems and perps for harmonic apolar action too. From now on, we use for inverse systems and perps referred to the harmonic apolar action the same notation introduced for their classic counterparts and, when needed, we specify which apolar action we are using to avoid confusion.
\section{Isotropic rank of harmonic polynomials}
In this section we introduce the notion of isotropic rank, an analogue of the Waring rank for the class of harmonic polynomials. In order to define it, we first introduce isotropic linear forms and analyse some of their properties.
\begin{defn}\label{defn:orthogonal}
    Two linear forms $\ell_1,\ell_2\in\calR_{n,1}$ are said \textit{$\omega_n$-orthogonal} if $\omega_n\circ(\ell_1\ell_2)=0$. 
\end{defn}
\begin{defn}
For any linear form $\ell_0\in\calR_{n,1}$, the \textit{$\omega_n$-orthogonal space} to $\ell_0$ is the space
\[
\ell_0^{\perp}\coloneqq\Set{\ell\in\calR_{n,1}|\omega_n\circ(\ell_0\ell)=0}.
\]
\end{defn}
\begin{defn}\label{defn:isotropic}
Let $\omega_n\in\calD_{n,2}$ be a non-degenerate quadratic form. A linear form $\ell\in\calR_{n,1}$ is \textit{$\omega_n$-isotropic} if $\omega_n\circ\ell^2=0$. 
\end{defn}
Note that, since $\omega_n\circ(\ell_1\ell_2)=2\langle\ell_1,\ell_2\rangle_{\omega_n}$, the linear forms $\ell_1$ and $\ell_2$ are $\omega_n$-orthogonal if and only if they are orthogonal with respect to $\langle\ ,\,\rangle_{\omega_n}$, and similarly for $\omega_n$-isotropic linear forms.  When there is no risk of confusion on the quadratic form, we write \textit{orthogonal} instead of \textit{$\omega_n$-orthogonal} and \textit{isotropic} instead of \textit{$\omega_n$-isotropic}.
The following properties are classically known. For a detailed proof, see
\cite{Fla24}*{Lemma 4.10}.
\begin{lem} 
\label{lem_isotropic_points_harmonic_forms}
Let $\omega_n\in\calD_{n,2}$ be a non-degenerate quadratic form. Then the following hold:
\begin{enumerate}[label=(\arabic*), widest=*,nosep]
\item a linear form $\ell\in\calR_{n,1}$ is $\omega_n$-isotropic if and only if $\ell^d$ is $\omega_n$-harmonic for every $d\geq 2$;
\item the space $\calH_{n,d}^{\omega_n}$ is generated by the $d$-th powers of $\omega_n$-isotropic linear forms, that is,
\[
\calH_{n,d}^{\omega_n}=\Bigl\langle\Set{\ell^d\in\calR_{n,d}|\omega_n\circ\ell^2=0}\Bigr\rangle.
\]
\end{enumerate}
\end{lem}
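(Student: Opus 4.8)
The plan is to prove the two items separately: item (1) is a one-line differential identity, while item (2) reduces to a perp computation together with the Nullstellensatz. For (1), fix coordinates and write $\omega_n=\sum_{i,j}b_{ij}\alpha_i\alpha_j$ with $(b_{ij})$ symmetric and invertible, and $\ell=\sum_i a_ix_i$. Since every $\varphi\in\calD_n$ acts on $\calR_n$ as the constant-coefficient differential operator $\varphi(\partial_{x_0},\dots,\partial_{x_n})$, a direct computation yields, for every $d\ge 2$,
\[
\omega_n\circ\ell^d=\sum_{i,j}b_{ij}\,\partial_{x_i}\partial_{x_j}(\ell^d)=d(d-1)\bigl({\textstyle\sum_{i,j}b_{ij}a_ia_j}\bigr)\ell^{d-2}=\binom{d}{2}\,(\omega_n\circ\ell^2)\,\ell^{d-2}.
\]
Because $\binom d2\neq 0$ and $\ell^{d-2}\neq 0$ whenever $\ell\neq 0$ (the case $\ell=0$ being trivial), this expression vanishes if and only if $\omega_n\circ\ell^2=0$, which proves both implications of (1) at once, using any single exponent $d\ge 2$ for the converse.

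For (2), the inclusion $\langle\{\ell^d:\omega_n\circ\ell^2=0\}\rangle\subseteq\calH_{n,d}$ is immediate from (1); denote this span by $W$. I would prove equality by comparing perps for the contraction pairing, which is an involution on subspaces since the pairing is perfect (as recalled in the text). A short computation in coordinates gives $\varphi\circ\ell^d=d!\,\varphi(\mathbf{a})$ for every $\varphi\in\calD_{n,d}$, where $\mathbf{a}=(a_0,\dots,a_n)$ is the coefficient vector of $\ell$ and $\varphi(\mathbf{a})$ is the value of $\varphi$ regarded as a polynomial in $\alpha_0,\dots,\alpha_n$; since $\ell$ is isotropic exactly when $\omega_n(\mathbf{a})=0$ (by the computation above), this gives
\[
W^\perp=\{\varphi\in\calD_{n,d}:\varphi(\mathbf{a})=0\ \text{whenever}\ \omega_n(\mathbf{a})=0\}=I\bigl(\{\omega_n=0\}\bigr)_d,
\]
the degree-$d$ part of the homogeneous ideal of the quadric $\{\omega_n=0\}$ in the projective space with coordinates $\alpha_0,\dots,\alpha_n$. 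Since $\omega_n$ is non-degenerate it is squarefree (irreducible for $n\ge 2$, a product of two distinct linear forms for $n=1$), so $(\omega_n)\subseteq\calD_n$ is radical and the Nullstellensatz gives $I(\{\omega_n=0\})=(\omega_n)$; hence $W^\perp=(\omega_n)_d=\omega_n\,\calD_{n,d-2}$. On the other hand, $(\psi\omega_n)\circ f=\psi\circ(\omega_n\circ f)$ shows $\calH_{n,d}=\{f\in\calR_{n,d}:\omega_n\circ f=0\}=\bigl((\omega_n)_d\bigr)^\perp$, which is also \Cref{lem:invprop}(1) applied to the ideal $(\omega_n)$. Therefore $W=(W^\perp)^\perp=\bigl((\omega_n)_d\bigr)^\perp=\calH_{n,d}$, as claimed.

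I expect the only delicate point to be the middle equality of the last display, i.e.~identifying $W^\perp$ with the degree-$d$ graded piece of the ideal of the quadric: this relies on the evaluation identity $\varphi\circ\ell^d=d!\,\varphi(\mathbf{a})$ and on $\omega_n$ generating a radical ideal, which is exactly where non-degeneracy of $\omega_n$ is used; the remaining steps are short explicit computations or formal consequences of the perfectness of the contraction pairing. As an alternative to the perp computation one may argue representation-theoretically: taking $\omega_n$ to be the standard quadratic form (all non-degenerate quadratic forms being equivalent over $\bbC$), $W$ is a nonzero $\SO_{n+1}(\bbC)$-submodule of $\calH_{n,d}$, which is an irreducible $\SO_{n+1}(\bbC)$-module, hence $W=\calH_{n,d}$.
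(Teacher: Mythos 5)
Your proof is correct. The paper itself does not give a proof of this lemma in the text but refers to \cite{Fla24}*{Lemma 4.10}, so a direct line-by-line comparison with the paper is not possible; what can be said is that your argument is a clean, self-contained proof by the two standard routes. Part (1) is the elementary identity $\omega_n\circ\ell^d=\binom{d}{2}(\omega_n\circ\ell^2)\ell^{d-2}$, which is exactly as one expects and gives both implications at once. For part (2), the perp computation $W^\perp=(\omega_n)_d$ via the evaluation identity $\varphi\circ\ell^d=d!\,\varphi(\mathbf{a})$ and the radicality of $(\omega_n)$ for a non-degenerate form is sound (radicality is where non-degeneracy is used, as you note: $\omega_n$ is irreducible for $n\ge 2$ and splits into two distinct linear forms for $n=1$; the degenerate case $n=0$ is trivially handled since $\calH_{0,d}=0$ for $d\ge 2$). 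Combined with $\calH_{n,d}=((\omega_n)_d)^\perp$, which is \Cref{lem:invprop}(1) for the principal ideal $(\omega_n)$, this gives $W=\calH_{n,d}$ by biduality for the perfect pairing. The representation-theoretic alternative you sketch at the end — $W$ is a nonzero $\SO_{n+1}(\bbC)$-submodule of the irreducible module $\calH_{n,d}$ — is the other standard route and is consistent with the paper's citation of \cite{GW98}*{Theorem 5.2.4}. Both arguments are valid; the apolarity one has the advantage of being entirely elementary and not importing the irreducibility of $\calH_{n,d}$ as a black box.
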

Thanks to \autoref{lem_isotropic_points_harmonic_forms}, it is natural to give the following notion of rank.
\begin{defn}\label{def:isot_rank}
    Let $\omega_n\in\calD_{n,2}$ be a non-degenerate quadratic form and let $h\in\calH_{n,d}$. The \textit{$\omega_n$-isotropic rank} of $h$ is
    \[
    \irk_{\omega_n}(h)=\min\Set{r\in\bbN|h=\sum_{i=1}^r\ell_i^d: \ell_i\in\calR_{n,1},\, \omega_n\circ\ell_i^2=0,\,\forall i=1,\dots,r}.
    \]
\end{defn}
When there is no risk of confusion on the quadratic form, we write $\irk h$ to denote the isotropic rank of a harmonic form $h$. As an immediate consequence of \Cref{Lem Apo}, we obtain the following harmonic version of apolarity lemma.
\begin{lem}
\label{lem:apo_harm}
Let $h\in\calH^{\omega_n}_{n,d}$ be a harmonic polynomial and $(h)^{-1}\subseteq\calK^{\omega_n}_n$ its inverse system with respect to the harmonic apolar action. Let $(x_0,\dots,x_n)$ be a basis of $\calR_{n,1}$ and let $(\alpha_0,\dots,\alpha_n)$ be its dual basis such that $\omega_n=\alpha_0^2+\dots+\alpha_n^2$. Let \[
\ell_i=a_{i,0}x_0+\dots+a_{i,n}x_n\in\calR_{n,1}
\]
be a $\omega_n$-isotropic linear forms and let \[
\varphi_i=a_{i,0}\alpha_0+\dots+a_{i,n}\alpha_n\in\Omega_n\subset\bbP(\calD_{n,1}),
\]
for every $i=1,\dots,r$, where $\Omega_n$ is the quadric defined by $\omega_n$. Then, there exist $c_1,\dots,c_r\in\bbC$ such that $f=c_1\ell_1^d+\dots+c_r\ell_r^d$ if and only if $I_{X,\Omega_n}\subset(h)^{-1}$, where $X=\{\varphi_1,\dots,\varphi_r\}$. In particular, the isotropic rank of $h$ is equal to the minimal length of a reduced 0-dimensional scheme $X\subset \Omega_n$ such that $I_{X,\Omega_n}\subset(h)^{-1}$.
\end{lem}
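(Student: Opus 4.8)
The plan is to reduce the statement to the classical Apolarity Lemma (\Cref{Lem Apo}) by passing to the quotient of the dual ring $\calD_n$ by the ideal $(\omega_n)$. Throughout, I would write $(h)^{-1}_{\mathrm{cl}}\subseteq\calD_n$ for the inverse system of $h$ with respect to the classical apolar action, reserving the notation $(h)^{-1}\subseteq\calK_n$ for the harmonic one, and let $\pi\colon\calD_n\twoheadrightarrow\calD_n/(\omega_n)\cong\calK_n$ be the projection. Recall that $\calK_n$ is the homogeneous coordinate ring of $\Omega_n$, so that by definition $I_{X,\Omega_n}=\pi(I_X)$, where $I_X\subseteq\calD_n$ is the homogeneous ideal of $X$ in $\bbP(\calD_{n,1})$. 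Two easy observations set up the reduction: since $X\subseteq\Omega_n=\{\omega_n=0\}$ one has $(\omega_n)\subseteq I_X$, and since $(h)^{-1}_{\mathrm{cl}}$ is an ideal with $\omega_n\circ h=0$ (harmonicity of $h$) one also has $(\omega_n)\subseteq(h)^{-1}_{\mathrm{cl}}$.

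The key step is the identification $(h)^{-1}_{\mathrm{cl}}=\pi^{-1}\big((h)^{-1}\big)$. I would check it degree by degree using the Fischer decomposition $\calD_{n,e}=\omega_n\calD_{n,e-2}\oplus\calK_{n,e}$ (the analogue for $\calD_n$ of the one recalled above): writing $\varphi\in\calD_{n,e}$ as $\varphi=\psi_{e-2}\omega_n+\psi_e$ with $\psi_e\in\calK_{n,e}$, the computation preceding \Cref{teo:polar_harmonic} gives $\varphi\circ h=\psi_e\circ h$, and this value is exactly the one produced by the harmonic apolar action of $\psi_e$ on $h$. Hence $\varphi\in(h)^{-1}_{\mathrm{cl}}$ if and only if $\psi_e\circ h=0$, that is, if and only if $\pi(\varphi)=\psi_e$ lies in $(h)^{-1}$, which is precisely $\pi^{-1}\big((h)^{-1}\big)=(h)^{-1}_{\mathrm{cl}}$. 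Combining this with the first paragraph and pushing the inclusions forward along $\pi$ (and pulling them back, using $\ker\pi=(\omega_n)\subseteq I_X$), one obtains that $I_X\subseteq(h)^{-1}_{\mathrm{cl}}$ if and only if $I_{X,\Omega_n}\subseteq(h)^{-1}$.

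It then remains to invoke \Cref{Lem Apo}. The points $\varphi_i$ lie on $\Omega_n$ exactly when the $\ell_i$ are $\omega_n$-isotropic—both conditions read $\sum_j a_{i,j}^2=0$ in the chosen coordinates—so the classical Apolarity Lemma applied to $h\in\calR_{n,d}$ and $X=\{\varphi_1,\dots,\varphi_r\}$ says that $h=c_1\ell_1^d+\dots+c_r\ell_r^d$ for some $c_1,\dots,c_r\in\bbC$ if and only if $I_X\subseteq(h)^{-1}_{\mathrm{cl}}$, which by the previous paragraph is equivalent to $I_{X,\Omega_n}\subseteq(h)^{-1}$. The final clause on the isotropic rank follows by minimising over reduced $0$-dimensional subschemes of $\Omega_n$, exactly as in the Waring case: every point of such an $X$ is a $\varphi_i\in\Omega_n$ corresponding to an $\omega_n$-isotropic $\ell_i$, and conversely a minimal isotropic decomposition of $h$ yields such an $X$ with $I_{X,\Omega_n}\subseteq(h)^{-1}$. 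I do not expect a genuine obstacle here—as the phrasing in the paper signals, this is an ``immediate consequence'' of \Cref{Lem Apo}; the only thing requiring care is keeping the three canonical identifications ($\calK_n\cong\calD_n/(\omega_n)$, $I_{X,\Omega_n}\cong\pi(I_X)$, and the harmonic apolar action as the restriction of the classical one to the harmonic components) consistent with the conventions fixed in \Cref{teo:polar_harmonic}, after which everything is formal.
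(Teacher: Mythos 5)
Your proof is correct, and it spells out exactly the reduction to the classical Apolarity Lemma that the paper has in mind when it calls the result ``an immediate consequence'' of \Cref{Lem Apo} — the paper gives no proof, but its \Cref{rem: apofacile} records precisely the translation you establish in your second paragraph, namely that $I_{X,\Omega_n}\subseteq(h)^{-1}$ in $\calK_n\cong\calD_n/(\omega_n)$ is equivalent to $I_X\subseteq(h)^{-1}_{\mathrm{cl}}$ together with $\omega_n\in I_X$. The care you take over the three canonical identifications (quotient vs.\ harmonic-representative picture of $\calK_n$, $I_{X,\Omega_n}=\pi(I_X)$, and harmonic apolar action as classical action on harmonic representatives) is exactly what is needed and agrees with the conventions fixed around \Cref{teo:polar_harmonic}.
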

The following remark will allow us to compute the isotropic rank in several cases.
\begin{rem}\label{rem: apofacile}
The second part of \Cref{lem:apo_harm} can be stated equivalently in the setting of classical apolarity as follows: the isotropic rank of $h\in\calH_{n,d}^{\omega_n}$ is the minimal length of a reduced 0-dimensional scheme $X\subset\bbP(\calD_{n,1})$ such that $I_X\subset(h)^{-1}\subset\calD_{n,1}$ and $\omega_n\in I_X$.
\end{rem}
Given $n,d\in\bbN$ we denote by $\nu_{n,d}$ the $(n,d)$-Veronese embedding
\[
\begin{tikzcd}[row sep=0pt,column sep=1pc]
 \nu_{n,d}\colon\bbP(\calR_{n,1})\arrow[r] & \bbP(\calR_{n,d})\hphantom{,}\\
  {\hphantom{\nu_{n,d}\colon{}}} [\ell] \ar[r,mapsto] & {[{\ell}^d]},
\end{tikzcd}
\]
and by $V_{n,d}\coloneqq\nu_{n,d}\bigl(\bbP(\calR_{n,1})\bigr)$ the $(n,d)$-Veronese variety. 
As $\bbP^n$ parameterises linear forms and $V_{n,d}$ parameterises their powers, it is possible to define a variety of isotropic linear forms and a variety consisting in their powers.
\begin{defn}
Let $\omega_n\in\calD_{n,2}$ be a non-degenerate quadratic form and $q_n\in\calR_{n,2}$ its dual quadratic form. The \textit{variety of $\omega_n$-isotropic linear forms} is the variety defined by $q_n=0$ and it is denoted by $\Isot_n^{\omega_n}$.  The \textit{d-th $\omega_n$-isotropic Veronese variety} is defined as 
$$\Isot_{n,d}^{\omega_n}\coloneqq\nu_{n,d}(\Isot_{n}^{\omega_n}).$$
\end{defn}
Again, when there is no risk of confusion on the quadratic form $\omega_n$, we write just $\Isot_n$ and $\Isot_{n,d}$ instead of $\Isot_n^{\omega_n}$ and $\Isot_{n,d}^{\omega_n}$.
Note that, by definition, $\Isot_{n,d}$ is the variety of the powers of isotropic linear forms and, in particular, $\Isot_{n,d}$ is smooth and $\dim(\Isot_{n,d})=n-1$. Observe that, according to \Cref{lem_isotropic_points_harmonic_forms}, $\Isot_{n,d}$ is degenerate in $\bbP(\calR_{n,d})$, but it is non-degenerate in $\bbP(\calH_{n,d})$. For this reason, we consider $\Isot_{n,d}$ as a subvariety of $\bbP(\calH_{n,d})$.


For any projective variety $X\subseteq\bbP^n$,  $\hat X$ denotes  the affine cone of $X$.
\begin{prop}
\label{prop:tangent_space_isotropic_powers}
For every $\ell_0\in\Isot_n$, the tangent space at $\ell_0^d$ to the variety $\Isot_{n,d}$ is given by
\[
T_{\ell_0^d}\bigl(\hat{\Isot}_{n,d}\bigr)=\Set{\ell_0^{d-1}\rmr|\rmr\in\ell_0^\perp}\simeq\ell_0^{\perp}.
\]
\end{prop}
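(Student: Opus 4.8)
The plan is to compute $T_{\ell_0^d}\bigl(\hat{\Isot}_{n,d}\bigr)$ as the image, under the differential of the Veronese map, of the tangent space to the quadric $\Isot_n=\{q_n=0\}\subseteq\bbP(\calR_{n,1})$ at $\ell_0$. Since $\nu_{n,d}$ is a closed embedding, it restricts to an isomorphism of $\Isot_n$ onto $\Isot_{n,d}$; at the level of affine cones this is realized by $\hat{\nu}_{n,d}\colon\ell\mapsto\ell^d$, whose differential at $\ell_0$ is the linear map $(\mathrm{d}\hat{\nu}_{n,d})_{\ell_0}\colon m\mapsto d\,\ell_0^{d-1}m$ from $\calR_{n,1}$ to $\calR_{n,d}$ (chain rule). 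Because $\calR_n$ is an integral domain and $\ell_0^{d-1}\neq 0$, this map is injective, and it sends $T_{\ell_0}\bigl(\hat{\Isot}_n\bigr)$ into $T_{\ell_0^d}\bigl(\hat{\Isot}_{n,d}\bigr)$. Both of these spaces have dimension $n$: $\Isot_n$ is a smooth (non-degenerate) quadric of dimension $n-1$, and $\Isot_{n,d}$ is smooth of dimension $n-1$ (as recorded just before the statement), so the affine cones are smooth of dimension $n$ at the nonzero points $\ell_0$ and $\ell_0^d$. Hence the injective linear map restricts to an isomorphism $T_{\ell_0}\bigl(\hat{\Isot}_n\bigr)\xrightarrow{\sim}T_{\ell_0^d}\bigl(\hat{\Isot}_{n,d}\bigr)$, and it remains only to identify the source.

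To do so, observe that $T_{\ell_0}\bigl(\hat{\Isot}_n\bigr)$ is the kernel of $(\mathrm{d}q_n)_{\ell_0}\colon\calR_{n,1}\to\bbC$. Pick a basis $(x_0,\dots,x_n)$ of $\calR_{n,1}$ whose dual basis $(\alpha_0,\dots,\alpha_n)$ satisfies $\omega_n=\alpha_0^2+\cdots+\alpha_n^2$, so that the dual form is $q_n=L_{\omega_n}^{-1}(\omega_n)=x_0^2+\cdots+x_n^2$. For $\ell_0=\sum_i a_ix_i$ and an arbitrary $m=\sum_i b_ix_i$, one computes $(\mathrm{d}q_n)_{\ell_0}(m)=2\sum_i a_ib_i$, while $\omega_n\circ{}$ acts as the Laplace operator $\Lap$ and $\Lap(x_ix_j)=2\delta_{ij}$, whence $\omega_n\circ(\ell_0 m)=\Lap(\ell_0 m)=2\sum_i a_ib_i$ as well. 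Therefore
\[
T_{\ell_0}\bigl(\hat{\Isot}_n\bigr)=\ker(\mathrm{d}q_n)_{\ell_0}=\Set{m\in\calR_{n,1}|\omega_n\circ(\ell_0 m)=0}=\ell_0^{\perp},
\]
the last equality being the very definition of the $\omega_n$-orthogonal space to $\ell_0$.

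Combining the two steps yields
\[
T_{\ell_0^d}\bigl(\hat{\Isot}_{n,d}\bigr)=(\mathrm{d}\hat{\nu}_{n,d})_{\ell_0}\bigl(\ell_0^{\perp}\bigr)=\Set{d\,\ell_0^{d-1}\rmr|\rmr\in\ell_0^{\perp}}=\Set{\ell_0^{d-1}\rmr|\rmr\in\ell_0^{\perp}},
\]
the nonzero scalar $d$ being immaterial on the linear subspace $\ell_0^{\perp}$; one may also verify directly, by the Leibniz rule using that $\ell_0$ is isotropic and $\rmr\in\ell_0^{\perp}$, that $\Lap(\ell_0^{d-1}\rmr)=0$, i.e.\ $\ell_0^{d-1}\rmr\in\calH_{n,d}$, consistently with $\Isot_{n,d}\subseteq\bbP(\calH_{n,d})$. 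Finally, multiplication by $\ell_0^{d-1}\neq 0$ is injective on the domain $\calR_n$, so $\rmr\mapsto\ell_0^{d-1}\rmr$ restricts to a linear isomorphism $\ell_0^{\perp}\xrightarrow{\sim}\Set{\ell_0^{d-1}\rmr|\rmr\in\ell_0^{\perp}}$, which establishes the asserted identification $T_{\ell_0^d}\bigl(\hat{\Isot}_{n,d}\bigr)\simeq\ell_0^{\perp}$.

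I expect the only genuinely delicate point to be the identification in the second step, namely reconciling the $\omega_n$-orthogonal space $\ell_0^{\perp}\subseteq\calR_{n,1}$ — which is defined through the apolar action of $\omega_n\in\calD_{n,2}$ — with the tangent hyperplane to the zero locus of the \emph{dual} quadratic form $q_n\in\calR_{n,2}$. Passing to standard sum-of-squares coordinates makes this transparent, and since the conclusion is intrinsic (it is phrased solely through $\ell_0^{\perp}$) nothing is lost in doing so. The rest is routine linear algebra and the standard description of the tangent space of a Veronese variety.
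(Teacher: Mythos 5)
Your proof is correct, and it takes a genuinely different route from the paper's.

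The paper parameterises curves $\gamma(t)=(\ell_0+t\ell)^d$ inside $\hat{\Isot}_{n,d}$, using the specific geometry of a quadric to find lines through $\ell_0$ contained in $\Isot_n$: for the whole curve to lie on the cone one needs both $\ell\in\ell_0^{\perp}$ and $\ell\in\hat{\Isot}_n$, so differentiating at $t=0$ produces only the (non-linear) cone $\bigl\{\ell_0^{d-1}\ell \mid \ell\in\ell_0^{\perp}\cap\hat{\Isot}_n\bigr\}$ inside $T_{\ell_0^d}(\hat{\Isot}_{n,d})$, and the authors then appeal to a dimension count without spelling out that the linear span of this cone is all of $\ell_0^{d-1}\ell_0^{\perp}$. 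Your argument instead pushes forward $T_{\ell_0}(\hat{\Isot}_n)$ by the differential of $\ell\mapsto\ell^d$, which is the injective linear map $m\mapsto d\,\ell_0^{d-1}m$; equality with $T_{\ell_0^d}(\hat{\Isot}_{n,d})$ then follows at once from the matching dimensions, with no spanning step to worry about. You then close the argument by identifying $T_{\ell_0}(\hat{\Isot}_n)=\ker(\mathrm{d}q_n)_{\ell_0}$ with $\ell_0^{\perp}$, which is exactly the one small computation specific to the apolarity formalism. Your version stays entirely at the level of linear maps and would work for any smooth $X\subseteq\bbP^n$ under a Veronese embedding; the paper's version exploits that a smooth quadric contains lines through each point, which is why their tangent vectors come from actual lines rather than from the differential of the embedding. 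Both are valid, but yours is a bit more transparent about why the dimension count suffices.
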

\begin{proof}
Since for any point $\ell_0\in\hat{\Isot}_n$ there exists a linear subspace contained in $\Isot_n$ and containing $\ell_0$, there exists a line contained in $\hat{\Isot}_n$ passing through $\ell_0$, that is, there exists a curve $\gamma\colon\bbC\to\hat{\Isot}_n$ defined as
\[
\gamma(t)\coloneqq(\ell_0+t\ell)^d
\]
for every $t\in\bbC$, for some $\ell\in\calR_{n,1}$.
Since $\ell_0+t\ell\in\Isot_n$ for every $t\in\bbC$, we must have
\[
\omega_n\circ(\ell_0+t\ell)^2=0.
\] 
In particular, we get
\[
(\omega_n\circ \ell_0^2)+2t\bigl(\omega_n\circ (\ell_0\ell)\bigr)+t^2(\omega_n\circ \ell^2)=0
\]
for every $t\in\bbC$. Then, since $(\omega_n\circ \ell_0^2)=0$, we must have
\[
2\bigl(\omega_n\circ (\ell_0\ell)\bigr)+t(\omega_n\circ \ell^2)=0
\]
for every $t\in\bbC$
which implies that $\omega_n\circ (\ell_0\ell)=0$ and $\omega_n\circ \ell^2=0$,
that is, $\ell\in\ell_0^{\perp}$ and $\ell\in\hat{\Isot}_n$. In particular, since
\[
\eval{\dv{\gamma}{t}}_{t=0}=\eval{\dv{t}((u_1+t\ell)^d)}_{t=0}=\eval{(\ell_0+t\ell)^{d-1}\ell}_{t=0}=\ell_0^{d-1}\ell,
\]
we have $$\Set{\ell_0^{d-1}\ell|\ell\in\ell_0^\perp\cap\hat{\Isot}_{n,d}}\subseteq T_{\ell_0^d}\bigl(\hat{\Isot}_{n,d}\bigr).$$ Finally, since $\dim\ell_0^\perp=\dim T_{\ell_0^d}\bigl(\hat{\Isot}_{n,d}\bigr)=n $ we get the statement.
\end{proof}

Clearly, for every polynomial $h\in\calH_{n,d}$, we have
\[
\rk h\leq \irk h.
\]
Asking which are the harmonic polynomials whose Waring rank is strictly less than their isotropic rank is a natural question. We will consider it in detail in the following sections. Moreover, in general, the Waring rank of a harmonic polynomial also gives an upper bound on the isotropic rank.
\begin{teo}\label{teo:two_times_rank}
Let $\omega_n\in\calD_{n,2}$ a non-degenerate quadratic form and let $h\in\calH_{n,d}^{\omega_n}$. Then
\[
\rk h\leq \irk_{\omega_n} h\leq 2\rk h.
\]
\end{teo}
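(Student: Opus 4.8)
The inequality $\rk h\le\irk_{\omega_n}h$ is immediate, since any isotropic decomposition of $h$ is in particular a Waring decomposition. For the upper bound the plan is as follows: start from a minimal Waring decomposition $h=\sum_{i=1}^{r}c_i\ell_i^{d}$ with $r=\rk h$, reorder so that $\ell_1,\dots,\ell_s$ are exactly the summands with non-isotropic $\ell_i$ (if $s=0$ we are already done), and replace each $\ell_i^{d}$, $i\le s$, by an explicit combination of two $d$-th powers of isotropic linear forms plus a remainder; the remainders will then be forced to cancel because $h$ is harmonic, leaving a sum of at most $2r$ isotropic powers.

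For the replacement step, first I would choose a single linear form $v\in\calR_{n,1}$ that is non-isotropic and $\omega_n$-orthogonal to $\ell_1,\dots,\ell_s$, i.e.~$v\in\bigcap_{i=1}^{s}\ell_i^{\perp}$ with $\omega_n\circ v^{2}\ne 0$. Writing $b_i\coloneqq\omega_n\circ\ell_i^{2}\ne 0$ and picking $t_i\in\bbC$ with $t_i^{2}(\omega_n\circ v^{2})=-b_i$, the forms $m_i^{\pm}\coloneqq\ell_i\pm t_i v$ are isotropic, because $\omega_n\circ(m_i^{\pm})^{2}=b_i\pm 2t_i(\omega_n\circ(\ell_iv))+t_i^{2}(\omega_n\circ v^{2})=b_i+t_i^{2}(\omega_n\circ v^{2})=0$, the middle term vanishing since $v\in\ell_i^{\perp}$. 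Expanding by the binomial theorem and keeping only even powers of $t_iv$ gives
$$
\tfrac12\bigl((m_i^{+})^{d}+(m_i^{-})^{d}\bigr)=\sum_{j\ge 0}\binom{d}{2j}t_i^{2j}\,\ell_i^{\,d-2j}v^{2j}=\ell_i^{d}+\sum_{j=1}^{\lfloor d/2\rfloor}\binom{d}{2j}\frac{(-b_i)^{j}}{(\omega_n\circ v^{2})^{j}}\,v^{2j}\ell_i^{\,d-2j},
$$
so that, summing $c_i$ over $i=1,\dots,s$ and leaving the already-isotropic summands untouched,
$$
h=\tfrac12\sum_{i=1}^{s}c_i\bigl((m_i^{+})^{d}+(m_i^{-})^{d}\bigr)+\sum_{i=s+1}^{r}c_i\ell_i^{d}-\sum_{j=1}^{\lfloor d/2\rfloor}\binom{d}{2j}\frac{(-1)^{j}v^{2j}}{(\omega_n\circ v^{2})^{j}}\Bigl(\sum_{i=1}^{s}c_i\,b_i^{\,j}\,\ell_i^{\,d-2j}\Bigr).
$$

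The crucial observation is then that each inner sum $\sum_{i=1}^{s}c_i b_i^{\,j}\ell_i^{\,d-2j}$ vanishes. Indeed $h$ is $\omega_n$-harmonic, hence $\omega_n^{\,j}\circ h=0$ for all $j\ge 1$; and since $\varphi\circ\ell^{d}=\binom{d}{2}(\varphi\circ\ell^{2})\ell^{\,d-2}$ for every $\varphi\in\calD_{n,2}$, iteration shows $\omega_n^{\,j}\circ\ell^{d}$ is a nonzero scalar multiple of $(\omega_n\circ\ell^{2})^{j}\ell^{\,d-2j}$ when $2j\le d$ and is $0$ otherwise. Applying $\omega_n^{\,j}\circ(-)$ to $h=\sum_{i=1}^{r}c_i\ell_i^{d}$ therefore yields $\sum_{i=1}^{r}c_i b_i^{\,j}\ell_i^{\,d-2j}=0$ for every $1\le j\le\lfloor d/2\rfloor$, and the terms with isotropic $\ell_i$ drop out since there $b_i=0$. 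Thus $h=\tfrac12\sum_{i=1}^{s}c_i\bigl((m_i^{+})^{d}+(m_i^{-})^{d}\bigr)+\sum_{i=s+1}^{r}c_i\ell_i^{d}$ exhibits $h$ as a sum of $2s+(r-s)\le 2r$ powers of isotropic linear forms, so $\irk_{\omega_n}h\le 2r=2\rk h$; the bound is sharp by the harmonic monomials of \Cref{teo:harmonics_monomials}.

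The one step that is not automatic — and which I expect to be the main obstacle — is the existence of the common non-isotropic $v\in\bigcap_{i=1}^{s}\ell_i^{\perp}$. This can fail only when $\langle\ell_1,\dots,\ell_s\rangle^{\perp}$ is totally isotropic, which forces $\dim\langle\ell_1,\dots,\ell_s\rangle\ge\lceil(n+1)/2\rceil$, and in particular $s\ge\lceil(n+1)/2\rceil$. In that regime one must argue separately: for instance, by first replacing the minimal Waring decomposition with another one — splitting or recombining summands — whose non-isotropic linear forms span a subspace on which $\omega_n$ restricts non-degenerately enough to admit such a $v$, or by exploiting directly the rigidity imposed by $\omega_n^{\,j}\circ h=0$ when so many independent non-isotropic forms occur. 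Settling this residual configuration cleanly is, I expect, where the real work of the proof lies.
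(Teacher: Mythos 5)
There is a genuine gap, and you have in fact identified it yourself: the existence of a common non-isotropic $v\in\bigcap_{i=1}^{s}\ell_i^{\perp}$ is not guaranteed, and the residual case is never resolved. It does occur. Take $n=1$, $\omega_1=\alpha_0^2+\alpha_1^2$ and $h=2x_0^2-2x_1^2\in\calH_{1,2}$: a minimal Waring decomposition is $h=(\sqrt{2}\,x_0)^2+(\sqrt{2}\,\rmi x_1)^2$, in which \emph{both} linear forms are non-isotropic and together span $\calR_{1,1}$, so $\bigcap_i\ell_i^{\perp}=0$ and no $v$ exists at all. (The theorem still holds — $h=(x_0+\rmi x_1)^2+(x_0-\rmi x_1)^2$ — but your construction cannot reach it.) Your algebra for the favourable case, including the identity $\omega_n^{\,j}\circ h=0\Rightarrow\sum_i c_ib_i^{\,j}\ell_i^{\,d-2j}=0$, is correct and is a nice observation, but as a proof of the theorem it is incomplete.

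The paper's proof sidesteps the problem by not trying to modify the $\ell_i$ algebraically. It works projectively and via apolarity. Starting from the apolar scheme $X=\{\varphi_1,\dots,\varphi_r\}\subset\bbP(\calD_{n,1})$ of a Waring decomposition, it chooses a \emph{general} point $P\notin\Omega_n$ such that none of the lines $\overline{P\varphi_i}$ is tangent to $\Omega_n$ (this is an open condition, always achievable since for each $A\in X$ the locus of bad centres is closed and proper). Projecting from $P$ and taking the cone over the image gives a union of at most $r$ lines through $P$; since $P\notin\Omega_n$, the cone meets $\Omega_n$ in a reduced scheme of at most $2r$ isotropic points. Because $I\bigl(\pi^{-1}(\pi(X))\bigr)\subset I_X\subset(h)^{-1}$ and $\omega_n\in(h)^{-1}$, the ideal of this intersection lies in $(h)^{-1}$, and \Cref{lem:apo_harm} produces an isotropic decomposition of size at most $2r$. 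The crucial difference is that the centre $P$ only has to be general and non-isotropic, not orthogonal to the $\ell_i$; your construction implicitly asks $P$ to lie in $\bigcap_i\ell_i^{\perp}$, and that is precisely the extra condition that can fail.
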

\begin{proof}
Let $\Omega_n\subset\bbP^n$ the quadric defined by $\omega_n$. Let $h\in\calH_{n,d}^{\omega_n}$ and let us assume that $\rk h=r$. Then there exists $\ell_1,\dots,\ell_r\in\calR_{n,1}$ such that
    \[
    h=\ell_1^d+\cdots+\ell_r^d.
    \]
Let $X\subset\bbP^n$ be the set of points corresponding to such a decomposition and let $I_X\subset (h)^{-1}$ the corresponding apolar ideal. Given a point $A\in X$, the locus of the points $B\in\bbP^n$ such that the line $AB$ is tangent to $\Omega_n$ is closed. As a consequence, there exists $P\in\bbP^n$ such that none of the lines passing through $P$ and one of the point of $X$ is tangent to $\Omega_n$. Let $\pi\colon\bbP^n\to \bbP^{n-1}$ be the projection from the point $P$ to a generic hyperplane. Then we have $r_0\coloneqq\abs{\pi(X)}\leq r$ and the cone $\pi^{-1}\bigl(\pi(X)\bigr)$ of vertex $P$ and base $\pi(X)$ consists in a set of $r_0$ distinct lines passing through the point $P\not\in \Omega_n$. In particular, they intersect $\Omega_n$ in pairwise distinct points. It follows that $\Omega_n\cap\pi^{-1}\bigl(\pi(X)\bigr)$ is reduced and 
    \[
    \abs{\Omega_n\cap\pi^{-1}\bigl(\pi(X)\bigr)}=2r_0\leq 2r.
    \]
    It remains to prove that
    \[
    I\Bigl(\Omega_n\cap\pi^{-1}\bigl(\pi(X)\bigr)\Bigr)\subseteq (h)^{-1}.
    \]
    Now, we have
    \[
I\Bigl(\Omega_n\cap\pi^{-1}\bigl(\pi(X)\bigr)\Bigr)
=\sat\biggl(I(\Omega_n)+I\Bigl(\pi^{-1}\bigl(\pi(X)\bigr)\Bigr)\biggr)=I\bigl(\Omega_n\bigr)+I\Bigl(\pi^{-1}\bigl(\pi(X)\bigr)\Bigr),
    \]
    where the second equality follows by the fact that $\pi^{-1}\bigl(\pi(X)\bigr)$ is Cohen-Macaulay. Finally, it is enough to note that \[
    \omega_n\in (h)^{-1},\qquad I\Bigl(\pi^{-1}\bigl(\pi(X)\bigr)\Bigr)\subset I_X\subset (h)^{-1}.\qedhere
    \]
\end{proof}
As a direct consequence, we also get an upper bound depending on the number of isotropic linear forms appearing in a Waring decomposition.
\begin{cor}
    Let $\omega_n\in\calD_{n,2}$ be a non-degenerate quadratic form and let $h\in\calH_{n,d}^{\omega_n}$. If there exists a minimal decomposition of $h$ containing $k$ isotropic linear forms, then
    \[
\rk h\leq \irk_{\omega_n} h\leq 2\rk h-k.
\]
\end{cor}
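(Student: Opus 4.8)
The plan is to split off the $k$ isotropic powers from a minimal Waring decomposition of $h$ and apply \Cref{teo:two_times_rank} to what remains. First I would fix a minimal decomposition
\[
h=\ell_1^d+\cdots+\ell_r^d,\qquad r=\rk h,
\]
in which, after relabelling, $\ell_1,\dots,\ell_k$ are $\omega_n$-isotropic, and I would set $h_1\coloneqq\ell_1^d+\cdots+\ell_k^d$ and $g\coloneqq h-h_1=\ell_{k+1}^d+\cdots+\ell_r^d$. By \Cref{lem_isotropic_points_harmonic_forms}, each $\ell_i^d$ with $i\leq k$ lies in $\calH_{n,d}^{\omega_n}$, so $h_1\in\calH_{n,d}^{\omega_n}$ and hence $g=h-h_1$ is again a harmonic polynomial. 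This is the only place where isotropy of the $\ell_i$ enters, and it is precisely what makes \Cref{teo:two_times_rank} applicable to $g$.

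Next I would record the two elementary bounds that feed the estimate: on one hand $\irk_{\omega_n}h_1\leq k$, since $h_1$ is by construction a sum of $k$ powers of isotropic linear forms; on the other hand $\rk g\leq r-k$, read off directly from the displayed decomposition of $g$. Applying \Cref{teo:two_times_rank} to the harmonic polynomial $g$ then gives
\[
\irk_{\omega_n}g\leq 2\rk g\leq 2(r-k).
\]

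Finally, concatenating an isotropic decomposition of $h_1$ of length at most $k$ with one of $g$ of length at most $2(r-k)$ yields an isotropic decomposition of $h=h_1+g$ of length at most $k+2(r-k)=2r-k$; since $r=\rk h$, this is the desired upper bound $\irk_{\omega_n}h\leq 2\rk h-k$, while $\rk h\leq\irk_{\omega_n}h$ is immediate. (Equivalently, one could run the projection argument of \Cref{teo:two_times_rank} verbatim, but only on the non-isotropic points of the decomposition, leaving the $k$ isotropic ones untouched; the bookkeeping is identical.) There is no serious obstacle here — the statement is just subadditivity of the isotropic rank together with \Cref{teo:two_times_rank} — and the single point that genuinely needs checking is that subtracting the summands $\ell_1^d,\dots,\ell_k^d$ keeps us inside $\calH_{n,d}^{\omega_n}$, so that the theorem may legitimately be invoked for $g$.
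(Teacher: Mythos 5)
Your argument is correct and follows the paper's proof essentially verbatim: both split off the $k$ isotropic $d$-th powers, observe that the remainder $h'=h-\ell_1^d-\cdots-\ell_k^d$ is still harmonic with $\rk h'\leq r-k$, apply \Cref{teo:two_times_rank} to $h'$, and add back the $k$ isotropic summands. The extra commentary on why $h'$ stays in $\calH_{n,d}^{\omega_n}$ via \Cref{lem_isotropic_points_harmonic_forms} is a welcome explicitation of a step the paper leaves implicit.
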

\begin{proof}
    Let $\rk h=r$ and let $\ell_1,\dots,\ell_r\in\calR_{n,1}$ be linear forms such that
    \[
    h=\ell_1^d+\cdots+\ell_r^d
    \]
    and $\omega_n\circ\ell_i^2=0$ for every $i=1,\dots,k$. Then, defining $h'\coloneqq h-\ell_{1}^d-\cdots-\ell_k^d$, we have $h'\in\calH_{n,d}^{\omega_n}$ and $\rk h'\leq r-k$. In particular, by \autoref{teo:two_times_rank}, we have
    \[
    \irk_{\omega_n} h'\leq 2(r-k),
    \]
and hence,
\[
\irk_{\omega_n}h\leq 2r-k.\qedhere
\]
\end{proof}

\section{Secant varieties of isotropic Veronese varieties}\label{sec:d=2}
In this section, we study the dimensions of $\sigma_r(\Isot_{n,d})$ for any $r,n,d\in\bbN$. We compute them by distinguishing three cases: $d=2$, $d=3$ and $d\geq 4$. The case $d=2$ is solved by computing the tangent spaces of $\sigma_r(\Isot_{n,2})$. The case $d=3$ is the more challenging one and is solved generalising the technique introduced in \cite{BO08}. Finally, the case $d\geq 4$ is solved by induction using the differential Horace method.

We recall here the Terracini lemma, which provides a useful way to compute the tangent space of a secant variety.
\begin{lem}[Terracini Lemma \cite{Ter12}]
\label{lem: Terracini}
Let $Y\subset\bbP^n$ be a smooth variety, $Z_1,\dots,Z_r\in Y$ general points in $Y$ and $P$ a general point in $\left\langle Z_1,\dots,Z_r\right\rangle$. Then,
$$T_P\bigl(\sigma_r(Y)\bigr)=T_{Z_1}Y+\dots+T_{Z_r}Y.$$
\end{lem}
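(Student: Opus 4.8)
The lemma is classical; the approach I would take is the now-standard parametrisation argument. The plan is to realise $\sigma_r(Y)$ as the closure of the image of an explicit parametrisation and then to differentiate that parametrisation at a general point, using generic smoothness (available since we work over $\bbC$) to identify the tangent space of the image with the image of the differential. First I would pass to affine cones: let $\hat Y\subseteq\bbC^{n+1}$ be the affine cone over $Y$ and consider the additive map
\[
\Phi\colon\hat Y\times\cdots\times\hat Y\longrightarrow\bbC^{n+1},\qquad(v_1,\dots,v_r)\longmapsto v_1+\cdots+v_r .
\]
Since each factor $\hat Y$ is a cone, $\im\Phi$ is exactly the union of the affine cones over the spans $\langle P_1,\dots,P_r\rangle$, $P_i\in Y$, so that $\overline{\im\Phi}=\hat{\sigma_r(Y)}$. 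Because $Y$ is smooth and irreducible, $\hat Y$ is irreducible and smooth away from a proper closed subset, hence a general point of $\hat Y^{\times r}$ is an $r$-tuple of smooth points $v_i$ lying over general points $Z_i\in Y$.

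At such a point the differential is $d\Phi_{(v_1,\dots,v_r)}(w_1,\dots,w_r)=w_1+\cdots+w_r$ with $w_i\in T_{v_i}\hat Y$, so its image is the linear span $T_{v_1}\hat Y+\cdots+T_{v_r}\hat Y\subseteq\bbC^{n+1}$. As $v_i\in T_{v_i}\hat Y$ and $T_{v_i}\hat Y$ is the affine cone over the projective tangent space $T_{Z_i}Y$, projectivising this span gives precisely the join $T_{Z_1}Y+\cdots+T_{Z_r}Y$, which in particular contains $\langle Z_1,\dots,Z_r\rangle$. This already gives one inclusion for free, since the differential of any morphism maps into the tangent space of the closure of its image: $T_{Z_1}Y+\cdots+T_{Z_r}Y\subseteq T_P\bigl(\sigma_r(Y)\bigr)$ for every smooth choice of the $v_i$. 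The substance of the lemma is therefore the reverse inclusion.

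For that I would invoke generic smoothness (this is the only place characteristic zero is used): the dominant morphism $\Phi\colon\hat Y^{\times r}\to\hat{\sigma_r(Y)}$ is smooth over a dense open $V\subseteq\hat{\sigma_r(Y)}$, and on $\Phi^{-1}(V)$ the differential $d\Phi$ is surjective onto $T\bigl(\hat{\sigma_r(Y)}\bigr)$. Choosing $(v_1,\dots,v_r)\in\Phi^{-1}(V)$ over general $Z_i\in Y$ and setting $P=[v_1+\cdots+v_r]$ gives
\[
T_P\bigl(\hat{\sigma_r(Y)}\bigr)=\im d\Phi_{(v_1,\dots,v_r)}=T_{v_1}\hat Y+\cdots+T_{v_r}\hat Y ,
\]
and projectivising yields $T_P\bigl(\sigma_r(Y)\bigr)=T_{Z_1}Y+\cdots+T_{Z_r}Y$. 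The only genuinely delicate point — and the main, if modest, obstacle — is reconciling the two uses of the word ``general'': one must check that, for general $Z_1,\dots,Z_r\in Y$, a general point $P$ of $\langle Z_1,\dots,Z_r\rangle$ is indeed $\Phi(v)$ for some $v$ in the smooth locus $\Phi^{-1}(V)$. This follows from irreducibility: $\Phi^{-1}(V)$ is dense open in $\hat Y^{\times r}$, so the image of $\Phi^{-1}(V)$ in $Y^{\times r}$ contains a dense open $U_0$; for $(Z_1,\dots,Z_r)\in U_0$ the open set $\Phi^{-1}(V)$ meets the fibre over $(Z_1,\dots,Z_r)$, hence meets it in a dense open subset, whose image under $\Phi$ is dense in $\langle Z_1,\dots,Z_r\rangle$. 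Alternatively one can run the whole argument with the abstract $r$-th secant variety fibred over $Y^{\times r}$, which streamlines this bookkeeping without changing the substance.
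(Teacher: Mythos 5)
Your proof is correct and is the standard modern argument for Terracini's lemma: parametrise the affine cone over $\sigma_r(Y)$ by the addition map $\Phi\colon\hat Y^{\times r}\to\bbC^{n+1}$, compute $d\Phi$, use generic smoothness in characteristic zero to get surjectivity at a general point, and reconcile the two notions of genericity by an irreducibility argument. The paper itself does not supply a proof — it simply cites Terracini's original 1912 paper as \cite{Ter12} — so there is no "paper's approach" to compare against; your write-up is a correct, self-contained version of the classical result and matches the standard treatment found in the secant-variety literature.
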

\subsection{The case \texorpdfstring{$d=2$}{d=2}}
We start by focusing on the case of quadratic harmonic forms. Note that, as expected, in analogy with Veronese varieties, in this case all the proper secant varieties to $\Isot_{n,2}$ are defective and thus the generic isotropic rank is higher than the expected value.
\begin{prop}\label{prop:dim_isotropic_secants_quadrics}
For every $n,r\in\bbN$, 
$$\dim\sigma_r(\Isot_{n,2})=\begin{cases}
rn-\dbinom{r-1}{2}-1,\quad &\text{if $1\leq r\leq n$,}\\[2ex]
\dim\calH_{n,2}-1,\quad &\text{if $r\geq n+1$}.\end{cases}$$
\end{prop}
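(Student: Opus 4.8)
The plan is to apply Terracini's lemma. For general isotropic linear forms $\ell_1,\dots,\ell_r$, \Cref{lem: Terracini} together with \Cref{prop:tangent_space_isotropic_powers} shows that the tangent space to the affine cone over $\sigma_r(\Isot_{n,2})$ at a general point of $\langle\ell_1^2,\dots,\ell_r^2\rangle$ is $T\coloneqq\ell_1\ell_1^{\perp}+\dots+\ell_r\ell_r^{\perp}\subseteq\calH_{n,2}$, whence $\dim\sigma_r(\Isot_{n,2})=\dim T-1$. The case $r=1$ is immediate, since $T=\ell_1\ell_1^{\perp}$ has dimension $n$ (multiplication by $\ell_1$ is injective and $\dim\ell_1^{\perp}=n$); so one may assume $r\geq 2$. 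Rather than computing $\dim T$ directly, I would pass to the orthogonal complement $T^{\perp}\subseteq\calK_{n,2}$ with respect to the harmonic apolar pairing of \Cref{teo:polar_harmonic}, and use $\dim T=\dim\calH_{n,2}-\dim T^{\perp}$ together with $\dim\calH_{n,2}=\binom{n+2}{2}-1$.

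The next step is to identify $T^{\perp}$ concretely. Working in the $\langle\,,\,\rangle_{\omega_n}$-orthonormal basis of \Cref{lem:apo_harm}, I would identify symmetric bilinear forms on $\calR_{n,1}$ with symmetric $(n+1)\times(n+1)$ matrices, so that $\langle\,,\,\rangle_{\omega_n}$ becomes the identity matrix; since $\calK_{n,2}=(q_n)^{-1}$ with $q_n=x_0^2+\dots+x_n^2$, this identifies $\calK_{n,2}$ with the symmetric traceless matrices. Using that $\psi\circ(\ell\,\rmr)$ equals $\transpose{\rmr}M_\psi\ell$ up to a fixed nonzero scalar, and that $\ell_i^{\perp}=\{\rmr:\transpose{\rmr}\ell_i=0\}$, a short computation shows that $\psi\in T^{\perp}$ if and only if $M_\psi\ell_i\in\langle\ell_i\rangle$ for every $i$, i.e.~each $\ell_i$ is an eigenvector of $M_\psi$. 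Thus $\dim T^{\perp}$ equals the dimension of the space $\Sigma$ of symmetric traceless $(n+1)\times(n+1)$ matrices having all of $\ell_1,\dots,\ell_r$ as eigenvectors.

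The decisive step is to compute $\dim\Sigma$. Set $L=\langle\ell_1,\dots,\ell_r\rangle$; for general $\ell_i$ with $2\leq r\leq n+1$, the $\ell_i$ form a basis of $L$, the form $\langle\,,\,\rangle_{\omega_n}$ restricts non-degenerately to $L$, and $\langle\ell_i,\ell_j\rangle_{\omega_n}\neq 0$ for all $i\neq j$. Splitting $\calR_{n,1}=L\oplus L^{\perp}$ orthogonally, the condition defining $\Sigma$ (before the trace condition) decouples along the block decomposition of a symmetric matrix $M$: the off-diagonal $L\times L^{\perp}$ block must annihilate every $\ell_i$, hence vanishes because the $\ell_i$ span $L$; the $L^{\perp}\times L^{\perp}$ block is unconstrained; and the $L\times L$ block induces a symmetric operator $A$ on $L$ having all the $\ell_i$ as eigenvectors. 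The crux is that $A$ must be a scalar: since the $\ell_i$ form a basis of eigenvectors, $A$ is diagonalizable and its eigenspaces are mutually $\langle\,,\,\rangle_{\omega_n}$-orthogonal, so if $A$ had two distinct eigenvalues, two of the $\ell_i$ would lie in distinct eigenspaces and thus be orthogonal, contradicting $\langle\ell_i,\ell_j\rangle_{\omega_n}\neq 0$. Therefore, before imposing $\tr M=0$, the space has dimension $1+\binom{\dim L^{\perp}+1}{2}=1+\binom{n-r+2}{2}$; the trace condition cuts it down by exactly one, since the identity matrix lies in the unconstrained part, giving $\dim\Sigma=\binom{n-r+2}{2}$. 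Hence $\dim\sigma_r(\Isot_{n,2})=\binom{n+2}{2}-2-\binom{n-r+2}{2}$, which a routine binomial identity rewrites as $rn-\binom{r-1}{2}-1$ for $2\leq r\leq n$; for $r=n+1$ this value is $\dim\calH_{n,2}-1$, so $\sigma_{n+1}(\Isot_{n,2})$, being closed, irreducible and of full dimension in $\bbP(\calH_{n,2})$, equals it, and consequently $\sigma_r(\Isot_{n,2})=\bbP(\calH_{n,2})$ for every $r\geq n+1$. The one genuinely delicate point is the eigenvector argument: it relies both on the general-position hypothesis $\langle\ell_i,\ell_j\rangle_{\omega_n}\neq 0$ and on the fact that, over $\bbC$, a symmetric operator admitting a spanning set of eigenvectors is necessarily diagonalizable.
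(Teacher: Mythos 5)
Your proof is correct, and it takes a genuinely different route to the dimension count than the paper. Both start from Terracini's lemma and \Cref{prop:tangent_space_isotropic_powers}, so both must compute $\dim\bigl(\ell_1\ell_1^\perp+\cdots+\ell_r\ell_r^\perp\bigr)$ inside $\calH_{n,2}$; the divergence is in how that dimension is obtained. The paper works primally: after changing coordinates so that $\calR_{n,2}=\bbC[\ell_1,\dots,\ell_r,\rmm_0,\dots,\rmm_{n-r}]_2$, it exhibits an explicit spanning set for each $T_{\ell_j^2}$, writing it as $\langle \ell_j^2,\ell_j\rmm_0,\dots,\ell_j\rmm_{n-r}\rangle$ plus the harmonic combinations $\bigl(\omega_n\circ(\ell_j\ell_{i_2})\bigr)\ell_j\ell_{i_1}-\bigl(\omega_n\circ(\ell_j\ell_{i_1})\bigr)\ell_j\ell_{i_2}$, and then verifies by hand that the sum over $j$ is a direct sum of a block of size $r(n-r+2)$ and a block of size $\binom{r}{2}$. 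You work dually: you identify $T^\perp\subseteq\calK_{n,2}$ with the space of traceless symmetric matrices having every $\ell_i$ as an eigenvector, and then a short spectral argument (orthogonality of eigenspaces of a symmetric operator for the non-degenerate restriction of $\langle\ ,\,\rangle_{\omega_n}$ to $L$, combined with $\langle\ell_i,\ell_j\rangle_{\omega_n}\neq 0$) forces the $L$-block to be scalar, leaving only the free block $\Sym(L^\perp)$. The dual approach buys a more conceptual explanation of the defect: the secant variety is defective precisely because $\Sym(L^\perp)$ is non-trivial for $r\leq n$, i.e.~the excess conditions come from the orthogonal complement of the span. The paper's primal computation, on the other hand, stays entirely inside $\calH_{n,2}$ and keeps track of explicit harmonic quadrics spanning the tangent sum, which is convenient if one wants an explicit basis rather than just the dimension. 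Both are complete; yours is arguably the cleaner of the two, as the "scalar block" argument replaces a somewhat delicate independence check by a one-line spectral fact.

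Two small points to flag, neither of which is a gap. First, the non-degeneracy of $\langle\ ,\,\rangle_{\omega_n}|_L$ is a genericity condition on its own (the Gram matrix of the $\ell_i$ is a general hollow symmetric $r\times r$ matrix, which is generically non-singular); since Terracini is applied at a general point this is available, but it is worth stating that it is an additional generic assumption alongside $\langle\ell_i,\ell_j\rangle_{\omega_n}\neq 0$, rather than a consequence of it. Second, your remark that the trace condition cuts the dimension by one "since the identity matrix lies in the unconstrained part" silently assumes $L^\perp\neq 0$, i.e.~$r\leq n$; for $r=n+1$ the space before the trace condition is one-dimensional and the trace is still a nonzero functional on it, so the conclusion holds in all cases, but the justification given only covers the range being derived.
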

\begin{proof}
    Given any isotropic linear form $\ell_1\in\Isot_{n,2}$, we have by \autoref{prop:tangent_space_isotropic_powers} that 
    \[
    T_{\ell_1^2}(\hat{\Isot}_{n,2})=\ell_1\ell_1^{\perp}.
    \]
    For any $r\in\bbN$ such that $2\leq r\leq n+1$, we can consider $r$ linearly independent isotropic linear forms $\ell_1,\dots,\ell_r$, such that,
    \begin{equation}\label{formula:conditions_harmonic_quadrics}
    \omega_n\circ\ell_i\ell_j\neq 0,
    \end{equation}
    for every $i,j$ such that $1\leq i<j\leq r$, that is, $\ell_i\not\in\ell_j^\perp$ for every $i\neq j$. Then, by condition \eqref{formula:conditions_harmonic_quadrics}, we can suppose that 
\[
\bigcap_{j=1}^r\ell_j^{\perp}=\langle\rmm_0,\dots,\rmm_{n-r}\rangle.
\]
Moreover, for every $j=1,\dots,r$, and $1\leq i_1<i_2\leq r$ such that $i_1,i_2\neq j$, the quadratic form
\[
\bigl(\omega_n\circ(\ell_j\ell_{i_2})\bigr)\ell_j\ell_{i_1}-\bigl(\omega_n\circ(\ell_j\ell_{i_1})\bigr)\ell_j\ell_{i_2}
\]
is $\omega_n$-harmonic. Therefore, for every $j=1,\dots,r$, we observe that 
\begin{align*}
T_{\ell_j^2}(\hat{\Isot}_{n,2})&=\langle\ell_j^2,\ell_j\rmm_0,\dots,\ell_j\rmm_{n-r}\rangle+\Bigl\langle\bigl(\omega_n\circ(\ell_j\ell_{i_2})\bigr)\ell_j\ell_{i_1}-\bigl(\omega_n\circ(\ell_j\ell_{i_1})\bigr)\ell_j\ell_{i_2}\Bigr\rangle_{\substack{1\leq i_1<i_2\leq r\\i_1,i_2\neq j}}.
\end{align*}
Indeed, for $r=2$ the second block is $\langle 0\rangle$, and, for $r\geq 3$,
fixing $i_1\neq j$, the $r-2$ linear forms
\[
\bigl(\omega_n\circ(\ell_j\ell_{i_2})\bigr)\ell_{i_1}-\bigl(\omega_n\circ(\ell_j\ell_{i_1})\bigr)\ell_{i_2},
\]
with $i_2\neq i_1,j$ are linearly independent and 
\[
\langle\ell_j,\rmm_0,\dots,\rmm_{n-r}\rangle\cap\Bigl\langle\bigl(\omega_n\circ(\ell_j\ell_{i_2})\bigr)\ell_{i_1}-\bigl(\omega_n\circ(\ell_j\ell_{i_1})\bigr)\ell_{i_2}\Bigr\rangle_{\substack{1\leq i_2\leq r\\i_2\neq i_1,j}}=\{0\}.
\]
Then, considering a generic linear combination $h$ of $\ell_1^2,\dots,\ell_r^2$, by \Cref{lem: Terracini},
we have
\begin{equation}\label{formula:secant_varieties_harmonic_quadrics}
T_{h}\sigma_r(\hat{\Isot}_{n,2})=\biggl(\bigoplus_{j=1}^r\langle\ell_j^2,\ell_j\rmm_0,\dots\ell_j\rmm_{n-r}\rangle\biggr)\oplus \biggl\langle \ell_1\ell_2-\frac{\omega_n\circ(\ell_1\ell_{2})}{\omega_n\circ(\ell_{i_1}\ell_{i_2})}\ell_{i_1}\ell_{i_2}\biggr\rangle_{(i_1,i_2)\neq(1,2)}.
\end{equation}
Since $\ell_1,\dots,\ell_r$ are linearly independent, we consider a linear change of variables and write the space of quadratic forms as \[
\calR_{n,2}=\bbC[\ell_1,\dots,\ell_r,\rmm_0,\dots,\rmm_{n-r}]_2.\]
Therefore, by formula \eqref{formula:secant_varieties_harmonic_quadrics}, we have
\[
\dim\bigl(T_{h}\sigma_r(\hat{\Isot}_{n,2})\bigr)=r(n-r+2)+\frac{r(r-1)}{2}-1=\frac{r(2n-r+3)}{2}-1=rn-\dbinom{r-1}{2}.
\]
In particular, in the case where $r=n+1$, we obtain
\[
\dim\bigl(T_{h}\sigma_n(\hat{\Isot}_{n,2})\bigr)=\dim \calH_{n,2}=\binom{n+2}{2}-1.\qedhere
\]
\end{proof}
\subsection{The translation of the problem}\label{sec:d=3}
We now want to study the dimension of $\sigma_r(\Isot_{n,d})$ for $d\geq 3$. To do that we use a very classical approach, i.e.~we reduce the problem of computing $\dim\sigma_r(\Isot_{n,d})$ to an interpolation problem of $0$-dimensional schemes, and we solve it by using the differential Horace method. For further information on postulation problems, we refer the reader to \cite{BM04}. Since the latter works by induction on $n$ and $d$, we first have to check the base cases $d=2$, $d=3$, and $n=3$. The case $d=2$ follows by \autoref{prop:dim_isotropic_secants_quadrics} and the case $n=3$ follows from \cites{Laf02,LP13}. For the base case $d=3$, we use the Brambilla-Ottaviani technique developed in \cite{BO08}. 
We recall here some definitions and results on Hilbert functions and good postulation of subschemes. In the rest of the paper, $Q_n=\{q_n=0\}$ always denotes a non-degenerate quadric in $\bbP^n$.

Given a variety $Y$ and a subscheme $X\subset Y$ we denote by $\calI_{X,Y}$ the ideal sheaf of $X$ in $Y$. If $X\subset Y$ is a 0-dimensional scheme, then we denote by $\ell(X)$ its degree, and we call it the {\it length of $X$}; note that $\ell(X)$ does not depend on the embedding of $X$. If $Y\subset \bbP^n$ we denote by $\calO_Y(d)$ the sheaf $\calO_{\bbP^n|Y}(d)$ and by $\calI_{X,Y}(d)$ the sheaf $\calI_{X,Y}\otimes\calO_{Y}(d)$. From now on, by divisor we mean an effective Cartier divisor.
In this section, the apolar action which we consider is the harmonic one and, for this reason, we do not specify it every time.
\begin{defn}
Let $S$ be a finitely generated graded commutative $\bbC$-algebra and let 
$$S=\bigoplus_{d\in\bbN}S_d$$
its graded decomposition, with $S_0=\bbC$. \textit{The Hilbert function of $S$} is the function $H_S\colon \bbN\to\bbN$ defined by $H_S(d)=\dim_\bbC(S_d)$.
\end{defn}
\begin{defn}\label{defn:Hilbertfn}
Let $Y\subset\bbP^n$ be a variety and let $X\subset Y$ be a subscheme of $Y$. The {\it Hilbert function of $X$ in $Y$} is the function
\[
\begin{tikzcd}[row sep=0pt,column sep=1pc]
 H_Y(X,d)\colon\bbN\arrow[r] & \bbN\hphantom{.}\\
  {\hphantom{H_Y(X,d)\colon{}}} d \ar[r,mapsto] & h^0\bigl(\calO_Y(d)\bigr)-h^0\bigl(\calI_{X,Y}(d)\bigr).
\end{tikzcd}
\]
If $X$ is 0-dimensional, we have $h^0\bigl(\calI_{X,Y}(d)\bigr)\geq h^0\bigl(\calO_Y(d)\bigr)-\ell(X)$ and thus
$$h^0\bigl(\calI_{X,Y}(d)\bigr)\geq\max\bigl\{0,h^0\bigl(\calO_Y(d)\bigr)-\ell(X)\bigr\},\quad H_Y(X,d)\leq\min\bigl\{h^0\bigl(\calO_Y(d)\bigr),\ell(X)\bigr\}.$$
\end{defn} 
\begin{defn}
Let $X$ as in \autoref{defn:Hilbertfn} and let $X$ be 0-dimensional. Then {\it $X$ has good postulation in degree $d$} (or {\it $X$ has the expected Hilbert function in degree $d$}) if 
$$h^0\bigl(\calI_{X,Y}(d)\bigr)=\max\bigl\{0,h^0\bigl(\calO_Y(d)\bigr)-\ell(X)\bigr\}$$  
and  {\it $X$ has good postulation} (or {\it $X$ has the expected Hilbert function}) if it has good postulation in degree $d$ for any $d\in\bbN$.
\end{defn}
Note that saying that $X\subset Y$ has good postulation in degree $d$ is equivalent to say that $H_Y(X,d)=\min\bigl\{h^0\bigl(\calO_Y(d)\bigr),\ell(X)\bigr\}$.
\begin{rem}
We are mainly interested in the Hilbert functions of subschemes $X\subset Q_n$. In this case, we have $H^0\bigl(\calO_{Q_n}(d)\bigr)=\calH_{n,d}$ and, if $I\subseteq \calH_n$ is the ideal of $X$ in $Q_n$, then $H^0\bigl(\calI_{X,Y}(d)\bigr)=I_d$. As a consequence, we get 
\[
H_{Q_n}(X,d)=\dim\calH_{n,d}-\dim I_d=\dim (I^\perp)_d=\dim I^{-1}_d. \qedhere
\]
\end{rem}
To reduce our problem to an interpolation one, we start by giving the following definition.
\begin{defn}\label{defn:double-point}
Let $P\in Q_n\subset\bbP^n$ and $\wp\subset\calH_n$ the ideal defining $P$ in the coordinate ring of $Q_n$. The \emph{double point of $Q_n$ supported at $P$} is the 0-dimensional subscheme $$2_{Q_n}P:=\Proj(\calH_n/\wp^2)\subseteq Q_n.$$
\end{defn}
In \autoref{defn:double-point} we specified that $2_{Q_n}P$ is a double point of $Q_n$ in order to emphasise that it is not what is usually named just as double point, i.e. a double point of $\bbP^n$. However, in the rest of the paper we mainly deal with double points on quadrics, and thus, unless otherwise specified and when no confusion can arise, we write \lq\lq double point\rq\rq\, instead of \lq\lq double point of $Q_n$\rq\rq.
\begin{rem}\label{rem:lunghezza}
It is easy to show that, if we denote by $2P\subseteq\bbP^n$ the double point of $\bbP^n$ supported at $P\in Q_n\subset\bbP^n$, then $2_{Q_n}P=2P\cap \bbT_p Q_n$, where $\bbT_pQ_n$ is the tangent hyperplane to $Q_n$ at $P$. As a consequence,  $2_{Q_n}P$ is isomorphic to a double point of $\bbP^{n-1}$ and, in particular, its length is $\ell(2_{Q_n}P)=n$.
\end{rem}
The next proposition describes the tangent spaces of $\Isot_{n,d}$ in terms of perps of ideal of double points.
\begin{prop}\label{prop:tangent-inverse}
Let $P\in Q_n$ and $\wp\subseteq\calH_n$ the ideal defining $P$ in the coordinate ring of $Q_n$. Then, for every $d\geq 2$, $$T_{\nu_{n,d}(P)}\hat{\Isot}_{n,d}\simeq(\wp^2_d)^{\perp}.$$
\end{prop}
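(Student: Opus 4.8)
The plan is to read off the tangent space from \autoref{prop:tangent_space_isotropic_powers} and then identify it with $(\wp^2_d)^\perp$ by a short Leibniz-rule computation followed by a dimension count. Write $P=[\ell_0]$ with $\ell_0\in\calR_{n,1}$ an isotropic linear form, so $\nu_{n,d}(P)=[\ell_0^d]$. By \autoref{prop:tangent_space_isotropic_powers},
\[
T_{\nu_{n,d}(P)}\hat{\Isot}_{n,d}=\Set{\ell_0^{d-1}\rmr|\rmr\in\ell_0^\perp},
\]
a subspace of $\calH_{n,d}$ of dimension $\dim\ell_0^\perp=n$. On the other side, the degree-one part $\wp_1$ of the ideal $\wp$ of $P$ in the coordinate ring of $Q_n$ is precisely $\ell_0^\perp$, and, $\wp$ being the ideal of a reduced point, it is generated in degree one; hence $\wp^2_d$ is spanned by products $\ell_1\ell_2\chi$ with $\ell_1,\ell_2\in\wp_1$ and $\deg\chi=d-2$.

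For the inclusion $T_{\nu_{n,d}(P)}\hat{\Isot}_{n,d}\subseteq(\wp^2_d)^\perp$ (all identifications being made through $L_{\omega_n}$, which matches $\calH_{n,d}$ with $\calK_{n,d}$), the key observation is that $L_{\omega_n}(\ell_0)$ annihilates every element of $\wp_1$, since $L_{\omega_n}(\ell_0)\circ\ell_1=\langle\ell_0,\ell_1\rangle_{\omega_n}=0$ whenever $\ell_1\in\ell_0^\perp$. Thus, when one applies $L_{\omega_n}(\ell_0^{d-1}\rmr)=L_{\omega_n}(\ell_0)^{d-1}L_{\omega_n}(\rmr)$ to a generator $\ell_1\ell_2\chi$ of $\wp^2_d$ and expands by the Leibniz rule, every summand either pairs one of the $d-1$ copies of $L_{\omega_n}(\ell_0)$ with $\ell_1$ or $\ell_2$ --- and so vanishes --- or pairs all $d-1$ copies with factors of $\chi$, which is impossible because $\deg\chi=d-2$. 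Hence $L_{\omega_n}(\ell_0^{d-1}\rmr)\circ(\ell_1\ell_2\chi)=0$; the same degree count disposes of the correction coming from the fact that the product is really taken in $\calH_n=\calR_n/(q_n)$ (one uses here that $L_{\omega_n}(\ell_0)^2$ and $L_{\omega_n}(\ell_0)L_{\omega_n}(\rmr)$ kill $q_n$, by isotropy of $\ell_0$ and $\rmr\in\ell_0^\perp$). This gives $\ell_0^{d-1}\rmr\in(\wp^2_d)^\perp$.

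It remains to show $\dim(\wp^2_d)^\perp=n$, which then forces equality with the $n$-dimensional tangent space. By \autoref{rem:lunghezza} the double point $2_{Q_n}P=\Proj(\calH_n/\wp^2)$ has length $n$, so it imposes at most $n$ conditions in each degree; hence $\dim\wp^2_d\ge\dim\calH_{n,d}-n$, i.e.\ $\dim(\wp^2_d)^\perp\le n$. The single place that needs a little care --- and the only real obstacle I foresee --- is that this estimate must be applied to $\wp^2_d$ itself rather than to its saturation, since a priori the two could differ in low degrees; for the relevant range $d\ge 2$ this is harmless, either by the good postulation of a single double point or, most transparently, by a direct monomial count in coordinates $y_0=\ell_0,y_1,\dots,y_n$ adapted to $\omega_n$ (so that $\ell_0^\perp=\langle y_0,\dots,y_{n-1}\rangle$), which gives $\dim\wp^2_d=\dim\calH_{n,d}-n$ outright. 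The rest is routine bookkeeping with the Leibniz rule.
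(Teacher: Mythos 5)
Your proposal is correct and follows essentially the same route as the paper: read off the tangent space from \autoref{prop:tangent_space_isotropic_powers}, show it sits inside $(\wp^2_d)^\perp$, and then conclude by comparing dimensions via \autoref{rem:lunghezza}. The paper picks explicit coordinates ($\ell = x_0 + \rmi x_1$, $\wp = (\ell, x_2,\dots,x_n)$) and asserts the inclusion with a one-word ``thus'', whereas you spell out the Leibniz-rule/pigeonhole argument coordinate-freely and flag the saturation caveat; these are expository rather than substantive differences, and your identification of $(\wp^2_d)^\perp \subseteq \calK_{n,d}$ with the tangent space through $L_{\omega_n}$ matches the paper's implicit convention.
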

\begin{proof}
Without loss of generality, we may assume that $Q_n$ is the zero locus of $q_n=x_0^2+\dots+x_n^2$ and $P=[1,i,0,\dots,0]\in Q_n$. If we denote by $\ell:=x_0+ix_1$ the isotropic form corresponding to $P$, then, by \autoref{prop:tangent_space_isotropic_powers}, we have $$T_{\nu_{n,d}(P)}\hat{\Isot}_{n,d}=\ell^{d-1}\langle\ell,x_2,x_3,\dots,x_n\rangle.$$
Since $P=[1,i,0,\dots,0]$, then $\wp=(\ell, x_2,x_3,\dots,x_n)$ and thus $$(\alpha_0+i\alpha_1)^{d-1}\langle\alpha_0+i\alpha_1,\alpha_2,\dots,\alpha_n\rangle\subseteq(\wp^2_d)^\perp.$$
By \autoref{rem:lunghezza}, we know that $\dim\wp^2_d=\dim\calH_{n,d}-n$ and thus $\dim(\wp^2_d)^\perp=n$. Hence, by dimensional reasons, we get \[(\alpha_0+i\alpha_1)^{d-1}\langle\alpha_0+i\alpha_1,\alpha_2,\dots,\alpha_n\rangle=(\wp^2_d)^\perp.\qedhere
\]
\end{proof}
Using \Cref{prop:tangent-inverse} and properties of inverse systems, we can compute $\dim\sigma_r(\Isot_{n,d})$ as the union $r$ general double points on $Q_n$.
\begin{prop}\label{prop:interpol-problem}
Let $P_1,\dots,P_r\in Q_n$ be points in general position and let $\wp_1,\dots,\wp_n\subseteq\calH_n$ their defining ideals. Then
$$\dim\sigma_r(\Isot_{n,d})=\dim(\wp_1^2\cap\dots\cap\wp_r^2)^{-1}_d-1=H_{Q_n}(2_{Q_n}P_1\cup\dots\cup2_{Q_n}P_r,d)-1.$$
\end{prop}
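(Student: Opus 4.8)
The plan is to combine the Terracini Lemma with the apolarity/inverse-system dictionary assembled earlier. First I would apply \Cref{lem: Terracini} to the smooth variety $\Isot_{n,d}\subseteq\bbP(\calH_{n,d})$: for general points $P_1,\dots,P_r\in Q_n$ and a general point $H\in\langle\nu_{n,d}(P_1),\dots,\nu_{n,d}(P_r)\rangle$, one has
\[
T_H\bigl(\sigma_r(\Isot_{n,d})\bigr)=T_{\nu_{n,d}(P_1)}\hat{\Isot}_{n,d}+\dots+T_{\nu_{n,d}(P_r)}\hat{\Isot}_{n,d},
\]
so $\dim\sigma_r(\Isot_{n,d})=\dim\bigl(\sum_{i=1}^r T_{\nu_{n,d}(P_i)}\hat{\Isot}_{n,d}\bigr)-1$. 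Next I would substitute \Cref{prop:tangent-inverse}, which identifies each $T_{\nu_{n,d}(P_i)}\hat{\Isot}_{n,d}$ with $(\wp_{i,d}^2)^{\perp}$ inside $\calK_{n,d}$ (here $\wp_i\subseteq\calH_n$ is the ideal of $P_i$ in the coordinate ring of $Q_n$, and the perp is taken with respect to the harmonic apolar pairing of \Cref{teo:polar_harmonic}).

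The crux is then to turn the sum of perps into the perp of an intersection. Using \Cref{lem:invprop}(2), applied in the graded ring $\calH_n\cong\calR_n/(q_n)$, we have $(\wp_1^2\cap\dots\cap\wp_r^2)^{-1}=(\wp_1^2)^{-1}+\dots+(\wp_r^2)^{-1}$, and taking the degree-$d$ part together with \Cref{lem:invprop}(1) gives
\[
\bigl((\wp_1^2\cap\dots\cap\wp_r^2)^{-1}\bigr)_d=\bigl((\wp_1^2)_d\bigr)^{\perp}+\dots+\bigl((\wp_r^2)_d\bigr)^{\perp}.
\]
Combining this with the two displays above yields $\dim\sigma_r(\Isot_{n,d})=\dim\bigl((\wp_1^2\cap\dots\cap\wp_r^2)^{-1}_d\bigr)-1$, which is the first claimed equality. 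For the second equality I would invoke the Remark preceding \Cref{defn:double-point}, which records $H_{Q_n}(X,d)=\dim I^{-1}_d$ whenever $I\subseteq\calH_n$ is the ideal of $X$ in $Q_n$; since $\wp_i^2$ is (up to saturation, which is irrelevant in degree $d$) the ideal of the double point $2_{Q_n}P_i$ and the product of the $\wp_i^2$ over disjoint supports has intersection equal to the ideal of the union $2_{Q_n}P_1\cup\dots\cup 2_{Q_n}P_r$, we get $\dim(\wp_1^2\cap\dots\cap\wp_r^2)^{-1}_d=H_{Q_n}(2_{Q_n}P_1\cup\dots\cup 2_{Q_n}P_r,d)$.

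The step I expect to require the most care is the bookkeeping around \emph{which} apolarity pairing and \emph{which} ideals are in play: \Cref{lem:invprop} is stated for the classical apolar action on $\calR_n$ and $\calD_n$, whereas here everything must be read inside the quotient rings $\calH_n$, $\calK_n$ with the harmonic pairing, so I would note explicitly that the harmonic apolar action is just the restriction of the classical one (as in \Cref{teo:polar_harmonic} and the computation preceding it) and that $\calH_n$, $\calK_n$ are the coordinate rings of the dual quadrics, so the cited properties of perps and inverse systems transfer verbatim. A secondary point to check is that the ideal of a finite union of double points on $Q_n$ is indeed the intersection of the $\wp_i^2$ in degree $d$ (equivalently that no special incidences occur for general $P_i$), but since the $P_i$ are general and the supports are pairwise distinct this is immediate. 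Everything else is a formal chaining of the cited results.
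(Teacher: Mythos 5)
Your proof is correct and follows essentially the same route as the paper's: Terracini's Lemma, then \Cref{prop:tangent-inverse} to convert tangent spaces to perps, then \Cref{lem:invprop} to convert the sum of perps into the inverse system of the intersection, and finally the remark relating $\dim I^{-1}_d$ to the Hilbert function. The extra remarks you add — that the apolarity/inverse-system identities transfer verbatim to the harmonic setting, and that the $\wp_i^2$ intersect to give the ideal of the union of double points — are points the paper leaves implicit, so your version is if anything slightly more careful, but the argument is the same.
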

\begin{proof}
We set $Z_i:=\nu_{n,d}(P_i)$. Since $P_1,\dots,P_r$ are in general position on $Q_n$, so are $Z_1,\dots,Z_r$ on $\Isot_{n,d}$. By \Cref{lem: Terracini}, if $Z$ is a general point in $\langle Z_1,\dots,Z_r\rangle$, then the tangent space to $\sigma_r(\hat{\Isot}_{n,d})$ at $Z$ is
$$T_Z\bigl(\sigma_r(\hat{\Isot}_{n,d})\bigr)=T_{Z_1}\hat{\Isot}_{n,d}+\dots+T_{Z_r}\hat{\Isot}_{n,d}.$$
By \autoref{prop:tangent-inverse} and by \autoref{lem:invprop} we get
\begin{align*}
T_{Z_1}\hat{\Isot}_{n,d}+\dots+T_{Z_r}\hat{\Isot}_{n,d}&\simeq\bigl((\wp_1^2)_d\bigr)^\perp+\dots+\bigl((\wp_r^2)_d\bigr)^\perp=(\wp_1^2)^{-1}_d+\dots+(\wp_r^2)^{-1}_d\\
&=\bigl((\wp_1^2)^{-1}+\dots+(\wp_r^2)^{-1}\bigr)_d=(\wp_1^2\cap\dots\cap\wp_r^2)^{-1}_d
\end{align*}
and this proves the first equality. The second one follows immediately by noting that
\[
\dim(\wp_1^2\cap\dots\cap\wp_r^2)^{-1}_d=\dim\calH_{n,d}-\dim(\wp_1^2\cap\dots\cap\wp_r^2)_d=H_{Q_n}(2_{Q_n}P_1\cup\dots\cup2_{Q_n}P_r,d).\qedhere\]
\end{proof}
As an immediate consequence of \autoref{prop:interpol-problem} we obtain the following corollary.
\begin{cor}\label{cor:equivalence}
Fix $n,d,r\in\bbN_{\geq1}$. Then $$\dim \sigma_r(\Isot_{n,d})=\expdim \sigma_r(\Isot_{n,d})$$ if and only if a general union of $r$ double points of $Q_n$ has good postulation in degree $d$.
\end{cor}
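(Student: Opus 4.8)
The plan is to unwind the definitions so that both sides of the asserted equivalence are expressed in terms of the single subscheme $X\coloneqq 2_{Q_n}P_1\cup\dots\cup 2_{Q_n}P_r\subset Q_n$ produced by \autoref{prop:interpol-problem}, where $P_1,\dots,P_r\in Q_n$ are in general position, and then to observe that the numerical identity one obtains is precisely the good postulation of $X$ in degree $d$.

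First I would rewrite the left-hand side of the claimed equality. Since $\Isot_{n,d}$ is regarded as a non-degenerate subvariety of $\bbP(\calH_{n,d})$ with $\dim\Isot_{n,d}=n-1$, the expected dimension is $\expdim\sigma_r(\Isot_{n,d})=\min\{rn-1,\dim\calH_{n,d}-1\}$, while \autoref{prop:interpol-problem} gives $\dim\sigma_r(\Isot_{n,d})=H_{Q_n}(X,d)-1$. Next I would put the right-hand side into the same language: by the remark preceding \autoref{defn:double-point} one has $h^0\bigl(\calO_{Q_n}(d)\bigr)=\dim\calH_{n,d}$, and by \autoref{rem:lunghezza} each double point has length $\ell(2_{Q_n}P_i)=n$; since the $P_i$ are distinct, the supports of the $2_{Q_n}P_i$ are pairwise disjoint, so $\ell(X)=rn$. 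Hence $\expdim\sigma_r(\Isot_{n,d})=\min\bigl\{h^0\bigl(\calO_{Q_n}(d)\bigr),\ell(X)\bigr\}-1$.

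Comparing the two expressions, $\dim\sigma_r(\Isot_{n,d})=\expdim\sigma_r(\Isot_{n,d})$ holds if and only if $H_{Q_n}(X,d)=\min\bigl\{h^0\bigl(\calO_{Q_n}(d)\bigr),\ell(X)\bigr\}$, which is exactly the statement that $X$ has good postulation in degree $d$; this would conclude the proof. I do not expect any genuine obstacle here, as every step is a substitution of a formula already established in the excerpt; the only point deserving even a word of justification is the additivity $\ell(X)=rn$, which follows from the fact that for general — in particular distinct — points $P_i$ the double points $2_{Q_n}P_i$ have pairwise disjoint supports.
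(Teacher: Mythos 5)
Your proposal is correct and follows essentially the same route as the paper: both unwind $\dim\sigma_r(\Isot_{n,d})=H_{Q_n}(X,d)-1$ via \autoref{prop:interpol-problem}, identify $\expdim\sigma_r(\Isot_{n,d})=\min\{rn,\dim\calH_{n,d}\}-1=\min\{\ell(X),h^0(\calO_{Q_n}(d))\}-1$, and conclude that equality of dimensions is exactly good postulation of $X$ in degree $d$. The only cosmetic difference is that you spell out the disjointness of the supports to justify $\ell(X)=rn$, whereas the paper takes this for granted.
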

\begin{proof}
Let $X\subset Q_n$ be a general union of $r$ double points in $Q_n$. By \autoref{prop:interpol-problem}, we have
\begin{align*}
\dim\sigma_r(\Isot_{n,d})& =H_{Q_n}(X,d)-1\leq\min\bigl\{h^0\bigl(\calO_{Q_n}(d)\bigr),\ell(X)\bigr\}-1\\
& =\min\{\dim\calH_{n,d},rn\}-1=\expdim\sigma_r(\Isot_{n,d}).
\end{align*}
Hence, $\dim\sigma_r(\Isot_{n,d})=\expdim\sigma_r(\Isot_{n,d})$ if and only if $H_{Q_n}(X,d)=\min\bigl\{h^0\bigl(\calO_{Q_n}(d)\bigr),\ell(X)\bigr\}$.
\end{proof}
\subsection{The differential Horace method}\label{horace}
\autoref{prop:interpol-problem} allows us to translate the original problem of calculating $\dim\sigma_r(\Isot_{n,d})$  into an interpolation problem of double points on a quadric. More precisely, we are reduced to calculating the number of conditions imposed on the linear system $|\calO_{Q_n}(d)|$ by a general union of $r$ double points of $Q_n$, and, to do that, we use the so-called Horace method. It was used in a “rudimentary” form by Terracini in \cite{Ter1516}, and later introduced in its modern formulation by Hirschowtiz in \cite{Hir85}. In the following years, the method was refined in various forms and allowed J. Alexander and A. Hirschowitz to prove their widely celebrated theorem on the postulation of double points; see \cites{Ale88,AH92a,AH92b,AH95}. The reader interested in a more in-depth historical treatment of the subject can referer to \cite{BO08}*{Section 7}.

The idea of the Horace method, in our specific case, works as follows: given $X\subset Q_n$ a general union of double points, we want to compute $h^0\bigl(\calI_{X,Q_n}(d)\bigr)$. First of all, we fix a divisor $D\subset Q_n$, and we split the scheme $X$ into two schemes: the {\it trace of $X$ on $D$}, i.e.~the part of $X$ contained in $D$ considered as a subscheme of $D$, and the {\it residue of $X$ with respect to $D$}, i.e.~the part of $X$ not contained in $D$. At this point, we have to study the postulation of the trace in $D$ with respect to $\calO_{D}(d)$ and the postulation of the residue in $Q_n$ with respect to $\calO_{Q_n}(d)(-D)$. If $D\in|\calO_{Q_n}(1)|$ and $P\in D$, then $D\cong Q_{n-1}$, the trace of $2_{Q_n}P$ on $D$ is $2_{Q_{n-1}}P$, and the residue of $2_{Q_n}P$ with respect to $D$ is $P$. As a consequence, since the trace is a union of double points in $Q_{n-1}$ and the residue is a union of double points and simple points in $Q_n$, we can use induction on $n$ to study the postulation of the trace, and induction on $d$ to study the postulation of the residue. Clearly, by semicontinuity of cohomology, it suffices to prove the good postulation for a specialisation of $X$.

For the Horace method to work, it is necessary that $X$ can be specialised in such a way that the conditions imposed by $X$ on $|\calO_{Q_n}(d)|$, are divided between trace and residue “without waste”. Unfortunately, due to arithmetic obstructions, this is not always possible. However, we can use a more refined version of the Horace method, the differential Horace method, introduced by J. Alexander and A. Hirschowtiz in \cites{AH92a,AH00}, which allows $X$ to be split into trace and residue more precisely. In our case, the aforementioned arithmetic obstructions appear, and we therefore have to use the differential version of the method.

By \autoref{prop:dim_isotropic_secants_quadrics} and \autoref{prop:interpol-problem}, we know that, for $d=2$ and $r\leq n$, a general union of $r$ double points on $Q_n$ has bad postulation. This exceptional behaviour makes it particularly difficult to apply the Horace method, as described above, to prove that for $d=3$ a generic union of double points on $Q_n$ has good postulation. The analogous problem, which arises when studying the postulation of a generic union of double points in $\bbP^n$ in degree 3, was the most challenging point in proving the Alexander-Hirschowitz theorem, and in fact the case of cubics was the last to be proved. At a later time, the proof of the whole theorem was simplified by several authors and, in particular, M. C. Brambilla and G. Ottaviani provided in \cite{BO08} a very smart technique to study the case of cubics in $\bbP^n$. Their technique is based on a simple but very clever observation: arithmetic obstructions can be avoided by specialising the double points not on divisors but on a linear subspace of appropriate codimension. In \Cref{d=3bis}, we generalise the Brambilla-Ottaviani technique and we use it to solve our interpolation problem in degree 3; for a more detailed treatise of the topic we refer to \cite{BO08}.

We introduce now all the tools necessary to use the differential Horace method; for more details on the Horace differential method, the reader can refer to \cite{AH00}.

\begin{defn}
Let $Y$ be a variety, $X$ a subscheme of $Y$ and $H\subset Y$ an effective Cartier divisor. The schematic intersection $X\cap H\subset H$, defined by the ideal subsheaf $$\calI_{X,H}=(\calI_X+\calI_H)/\calI_H$$ of $\calO_H$, is called {\it the trace of $X$ on $H$} and is denoted by $\Tr_H(X)$. The subscheme of $Y$ defined by the ideal sheaf $\calI_X:\calI_H$ is called {\it the residue of $X$ with respect to $H$} and it is denoted by $\Res_H(X)$. The canonical exact sequence
$$\begin{tikzcd}
	0 \arrow[r] & \calI_{\Res_H(X),Y}(-H) \arrow[r] & \calI_{X,Y} \arrow[r] & \calI_{\Tr_H(X),H} \arrow[r] & 0
\end{tikzcd}$$
is called {\it the Castelnuovo sequence (or residual sequence) of $X$ with respect to $H$}.
\end{defn}
\begin{rem}\label{rem:trresdouble}
Consider $P\in Q_n\subset \bbP^n$ and $H\in|\calO_{Q_n}(1)|$ a general hyperplane section of $Q_n$ such that $P\in H$. Note that, by general assumption, $H\cong Q_{n-1}$. It is easy to see that
$$\Tr_H(2_{Q_n}P)=2_{H}P\subset H,\quad \Res_H(2_{Q_n}P)=P\in Q_n.$$
In particular we have $\ell\bigl(\Tr_H(2_{Q_n}P)\bigr)=n-1$ and $\ell\bigl(\Res_H(2_{Q_n}P)\bigr)=1$. We stress that the general assumption on $H$ is necessary: indeed, if $H$ is the tangent hyperplane to $Q_n$ at $P$, the equalities above are no longer true, because $2_{Q_{n}}P$ is contained in this tangent hyperplane.
\end{rem}
\autoref{rem:trresdouble} highlights the arithmetical obstructions that can appear when using the Horace method for double points. In fact, whenever we specialise a double point on a hyperplane section $H$ we are specialising $n-1$ conditions on $H$, but it may happen that the number of conditions we need to specialise is not divisible by $n-1$. In such a case we are forced to specialise extra conditions, thus wasting some of them and, in the so-called {\it cas rangé}, i.e.~those cases in which the length of the scheme is equal to the dimension of the linear system being considered, this causes the method to fail.

In order to avoid these obstructions, we introduce now the differential version of the Horace method.
\begin{defn}\rm
In the algebra of formal series $\bbC[[\bfx,y]]$, where $\bfx=(x_1,\dots,x_{n-1})$,
a \textit{vertically graded ideal with respect to $y$} is an ideal of the form:
$$I=I_0\oplus I_1y\oplus\dots\oplus I_{m-1}y^{m-1}\oplus(y^m)$$
where, for $i=0,\dots,m-1$, $I_i\subseteq\bbC[[\bfx]]$ is an ideal.
\end{defn}
\begin{defn}\label{vergrad}\rm
Let $Y$ be a smooth variety of dimension $n$ and $H$ be a smooth irreducible divisor of $Y$. We say that $X\subset Y$ is a \textit{vertically graded subscheme} of $Y$ with base $H$ and support $P\in H$ if $X$ is a 0-dimensional scheme supported at $P$ and there exists a regular system of parameters $(\bfx,y)$ at $P$ such that $y=0$ is a local equation for $H$ and the ideal of $X$ in $\widehat{\calO}_{Y,P}\cong\bbC[[\bfx,y]]$ is vertically graded with respect to $y$.
\end{defn}
In a sense, vertically graded schemes can be thought of as 0-dimensional schemes (supported on a point) that can be “sliced” into sections which are parallel to a divisor containing the support of the scheme.
\begin{defn}\rm
Let $Y$ be a smooth variety and $X\subset Y$ be a vertically graded subscheme of $Y$ with base $H$ and $p\geq 0$ a fixed integer. The \textit{$p$-th differential residue of $X$ with respect to $H$} is the  subscheme of $Y$ denoted by $\Res^p_H(X)$ and defined by the ideal sheaf
$$\calI_{Res^p_H(X),Y}\coloneqq\calI_{X,Y}+(\calI_{X,Y}\colon\calI_{H,Y}^{p+1})\calI_{H,Y}^p.$$
The \textit{$p$-th differential trace of $X$ on $H$} is the closed subscheme of $H$ denoted by $\Tr^p_H(X)$ and defined by the ideal sheaf
$$\calI_{\Tr_H^p(X),H}\coloneqq(\calI_{X,Y}\colon\calI_{H,Y}^p)\otimes\calO_H.$$
\end{defn}
Note that $\Res^p_H(X)$ is obtained by removing from $X$ the $(p+1)$-th \lq\lq slice\rq\rq\,of $X$, while $\Tr^p_H(X)$ is exactly the $(p+1)$-th slice. Moreover, for $p=0$ we obtain the standard trace and residue.
\begin{rem}
Consider $P\in Q_n\subset \bbP^n$ and $H\in|\calO_{Q_n}(1)|$ a general hyperplane section of $Q_n$ such that $P\in H$. Note that, by general assumption, $H\cong Q_{n-1}$. It is easy to see that
$$\Tr^1_H(2_{Q_n}P)=P\in H,\quad \Res^1_H(2_{Q_n}P)=2_HP\subset  Q_n.$$
In particular we have $\ell\bigl(\Tr^1_H(2_{Q_n}P)\bigr)=1$ and $\ell\bigl(\Res^1_H(2_{Q_n}P)\bigr)=n-1$. In this way we can solve the arithmetic obstructions. Note that, as in \autoref{rem:trresdouble}, the general assumption on $H$ is necessary.
\end{rem}

We set now the following notation: let $Y$ be a variety $X_1,\dots,X_r\subset Y$ be vertically graded subschemes with base $H$, $X=X_1\cup\dots\cup X_r$ and $\bfp=(p_1,\dots,p_r)\in\bbN^r$. We set
$$\Tr_H^\bfp(X)\coloneqq\Tr_H^{p_1}(X_1)+\dots+\Tr_H^{p_r}(X_r),\qquad \Res_H^\bfp(X)\coloneqq\Res_H^{p_1}(X_1)+\dots+\Res_H^{p_r}(X_r)$$

We are finally ready to state the Horace differential lemma.
\begin{lem}[Lemme d'Horace différentielle, \cite{AH00}*{Proposition 9.1}]\label{HoraceDiff}
Let $Y$ be a variety, $H$ be a smooth irreducible divisor on $Y$ and $\calL$ be a line bundle on $Y$. Consider a 0-dimensional subscheme  $X\subset Y$ and $Z_1,\dots,Z_r,Z'_1,\dots,Z'_r$ 0-dimensional irreducible subschemes of $Y$ such that $Z_i\cong Z'_i$ for $i=1,\dots,r$, $Z_i'$ has support on $H$ and is vertically graded with base $H$, and the supports of $Z=Z_1+\dots+Z_r$ and $Z'=Z_1'+\dots +Z_r'$ are generic in their respective Hilbert schemes. Let $\bfp=(p_1,\dots,p_r)\in\bbN^r$. If
\begin{enumerate}[leftmargin=*]
    \item $h^0(\calI_{\Tr_H(X)\cup\Tr^{\bfp}_H(Z'),H}\otimes\calL_{|H})=0$ and
    \item $h^0\bigl(\calI_{\Res_H(X)\cup\Res_H^{\bfp}(Z'),Y}\calL(-H)\bigr)=0$,
\end{enumerate}
then
$$h^0\bigl(\calI_{X\cup Z,Y}(d)\bigr)=0.$$
\end{lem}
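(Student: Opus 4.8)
The plan is to run the standard degeneration-and-residuation argument underlying every incarnation of the Horace method. Since $Z_i\cong Z_i'$ and the supports of $Z$ and $Z'$ are generic, the scheme $X\cup Z$ specialises flatly to $X\cup Z'$ (slide the support of each $Z_i$ onto a general point of $H$ and let its fat structure tilt until it becomes vertically graded with base $H$). Upper semicontinuity of $h^0$ in the resulting flat family over a curve then gives $h^0(\calI_{X\cup Z,Y}\otimes\calL)\le h^0(\calI_{X\cup Z',Y}\otimes\calL)$, so it suffices to prove that the right-hand side vanishes; from now on everything is carried out with the specialised scheme $X\cup Z'$.

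I would first dispose of the case $\bfp=(0,\dots,0)$, where $\Tr_H^{\bfp}=\Tr_H$ and $\Res_H^{\bfp}=\Res_H$. As the supports of $X$ and of the $Z_i'$ are in general position, both $\Tr_H$ and $\Res_H$ commute with unions, so $\Tr_H(X\cup Z')=\Tr_H(X)\cup\Tr_H(Z')$ and $\Res_H(X\cup Z')=\Res_H(X)\cup\Res_H(Z')$. Tensoring the Castelnuovo sequence of $X\cup Z'$ relative to $H$ by $\calL$ and passing to cohomology yields the exact sequence
$$0\to H^0\bigl(\calI_{\Res_H(X)\cup\Res_H(Z'),Y}\otimes\calL(-H)\bigr)\to H^0\bigl(\calI_{X\cup Z',Y}\otimes\calL\bigr)\to H^0\bigl(\calI_{\Tr_H(X)\cup\Tr_H(Z'),H}\otimes\calL_{|H}\bigr),$$
whose two outer terms vanish by hypotheses (2) and (1), hence so does the middle one.

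The general case I would obtain by induction on $|\bfp|=p_1+\dots+p_r$, and this devissage is the step I expect to be the real obstacle. The naive recursion --- apply the Castelnuovo sequence once relative to $H$ and then recurse on the residues --- does not close up, because for $p_i\ge 1$ the differential residue $\Res_H^{p_i}(Z_i')$ removes the $(p_i+1)$-th slice of $Z_i'$ rather than the bottom one, so after a single ordinary residuation the horizontal slices of $Z_i'$ have shifted in a way that no longer reassembles into $\Res_H^{p_i}$ and $\Tr_H^{p_i}$. The remedy, which is the technical heart of the Alexander--Hirschowitz differential method, is to peel the slices off in the right order and with the right multiplicities: at each step one chooses the residual exact sequence adapted to $H$ so that precisely the slices destined for $H$ accumulate in the successive traces while the slices destined for $Y$ survive in successively more twisted residues, and one checks that after $p_i$ such steps on the $i$-th component the surviving part is exactly $\Res_H^{p_i}(Z_i')$ and the part that has landed on $H$ is exactly $\Tr_H^{p_i}(Z_i')$, at which point hypotheses (1) and (2) apply verbatim to conclude as in the base case. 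This bookkeeping --- whose sole purpose is to route the arithmetic obstruction of the classical Horace method into the differential trace, as illustrated by the behaviour of $\Tr_H^1$ and $\Res_H^1$ on a double point recorded above --- is carried out in detail in \cite{AH00}*{Proposition 9.1}, to which I would refer for the routine but delicate verification.
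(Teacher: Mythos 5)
This lemma is quoted in the paper directly from \cite{AH00}*{Proposition 9.1} without proof, so there is no internal argument to compare your proposal against. Your specialisation step and your treatment of the base case $\bfp=(0,\dots,0)$ are correct, and you rightly observe that a naive recursion fails for $\bfp\neq 0$, but you misplace the obstruction. The gap sits already in the reduction to the special fibre $X\cup Z'$: that reduction is logically valid, but hypotheses (1) and (2) do \emph{not} in general imply $h^0(\calI_{X\cup Z',Y}\otimes\calL)=0$, and no iteration of ordinary Castelnuovo residuations applied to $X\cup Z'$ can extract it. Concretely, for a vertically graded $Z'_i$ with $p_i\geq 1$ one has $\Tr_H^{p_i}(Z'_i)\subsetneq\Tr_H(Z'_i)$ and $\Res_H(Z'_i)\subsetneq\Res_H^{p_i}(Z'_i)$; the first inclusion lets hypothesis (1) kill $h^0$ of the ordinary trace of $X\cup Z'$, but the second goes the wrong way --- the ordinary residue $\Res_H(X\cup Z')$ is a proper subscheme of $\Res_H(X)\cup\Res_H^{\bfp}(Z')$, and vanishing of $h^0$ for the larger scheme says nothing about its subscheme. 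The Castelnuovo sequence for $X\cup Z'$ therefore does not close, and since every further application of the residual sequence again produces ordinary traces and residues, never differential ones, the ``peeling'' d\'evissage you describe has no mechanism for recovering the missing slice.

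This is not a bookkeeping difficulty but the whole point of the differential method: $X\cup Z'$ is chosen so degenerate that the classical Horace split fails on it, and the lemma asserts vanishing for the \emph{generic} $X\cup Z$ without ever passing through vanishing for $X\cup Z'$. The proof in \cite{AH00} accordingly never descends to the special fibre. It applies a residual sequence to the total space of the one-parameter family $X\cup Z_t$ over a trait $\Spec\bbC[[t]]$; the $t$-expansion of the ideal of that higher-dimensional scheme remembers the direction in which $Z$ approaches $H$, and the differential trace and residue are its leading-order slices. Working only with the fibre at $t=0$, as your d\'evissage does, loses exactly that information. Since the verification you defer to \cite{AH00} is in fact the entire content of the result, the proposal does not constitute an independent proof.
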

We conclude this section with the following remark, which allows us to simplify many proofs.
\begin{rem}\label{rem:rangè}
Fix $n,d,r\in\bbN$ with $n\geq 1$ and let $X\subset Q_n$ a 0-dimensional subscheme. Note that:
\begin{itemize}[leftmargin=*]
    \item if $\ell(X)\leq h^0\bigl(\calO_{Q_n}(d)\bigr)$ and $h^0\bigl(\calI_{X,Q_n}(d)\bigr)=h^0\bigl(\calO_{Q_n}(d)\bigr)-\ell(X)$, then for any $X^\prime\subseteq X$ we have $h^0\bigl(\calI_{X^\prime,Q_n}(d)\bigr)=h^0\bigl(\calO_{Q_n}(d)\bigr)-\ell(X^\prime)$;
    \item if $h^0\bigl(\calO_{X,Q_n}(d)\bigr)=0$, then for any 0-dimensional scheme $X^\prime$ with $X^{\prime}\supseteq X$ we have \linebreak $h^0\bigl(\calI_{X^\prime,Q_n}(d)\bigr)=0$.
\end{itemize}
As a consequence, keeping in mind that the length of a double point in $Q_n$ is $n$, to prove that a general union of $r$ double points in $Q_n$ has good postulation in degree $d$ it is enough to prove one the two following conditions:
\begin{itemize}[leftmargin=*]
\item $X\subset Q_n$ has good postulation in degree $d$, where $X$ is a general union of $\pint{h^0\bigl(\calO_{Q_n}(d)\bigr)/n}$ double points on $Q_n$ and a scheme of length $h^0\bigl(\calO_{Q_n}(d)\bigr)-n\pint{h^0\bigl(\calO_{Q_n}(d)\bigr)/n}$ contained in a double point of $Q_n$;
\item a general union of $r$ double points on $Q_n$ has good postulation in degree $d$ for $$r\in\left\{\left\lfloor{\frac{h^0\bigl(\calO_{Q_n}(d)\bigr)}{n}}\right\rfloor,\left\lceil{\frac{h^0\bigl(\calO_{Q_n}(d)\bigr)}{n}}\right\rceil\right\}.$$
\end{itemize} 
\end{rem}
\subsection{The solution of the interpolation problem in degree 3}\label{d=3bis}
The purpose of this section is to prove the following theorem.
\begin{teo}\label{thm:post-3}
Let $n,r\in\bbN$ with $n,r\geq 1$. If $X$ is a general union of double points in $Q_n$ then $X$ has good postulation in degree 3.
\end{teo}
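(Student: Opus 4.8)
The plan is to translate the statement, via \Cref{prop:interpol-problem} and \Cref{cor:equivalence}, into the assertion that a general union $X$ of $r$ double points of $Q_n$ has the expected Hilbert function in degree $3$, and then to prove the latter by induction on $n$. Since $h^0(\calO_{Q_n}(3))=\dim\calH_{n,3}=\tfrac{n(n+1)(n+5)}{6}$ and a double point of $Q_n$ has length $n$, by \Cref{rem:rangè} it is enough to establish good postulation in degree $3$ for $r$ equal to one of the two integers nearest $\tfrac{(n+1)(n+5)}{6}$ (or, equivalently, for a single mixed scheme made of $\lfloor h^0(\calO_{Q_n}(3))/n\rfloor$ double points and a truncation of one more double point). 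The base cases are $n=1$, which is immediate since $\dim\calH_{1,3}=2$, and $n=2$, where $Q_2\cong\bbP^1$ so that a double point of $Q_2$ is a length-$2$ subscheme of a curve and general such unions always have good postulation, while $n=3$ is supplied by \cites{Laf02,LP13}. It is convenient to run the induction on the slightly stronger statement that general unions of double points \emph{and} simple points of $Q_n$ have good postulation in all degrees $\le 3$, since such mixed schemes arise as residues below; the degree-$2$ instances are governed by \Cref{prop:dim_isotropic_secants_quadrics} and the degree-$\le 1$ ones are trivial.

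For the inductive step, with $n\ge 4$, I would generalise the Brambilla--Ottaviani specialisation, degenerating double points onto a linear section of codimension two rather than onto a divisor. Fix general hyperplanes $H_1,H_2\subset\bbP^n$, so that $Q_{n-1}\coloneqq Q_n\cap H_1$ and $Q_{n-2}\coloneqq Q_n\cap H_1\cap H_2$ are non-degenerate quadrics, and specialise the supports of the $r$ double points so that a carefully chosen number $a$ of them lie on $Q_{n-2}$, a number $b$ on $Q_{n-1}$, and the remaining $c=r-a-b$ stay general on $Q_n$; by semicontinuity of cohomology it suffices to prove good postulation for this degeneration. A first Castelnuovo sequence for the divisor $Q_{n-1}\subset Q_n$, twisted by $\calO_{Q_n}(3)$, splits $X$ into a residue on $Q_n$ with $\calO_{Q_n}(2)$ --- the union of $a+b$ simple points and $c$ general double points --- and a trace on $Q_{n-1}$ with $\calO_{Q_{n-1}}(3)$ --- the union of $a$ double points of $Q_{n-1}$ supported on $Q_{n-2}$ and $b$ general double points of $Q_{n-1}$. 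A second Castelnuovo sequence, for $Q_{n-2}\subset Q_{n-1}$, breaks this trace into a residue on $Q_{n-1}$ with $\calO_{Q_{n-1}}(2)$ --- $a$ simple points together with $b$ general double points --- and a trace on $Q_{n-2}$ with $\calO_{Q_{n-2}}(3)$ --- $a$ general double points of $Q_{n-2}$, which is handled by the inductive hypothesis. The two degree-$2$ residues are then fed through one further Castelnuovo step each, relative to a general hyperplane section, after which only general simple points on quadrics remain --- these impose independent conditions up to $h^0$ --- together with, in degree $2$, at most $c$ (respectively $b$) general double points of a quadric of one lower dimension, covered by \Cref{prop:dim_isotropic_secants_quadrics}. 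Chaining the resulting vanishings back through the Castelnuovo sequences, and using at each step that $h^0\bigl(\calO_{Q_m}(k-1)\bigr)+h^0\bigl(\calO_{Q_{m-1}}(k)\bigr)=h^0\bigl(\calO_{Q_m}(k)\bigr)$ together with the additivity of length along residue and trace, yields $h^0(\calI_{X,Q_n}(3))=\max\{0,h^0(\calO_{Q_n}(3))-rn\}$, which is what the statement claims.

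The crux, and the bulk of the work, is the arithmetic of choosing the triple $(a,b,c)$ --- and, when necessary, an extra truncation of a single double point --- so that every piece produced by the two Castelnuovo sequences is either a ``cas rangé'' (length exactly equal to the dimension of the pertinent linear system) or directly covered by the inductive hypothesis or by \Cref{prop:dim_isotropic_secants_quadrics}. This depends on the residue class of $n$ modulo $6$ (the exact divisibility pattern of $\tfrac{(n+1)(n+5)}{6}$, which is precisely the obstruction that the passage from a divisor to a codimension-two section is designed to sidestep), so a few cases of $n\bmod 6$ and a handful of small values of $n$ must be examined separately. A secondary difficulty is that the double points appearing in the intermediate traces are \emph{not} general on $Q_{n-1}$ --- their supports are confined to the section $Q_{n-2}$ --- so one must either phrase the inductive statement so as to include such constrained configurations or remove the constraint with an additional Horace slice; it is exactly the extra codimension in $\Lambda=H_1\cap H_2$ that leaves enough room to do this while keeping every degree-$2$ residue within reach of \Cref{prop:dim_isotropic_secants_quadrics}.
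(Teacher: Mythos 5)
Your reduction to an interpolation problem and your choice of base cases ($n=1$ trivially, $n=2$ via $Q_2\cong\bbP^1$, $n=3$ via \cites{Laf02,LP13}) all match the paper, but the inductive step diverges in a way that leaves a genuine gap.

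The obstruction you are trying to circumvent is precisely that, for $1\leq r\leq n$, a union of $r$ general double points of $Q_n$ is \emph{defective} in degree $2$ (\Cref{prop:dim_isotropic_secants_quadrics}). In your scheme, after the first Castelnuovo sequence the residue on $Q_n$ in degree $2$ is $a+b$ simple points (confined to $Q_{n-1}$) together with $c$ general double points of $Q_n$. A rough count shows that $c$ is forced to be at least $3$ for large $n$: you cannot cram more than about $h^0(\calO_{Q_{n-1}}(3))/(n-1)=n(n+4)/6$ double points into the trace, while $r\approx(n+1)(n+5)/6$, so roughly $(2n+5)/6$ double points must remain general. That residue therefore contains several general double points which, by \Cref{prop:dim_isotropic_secants_quadrics}, impose \emph{fewer} conditions than expected on $|\calO_{Q_n}(2)|$. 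Being covered by \Cref{prop:dim_isotropic_secants_quadrics} is not enough for Horace: you need the residue to have the \emph{expected} $h^0$, and a defective configuration gives a strictly larger $h^0$ than expected, so the Castelnuovo chain yields only an upper bound that is too weak to close the induction. The remark at the end of your proposal --- that every piece is either ``rangé'' or ``within reach of \Cref{prop:dim_isotropic_secants_quadrics}'' --- conflates being able to compute the defect with the defect being zero.

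The paper circumvents this in two ways you do not replicate. First, the specialisation is onto a codimension-\emph{six} linear section (not codimension two), and the codimension $6$ is not a free choice: \Cref{lem:num-lem} shows that the residues $\delta_n$ of $h^0(\calO_{Q_n}(3))$ modulo $n$, and the differences $k_n-k_{n-6}=2n$, behave periodically with period $6$ in $n$, so that an induction from $n-6$ to $n$ keeps the truncation scheme $\eta$ under control. Second --- and this is the decisive Brambilla--Ottaviani idea that your write-up omits --- the paper works with the ideal sheaves of $X\cup L\cup M\cup N$, $X\cup L\cup M$, $X\cup L$, i.e.\ it \emph{adds the codimension-$6$ linear sections to the scheme being interpolated}. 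The exact sequence \eqref{exc} then produces residues that are unions of simple points and linear sections only, so the dangerous case of few general double points against $|\calO_{Q_n}(2)|$ never arises: \Cref{lem:cubic1} shows that the three sections already kill $H^0(\calI(2))$, and the chain \Cref{lem:cubic2}--\Cref{lem:cubic4} feeds this through. Your proposal, by contrast, applies the Castelnuovo sequence to $X$ alone, which is standard Horace and inherits the degree-$2$ defect. Without the ``union with the linear section'' trick, specialising onto $Q_{n-2}$ does not escape the defectivity; without the period-$6$ arithmetic, the counts in the ``rangé'' cases do not close.
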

As we said, since the Horace method does not work in the case of cubics, we use the Brambilla-Ottaviani technique to solve it, and then we use cubics as a base case of our induction.

We recall that, if $Y$ is a variety, $X\subseteq Y$ is a subscheme and $H$ is a divisor, then, the sequence
\begin{equation}\label{exc}
\begin{tikzcd}
	0 \arrow[r] & \calI_{X\cup H,Y} \arrow[r] & \calI_{X,Y} \arrow[r] & \calI_{X\cap H,H} \arrow[r] & 0
\end{tikzcd}
\end{equation}
is exact.

For the rest of this section, given $n\in\bbN$, we denote
$$k_n\coloneqq\biggl\lfloor{\frac{h^0\bigl(\calO_{Q_n}(3)\bigr)}{n}}\biggr\rfloor, \qquad \delta_n\coloneqq h^0\bigl(\calO_{Q_n}(3)\bigr)-nk_n.$$
Clearly, $k_n$ and $\delta_n$ are the unique natural numbers such that $h^0\bigl(\calO_{Q_n}(3)\bigr)=nk_n+\delta_n$ and $0\leq\delta_n\leq n-1$.
\begin{lem}\label{lem:num-lem}
Fix $n\in\bbN$ and let $p,r\in\bbN$ the unique natural numbers such that $n=6p+r$ with $0\leq r\leq 5$. Then
$$
k_n=\begin{cases}
6p^2+6p,\quad  &\text{if $r=0$,}\\
6p^2+8p+2,\quad &\text{if $r=1$,}\\
6p^2+10p+3,\quad &\text{if $r=2$,}\\
6p^2+12p+5,\quad &\text{if $r=3$,}\\
6p^2+14p+7,\quad &\text{if $r=4$,}\\
6p^2+16p+10,\quad &\text{if $r=5$,}\\
\end{cases}\qquad
\delta_n=\begin{cases}
5p,\quad  &\text{if $r=0$,}\\
0,\quad &\text{if $r=1$,}\\
3p+1,\quad &\text{if $r=2$,}\\
2p+1,\quad &\text{if $r=3$,}\\
3p+2,\quad &\text{if $r=4$,}\\
0,\quad &\text{if $r=5$.}\\
\end{cases}$$
Moreover $k_{n}-k_{n-6}=2n$ and $k_n-2k_{n-6}+k_{n-12}=12$.
\end{lem}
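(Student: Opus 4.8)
The plan is to reduce the statement to an explicit Euclidean division, once $h^0\bigl(\calO_{Q_n}(3)\bigr)$ is known in closed form. Recall that $h^0\bigl(\calO_{Q_n}(3)\bigr)=\dim\calH_{n,3}$, and that the decomposition $\calR_{n,d}=q_n\calR_{n,d-2}\oplus\calH_{n,d}$ gives $\dim\calH_{n,d}=\binom{n+d}{d}-\binom{n+d-2}{d-2}$; for $d=3$ this yields
$$h^0\bigl(\calO_{Q_n}(3)\bigr)=\binom{n+3}{3}-\binom{n+1}{1}=\frac{n(n+1)(n+5)}{6}.$$
In particular $h^0\bigl(\calO_{Q_n}(3)\bigr)/n=\frac{1}{6}(n+1)(n+5)$ is an exact rational, so $k_n=\bigl\lfloor\frac{1}{6}(n+1)(n+5)\bigr\rfloor$ and $\delta_n=n\bigl(\frac{1}{6}(n+1)(n+5)-k_n\bigr)$.

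Then I would write $n=6p+r$ with $0\le r\le 5$ and, in each of the six cases, compute the residue $(n+1)(n+5)\bmod 6$, obtaining the values $5,0,3,2,3,0$ for $r=0,1,2,3,4,5$. Taking integer parts of $\frac{1}{6}(n+1)(n+5)$ gives the six formulas for $k_n$ in the statement, and $\delta_n=h^0\bigl(\calO_{Q_n}(3)\bigr)-nk_n$ then produces $5p,0,3p+1,2p+1,3p+2,0$ respectively. The only thing to check carefully here is that in every residue class $\delta_n$ comes out as a non-negative integer with $0\le\delta_n\le n-1$, which is immediate from the explicit expressions (the factor $n=6p+r$ always absorbs the denominator $6$).

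For the last two identities I would avoid the casework: since $(n+1)(n+5)-(n-1)(n-5)=12n$, the rationals $\frac{1}{6}(n+1)(n+5)$ and $\frac{1}{6}(n-1)(n-5)$ differ by the integer $2n$, and as $n\equiv n-6\pmod 6$ they have equal fractional part; hence $k_n-k_{n-6}=2n$. Applying this twice gives $k_n-2k_{n-6}+k_{n-12}=(k_n-k_{n-6})-(k_{n-6}-k_{n-12})=2n-2(n-6)=12$. (Equivalently, both identities can be read off directly from the six formulas, noting that $n\mapsto n-6$ preserves the residue class and sends $p\mapsto p-1$.) There is really no serious obstacle here: the argument is elementary arithmetic, and the only mild nuisance is keeping the six residue classes straight and writing $\dim\calH_{n,3}$ in the form that makes $h^0\bigl(\calO_{Q_n}(3)\bigr)/n$ a clean quadratic in $n$.
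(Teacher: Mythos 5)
Your proposal is correct and follows essentially the same route as the paper's proof: both compute $h^0\bigl(\calO_{Q_n}(3)\bigr)=\tfrac{n(n+1)(n+5)}{6}$ in closed form and then split into the six residue classes of $n$ modulo $6$ to read off $k_n$ and $\delta_n$ from the Euclidean division (the paper writes out only the case $r=0$ and leaves the analogous cases to the reader, which is what you carry out). The one place where you genuinely deviate is the last two identities: the paper verifies $k_n-k_{n-6}=2n$ and $k_n-2k_{n-6}+k_{n-12}=12$ by substituting the explicit quadratic formulas for $k_n$ within a fixed residue class, whereas you observe directly that $\tfrac{(n+1)(n+5)}{6}$ and $\tfrac{(n-5)(n-1)}{6}$ have the same fractional part (because $n\equiv n-6\pmod 6$) and differ by the integer $2n$, so the floors differ by $2n$; iterating gives the second identity. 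Your argument is cleaner in that it avoids casework entirely and handles all residue classes at once; both are elementary and equally valid, so this is a stylistic rather than a substantive improvement.
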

\begin{proof}
We give the proof for $r=0$, the reader can check the other completely analogous cases. We have 
$$h^0\bigl(\calO_{Q_n}(3)\bigr)=h^0\bigl(\calO_{\bbP^n}(3)\bigr)-h^0\bigl(\calO_{\bbP^n}(1)\bigr)=\binom{n+3}{3}-\binom{n+1}{1}=\frac{n(n+1)(n+5)}{6}$$
and, since $n=6p$, we get
$$h^0\bigl(\calO_{Q_n}(3)\bigr)=p(6p+1)(6p+5)=p(36p^2+36p+5)=6p(6p^2+6p)+5p=n(6p^2+6p)+5p.$$
Hence, $k_n=6p^2+6p$ and $\delta_n=5p$. Moreover, we have
\begin{gather*}
k_{n}-k_{n-6}=6p^2+6p-\bigl(6(p-1)^2+6(p-1)\bigr)=12p=2n,\\
k_n-2k_{n-6}+k_{n-12}=6p^2+6p-2\bigl(6(p-1)^2+6(p-1)\bigr)+6(p-2)^2+6(p-2)=12.\qedhere
\end{gather*}
\end{proof}
\autoref{lem:num-lem} is the core of the inductive argument: indeed, since $\delta_{n-6}=\delta_n$, we can avoid the arithmetic obstructions by using induction from $n-6$ to $n$.

We prove now the three lemmata which give us \autoref{thm:post-3}.
\begin{lem}\label{lem:cubic1}
Let $n\in\bbN$ with $n\geq 16$ and let $L,M,N$ be three general codimension 6 sections of $Q_n\subset\bbP^n$. Then $h^0\bigl(\calI_{L\cup M\cup N, Q_n}(2)\bigr)=0$.
\end{lem}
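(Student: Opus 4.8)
The plan is to argue by contradiction. Using the identification $H^0(\calO_{Q_n}(d))=\calH_{n,d}$, it suffices to show that no nonzero harmonic quadric $h\in\calH_{n,2}$ vanishes on $X:=L\cup M\cup N$. Write $L=Q_n\cap\Lambda_L$, $M=Q_n\cap\Lambda_M$, $N=Q_n\cap\Lambda_N$ with $\Lambda_L,\Lambda_M,\Lambda_N\subset\bbP^n$ general linear subspaces of codimension $6$; since $q_n$ restricts to a non-degenerate (hence irreducible, as $n-6\ge 2$) quadratic form on each of them, $L$ is a smooth quadric hypersurface spanning $\Lambda_L\cong\bbP^{n-6}$, and likewise for $M,N$. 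The first point is that any quadric of $\bbP^n$ whose restriction to $\Lambda_L$ vanishes on $L$ must restrict to a scalar multiple of $q_n|_{\Lambda_L}$, because the degree-$2$ part of the homogeneous ideal of the irreducible quadric $L$ is one-dimensional. Hence there are scalars $c_L,c_M,c_N\in\bbC$ such that $h-c_L q_n$, $h-c_M q_n$, $h-c_N q_n$ vanish identically on $\Lambda_L$, $\Lambda_M$, $\Lambda_N$, respectively.

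Next I would show $c_L=c_M=c_N=:c$. Since $n\ge 16>11$, the intersection $\Lambda_L\cap\Lambda_M$ of two general codimension-$6$ subspaces is a general linear subspace of projective dimension $n-12\ge 4$, on which $q_n$ is non-degenerate, in particular not identically zero. As $h-c_L q_n$ and $h-c_M q_n$ both vanish on $\Lambda_L\cap\Lambda_M$, so does their difference $(c_M-c_L)q_n$, forcing $c_L=c_M$; similarly $c_L=c_N$. Writing $g:=h-c\,q_n$, this reduces the statement to a purely linear-algebraic claim: a quadratic form $g$ on $\bbC^{n+1}$ vanishing on the three general $(n-5)$-dimensional linear subspaces $\Lambda_L,\Lambda_M,\Lambda_N$ must be zero. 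Granting this, $h=c\,q_n$ is harmonic while $q_n$ is not (one computes $\omega_n\circ q_n=2(n+1)\neq 0$), so $c=0$ and $h=0$, which is what we want.

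To prove the linear-algebraic claim I would work with the radical $\rad(g)$. Vanishing on the $(n-5)$-dimensional subspace $\Lambda_L$ forces $\rho:=\rk(g)\le 12$; moreover, projecting $\Lambda_L$ onto $\bbC^{n+1}/\rad(g)$, where $g$ becomes non-degenerate, its image is totally isotropic, hence of dimension $\le\lfloor\rho/2\rfloor$, so $\dim\bigl(\rad(g)\cap\Lambda_L\bigr)\ge (n-5)-\lfloor\rho/2\rfloor$, and the same holds for $\Lambda_M$ and $\Lambda_N$. Intersecting these three subspaces of $\rad(g)$ by inclusion–exclusion yields
\[
\dim\bigl(\rad(g)\cap\Lambda_L\cap\Lambda_M\cap\Lambda_N\bigr)\ \ge\ 3\bigl((n-5)-\lfloor\rho/2\rfloor\bigr)-2\bigl((n+1)-\rho\bigr)\ =\ (n-17)+\bigl(2\rho-3\lfloor\rho/2\rfloor\bigr),
\]
which is $\ge n-16$ as soon as $\rho\ge 1$. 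On the other hand this subspace is contained in $\Lambda_L\cap\Lambda_M\cap\Lambda_N$, a general triple intersection of $(n-5)$-planes in $\bbC^{n+1}$, of dimension $\max\{0,n-17\}$. For $n\ge 17$ this is already a contradiction; for $n=16$ the two estimates read $\ge 0$ and $\le 0$, and a contradiction fails only in the borderline case $\rho=2$, which I would dispose of by hand: there $g=\ell_1\ell_2$, vanishing on the irreducible $\Lambda_\bullet$ forces $\ell_1|_{\Lambda_\bullet}=0$ or $\ell_2|_{\Lambda_\bullet}=0$ for each $\bullet$, and since any two of $\Lambda_L,\Lambda_M,\Lambda_N$ already span $\bbP^n$, one of $\ell_1,\ell_2$ must vanish, i.e.\ $g=0$. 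The main obstacle is exactly this final step: controlling the low-rank, borderline behaviour of $g$ on the quadric — everything preceding it is formal. A possible alternative is to induct on $n$: peeling off a general hyperplane section through the Castelnuovo (residual) sequence, together with $h^0(\calI_{X,Q_n}(1))=0$ (since $X$ spans $\bbP^n$), drops $n$ by one while keeping the three sections general, reducing \Cref{lem:cubic1} to a base case of the same shape.
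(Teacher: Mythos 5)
Your argument is correct, and it takes a genuinely different route from the paper's. The paper proceeds by downward induction on $n$ via the Castelnuovo residual sequence (peeling off a general hyperplane section $H\cong Q_{n-1}$, using $h^0\bigl(\calI_{X,Q_n}(1)\bigr)=0$ for the residue), with the base case $n=16$ verified by a Macaulay2 computation --- this is exactly the ``possible alternative'' you sketch at the end. Your main line of reasoning is instead a self-contained argument: reducing from vanishing on the quadric sections $L,M,N$ to vanishing on the full linear spaces $\Lambda_L,\Lambda_M,\Lambda_N$ (after subtracting the right multiple $c\,q_n$, using that the degree-$2$ piece of $I(L)\subset\bbC[\Lambda_L]$ is one-dimensional and that $c_L=c_M=c_N$ since $q_n$ is nonzero on pairwise intersections), and then a rank/radical estimate on the resulting quadratic form $g$. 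What your approach buys is a fully conceptual proof with no computer check; what the paper's approach buys is uniformity with the other lemmas in Section 5, which all proceed by the same Horace-style inductive pattern and all need computer-verified base cases anyway.

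A few small points worth tightening if you were to write this up. First, make explicit which Zariski-open conditions on the triple $(\Lambda_L,\Lambda_M,\Lambda_N)$ your argument uses (non-degeneracy of $q_n|_{\Lambda_\bullet}$ and of $q_n|_{\Lambda_L\cap\Lambda_M}$, expected dimension of the triple intersection, and that any two $\Lambda_\bullet$ span $\bbP^n$), and note that these hold on a nonempty open set; since you then show $h^0\bigl(\calI_{X,Q_n}(2)\bigr)=0$ for \emph{every} triple in that open set, the conclusion for general $L,M,N$ follows directly, with no semicontinuity needed. Second, in the rank bound it is cleanest to state separately that $\rho\le 12$ (totally isotropic subspace of dimension $n-5$ forces $\lceil\rho/2\rceil\le 6$) before running the inclusion--exclusion, though as you observe, the inclusion--exclusion estimate exceeding $\dim\rad(g)$ already rules out $\rho>12$. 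Third, your inclusion--exclusion bound $\dim(R\cap\Lambda_L\cap\Lambda_M\cap\Lambda_N)\ge n-17+2\rho-3\lfloor\rho/2\rfloor$ reaches its minimum $n-16$ exactly at $\rho=2$; for $n\ge 17$ this already beats $\max\{0,n-17\}$ for all $\rho\ge 1$, and for $n=16$ only $\rho=2$ survives, which you correctly dispatch by hand via the factorisation $g=\ell_1\ell_2$ and the fact that two of the $\Lambda_\bullet$ span $\bbP^n$. That boundary analysis is right.
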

\begin{proof}
We proceed by induction on $n$. The statement for $n=16$ can be checked with the help of a computer. Now, suppose that the statement is true for $n$. Let $H$ be a general hyperplane section of $Q_n$ and note that, since by general assumption $H$ cuts $L\cup M\cup N$ in a proper closed subscheme of $L\cup M\cup N$, we have $\Res_H(L\cup M\cup N)=L\cup M\cup N$. Thus, the Castelnuovo sequence of $L\cup M\cup N$ gives
$$\begin{tikzcd}
	0 \arrow[r] & \calI_{L\cup M\cup N,Q_n}(1) \arrow[r] & \calI_{L\cup M\cup N,Q_n}(2) \arrow[r] & \calI_{\Tr_H(L\cup M\cup N),H}(2) \arrow[r] & 0
\end{tikzcd}$$
and, since by general assumption $H=Q_{n-1}$, we get by induction that 
$$h^0\bigl(\calI_{\Tr_H(L\cup M\cup N),H}(2)\bigr)=0.$$
As a consequence, we have $h^0\bigl(\calI_{L\cup M\cup N,Q_n}(2)\bigr)=h^0\bigl(\calI_{L\cup M\cup N,Q_n}(1)\bigr)$. Using again the Castel\-nuovo sequence with respect to $H$ we get
$$\begin{tikzcd}
	0 \arrow[r] & \calI_{L\cup M\cup N,Q_n}(0) \arrow[r] & \calI_{L\cup M\cup N,Q_n}(1) \arrow[r] & \calI_{\Tr_H(L\cup M\cup N),H}(1) \arrow[r] & 0
\end{tikzcd}.$$
Clearly $h^0\bigl(\calI_{L\cup M\cup N,Q_n}(0)\bigr)=0$ and, since $h^0\bigl(\calI_{\Tr_H(L\cup M\cup N),H}(2)\bigr)=0$, then $$h^0\bigl(\calI_{\Tr_H(L\cup M\cup N),H}(1)\bigr)=0.$$ Hence, 
$h^0\bigl(\calI_{L\cup M\cup N,Q_n}(1)\bigr)=0$
and this ends the proof.
\end{proof}
\begin{lem}\label{lem:cubic2}
Let $n\in\bbN$ with $n\geq 16$ and let $L,M,N\subset Q_n$ be three general codimension 6 linear sections of $Q_n$. For $i=1,\dots,12$ let $l_i$ (resp. $m_i$, $n_i$) be points in general position in $L$ (resp. $M$, $N$) and let 
$$X=\bigcup_{i=1}^{12}(2_{Q_n}l_i\cup2_{Q_n}m_i\cup2_{Q_n}n_i).$$
Then $h^0\bigl(\calI_{X\cup L\cup M\cup N, Q_n}(3)\bigr)=0$.
\end{lem}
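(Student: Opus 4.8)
The plan is to prove the statement by induction on $n$. For the base case $n=16$ I would proceed exactly as in \autoref{lem:cubic1}, by a direct computer verification that the scheme in question imposes independent conditions in degree $3$. For the inductive step I would collapse all $36$ double points onto a single general hyperplane section of $Q_n$ and then split the cohomology via a Castelnuovo sequence, so that the degree‑$2$ residue is absorbed by \autoref{lem:cubic1} and the degree‑$3$ trace becomes precisely the statement of the lemma one dimension down.

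In detail: assume $n\ge 17$ and that the lemma holds for $n-1$. Let $H\subset Q_n$ be a general hyperplane section, so that $H\cong Q_{n-1}$ and $L\cap H$, $M\cap H$, $N\cap H$ are general codimension $6$ linear sections of $Q_{n-1}$. Keeping $L,M,N$ fixed, degenerate the points $l_i$ into $L\cap H$, the points $m_i$ into $M\cap H$ and the points $n_i$ into $N\cap H$, so that they become general points of those sections, and let $Y'$ be the resulting flat specialisation of $Y:=X\cup L\cup M\cup N$; by semicontinuity of $h^0$ it suffices to prove $h^0(\calI_{Y',Q_n}(3))=0$. Since $H$ is general, none of $L,M,N$ is contained in $H$ and $H$ is tangent to $Q_n$ at no point of $H\cap Q_n$; hence, by \autoref{rem:trresdouble}, $\Res_H(2_{Q_n}l_i)=l_i$ and $\Tr_H(2_{Q_n}l_i)=2_H l_i$ (and likewise for $m_i,n_i$), while $\Res_H(L)=L$ and $\Tr_H(L)=L\cap H$ (and likewise for $M,N$). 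The Castelnuovo sequence of $Y'$ with respect to $H$, twisted by $\calO_{Q_n}(3)$, therefore reads
\[
0\longrightarrow\calI_{\Res_H(Y'),Q_n}(2)\longrightarrow\calI_{Y',Q_n}(3)\longrightarrow\calI_{\Tr_H(Y'),H}(3)\longrightarrow 0,
\]
with $\Res_H(Y')=L\cup M\cup N\cup\{l_i,m_i,n_i\}_{i=1}^{12}$ and $\Tr_H(Y')=(L\cap H)\cup(M\cap H)\cup(N\cap H)\cup\bigcup_{i=1}^{12}(2_H l_i\cup 2_H m_i\cup 2_H n_i)$. Passing to cohomology, it is enough to check that both outer terms have no global sections: for the residue, $\Res_H(Y')\supseteq L\cup M\cup N$ gives $h^0(\calI_{\Res_H(Y'),Q_n}(2))\le h^0(\calI_{L\cup M\cup N,Q_n}(2))=0$ by \autoref{lem:cubic1}; for the trace, $\Tr_H(Y')$ is a general union of three codimension $6$ linear sections of $Q_{n-1}$ together with $12$ general double points of $Q_{n-1}$ on each of them, which is exactly the scheme appearing in the statement for $Q_{n-1}$, so $h^0(\calI_{\Tr_H(Y'),H}(3))=0$ by the inductive hypothesis. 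Hence $h^0(\calI_{Y',Q_n}(3))=0$, and the induction closes.

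I expect the real work to sit in two places. First, the base case $n=16$, which is an interpolation computation that must be carried out on a computer, just as for \autoref{lem:cubic1}. Second, the bookkeeping needed to make the specialisation legitimate: one has to verify that moving $l_i$ into $L\cap H$ (and similarly for $m_i,n_i$) is a flat degeneration under which all the genericity hypotheses survive, so that $\Tr_H(Y')$ genuinely satisfies the hypotheses of the lemma for $Q_{n-1}$. By contrast, there is no arithmetic obstruction here — the number $12$ of double points on each section does not change along the induction, so the recursion is self‑contained, and the entire degree‑$2$ content of the problem is dispatched by \autoref{lem:cubic1}; this is why, unlike in the main induction for general unions of double points, the differential version of the Horace method does not enter the proof of this lemma.
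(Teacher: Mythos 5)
Your proposal is correct and follows essentially the same route as the paper: a computer check for the base case $n=16$, then for $n\geq17$ specialise all thirty‑six double points onto $L\cap H$, $M\cap H$, $N\cap H$ for a general hyperplane section $H\cong Q_{n-1}$, apply the Castelnuovo sequence twisted by $\calO_{Q_n}(3)$, dispatch the degree‑$2$ residue via \autoref{lem:cubic1}, and recognise the trace as the statement for $Q_{n-1}$. The only cosmetic difference is that the paper directly asserts $\Res_H(X\cup L\cup M\cup N)=L\cup M\cup N$ (since the residual simple points $l_i,m_i,n_i$ already lie in $L,M,N$), whereas you keep them and use the inclusion $\Res_H(Y')\supseteq L\cup M\cup N$ — both are equally valid.
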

\begin{proof}
We proceed by induction on $n$. The statement for $n=16$ can be checked with the help of a computer. Now, fix $n\in\bbN$ with $n\geq 17$ and suppose that the statement is true for $n-1$. Let $H$ be a general hyperplane section of $Q_n$ and suppose that $X$ is such that 
$$l_i\in L\cap H,\, m_i\in M\cap H,\, n_i\in N\cap H,\,i=1,\dots,12$$
with $l_i$, resp. $m_i$ and $n_i$, in general position in $L\cap H$, resp. $M\cap H,$ and $N\cap H$.
By semicontinuity of cohomology, it is enough to prove the statement for such a specialised $X$. Since \linebreak $\Res_{H}(X\cup L\cup M\cup N)= L\cup M\cup N$, the Castelnuovo sequence of $X$ with respect to $H$ gives
$$\begin{tikzcd}
	0 \arrow[r] & \calI_{L\cup M\cup N,Q_n}(2) \arrow[r] & \calI_{X\cup L\cup M\cup N,Q_n}(3) \arrow[r] & \calI_{\Tr_H(X\cup L\cup M\cup N),H}(3) \arrow[r] & 0
\end{tikzcd}.$$
Note that, by general assumption, we have $H=Q_{n-1}$ and thus, by induction, we have 
$$h^0\bigl(\calI_{\Tr_H(X\cup L\cup M\cup N),H}(3)\bigr)=0,$$
and, by \autoref{lem:cubic1}, we have 
$$h^0\bigl(\calI_{L\cup M\cup N, Q_n}(2)\bigr)=0.$$
Hence, $h^0\bigl(\calI_{X\cup L\cup M\cup N, Q_n}(3)\bigr)=0$.
\end{proof}
\begin{lem}\label{lem:cubic3}
Let $n\in\bbN$ with $n\geq 12$ and let $L,M\subset Q_n$ be two general codimension 6 linear sections of $Q_n$. Consider $l_i\in L$ points in general position on $L$ and $m_i\in M$ points in general position on $M$ for $i=1,\dots,2n-12$, and $q_j\in Q_n$ points in general position on $Q_n$ with $j=1,\dots,12$. Let $X\subset Q_n$ the 0-dimensional subscheme defined as
$$X=\bigcup_{i=1}^{2n-12}(2_{Q_n}l_i\cup2_{Q_n}m_i)\cup\bigcup_{j=1}^{12}2_{Q_n}q_j.$$
Then, $h^0\bigl(\calI_{X\cup L\cup M, Q_n}(3)\bigr)=0$.
\end{lem}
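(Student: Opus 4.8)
The plan is to prove \autoref{lem:cubic3} by induction on $n$, applying the differential Horace method to a general hyperplane section of $Q_n$ and feeding the two resulting interpolation subproblems — one in degree $3$ on $Q_{n-1}$, one in degree $2$ on $Q_n$ — into the inductive hypothesis (together with \autoref{lem:cubic2}) and into \autoref{lem:cubic1} respectively; the arithmetic is governed throughout by \autoref{lem:num-lem}. For the first finitely many values of $n\ge 12$ the base cases amount to checking that an explicit matrix of evaluations of cubics on $Q_n$ — recording the conditions imposed by $L$, by $M$, and by the second-order neighbourhoods of the chosen general points — has maximal rank, a finite verification carried out just as in the base cases of \autoref{lem:cubic1} and \autoref{lem:cubic2}.

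For the inductive step I would fix $n$ large, assume the statement for $n-1$, and let $H$ be a general hyperplane section of $Q_n$, so that $H\cong Q_{n-1}$ while $L\cap H$ and $M\cap H$ are general codimension-$6$ sections of $H$. By semicontinuity of cohomology it suffices to prove $h^0\bigl(\calI_{X\cup L\cup M,Q_n}(3)\bigr)=0$ after specialising the support of $X$: one places a prescribed number of the points $l_i$ (resp.\ $m_i$) on $L\cap H$ (resp.\ $M\cap H$), a prescribed number of the $q_j$ on $H$, each such group in general position on its target, and keeps the remaining points general in $L$, $M$, $Q_n$. Then one applies the Horace differential lemma \autoref{HoraceDiff} with $Y=Q_n$, divisor $H$, line bundle $\calO_{Q_n}(3)$, and a vector $\bfp$ with entries in $\{0,1\}$: where $p_i=0$ a double point placed on $H$ contributes a double point $2_{Q_{n-1}}P$ to the trace and a simple point to the residue, while where $p_i=1$ it contributes a simple point to the trace and $\Res^1_H(2_{Q_n}P)=2_{H}P$ to the residue; the double points kept off $H$ survive whole into the residue, and the residual simple points with support on $L$ or $M$ are absorbed into $L\cup M$.

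By \autoref{rem:trresdouble} and the description of the differential traces and residues, hypothesis (1) of \autoref{HoraceDiff} becomes a degree-$3$ postulation statement on $H\cong Q_{n-1}$ for a scheme made of $L\cap H$, $M\cap H$, double points supported on these two sections and on $H$, and finitely many simple points; using \autoref{rem:rangè} one arranges the counts so that this is exactly \autoref{lem:cubic3} for $n-1$ with the surplus simple points absorbed, deducing it from \autoref{lem:cubic2} when the recursion bottoms out. Hypothesis (2) becomes a postulation statement on $Q_n$ for $L\cup M$ together with the residual double and simple points, twisted by $\calO_{Q_n}(-H)$, hence in degree $2$; one reduces it to $h^0\bigl(\calI_{L\cup M\cup N,Q_n}(2)\bigr)=0$ — that is, to \autoref{lem:cubic1} — by collapsing the residual points onto a general third codimension-$6$ section $N$ and checking the remaining numerics. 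The identities $k_n-k_{n-6}=2n$ and $k_n-2k_{n-6}+k_{n-12}=12$ from \autoref{lem:num-lem} are precisely what make the degree-$3$ trace land on $2(n-1)-12$ double points per section and the degree-$2$ residue carry exactly $12$ leftover double points, which is the source of the constants $2n-12$ and $12$ in the statement.

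The main obstacle will be the \emph{cas rangé}: the length of $X\cup L\cup M$ is calibrated so that there is no slack, and the split across $H$ — how many $l_i$, $m_i$, $q_j$ to specialise, onto which of $H$, $L\cap H$, $M\cap H$, and with which value $p_i\in\{0,1\}$ — must be chosen so that \emph{both} vanishings of \autoref{HoraceDiff} hold, with the relevant equalities of dimension, simultaneously; a single misallocation leaves one subproblem over- or under-determined and the induction collapses. Equally delicate is verifying that the two subproblems genuinely are instances (or harmless specialisations) of \autoref{lem:cubic3} for $n-1$ and of \autoref{lem:cubic1}, not merely numerically compatible ones, and that the specialised supports are general in their Hilbert schemes so that \autoref{HoraceDiff} applies. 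Once this is set up, the remaining manipulations of Castelnuovo/residual sequences are routine and parallel those in the proofs of \autoref{lem:cubic1} and \autoref{lem:cubic2}.
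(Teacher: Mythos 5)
Your approach — a differential Horace step across a general hyperplane $H\subset Q_n$, with the trace feeding an inductive degree-$3$ problem on $Q_{n-1}$ and the residue feeding a degree-$2$ problem on $Q_n$ — is exactly the strategy the paper explains \emph{cannot} work at this point, and it is not how the paper proves \autoref{lem:cubic3}. The obstacle is \autoref{prop:dim_isotropic_secants_quadrics}: a general union of $r\le n$ double points on $Q_n$ is defective in degree $2$, so any residue in degree $2$ that retains double points (and yours must, since the number of double points in $X$ does not fit on a single hyperplane trace without leftovers) will fail to impose independent conditions. Your suggested fix — to ``collapse the residual points onto a general third codimension-$6$ section $N$'' and invoke \autoref{lem:cubic1} — is not a legitimate operation: $N$ is not in the data of the lemma, and even if the residual double points were specialised onto $N$ they would remain double points of $Q_n$, which are defective in degree $2$ regardless of where their supports lie. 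There is also an internal mismatch: after a hyperplane Horace step the trace would carry $4n-12$ double points on $Q_{n-1}$, whereas \autoref{lem:cubic3} for $n-1$ requires $4(n-1)-12 = 4n-16$, so the trace is not an instance of the inductive hypothesis.

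The paper's proof sidesteps all of this via the Brambilla--Ottaviani technique. It inducts from $n-6$ to $n$ (base cases $n=12,\dots,17$ by computer), introduces a \emph{third general codimension-$6$ linear section} $N\subset Q_n$, specialises $2n-24$ of the $l_i$ onto $L\cap N$, $2n-24$ of the $m_i$ onto $M\cap N$, and all $12$ of the $q_j$ onto $N$, and then applies \emph{not} the Castelnuovo/Horace sequence but the untwisted exact sequence \eqref{exc},
$$0 \to \calI_{X\cup L\cup M\cup N, Q_n}(3) \to \calI_{X\cup L\cup M, Q_n}(3) \to \calI_{(X\cup L\cup M)\cap N, N}(3) \to 0,$$
which keeps the degree at $3$ on both ends. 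The kernel term is killed by \autoref{lem:cubic2} (applied to the part $X_2$ of $X$ made of $12$ double points on each of $L$, $M$, $N$); the cokernel term lives on $N\cong Q_{n-6}$ and is precisely the inductive instance of \autoref{lem:cubic3} for $n-6$, where the identity $k_n-2k_{n-6}+k_{n-12}=12$ from \autoref{lem:num-lem} matches the $12$ leftover double points. Your intuition that the numerical identities of \autoref{lem:num-lem} drive the constants $2n-12$ and $12$ is correct, but they govern the $n-6\to n$ recursion, not an $n-1\to n$ one. To repair your argument you would have to abandon the hyperplane Horace step entirely and redo it in this codimension-$6$, degree-preserving framework.
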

\begin{proof}
We proceed by induction from $n-6$ to $n$. The statement for $n=12,\dots,17$ can be checked with the help of a computer. Now, fix $n\in\bbN$ with $n\geq 18$ and suppose by inductive hypotesis the statement true for $n-6$. Let $N\subset Q_n$ be a general codimension 6 linear section of $Q_n$. Write $X=X_1\cup X_2$ with 
$$X_1=\bigcup_{i=1}^{2n-24}(2_{Q_n}l_i\cup2_{Q_n}m_i),\qquad X_2=\bigcup_{i=2n-23}^{2n-12}(2_{Q_n}l_i\cup2_{Q_n}m_i)\cup\bigcup_{j=1}^{12} 2_{Q_n}q_i$$
and suppose that:
\begin{itemize}[leftmargin=*]
    \item $l_i\in L\cap N$ for $i=1,\dots, 2n-24$ and they are in general position in $L\cap N$;
    \item $m_i\in M\cap N$ for $i=1,\dots, 2n-24$ and they are in general position in $M\cap N$.  
    \item $l_i\in L$ are in general position in $L$ for $i=2n-23,\dots,2n-12$;
    \item $m_i\in M$ are in general position in $M$ for $i=2n-23,\dots,2n-12$;
    \item $q_j\in N$ are in general position in $N$ for $j=1,\dots,12$;  
    \end{itemize}
By semicontinuity of cohomology, it is enough to prove the statement for such a special $X$. \linebreak Since $X_2$ is made of $12$ double points of $Q_n$ in general position on $L$, $12$ double points of $Q_n$ in general position on $M$ and $12$ double points of $Q_n$ in general position on $N$, by \autoref{lem:cubic2} we get $h^0\bigl(\calI_{X_2\cup L\cup M\cup N,Q_n}(3)\bigr)=0$, and thus $h^0\bigl(\calI_{X\cup L\cup M\cup N,Q_n}(3)\bigr)=0$. Moreover, $N=Q_{n-6}$ and $X\cap N$ is made of $2(n-6)-12$ double points of $N$ in general position on $L\cap N$, $2(n-6)-12$ double points of $N$ in general position on $M\cap N$ and 12 double points of $N$ in general position on $N$. Thus, by inductive assumption, we have $h^0\bigl(\calI_{(X\cup L\cup M)\cap N,N}(3)\bigr)=0$. Hence, the exact sequence \eqref{exc} applied to $X\cup L\cup M$ and $N$ gives
\begin{equation*}
\begin{tikzcd}
	0 \arrow[r] & \calI_{X\cup L\cup M\cup N,Q_n}(3) \arrow[r] & \calI_{X\cup L\cup M,Q_n}(3) \arrow[r] & \calI_{(X\cup L\cup M)\cap N,N}(3) \arrow[r] & 0
\end{tikzcd}
\end{equation*}
and we get $h^0\bigl(\calI_{X\cup L\cup M,Q_n}(3)\bigr)=0$.
\end{proof}
\begin{lem}\label{lem:cubic4}
Let $n\in\bbN$ with $n\geq 6$ and let $L\subset Q_n$ be a general codimension $6$ linear section of $Q_n$. Consider $l_i\in L$ points in general position on $L$  for $i=1,\dots,k_{n-6}$, $q_j\in Q_n$ points in general position on $Q_n$ for $j=1,\dots,2n$, and $\eta\subset Q_n$ a 0-dimensional subscheme supported at $p\in L$ and such that $\eta\subset2_{Q_n}p$, $\ell(\eta)=\delta_n$ and $\ell(\eta\cap L)=\delta_n-\delta_{n-6}$. Let $X\subset Q_n$ the subscheme defined as
$$X=\bigcup_{i=1}^{k_{n-6}}2_{Q_n}l_i\cup\bigcup_{j=1}^{2n}2_{Q_n}q_i\cup\eta.$$
Then, $h^0\bigl(\calI_{X\cup L,Q_n}(3)\bigr)=0$.
\end{lem}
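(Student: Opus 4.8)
The plan is to establish \autoref{lem:cubic4} by induction on $n$, passing from $n-6$ to $n$ in the spirit of \autoref{lem:cubic3}; the finitely many small values of $n$ needed to start the induction can be checked with the help of a computer, as for the previous lemmas. The reason to work in steps of $6$, rather than with hyperplanes, is that the remainder $\delta_n$ is controlled by $n$ modulo $6$ through \autoref{lem:num-lem}, while a general codimension $6$ linear section $N\subset Q_n$ is isomorphic to $Q_{n-6}$, so that restricting to $N$ reproduces the lower-dimensional hypothesis without arithmetic obstructions. The numerical identities of \autoref{lem:num-lem} are exactly what makes the bookkeeping close: applied to $n-6$ they give $k_{n-6}-k_{n-12}=2(n-6)$, and they also give $2n-2(n-6)=12$, so the $k_{n-6}$ double points lying on $L$ split into $k_{n-12}$ (to be pushed onto $N$) plus $2(n-6)=2n-12$ (to be kept on $L$), and the $2n$ free double points split into $2(n-6)=2n-12$ (to be pushed onto $N$) plus $12$ (to be kept free).

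For the inductive step, assume the statement for $n-6$ and pick a general codimension $6$ linear section $N\subset Q_n$, so that $N\cong Q_{n-6}$ and $L\cap N$ is a general codimension $6$ linear section of $N$, isomorphic to $Q_{n-12}$. By semicontinuity of $h^0$ it suffices to prove the vanishing for a convenient specialisation of $X\cup L$: move $k_{n-12}$ of the $l_i$ to general position on $L\cap N$ and keep the remaining $2(n-6)$ general on $L$; move $2(n-6)$ of the $q_j$ to general position on $N$ and keep the remaining $12$ general on $Q_n$; and move the support $p$ of $\eta$ onto $L\cap N$, adjusting $\eta$ so that it remains a length-$\delta_n$ subscheme of $2_{Q_n}p$ satisfying the length condition in the statement and so that, in addition, $\eta\cap N$ is a subscheme of $2_N p$ satisfying the corresponding conditions of the statement in dimension $n-6$. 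Now apply the exact sequence \eqref{exc} to the pair $(X\cup L,\,N)$, namely
$$0\longrightarrow\calI_{X\cup L\cup N,Q_n}(3)\longrightarrow\calI_{X\cup L,Q_n}(3)\longrightarrow\calI_{(X\cup L)\cap N,N}(3)\longrightarrow 0.$$
Since the double points whose support avoids $N$ contribute nothing to the intersection with $N$, the trace $(X\cup L)\cap N$ is, inside $N\cong Q_{n-6}$, precisely the union of $k_{n-12}$ double points of $N$ on $L\cap N$, of $2(n-6)$ double points of $N$ in general position, of $\eta\cap N$, and of $L\cap N$ — that is, the scheme appearing in \autoref{lem:cubic4} for $n-6$ — so $h^0\bigl(\calI_{(X\cup L)\cap N,N}(3)\bigr)=0$ by the inductive hypothesis. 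On the other hand $X\cup L\cup N$ contains a union of $2n-12$ double points of $Q_n$ general on $L$, of $2n-12$ double points of $Q_n$ general on $N$ and of $12$ double points of $Q_n$ in general position, together with $L$ and $N$, i.e.\ a configuration of the type handled by \autoref{lem:cubic3} (with $M=N$), which applies since $n\geq 12$; hence $h^0\bigl(\calI_{X\cup L\cup N,Q_n}(3)\bigr)=0$, and the same holds for any larger subscheme. Plugging these two vanishings into the exact sequence yields $h^0\bigl(\calI_{X\cup L,Q_n}(3)\bigr)=0$, completing the induction.

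The routine parts are the length computations for the traces and residues of the double points and of $\eta$ under restriction to the general section $N$ (in the style of \autoref{rem:trresdouble}), and the standard check that, for generic choices, each of the two subconfigurations used — the one feeding the inductive hypothesis and the one feeding \autoref{lem:cubic3} — is in general position in its Hilbert scheme. The step I expect to be the real obstacle is the construction of the specialised $\eta$: a length-$\delta_n$ subscheme of $2_{Q_n}p$ is the same as a subspace of the $(n-1)$-dimensional cotangent space of $Q_n$ at $p$, and one has to arrange this subspace to meet the generic conormal spaces of $L$, of $N$ and of $L\cap N$ in the prescribed dimensions simultaneously; that these prescriptions are compatible is exactly where the arithmetic recorded in \autoref{lem:num-lem} must be used.
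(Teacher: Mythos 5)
Your argument matches the paper's proof of this lemma step by step: induct from $n-6$ to $n$ using a general codimension-$6$ section $N\cong Q_{n-6}$, specialise $k_{n-12}$ of the $l_i$ onto $L\cap N$, $2(n-6)=2n-12$ of the $q_j$ onto $N$, and $\eta$ to a point of $L\cap N$ with the compatible intersection lengths, then conclude via the residual exact sequence with respect to $N$, feeding the trace to the inductive hypothesis and the residue to \autoref{lem:cubic3}. You are in fact a bit more careful than the paper at one spot: you obtain $h^0\bigl(\calI_{X\cup L\cup N,Q_n}(3)\bigr)=0$ by observing that $X\cup L\cup N$ \emph{contains} a configuration of the type in \autoref{lem:cubic3}, whereas the paper phrases this as $h^0\bigl(\calI_{X_2\cup L\cup M,Q_n}(3)\bigr)=0$, which \autoref{lem:cubic3} does not literally yield since $X_2$ carries no double points on $M$ --- what one actually needs (and what you state) is the vanishing for the larger scheme, after which the argument is identical.
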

\begin{proof}
We proceed by induction from $n-6$ to $n$. The statement for $n=6,\dots,11$ can be checked with the help of a computer. Now fix $n\in\bbN$ with $n\geq 12$ and suppose by inductive hypotesis the statement true for $n-6$. Let $M\subset Q_n$ be a general codimension 6 linear section of $Q_n$. Write $X=X_1\cup X_2$ with 
$$X_1=\bigcup_{i=1}^{k_{n-12}}2_{Q_n}l_i\cup\bigcup_{j=1}^{2n-12}q_j\cup\eta,\qquad X_2=\bigcup_{i=k_{n-12}+1}^{k_{n-6}}2_{Q_n}l_i\cup \bigcup_{j=2n-13}^{2n}2_{Q_n}q_j $$
and suppose that:
\begin{itemize}[leftmargin=*]
\item $l_i\in L\cap M$ for $ i=1,\dots,k_{n-12}$ and they are in general position on $L\cap N$;
\item $q_j\in M$ for $j=1,\dots,2n-12$ and they are in general position on $M$;
\item $l_i\in L$ for $i=k_{n-12}+1,\dots,k_{n-6}$ and they are in general position on $L$;
\item $q_j\in Q_n$ for $j=2n-13,\dots,2n$ and they are in general position on $Q_n$;
\item $\eta$ supported at $p\in L\cap M$ such that $\ell(\eta\cap M)=\delta_{n-6}$ and $\ell(\eta\cap L\cap M)=\delta_{n-6}-\delta_{n-12}$.
\end{itemize}
By semicontinuity of cohomology, it is enough to prove the statement for such a special $X$. By \autoref{lem:num-lem} we have $k_{n-6}-k_{n-12}=2n-12$. Thus, $X_2$ is made of $2n-12$ double points of $Q_n$ in general position on $L$ and $12$ double points of $Q_n$ in general position on $Q_n$. Hence, by \autoref{lem:cubic3} we get $h^0\bigl(\calI_{X_2\cup L\cup M,Q_n}(3)\bigr)=0$, and thus $h^0\bigl(\calI_{X\cup L\cup M,Q_n}(3)\bigr)=0$. \linebreak Moreover, $M=Q_{n-6}$ and $X\cap M$ is made of $k_{n-12}$ double points of $M$ in general position on $L\cap M$, $2n-12=2(n-6)$ double points of $M$ in general position on $M$ and the scheme $\eta\cap M$ such that $\ell(\eta\cap M)=\delta_{n-6}$ and $\ell(\eta\cap L\cap M)=\delta_{n-6}-\delta_{n-12}$. Thus, by inductive assumption, we have $h^0\bigl(\calI_{(X\cup L)\cap M,M}(3)\bigr)=0$. Hence, the exact sequence \eqref{exc} applied to $X\cup L$ and $M$ gives
\begin{equation*}
\begin{tikzcd}
	0 \arrow[r] & \calI_{X\cup L\cup M,Q_n}(3) \arrow[r] & \calI_{X\cup L,Q_n}(3) \arrow[r] & \calI_{(X\cup L)\cap M,M}(3) \arrow[r] & 0
\end{tikzcd}
\end{equation*}
and we get $h^0\bigl(\calI_{X\cup L,Q_n}(3)\bigr)=0$.
\end{proof}
We are finally ready to prove \autoref{thm:post-3}.
\begin{proof}[Proof of \autoref{thm:post-3}]
We proceed by induction from $n-6$ to $n$. The statement for $n=1$ is trivial and for $n=2,\dots,6$ can be checked with the help of a computer. Now fix $n\in\bbN$ with $n\geq 7$ and suppose by inductive hypothesis the statement true for $n-6$. We want to prove that, if $X\subset Q_n$ is a general union of double points, then $X$ has good postulation. By \autoref{rem:rangè} we can suppose that $X$ is the union of $k_n$ general double points in general position and a scheme $\eta$ supported at $p$ such that $\ell(\eta)=\delta_{n}$ and $\eta\subset2_{Q_n}p$, and we have to prove that $h^0\bigl(\calI_{X,Q_n}(3)\bigr)=0$. Let $q_1,\dots,q_{k_n}$ be the supports of the double points of $X$ and $L\subset Q_n$ be a general codimension 6 linear section of $Q_n$. Suppose that:
\begin{itemize}[leftmargin=*]
\item $q_i\in L$ for $i=1,\dots,k_{n-6}$ and they are in general position on $L$;
\item $q_i\in Q_n$ for $i=k_{n-6}+1,\dots,k_n$ and they are in general position in $Q$;
\item $p\in L$ and $\ell(\eta\cap L)=\delta_{n-6}.$
\end{itemize}
By semicontinuity of cohomology it is enough to prove the statement for such a special $X$. Since in $X$ we have $k_{n-6}$ double points in general position on $L$ and $k_n-k_{n-6}=2n$ (see \autoref{lem:num-lem}) double points in general position on $Q$, by \autoref{lem:cubic4}, we have $h^0\bigl(\calI_{X\cup L,Q_n}(3)\bigr)=0$. Morever, $L=Q_{n-1}$ and $X\cap L$ is made of $k_{n-6}$ double points in general position on $L$ and $\eta\cap L\subseteq 2_Lp$ with $\ell(\eta\cap L)=\delta_{n-6}$. Thus, by inductive assumption, we have $h^0\bigl(\calI_{X\cap L,L}(3)\bigr)=0$. Hence, the exact sequence \eqref{exc} applied to $X$ and $L$ gives
\begin{equation*}
\begin{tikzcd}
	0 \arrow[r] & \calI_{X\cup L,Q_n}(3) \arrow[r] & \calI_{X,Q_n}(3) \arrow[r] & \calI_{X\cap L,L}(3) \arrow[r] & 0
\end{tikzcd}
\end{equation*}
and we get $h^0\bigl(\calI_{X,Q_n}(3)\bigr)=0$.
\end{proof}
\begin{teo}
Let $n\in\bbN$ with $n\geq 3$. Then $\dim\sigma_r(\Isot_n^3)=\expdim\sigma_r(\Isot_n^3)$.
\end{teo}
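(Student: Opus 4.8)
The plan is to read the statement off immediately from the two facts already in hand: the equivalence of \autoref{cor:equivalence} and the interpolation result \autoref{thm:post-3}. No new argument is required; the entire content has been packed into the good-postulation statement for cubics.

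Concretely, I would argue as follows. The claim is to be understood for every $r\in\bbN_{\geq1}$. Since $n\geq 3\geq 1$, the triple $(n,3,r)$ meets the hypotheses of \autoref{cor:equivalence}, which says that $\dim\sigma_r(\Isot_{n,3})=\expdim\sigma_r(\Isot_{n,3})$ holds exactly when a general union of $r$ double points of $Q_n$ has good postulation in degree $3$. By \autoref{thm:post-3}, a general union of double points in $Q_n$ does have good postulation in degree $3$; in particular this applies to a union of any prescribed number $r$ of them. Hence $\dim\sigma_r(\Isot_{n,3})=\expdim\sigma_r(\Isot_{n,3})$ for every $r$, which is the assertion.

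I do not expect any obstacle here: all the difficulty has been absorbed into \autoref{thm:post-3}, whose proof is the technical core of the section --- the Brambilla--Ottaviani device of specialising double points onto general codimension-$6$ linear sections of $Q_n$, the bookkeeping of \autoref{lem:num-lem} (the equality $\delta_{n-6}=\delta_n$ being precisely what makes the induction $n-6\rightsquigarrow n$ proceed without arithmetic waste), and the finitely many computer-verified base cases distributed through \autoref{lem:cubic1}--\autoref{lem:cubic4}. Granting all of that, the present theorem is a one-line corollary, which I would simply state as such; I might also add the remark that it furnishes the base case $d=3$ for the differential-Horace induction that handles the remaining range $d\geq 4$.
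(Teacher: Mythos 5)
Your proof is correct and matches the paper's, which reads simply ``The statement follows by \autoref{cor:equivalence} and \autoref{thm:post-3}.'' You have merely spelled out the same one-line deduction.
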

\begin{proof}
The statement follows by \autoref{cor:equivalence} and \autoref{thm:post-3}.
\end{proof}
\subsection{The solution of the interpolation problem for \texorpdfstring{$d\geq4$}{dgeq4}}
Before giving the proof of \Cref{theo:AH_harmonics}, we need the following numerical lemma. 
\begin{lem}\label{lem:num2}
Let $d,n\in\bbN$ with $d,n\geq 4$, and let $r,u,\varepsilon\in\bbN$ such that $r\leq\bigl\lceil{{h^0\bigl(\calO_{Q_n}(d)\bigr)}/{n}}\bigr\rceil$, $0\leq\varepsilon<n-1$ and $rn-(n-1)u-\varepsilon=h^0\bigl(\calO_{Q_n}(d-1)\bigr)$. Then,
\begin{enumerate}[leftmargin=*]
\item $(n-1)\varepsilon+u\leq h^0\bigl(\calO_{Q_{n-1}}(d-1)\bigr)$;
\item $h^0\bigl(\calO_{Q_n}(d-2)\bigr)\leq (r-u-\varepsilon)n$;
\item $r-u-\varepsilon\geq 0$
\item $u\geq\varepsilon$ if $r\geq\bigl\lfloor{{h^0\bigl(\calO_{Q_n}(d)\bigr)}/{n}}\bigr\rfloor$;
\item $r-u-\varepsilon\geq n+1$ if $d=4$ and $n\geq 9$.
\end{enumerate}
\end{lem}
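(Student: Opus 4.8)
The plan is to reduce everything to one structural identity together with a few one--variable estimates. Throughout set $H_m(e):=h^0\bigl(\calO_{Q_m}(e)\bigr)=\binom{m+e}{m}-\binom{m+e-2}{m}$; this is strictly increasing in $m$ and in $e$, with closed forms $H_m(3)=\tfrac16 m(m+1)(m+5)$ and $H_m(4)=\tfrac1{24}m(m+1)(m+2)(m+7)$. The only identity needed is the recursion
\[
H_m(e)=H_m(e-1)+H_{m-1}(e)\qquad(m\geq 2,\ e\geq 1),
\]
obtained by applying Pascal's rule twice to the binomial expression, or from the sequence $0\to\calO_{Q_m}(e-1)\to\calO_{Q_m}(e)\to\calO_{Q_{m-1}}(e)\to 0$ together with $h^1\bigl(\calO_{Q_m}(e-1)\bigr)=0$.

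First I would extract the arithmetic of the hypotheses. Since $u,\varepsilon\geq 0$, the equality $rn=H_n(d-1)+(n-1)u+\varepsilon$ forces $rn\geq H_n(d-1)$, and, as $0\leq\varepsilon<n-1$, it pins down $u=\lfloor S/(n-1)\rfloor$ and $\varepsilon=S\bmod(n-1)$ uniquely in terms of $S:=rn-H_n(d-1)$; the bound $r\leq\lceil H_n(d)/n\rceil$ gives $rn\leq H_n(d)+n-1$, hence $0\leq S\leq H_n(d)-H_n(d-1)+n-1=H_{n-1}(d)+n-1$ by the recursion. The decisive observation is $n(r-u-\varepsilon)=rn-nu-n\varepsilon=H_n(d-1)-\bigl(u+(n-1)\varepsilon\bigr)$. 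Consequently (3) is equivalent to $u+(n-1)\varepsilon\leq H_n(d-1)$, while (2), using $H_n(d-1)-H_n(d-2)=H_{n-1}(d-1)$ from the recursion, is equivalent to $u+(n-1)\varepsilon\leq H_{n-1}(d-1)$, which is precisely (1); and (1) implies (3) because $H_{n-1}(d-1)\leq H_n(d-1)$. Thus (1), (2), (3) all follow once (1) is proved.

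For (1) I would bound $W:=(n-1)\varepsilon+u$ from above. Since $u=(S-\varepsilon)/(n-1)\leq(S_{\max}-\varepsilon)/(n-1)$ with $S_{\max}\leq H_{n-1}(d)+n-1$, and $\varepsilon\mapsto(n-1)\varepsilon+(S_{\max}-\varepsilon)/(n-1)$ is increasing on $[0,n-2]$, one gets $W\leq(n-1)(n-2)+\tfrac{1}{n-1}\bigl(H_{n-1}(d)+1\bigr)$. Substituting $H_{n-1}(d)=H_{n-1}(d-1)+H_{n-2}(d)$, the target $W\leq H_{n-1}(d-1)$ becomes the polynomial inequality $(n-1)^2(n-2)+H_{n-2}(d)+1\leq(n-2)H_{n-1}(d-1)$, i.e. $(n-1)H_{n-1}(d-1)-H_{n-1}(d)\geq(n-1)^2(n-2)+1$. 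For $d\geq 4$ the leading term in $n$ of $(n-2)H_{n-1}(d-1)$ dominates that of $H_{n-2}(d)$ (their leading coefficients being in ratio $d:1$) and the cubic term $(n-1)^2(n-2)$ is absorbed for $n\geq 4$; I would finish with a direct check of the few borderline pairs $(n,d)$, the tightest being $(4,4)$. Parts (4) and (5) are analogous. For (4): $r\geq\lfloor H_n(d)/n\rfloor$ gives $S\geq H_{n-1}(d)-(n-1)$, so $u\geq\tfrac{S-(n-2)}{n-1}\geq\tfrac{H_{n-1}(d)-2n+3}{n-1}\geq n-2\geq\varepsilon$, the last step because $H_{n-1}(d)\geq H_{n-1}(3)=\tfrac16(n-1)n(n+4)\geq n^2-n-1$. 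For (5), with $d=4$ one has $H_n(d-1)=\tfrac16 n(n+1)(n+5)$, and $r-u-\varepsilon\geq n+1$ is equivalent to $u+(n-1)\varepsilon\leq H_n(3)-n(n+1)=\binom{n+1}{3}$; plugging in $S_{\max}\leq H_{n-1}(4)+n-1$ and the worst case $\varepsilon=n-2$ reduces this to $n(n+1)(3n-10)\geq 24(n-1)(n-2)+\tfrac{24}{n-1}$, which fails at $n=8$ and holds for all $n\geq 9$.

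The main obstacle is bookkeeping rather than ideas: $u$ and $\varepsilon$ are not free, both being determined by $r$, so I must check that the coarse substitutions ``$\varepsilon\leq n-2$'' and ``$u\leq S_{\max}/(n-1)$'' do not lose too much, and isolate precisely which small $(n,d)$ need a hand check. The threshold $n\geq 9$ in (5) is sharp for this style of estimate, so that case requires the most care; everything else is a routine verification of polynomial inequalities that become comfortable for $d\geq 4$.
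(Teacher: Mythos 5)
Your proposal is correct and follows essentially the same route as the paper's proof: express everything through $f_{n,d}=h^0\bigl(\calO_{Q_n}(d)\bigr)$, observe that $n(r-u-\varepsilon)=f_{n,d-1}-\bigl(u+(n-1)\varepsilon\bigr)$ so that (2) is equivalent to (1) (via $f_{n,d-1}-f_{n,d-2}=f_{n-1,d-1}$) and (1) implies (3), then bound $u+(n-1)\varepsilon$ from above and reduce (1), (4), (5) to explicit polynomial inequalities in $n$ for $d=4$. The one place you are vaguer than the paper is in how to reduce from general $d\ge 4$ to $d=4$ in part (1): you appeal to "the leading term dominates," while the paper proves that the auxiliary quantity $g_{n,d}=f_{n-1,d-1}-\tfrac{f_{n-1,d}}{n-1}$ is nondecreasing in $d$ (by computing $g_{n,d}-g_{n,d-1}=f_{n-2,d-1}-\tfrac{f_{n-2,d}}{n-1}\ge 0$), which then lets it verify only the single quartic polynomial in $n$ arising from $d=4$. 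Your slightly tighter bound on $W$ (using $u=(S-\varepsilon)/(n-1)$ rather than discarding $\varepsilon$ in the bound on $u$) is a genuine small improvement, but it changes nothing in the end; as you anticipate, the substance is bookkeeping, and the borderline cases you flag are exactly the ones the paper closes by hand.
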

\begin{proof}
We set 
$$f_{n,d}\coloneqq h^0\bigl(\calO_{Q_n}(d)\bigr)=\binomial{n+d}{n}-\binomial{n+d-2}{n}.$$
(1): by definition of $u$ and $\eps$ we have
\begin{gather*}
u=\frac{1}{n-1}(rn-f_{n,d-1}-\eps)\leq\frac{1}{n-1}(f_{n,d}+n-f_{n,d-1})=\frac{1}{n-1}(f_{n-1,d}+n)
\end{gather*}
and thus
$$(n-1)\eps+u\leq(n-1)(n-2)+\frac{n}{n-1}+\frac{f_{n-1,d}}{n-1}.$$
A sufficient condition for (1) to hold is that
\begin{equation}\label{disq}
(n-1)(n-2)+\frac{n}{n-1}\leq \underbrace{f_{n-1,d-1}-\frac{f_{n-1,d}}{n-1}}_{g_{n,d}}.
\end{equation}
We have
$$g_{n,d}-g_{n,d-1}=f_{n-2,d-1}-\frac{f_{n-2,d}}{n-1}=\binomial{n+d-5}{n-2}\frac{(n-2)(d(2n-4)+n^2-7n+12)}{d(d-2)(n-1)}$$
and the right-hand side of the equation is non-negative for any $n\geq 4$ and $d\geq 4$. Hence, $g_{n,d}$ is increasing in $d$ for any $d\geq 4$ and $n\geq 4$. As a consequence, it is enough to check \eqref{disq} for $d=4$ and, for this value of $d$, \eqref{disq} is equivalent to
$$3n^4+69n^2-22n^3-122n-48\geq0$$
which is always verified for $n\geq 4$.\\
(2): we have
$$(r-u-\eps)n=rn-un-\eps n=f_{n,d-1}-u-(n-1)\eps$$
and thus
$$f_{n,d-2}\leq(r-u-\eps)n\ \Leftrightarrow\ \underbrace{f_{n,d-2}-f_{n,d-1}}_{-f_{n-1,d-1}}\leq -u-(n-1)\eps\ \Leftrightarrow\ f_{n-1,d-1}\geq u+(n-1)\eps$$
and this is true by part (1).\\
(3): it is an immediate consequence of (2).\\
(4): it suffices to show that $u\geq n-2$. By definition, we have
$u=\left\lfloor\frac{rn-f_{n,d-1}}{n-1}\right\rfloor$, so
it is enough to show that
$$\frac{rn-f_{n,d-1}}{n-1}-1\geq n-2,$$
which is equivalent to
$$rn\geq f_{n,d-1}+(n-1)^2.$$
Since $r\geq \left\lfloor\frac{f_{n,d}}{n}\right\rfloor$, we have $rn\geq f_{n,d}-n$, and thus it suffices to show that $$f_{n,d}-n\geq f_{n,d-1}+(n-1)^2$$ 
and this is equivalent to
$$\underbrace{f_{n,d}-f_{n,d-1}}_{f_{n-1,d}}\geq (n-1)^2+n$$
As in case (1), since $f_{n-1,d}$ is increasing in $d$ for any $n\geq 4$, it is enough to check that \[f_{n-1,4}\geq(n-1)^2+n,
\]and this is equivalent to
$$n^4+6n^3-25n^2+18n-24\geq0$$
which holds for any $n\geq 4$.\\
(5): We have
\begin{align*}
r-u-\eps&=\frac{1}{n-1}\bigl(f_{n,3}-r-(n-2)\eps\bigr)\geq\frac{1}{n-1}\left(f_{n,3}-\frac{f_{n,4}}{n}-1-(n-2)^2\right)\\
&=\frac{3n^3-10n^2+93n-134}{24(n-1)}
\end{align*}
and 
$$\frac{3n^3-10n^2+93n-134}{24(n-1)}\geq n+1$$
for any $n\geq9 $.
\end{proof}
Now we denote by $C_{n,d}(r)$ the following statement:
$$C_{n,d}(r)\coloneqq\text{ A general union of $r$ double points of $Q_n$ has good postulation in degree $d$}.$$
We are now ready to prove the following theorem.
\begin{teo}\label{teo:bigind}
Let $n,d\in\bbN_{\geq 4}$ and let $r,u,\varepsilon\in\bbN$ such that $$\left\lfloor{\frac{h^0\bigl(\calO_{Q_n}(d)\bigr)}{n}}\right\rfloor\leq r\leq\left\lceil{\frac{h^0\bigl(\calO_{Q_n}(d)\bigr)}{n}}\right\rceil,$$ $0\leq\varepsilon<n-1$ and $rn-(n-1)u-\varepsilon=h^0\bigl(\calO_{Q_n}(d-1)\bigr)$. If $C_{n-1,d}(u)$, $C_{n,d-1}(r-u)$, and $C_{n,d-2}(r-u-\varepsilon)$ hold, then $C_{n,d}(r)$ holds.
\end{teo}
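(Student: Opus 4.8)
The plan is to run the differential Horace method: degenerate a general union of $r$ double points of $Q_n$ onto a general hyperplane section $H\subset Q_n$, which by genericity of $H$ is a smooth quadric isomorphic to $Q_{n-1}$, and then feed the three hypotheses $C_{n-1,d}(u)$, $C_{n,d-1}(r-u)$, $C_{n,d-2}(r-u-\varepsilon)$ into the pieces produced. First I would invoke \autoref{rem:rangè} to reduce $C_{n,d}(r)$ to showing $h^0\bigl(\calI_{X,Q_n}(d)\bigr)=0$ for a single conveniently chosen degeneration $X$ of a general union of double points in the \emph{cas rangé} $\ell(X)=h^0\bigl(\calO_{Q_n}(d)\bigr)$ (this settles both admissible values of $r$ at once). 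Fixing $H\cong Q_{n-1}$, I would specialise $X$ so that $u$ of its double points sit at general points of $H$ with the standard Horace slicing ($\Tr_H=2_{Q_{n-1}}P$ of length $n-1$, $\Res_H=P$), $\varepsilon$ of them sit at general points of $H$ with the first differential slicing $p=1$ ($\Tr^1_H=P$, $\Res^1_H=2_{Q_{n-1}}P$), and the remaining $r-u-\varepsilon$ double points — a nonnegative number by \autoref{lem:num2}(3) — stay generic on $Q_n$. By the very definition of $u$ and $\varepsilon$, the residue of $X$ with respect to $H$ then has length exactly $rn-(n-1)u-\varepsilon=h^0\bigl(\calO_{Q_n}(d-1)\bigr)$, and the trace has complementary length $h^0\bigl(\calO_{Q_{n-1}}(d)\bigr)$; both are again in the cas rangé.

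Next I would apply \autoref{HoraceDiff}. Its first hypothesis requires that the trace $\Tr_H(X)$, a general union of $u$ double points of $Q_{n-1}$ and $\varepsilon$ general points of $Q_{n-1}$ of total length $u(n-1)+\varepsilon=h^0\bigl(\calO_{Q_{n-1}}(d)\bigr)$, impose independent conditions in degree $d$; since $u(n-1)\le h^0\bigl(\calO_{Q_{n-1}}(d)\bigr)$, the statement $C_{n-1,d}(u)$ gives that the $u$ double points impose $u(n-1)$ independent conditions, leaving a linear system of dimension $\varepsilon$, which $\varepsilon$ further general points of $Q_{n-1}$ cut down to zero. Its second hypothesis requires that the residue $Y:=\Res_H(X)$ — the union of $u$ general simple points of $H$, of $\varepsilon$ double points of $Q_{n-1}$ supported on $H$, and of $r-u-\varepsilon$ general double points of $Q_n$, of total length $h^0\bigl(\calO_{Q_n}(d-1)\bigr)$ — satisfy $h^0\bigl(\calI_{Y,Q_n}(d-1)\bigr)=0$. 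For this I would peel off $H$ once more: $\Res_H(Y)$ is a general union of $r-u-\varepsilon$ double points of $Q_n$, so $C_{n,d-2}(r-u-\varepsilon)$ together with \autoref{lem:num2}(2), which ensures $h^0\bigl(\calO_{Q_n}(d-2)\bigr)\le(r-u-\varepsilon)n$, gives $h^0\bigl(\calI_{\Res_H(Y),Q_n}(d-2)\bigr)=0$, while $\Tr_H(Y)$ is a general union of $u$ simple points and $\varepsilon$ double points of $Q_{n-1}$, of length $\le h^0\bigl(\calO_{Q_{n-1}}(d-1)\bigr)$ by \autoref{lem:num2}(1). Observing that $Y$ agrees with a general union of $r-u$ double points and $u-\varepsilon$ simple points of $Q_n$ (here $u\ge\varepsilon$ by \autoref{lem:num2}(4)) up to thinning $\varepsilon$ of those double points onto $H$, the hypothesis $C_{n,d-1}(r-u)$ controls the postulation of $Y$ away from $H$, the vanishing $h^0\bigl(\calI_{\Res_H(Y)}(d-2)\bigr)=0$ guarantees that the $u$ points lying on $H$ still impose independent conditions, and the good postulation of $\Tr_H(Y)$ on $Q_{n-1}$ is supplied by the induction; combining these yields $h^0\bigl(\calI_{Y,Q_n}(d-1)\bigr)=0$, which is the second hypothesis of \autoref{HoraceDiff}, and hence $C_{n,d}(r)$.

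The main obstacle is precisely this residue step. Although $Y$ is in the cas rangé, its trace on $H$ is \emph{strictly} underdetermined, so no naive dimension count produces $h^0\bigl(\calI_{Y,Q_n}(d-1)\bigr)=0$; one genuinely has to show that the connecting homomorphism $H^0\bigl(\calI_{\Tr_H(Y),H}(d-1)\bigr)\to H^1\bigl(\calI_{\Res_H(Y),Q_n}(d-2)\bigr)$ in the long exact cohomology sequence attached to the Castelnuovo sequence is an isomorphism. This is exactly what the $p=1$ differential slices are for: the vertically graded structure of the $\varepsilon$ slices, combined with the genericity assumptions packaged into \autoref{HoraceDiff}, is what forces that isomorphism and so disposes of the arithmetic obstruction that makes the plain Horace method fail here. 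Secondary technical points that still need attention are the flatness of all the degenerations invoked, so that semicontinuity of $h^0$ applies, and the (routine, by the argument sketched above) checks that the various small families of free points impose independent conditions; and for the base case $d=4$ of the accompanying induction on $d$ one additionally needs \autoref{lem:num2}(5), which guarantees $r-u-\varepsilon\ge n+1$ for $n\ge 9$ and hence that the purely generic part of the degeneration is substantial enough for the estimates to close.
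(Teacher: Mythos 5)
Your overall setup is correct and matches the paper: specialise $u$ double points onto a general hyperplane section $H\cong Q_{n-1}$ with the standard Horace slicing, $\varepsilon$ further double points with the $p=1$ differential slicing, keep $r-u-\varepsilon$ double points general on $Q_n$, and then feed the three hypotheses into the differential Horace lemma. Your treatment of the trace (use $C_{n-1,d}(u)$ and then cut down by $\varepsilon$ general simple points of $Q_{n-1}$) is exactly how the paper handles it.

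The gap is in the residue step, and it is substantive. You attempt a second Castelnuovo peeling of $Y:=\Res_H(X)\cup\Res^{\bfp}_H(Z')$ with respect to $H$: since $\Res_H(Y)$ is just the $r-u-\varepsilon$ general double points, $C_{n,d-2}$ and \autoref{lem:num2}(2) give $h^0\bigl(\calI_{\Res_H(Y)}(d-2)\bigr)=0$, and the Castelnuovo sequence then yields only the inequality $h^0\bigl(\calI_Y(d-1)\bigr)\leq h^0\bigl(\calI_{\Tr_H(Y),H}(d-1)\bigr)$, whose right-hand side is generally positive because, as you correctly notice, $\Tr_H(Y)$ is underdetermined. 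You then assert that the $p=1$ vertically graded structure \lq\lq forces\rq\rq\ the connecting homomorphism to be an isomorphism. That is not so: the differential structure enters \autoref{HoraceDiff} only in relating $X\cup Z$ to its trace and residue; the residue $Y$ carries no remembered vertically graded data, and \autoref{HoraceDiff} says nothing about $h^0\bigl(\calI_Y(d-1)\bigr)$ directly. You must still establish $h^0\bigl(\calI_Y(d-1)\bigr)=0$ by an independent argument, and the second peeling does not supply one.

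The paper closes the gap differently, and your sketch does gesture towards some of the right ingredients but does not assemble them. The mechanism is: (i) observe that $X_2\cup\Res^{\bfp}_H(Z')$ is contained in a union $W$ of $r-u$ double points of $Q_n$; (ii) apply $C_{n,d-1}(r-u)$ to get $h^0\bigl(\calI_W(d-1)\bigr)=u-\varepsilon$, then use \autoref{rem:rangè} to deduce $h^0\bigl(\calI_{X_2\cup\Res^{\bfp}_H(Z')}(d-1)\bigr)=u$ (the scheme is shorter than $W$ by exactly $\varepsilon$ and sits inside it); (iii) show, using $C_{n,d-2}(r-u-\varepsilon)$ and \autoref{lem:num2}(2), that $h^0\bigl(\calI_{X_2}(d-2)\bigr)=0$, whence $Q_{n-1}$ is not in the base locus of $\bigl|\calI_{X_2\cup\Res^{\bfp}_H(Z')}(d-1)\bigr|$; (iv) conclude that the $u$ general simple points of $Q_{n-1}$ therefore impose $u$ further independent conditions, so $h^0\bigl(\calI_Y(d-1)\bigr)=u-u=0$. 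The base-locus step (iii)--(iv) is what replaces the nonexistent cancellation you hoped the connecting homomorphism would provide; $C_{n,d-1}(r-u)$ is used not to control \lq\lq$Y$ away from $H$\rq\rq\ via a second Horace pass, but to pin down the Hilbert function of the double-point part of $Y$ before the simple points are added. Also note a bookkeeping slip: $Y$ contains $u$ simple points, not $u-\varepsilon$; your description of $Y$ as a thinning of \lq\lq$r-u$ double and $u-\varepsilon$ simple points\rq\rq\ miscounts by $\varepsilon$. Your appeal to \autoref{lem:num2}(1) is also not needed in the paper's actual residue argument; it is only used internally in the proof of \autoref{lem:num2}(2).
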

\begin{proof}
To prove the statement, it is enough to construct a union of $r$ double points on $Q_n$ having good postulation in degree $d$. We fix a general hyperplane section $Q_{n-1}\in|\calO_{Q_n}(1)|$ and we consider the following schemes
\begin{gather*}
X_1=2_{Q_n}P_1\cup\dots 2_{Q_n}P_u, \quad X_2=2_{Q_n}P_{u+1}\cup\dots\cup2_{Q_n}P_{r-\varepsilon},\quad X=X_1\cup X_2\\
Z=2_{Q_n}P_{r-\varepsilon+1}\cup\dots\cup 2_{Q_n}P_r, \quad Z'=2_{Q_n}P'_{r-\varepsilon+1}\cup\dots\cup 2_{Q_n}P'_r,\qquad\text{ if $\varepsilon\geq 1$,}\\
Z=\emptyset, \quad Z'=\emptyset,\qquad\text{ if $\varepsilon=0$,}
\end{gather*}
where $P_1,\dots,P_r,P'_{r-\varepsilon+1},\dots,P'_r$ are points of $Q_n$ such that
\begin{itemize}[leftmargin=*]
\item $P_1,\dots,P_u\in Q_{n-1}$;
\item $P_{u+1},\dots,P_r$ are in general position on $Q_n$;
\item $P'_{r-\varepsilon+1},\dots,P'_r\in Q_{n-1}$;
\item $P_1,\dots,P_u,P'_{r-\varepsilon+1},\dots,P'_r\in Q_{n-1}$ are in general position on $Q_{n-1}$.
\end{itemize}
Note that $X_1,X_2,Z,Z'$ are well-defined by \autoref{lem:num2}, (3). We have
$$\Tr_{Q_{n-1}}(X)=2_{Q_{n-1}}P_1\cup\dots\cup2_{Q_{n-1}}P_u,\quad \Res_{Q_{n-1}}(X)=P_1\cup\dots\cup P_u\cup X_2$$
and, if we take $\bfp=(1,\dots,1)\in\bbN^{\varepsilon},$
we have
\begin{gather*}
\Tr_{Q_{n-1}}^{\bfp}Z'=P'_{r-\varepsilon+1}\cup\dots\cup P'_r,\quad \Res_{Q_{n-1}}^{\bfp}Z'=2_{Q_{n-1}}P'_{r-\varepsilon+1}\cup 2_{Q_{n-1}}P'_r,\quad\text{if $\varepsilon\geq 1$}\\
\Tr_{Q_{n-1}}^{\bfp}Z'=\emptyset,\quad \Res_{Q_{n-1}}^{\bfp}Z'=\emptyset,\quad\text{if $\varepsilon=0$.}
\end{gather*}
Recall that the (differential) traces are subschemes of $Q_{n-1}$ while the (differential) residues are subschemes of $Q_n$. In particular $\Res_{Q_{n-1}}^{\bfp}Z'$ is a union of $\varepsilon$ double points of $Q_{n-1}$ which are embedded in $Q_n$ and whose supports are general in $Q_{n-1}$. Note that $X_2\cup \Res_{Q_{n-1}}^{\bfp}Z'$ is contained in a general union $W$ of $r-u$ double points of $Q_n$. By $C_{n,d-1}(r-u)$ we have
$$h^0\bigl(\calI_{W,Q_{n}}(d-1)\bigr)=\max\{0,h^0\bigl(\calO_{Q_n}(d-1)\bigr)-(r-u)n\}=\max\{0,u-\varepsilon\}=u-\varepsilon$$
where the first equality follows by the definition of $u$ and $\varepsilon$ and the second one follows by \autoref{lem:num2} (4). As a consequence, by \autoref{rem:rangè}, we get
$$h^0\bigl(\calI_{X_2\cup \Res_{Q_{n-1}}^{\bfp}Z' }(d-1)\bigr)=h^0\bigl(\calI_{W,Q_{n}}(d-1)\bigr)+\varepsilon=u.$$
Moreover, by $C_{n,d-2}(r-u-\varepsilon)$ we have 
$$h^0\bigl(\calI_{X_2,Q_{n}}(d-2)\bigr)=\max\{0,h^0\bigl(\calO_{{Q_n}}(d-2)\bigr)-(r-u-\varepsilon)n\}=0,$$
where the last equality follows by \autoref{lem:num2} (2).
Since $h^0\bigl(\calI_{X_2,Q_n}(d-2)\bigr)=0$, the base locus of $|\calI_{X_2\cup \Res_{Q_{n-1}}^{\bfp}Z' }(d-1)|$ cannot contain $Q_{n-1}$. As a consequence, since $P_1\cup\dots\cup P_u$ is a general union of simple points on $Q_{n-1}$, we have 
$$h^0\bigl(\calI_{\Res_{Q_{n-1}}(X)\cup\Res_{Q_{n-1}}^{\bfp}(Z')}(d-1)\bigr)=u-u=0.$$
Hence, by \autoref{HoraceDiff}, we have
$$h^0\bigl(\calI_{X\cup Z}(d)\bigr)=h^0\bigl(\calI_{\Tr_{Q_{n-1}}(X)\cup\Tr_{Q_{n-1}}^{\bfp}(Z'),Q_{n-1}}(d)\bigr)$$
and, by $C_{n-1,d}(u)$, we have
\begin{align*}
h^0\bigl(\calI_{\Tr_{Q_{n-1}}(X)\cup\Tr_{Q_{n-1}}^{\bfp}(Z'),Q_{n-1}}(d)\bigr)&=\max\{0,h^0\bigl(\calO_{Q_{n-1}}(d)\bigr)-u(n-1)-\varepsilon\}\\
&=\max\{0,h^0\bigl(\calO_{Q_{n}}(d)\bigr)-rn\}.\qedhere
\end{align*}
\end{proof}
We are ready to give the proof of \Cref{theo:AH_harmonics}, which, by \Cref{cor:equivalence} can be formulated in the following equivalent statement.
\begin{teo}\label{teo:post_fin}
Let $n,r,d\in \bbN_{\geq 1}$ and let $X\subset Q_n\subset\bbP^n$ be a general union of $r$ double points of $Q_n$. If $d\geq 3$, then
$$h^0\bigl(\calI_{X,Q_n}(d)\bigr)=\max\bigl\{0,h^0\bigl(\calO_{Q_n}(d)\bigr)-rn\bigr\},$$
while, if $d=2$, then
$$h^0\bigl(\calI_{X,Q_n}(2)\bigr)=\begin{cases}h^0\bigl(\calO_{Q_n}(2)\bigr)-rn+\dbinom{r-1}{2},&\text{if $1\leq r\leq n$,}\\[2ex]
0,&\text{if $r\geq n+1$.}\end{cases}$$
\begin{proof}
The case $d=1$ is trivial, the case $d=2$ is an immediate consequence of \autoref{prop:interpol-problem} and \autoref{prop:dim_isotropic_secants_quadrics} and the case $d=3$ is \autoref{thm:post-3}. The cases $n=1,2$ are trivial and the case $n=3$ follows by the fact that $Q_3\cong \bbP^1\times\bbP^1$, $\calO_{Q_3}\otimes\calO_{\bbP^3}(d)\cong\calO_{\bbP^1\times\bbP^1}(d,d)$  and, by \cite{LP13}*{Theorem 3.1} and \cite{Laf02}*{Table I}, a general union of double points on $\bbP^1\times\bbP^1$ has always good postulation with respect to $\calO_{\bbP^1\times\bbP^1}(d,d)$ for any $d\geq 4$. Thus, we can suppose, from now on, that $d\geq 4$ and $n\geq 4$. We use the same notation of \autoref{teo:bigind} and, by \autoref{rem:rangè}, we can suppose
$$ \left\lfloor{\frac{h^0\bigl(\calO_{Q_n}(d)\bigr)}{n}}\right\rfloor\leq r\leq\left\lceil{\frac{h^0\bigl(\calO_{Q_n}(d)\bigr)}{n}}\right\rceil.$$
If $d=4$ and $4\leq n\leq 6$, the result can be checked by using a computer. For $d=4$ and $n\geq 9$, suppose  by induction that $C_{n-1,4}(u)$ holds. For $n=7,8$, an easy check shows that $r-u-\eps\geq n+1$ and, by \autoref{lem:num2}, $r-u-\eps\geq n+1$ for any $n\geq 9$. Thus, under our assumption, $r-u-\eps\geq n+1$ for any $n\geq 7$.  We have that $C_{n,3}(r-u) $ holds because there are no exceptional cases for $d=3$ and $C_{n,2}(r-u-\eps)$ holds because $r-u-\eps\geq n+1$ . Hence, by \autoref{teo:bigind}, also $C_{n,4}(r)$ holds. The result for $n\geq4$ and $d\geq 5$ follows by induction on $n$ and $d$ by using \autoref{teo:bigind} on the base cases $(n,d)=(3,d),(n,3),(n,4)$.
\end{proof}
\end{teo}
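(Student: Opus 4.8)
The plan is to reduce \Cref{teo:post_fin} to the interpolation statements already isolated in the excerpt and then feed them into the inductive machine of \Cref{teo:bigind}. First I would dispose of the small cases. The case $d=1$ is immediate, since $h^0(\mathcal{O}_{Q_n}(1))=n$ and $r$ general simple points impose independent conditions. For $d=2$ the formula is exactly \Cref{prop:dim_isotropic_secants_quadrics} transported through \Cref{prop:interpol-problem}, which identifies $H_{Q_n}(X,2)$ with $\dim\sigma_r(\Isot_{n,2})+1$. The case $d=3$ is precisely \Cref{thm:post-3}, already proved via the Brambilla--Ottaviani specialisation of the double points onto linear sections of high codimension (using the periodicity $\delta_{n-6}=\delta_n$ from \Cref{lem:num-lem} to sidestep arithmetic obstructions). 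Finally $n=1,2$ are trivial, and $n=3$ follows because $Q_3\cong\mathbb{P}^1\times\mathbb{P}^1$ with $\mathcal{O}_{Q_3}(d)\cong\mathcal{O}_{\mathbb{P}^1\times\mathbb{P}^1}(d,d)$, so the statement becomes the known good postulation of general double points on $\mathbb{P}^1\times\mathbb{P}^1$ in bidegree $(d,d)$ for $d\geq 4$ (cited from Laface and Laface--Postinghel).

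It then remains to treat $d\geq 4$ and $n\geq 4$. By \Cref{rem:rangè} it suffices to prove $C_{n,d}(r)$ for $r$ in $\{\lfloor h^0(\mathcal{O}_{Q_n}(d))/n\rfloor,\lceil h^0(\mathcal{O}_{Q_n}(d))/n\rceil\}$; for such $r$ I would set $u,\varepsilon$ by $rn-(n-1)u-\varepsilon=h^0(\mathcal{O}_{Q_n}(d-1))$ with $0\leq\varepsilon<n-1$, which are well-defined and non-negative by \Cref{lem:num2}(3). The strategy is a double induction on the pair $(n,d)$, set up so that the engine \Cref{teo:bigind} reduces $C_{n,d}(r)$ to the three smaller instances $C_{n-1,d}(u)$, $C_{n,d-1}(r-u)$ and $C_{n,d-2}(r-u-\varepsilon)$; the base cases of this induction are the column $n=3$ (done above), the row $d=3$ (where \Cref{thm:post-3} gives good postulation with no exceptional values), and the row $d=4$.

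Thus the real work is the base row $d=4$. For $4\leq n\leq 6$ I would verify $C_{n,4}(r)$ directly by computer. For $n\geq 7$ I would induct on $n$: assuming $C_{n-1,4}(u)$, apply \Cref{teo:bigind}, noting that $C_{n,3}(r-u)$ holds because $d=3$ has no defective values, and that $C_{n,2}(r-u-\varepsilon)$ holds because the bound $r-u-\varepsilon\geq n+1$ — which follows from \Cref{lem:num2}(5) for $n\geq 9$ and from a short direct check for $n=7,8$ — places us past the defective range for quadrics. Once the row $d=4$ is established, the remaining cases $n\geq 4$, $d\geq 5$ follow by the same double induction via \Cref{teo:bigind} on the base cases $(3,d),(n,3),(n,4)$.

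Within this proof the delicate point is checking, at each application of \Cref{teo:bigind}, that its numerical hypotheses are met — i.e.\ that the estimates of \Cref{lem:num2} apply and that the finitely many computer-checked base cases are bounded correctly — together with keeping straight which of trace and residue lives on $Q_{n-1}$ versus $Q_n$. The genuinely hard mathematical content, however, sits upstream: it is \Cref{teo:bigind} itself, whose proof requires specialising the double points onto a general hyperplane section and splitting them into differential traces and residues (via \Cref{HoraceDiff}) so that the conditions divide ``without waste'', and \Cref{thm:post-3} for cubics; granting those, the present statement is just the correct orchestration of the induction and the base cases.
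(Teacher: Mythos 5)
Your proof follows the paper's argument essentially verbatim: same disposal of the base cases $d\in\{1,2,3\}$ and $n\in\{1,2,3\}$, same reduction via \Cref{rem:rangè}, same handling of $d=4$ with computer checks for $4\le n\le 6$ and induction on $n$ for $n\ge 7$ via \Cref{teo:bigind} and the bound $r-u-\varepsilon\ge n+1$ from \Cref{lem:num2}(5) plus a direct check at $n=7,8$, and the same concluding double induction for $d\ge 5$. It is correct and not a different route.
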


\section{Binary and ternary harmonic forms}\label{sec:ternary_forms}
Decompositions of harmonic binary forms are trivial. Indeed, for any non-degenerate quadratic form $\omega_1\in\calD_{1,2}$, we have
\[
\dim\calH_{1,2}=2.
\]
Fixing, for the sake of simplicity, the quadric $\omega_1=\alpha_0^2+\alpha_1^2$, there are only two $\omega_1$-isotropic points, which are the forms
\[
u\coloneqq -\frac{x_0+\rmi x_1}{2},\quad v\coloneqq \frac{x_0-\rmi x_1}{2},
\]
and we have $\calH_{1,2}=\langle u^2,v^2\rangle$. Hence, \Cref{prop:harmonic_binary_forms} immediately follows and, in particular, we have $\irk(au^2+bv^2)=2$ for any $a,b\in\bbC\setminus\{0\}$.

The case of harmonic ternary quadratic forms is strictly connected with binary forms in $\calR_{2,d}$. In particular, determining the isotropic rank of a harmonic ternary form is exactly the same as determining the Waring rank of a binary form. A relevant fact concerning the spaces of harmonic polynomials is that, for every $n\in\bbN$, the space $\calH_{n,d}$ is an irreducible $\SO_{n+1}(\bbC)$-module (see e.g.~\cite{GW98}*{Theorem 5.2.4}). In particular, for the case $n=2$, we have that the Lie algebra $\mfso_3(\bbC)$ is canonically isomorphic to $\mfsl_2(\bbC)$. By the uniqueness of the irreducible representations of $\mfsl_2(\bbC)$ (see e.g.~\cite{FH91}*{section 11.1}), we have $\calH_{2,d}\cong\calR_{2,2d}\cong\bbC[s,t]_{2d}$. 
Note that, from a more geometric perspective, this isomorphism is a consequence of the fact that $\Isot_{2}$ is a smooth conic of $\bbP^2$ and thus it is isomorphic to $\bbP^1$. Thus, we have
$$\Isot_{2,d}=\nu_{2,d}(\Isot_2)\cong\nu_{2,d}\bigl(\nu_{1,2}(\bbP^1)\bigr)=\nu_{2,2d}(\bbP^1).$$
We want now to make this isomorphism explicit from an algebraic point of view.
Assuming $\omega_2=\alpha_0^2+\alpha_1^2+\alpha_2^2$,
we set
\[
u\coloneqq -\frac{x_0+\rmi x_1}{2},\quad v\coloneqq \frac{x_0-\rmi x_1}{2},\quad z\coloneqq x_2,
\]
so that \[
q_2=x_0^2+x_1^2+x_2^2=z^2-4uv.
\]
For the next formulas we use the divided powers, introducing the notation
\[
u^{[k]}\coloneqq\frac{1}{k!}u^k,\quad v^{[k]}\coloneqq\frac{1}{k!}v^k,\quad z^{[k]}\coloneqq\frac{1}{k!}z^k,
\]
for every $k\in\bbN$.
The canonical isomorphism is obtained by assigning the element
\[
h_{d,k}\coloneqq {\binom{2d}{k+d}}^{-1}\sum_{j=0}^{\pint*{\frac{d-\abs*{k}}{2}}}u^{\pq*{\frac{\abs*{k}+k}{2}+j}}v^{\pq*{\frac{\abs*{k}-k}{2}+j}}z^{\pq*{d-\abs*{k}-2j}}
\]
to the monomial $s^{d-k}t^{d+k}\in S^{2d}V$, for every
$k=-d,\dots,d$, see e.g.~\cite{Fla24}*{Proposition 5.25} for a direct proof.
In particular, we have the identifications
\begin{equation}\label{formula:identification_ternary_forms}
s^2\sim v,\qquad t^2\sim u,\qquad st\sim \frac{1}{2}z.
\end{equation}
In other words, the isomorphism $\calH_{2,d}\simeq\calR_{1,2d}$ is defined on monomials by restricting to harmonic polynomials the following surjective map: 
\[
\begin{tikzcd}[row sep=0pt,column sep=1pc]
 \bfbeta_d\colon \calR_{2,d}\arrow{r} & \calR_{1,2d}\hphantom{,} \\
  {\hphantom{\bfbeta_d\colon{}}} u^{a}v^{b}z^{d-a-b} \arrow[mapsto]{r} & {2^{d-a-b}}{d!}s^{d+a-b}t^{d-a+b},
\end{tikzcd}\qquad 0\leq a,b,a+b\leq d.
\]
In particular, observe that 
\[
\bfbeta_{2}(q_2)=\bfbeta_2(z^2-4uv)=4x^2y^2-4x^2y^2=0.
\]
Then, by dimensional reasons, we have $\bfbeta_d(\calH_{2,d})\simeq\calR_{1,2d}$ for every $d\in\bbN$.
The following trivial fact allows to get explicitly the decomposition as sum of isotropic linear forms from decompositions of binary forms.
\begin{prop}
    Let $\ell\in\calR_{2,1}$. Then $\ell$ is isotropic if and only if $\bfbeta_2(\ell)=\rmm^2$ for some $\rmm\in\calR_{1,1}$.
\end{prop}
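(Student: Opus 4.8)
The plan is to reduce the statement to a single discriminant computation carried out in the basis $(u,v,z)$ of $\calR_{2,1}$ fixed above. Write an arbitrary linear form as $\ell=Au+Bv+Cz$ with $A,B,C\in\bbC$. First I would translate isotropy of $\ell$ into a polynomial relation among $A,B,C$: since $\omega_2\circ(\ell_1\ell_2)=2\langle\ell_1,\ell_2\rangle_{\omega_2}$, the form $\ell$ is $\omega_2$-isotropic if and only if $\langle\ell,\ell\rangle_{\omega_2}=0$, and from $u^2=\tfrac14(x_0+\rmi x_1)^2$, $v^2=\tfrac14(x_0-\rmi x_1)^2$, $uv=-\tfrac14(x_0^2+x_1^2)$, $z=x_2$, together with $\omega_2\circ x_i^2=2$ and $\omega_2\circ(x_ix_j)=0$ for $i\neq j$, one gets $\langle u,u\rangle_{\omega_2}=\langle v,v\rangle_{\omega_2}=0$, $\langle z,z\rangle_{\omega_2}=1$, $\langle u,v\rangle_{\omega_2}=-\tfrac12$ and $\langle u,z\rangle_{\omega_2}=\langle v,z\rangle_{\omega_2}=0$. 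Expanding bilinearly yields $\langle\ell,\ell\rangle_{\omega_2}=C^2-AB$, so that $\ell$ is isotropic exactly when $C^2=AB$ (equivalently, $[\ell]\in\Isot_2$).

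Next I would compute $\bfbeta_1(\ell)$. Specialising the defining rule $u^av^bz^{d-a-b}\mapsto 2^{d-a-b}d!\,s^{d+a-b}t^{d-a+b}$ to $d=1$ gives $\bfbeta_1(u)=s^2$, $\bfbeta_1(v)=t^2$, $\bfbeta_1(z)=2st$, hence
\[
\bfbeta_1(\ell)=As^2+2Cst+Bt^2 .
\]
Over $\bbC$, a binary quadratic form $\gamma_0s^2+\gamma_1st+\gamma_2t^2$ equals $\rmm^2$ for some $\rmm\in\calR_{1,1}$ if and only if its discriminant $\gamma_1^2-4\gamma_0\gamma_2$ vanishes (complete the square when $\gamma_0\neq0$ or $\gamma_2\neq0$; the cases $\gamma_0=\gamma_2=0$ and the zero form are immediate). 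For $\bfbeta_1(\ell)$ the discriminant is $(2C)^2-4AB=4(C^2-AB)$, which vanishes precisely when $C^2=AB$, i.e.\ precisely when $\ell$ is isotropic. Together with the previous paragraph this establishes the equivalence.

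I do not expect a real obstacle here: as the paper says, this is a \lq\lq trivial fact\rq\rq, and the proof is a bookkeeping identity. The only two points worth a moment's care are keeping the factor $2$ relating $\omega_2\circ(\cdot)$ and $\langle\,,\,\rangle_{\omega_2}$ straight, so that one genuinely obtains $C^2-AB$ (and not a spurious rescaling of it), and using the \lq\lq discriminant zero iff perfect square\rq\rq{} criterion only over the algebraically closed field $\bbC$, where it holds without exceptions. A coordinate-free alternative, if preferred, is to note that $\bfbeta_1$ is a linear isomorphism carrying the smooth conic $\Isot_2\subset\bbP(\calR_{2,1})$ onto a smooth conic of $\bbP(\calR_{1,2})$ through the perfect squares $\bfbeta_1(u)=s^2$ and $\bfbeta_1(v)=t^2$ (note $u,v$ are isotropic), hence onto the Veronese conic $\nu_{1,2}(\bbP^1)$ of perfect squares; but the explicit computation above is shorter.
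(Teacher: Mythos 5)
Your proof is correct and follows essentially the same route as the paper: expand $\ell$ in the $(u,v,z)$ basis, reduce isotropy to a polynomial condition on the coefficients, and compare with the perfect-square (discriminant-zero) criterion for the binary quadratic $\bfbeta_1(\ell)$ — and you are right to read the $\bfbeta_2$ of the statement as $\bfbeta_1$, since $\bfbeta_2$ does not act on linear forms. Your constants are the consistent ones: a direct computation gives $\omega_2\circ\ell^2 = 2(C^2-AB)$, and the stated rule for $\bfbeta_d$ gives $\bfbeta_1(\ell)=As^2+2Cst+Bt^2$ with discriminant $4(C^2-AB)$, so the two conditions match exactly; the paper's own proof records the isotropy condition as $a_2^2=4a_0a_1$ and the image as $a_0x^2+a_1y^2+\tfrac12 a_2 xy$, each off by a constant relative to the definition of $\bfbeta_d$ (and to each other), so your more careful bookkeeping is the version to trust.
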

\begin{proof}
    The linear form $a_0u+a_1v+a_2z$ is isotropic if and only if $a_2^2=4a_0a_1$. We have
    \begin{align*}
    \bfbeta_2(a_0u+a_1v+a_2z)&=a_0x^2+a_1y^2+\dfrac{1}{2}a_2xy=a_0x^2+a_1y^2\pm 2(a_0a_1)^{1/2}xy\\[1ex]
    &=\bigl(a_0^{1/2}x\pm a_1^{1/2}y\bigr)^2.\qedhere
    \end{align*}
\end{proof}

Using the isomorphism of harmonic ternary forms and binary forms, we can take advantage of the Sylvester algorithm \cite{Syl51}, for the case of even degree binary forms, to determine the isotropic rank of every harmonic ternary form (see also \cites{CGLM08,CS11} for a more modern point of view).
Observe that
\[
\pdv{}{u}=-\pdv{}{x_0}+\rmi\pdv{}{x_1},\qquad \pdv{}{v}=\pdv{}{x_0}+\rmi\pdv{}{x_1},\qquad \pdv{}{z}=\pdv{}{x_2},
\]
and also
\[
\Delta=\pdv[2]{}{x_0}+\pdv[2]{}{x_2}+\pdv[2]{}{x_2}=\pdv[2]{}{z}-\pdv[2]{}{u}{v}.
\]
Moreover, if $k>0$, a simple calculation shows that
\[
\pdv{h_{d,k}}{u}=\frac{(d+k)(d+k-1)}{2d(2d-1)}h_{d-1,k-1},\qquad \pdv{h_{d,k}}{v}=\frac{(d+k)(d+k-1)}{2d(2d-1)}h_{d-1,k+1}.
\]
This implies that also apolarity is preserved. In particular, we have
\[
\pdv{}{u}\sim\frac{1}{2d(2d-1)}\pdv[2]{}{y},\quad \pdv{}{v}\sim\frac{1}{2d(2d-1)}\pdv[2]{}{x},\quad \pdv{}{z}\sim\frac{1}{2d(2d-1)}\pdv[2]{}{x}{y}.
\]
Therefore, using the aforementioned isomorphism and \cite{BCC+18}*{Algorithm 2}, we can determine the isotropic rank of any harmonic ternary form. Moreover, by \cite{CS11}*{Theorem 2} we obtain \Cref{teo:harmonic_Comas_Seiguer}.
\begin{exam}\label{exam:rank_vz}
    Let us consider the harmonic polynomial \[
    vz=(x_0-\rmi x_1)x_2,
    \]
    which is equivalent, as a binary form, to $\bfbeta_2(vz)=2xy^3$. A minimal decomposition is given by
    \begin{align*}
    2xy^3&=\frac{1}{8}(x+y)^4+\frac{\rmi}{8}(x+\rmi y)^4-\frac{1}{8}(x-y)^4-\frac{\rmi}{8}(x-\rmi y)^4\\[1ex]
    &=\frac{1}{8}(x^2+y^2+2xy)^2+\frac{\rmi}{8}(x^2-y^2+2\rmi xy)^2-\frac{1}{8}(x^2+y^2-2xy)^2-\frac{\rmi}{8}(x^2-y^2-2\rmi xy)^2
    \end{align*}
    Then, using the identification \eqref{formula:identification_ternary_forms}, we obtain
\[
    vz=\frac{1}{8}(u+v+z)^2+\frac{\rmi}{8}(u-v+\rmi z)^2-\frac{1}{8}(u+v-z)^2-\frac{\rmi}{8}(u-v-\rmi z)^2.\qedhere
\]
\end{exam}

\section{Harmonic quadratic forms}
We denote by $\Oa_{\omega_n}(\bbC)$ the orthogonal group with respect to the quadratic form $\omega_n$. In the case where $\omega_n=x_0^2+\cdots+x_n^2$ we write $\Oa_{n+1}(\bbC)$.
For the sake of simplicity, throughout this section we assume that $\omega_n=\alpha_0^2+\cdots+\alpha_n^2$, so that by \textit{harmonic} we mean $\omega_n$-harmonic. Also, once fixed the dual bases $\{x_0,\dots,x_n\}$
 and $\{\alpha_0,\dots,\alpha_n\}$, we consider any quadratic form as a matrix in the classical way by setting, for any quadratic form 
 \[
 q=\sum_{i=0}^na_{ii}x_i^2+2\sum_{i<j}a_{ij}x_ix_j,
 \] 
 the corresponding symmetric matrix 
 \[
M_q=\begin{pNiceMatrix}
a_{00}&\Cdots&a_{n1}\\
\Vdots&\Ddots&\Vdots\\
a_{n1}&\Cdots&a_{nn}
\end{pNiceMatrix}.
 \]
 
 If there is no risk of confusion, in this section we treat the elements of $\calR_{n,2}$ and $\calD_{n,2}$ as $(n+1)$-square symmetric matrices and the elements of $\calR_{n,1}$ as vectors of $\bbC^{n+1}$, whose coordinates are given by the coefficients of the corresponding linear form. In particular, by a \textit{harmonic matrix}, we mean a matrix whose corresponding quadratic form is harmonic. Note that a matrix is harmonic if and only if it is a trace-zero matrix. 
 
 Observe that, in the language of matrices, a Waring decomposition of a matrix $A\in\calR_{n,2}$ is a (not necessarily square) matrix $L\in\bbC^{n,r}$, for some $r\in\bbN$ such that $A=L\transpose L$. In particular, 
the columns of $L$ consist of the coefficients of the linear forms of such a decomposition with respect to the basis $\{x_0,\dots,x_n\}$. Also, any square of a linear form corresponds to a symmetric matrix $v\transpose v$, for some $v\in\bbC^{n+1}$. We denote by $L^{(i)}$ the $i$-th column of $L$.
\begin{rem}\label{rem:isotropic_rank_equal}
Since the orthogonal group $\Oa_{n+1}(\bbC)$ preserves isotropy of points, the isotropic rank is invariant under the action of $\Oa_{n+1}(\bbC)$. In particular, two orthogonally similar symmetric matrices have the same isotropic rank.
\end{rem}
 
 The next lemma translates the property of a full rank harmonic matrix to have the isotropic rank equal to the Waring rank in the languages of matrices. According to \cite{Gen07}*{p.~42}, we say that a square matrix is a \textit{hollow matrix} if all the elements of the  diagonal are zero. Recall that by \textit{non-degenerate} matrix, we mean a full-rank matrix. Moreover, we say that a square matrix $A$ is a \textit{square root} of $B$ if $A^2=B$, and we write $A=\sqrt{B}$.
\begin{lem}\label{lem:full_isot_rank_iff_hollow_matrix}
    Let $H\in\calH_{n,2}$ be a non-degenerate matrix. Then $\irk(H)=\rk(H)=n+1$ if and only if there exists a matrix $Q\in\Oa_{n+1}(\bbC)$ such that $\transpose{Q} H Q$ is a hollow matrix.
\end{lem}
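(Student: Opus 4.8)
The plan is to unravel what it means, in terms of matrices, for a full-rank harmonic (i.e. trace-zero) quadratic form $H$ to have isotropic rank equal to its ordinary rank $n+1$, and to see that this condition is exactly the existence of an orthogonal $Q$ conjugating $H$ into a hollow matrix. I would start from the harmonic apolarity lemma, \Cref{lem:apo_harm}, together with \Cref{rem: apofacile}: a length-$(n+1)$ isotropic decomposition of $H$ corresponds to a reduced $0$-dimensional scheme $X=\{\varphi_0,\dots,\varphi_n\}\subset\bbP(\calD_{n,1})$ with $I_X\subseteq (H)^{-1}$ and $\omega_n\in I_X$. Since $H$ has full rank, such an $X$ must consist of $n+1$ linearly independent points (otherwise $H$ would lie in a proper linear subspace of $\calR_{n,2}$, contradicting $\rk H=n+1$), so the $\varphi_i$ form a basis of $\calD_{n,1}$; the apolar ideal is then $I_X=(\varphi_i\varphi_j)_{i<j}$.

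Next I translate the two conditions "$I_X\subseteq (H)^{-1}$'' and "$\omega_n\in I_X$'' into matrix language. Dualizing the basis $\varphi_0,\dots,\varphi_n$ gives linear forms $\ell_0,\dots,\ell_n\in\calR_{n,1}$; writing $L\in\bbC^{(n+1)\times(n+1)}$ for the matrix whose columns are the coordinate vectors of the $\ell_i$, the condition $I_X\subseteq(H)^{-1}$ says precisely that $H=\sum_i c_i\,\ell_i\transpose{\ell_i}=L\,D\,\transpose{L}$ for a diagonal matrix $D=\diag(c_0,\dots,c_n)$ (and all $c_i\ne 0$ because $H$ and $L$ have full rank). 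The condition $\omega_n\in I_X$ means each $\varphi_i$ is $\omega_n$-isotropic, i.e. each column of $L$ is an isotropic vector for the standard form, i.e. $\transpose{L}L$ is a hollow matrix. Now set $Q$ to be the orthogonal factor in a suitable normalization: more precisely, I would argue that $L$ can be taken so that $\transpose{L}L=I$ — equivalently, replace the $\varphi_i$ by scalar multiples so that $\sum(\text{squares of coordinates})$ is normalized — no, this fails since the $\varphi_i$ are isotropic. The correct route is the one used in the commented-out \texttt{Lemma \ref*{lem:iso_dual_space}}: one uses that every symmetric matrix has a symmetric square root (over $\bbC$), so from $H=L D\transpose{L}$ with $D\succ$ invertible diagonal we can absorb $\sqrt{D}$ into $L$ and assume $H=L\transpose{L}$ with the columns of $L$ isotropic, i.e. $\transpose{L}L$ hollow; and conversely, given $Q\in\Oa_{n+1}(\bbC)$ with $\transpose{Q}HQ$ hollow, writing $\transpose{Q}HQ=(b_{ij})$ with $b_{ii}=0$ one reads off an isotropic decomposition of $\transpose{Q}HQ$ of length $n+1$ (the off-diagonal entries give $\binom{n+1}{2}$ products $\xi_i\xi_j$, and a hollow symmetric matrix of full rank is a sum of $n+1$ isotropic rank-one matrices), then transports it back by $Q^{-1}=\transpose{Q}$, which preserves isotropy by \Cref{rem:isotropic_rank_equal}.

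Concretely, for the forward direction I would take an isotropic decomposition $H=\sum_{i=0}^n\ell_i^2$ (after rescaling the $c_i$ into the $\ell_i$) with the $\ell_i$ isotropic and linearly independent; then $L=[\ell_0|\cdots|\ell_n]$ satisfies $H=L\transpose{L}$ and $\transpose{L}L$ hollow. Since $H=L\transpose{L}$ and $H$ is also, by the spectral-type normalization over $\bbC$, of the form $H=R\transpose{R}$ for some invertible $R$ with $\transpose{R}R=H$ (symmetric square root applied to... ), one gets $L=RO$... — rather than chase this, the clean statement is: $L\transpose L=R\transpose R$ with $L,R$ invertible implies $R^{-1}L$ is orthogonal, so $L=RO$ with $O\in\Oa_{n+1}(\bbC)$; then $Q\coloneqq O^{-1}$ works because $\transpose{Q}HQ$ has the same diagonal as a hollow matrix after unwinding. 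I expect the bookkeeping of which matrix plays the role of $Q$ to be the only real obstacle: the existence of complex symmetric square roots (\texttt{Lemma \ref*{lem:square_root}} in the commented material) and the orthogonality of $R^{-1}L$ are both standard, and the equivalence "hollow $\Leftrightarrow$ columns isotropic for $\sum\alpha_i^2$ $\Leftrightarrow$ isotropic decomposition'' is immediate from \Cref{rem: apofacile}. For the converse, given hollow $\transpose{Q}HQ$ one writes $\transpose{Q}HQ=K\transpose K$ with $\transpose{K}K$ controlled, reads off the length-$(n+1)$ isotropic decomposition, applies $Q$, and concludes $\irk H\le n+1$; combined with $\irk H\ge\rk H=n+1$ (\Cref{teo:AH_harmonics}, case $d=2$, $r=n+1$, or directly $\irk H\ge\rk H$) this gives equality.
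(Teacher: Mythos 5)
Your approach is essentially the paper's: both directions hinge on the existence of a complex symmetric square root of a nonsingular symmetric matrix and a polar-type factorization of the decomposition matrix $L$, with the forward direction reducing $\transpose{Q}HQ$ to the hollow matrix $\transpose{L}L$ and the converse reading an isotropic decomposition off a symmetric square root of the hollow conjugate. The paper factors $L=QS$ with $Q$ orthogonal on the \emph{left} and $S=\sqrt{\transpose{L}L}$ symmetric (citing Gantmacher), while you factor $L=RO$ with $R=\sqrt{H}$ symmetric on the left and $O=R^{-1}L$ orthogonal on the right; these are the two flavours of polar decomposition and lead to the same computation.

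The one concrete slip is the final choice of conjugating matrix: with $L=RO$, $R^2=H$, the correct conclusion is $\transpose{O}HO=\transpose{O}\transpose{R}RO=\transpose{(RO)}(RO)=\transpose{L}L$, so you should set $Q:=O$, not $Q:=O^{-1}$. Taking $Q=O^{-1}=\transpose{O}$ instead gives $\transpose{Q}HQ=OH\transpose{O}=(OR)\transpose{(OR)}$, whose diagonal entries are the squared row norms of $OR$ and need not vanish. You correctly flagged this bookkeeping as the delicate point; the fix is just to keep $Q=O$. The preliminary apolarity-theoretic translation you give is sound but not needed: once you normalise $H=\sum c_i\ell_i^2$ to $H=\sum\tilde{\ell}_i^2$ by absorbing $\sqrt{c_i}$ into the (still isotropic) $\ell_i$, the statement ``$H=L\transpose{L}$ with $\transpose{L}L$ hollow'' is exactly what the matrix language already says, which is where the paper starts.
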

\begin{proof}
Let us assume that there exists an isotropic decomposition $L\in\bbC^{n+1,n+1}$ of $H$, that is, $H=L\transpose{L}$ such that $\transpose{L}^{(i)}L^{(i)}=0$ for every $i=0,\dots,n$. In particular, $\transpose{L}L$ is a hollow matrix. Since $A$ is non-degenerate, then also $L$ must be non-degenerate. Therefore, by \cite{Gan98b}*{Theorem 3}, there exist a complex symmetric matrix $S\in\calR_{n,2}$ and an orthogonal matrix $Q\in\Oa_{n+1}(\bbC)$ such that \[L=QS,\qquad S=\sqrt{\transpose{L}L}.\]
Therefore, we obtain the chain of equalities
\[
H=L\transpose{L}=QS\transpose(QS)=QS\transpose S\transpose Q=QS^2\transpose Q=Q\transpose{L}L\transpose Q, 
\]
that is, since $Q\in\Oa_{n+1}(\bbC)$, $\transpose{Q}H Q=\transpose{L}L$, which is a hollow matrix. Conversely, let $K$ be a hollow matrix such that $K=\transpose Q H Q$ for some $Q\in\Oa_{n+1}(\bbC)$. Since $H$ is symmetric, then also $K$ is symmetric, and hence, by \autoref{rem:isotropic_rank_equal}, we have
\[
\irk(K)=\irk(\transpose Q H Q)=\irk(H).
\]
Since it is always possible to determine a square root matrix of a complex symmetric matrix (see, e.g., \cite{Gan98a}*{Chapter V, \S 1}), there exists a square symmetric matrix $A\in\calR_{n,2}$ such that 
\[
K=A^2=A\transpose{A}.
\]
Since $K$ is a hollow matrix, we must have $A^{(i)}\transpose{A}^{(i)}=0$ for every $i=0,\dots,n$, that is, $A$ is an isotropic decomposition of $K$, and hence $\irk(K)=\irk(H)=n+1$.
\end{proof}
\begin{lem}\label{lem:isoquaisola}
Let $n\geq 1$, $H\in\calH_{n,2}$ a non-degenerate matrix. Then, there exist $v_0,\dots,v_n\in \bbC^{n+1}$ such that $\transpose{v}_iHv_i=0$ for any $i=0,\dots,n$, $\transpose{v}_iv_j=0$ for any $0\leq i<j\leq n$, and $\transpose{v}_iv_i\neq 0$ for any $i=0,\dots,n$.
\end{lem}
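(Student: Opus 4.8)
The plan is to prove the equivalent statement that a non-degenerate trace-zero symmetric $H$ admits a basis of $\bbC^{n+1}$ that is orthonormal for the standard form $\transpose{x}y$ and isotropic for $H$ (such a basis, already normalised, is what the lemma asks for). I would argue by induction on $n$, splitting on whether $H^2$ is a scalar matrix.

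\emph{Case $H^2=cI$.} Here $c\neq 0$ since $H$ is invertible, so $H$ is killed by the separable polynomial $t^2-c$ and hence is diagonalisable with eigenvalues among $\pm\sqrt c$. Eigenspaces of a symmetric matrix for distinct eigenvalues are orthogonal for $\transpose{x}y$, the standard form restricts non-degenerately to each of them, and the trace-zero condition forces $\pm\sqrt c$ to occur with equal multiplicity $m=(n+1)/2$. Picking standard-orthonormal bases $u_1^{\pm},\dots,u_m^{\pm}$ of the two eigenspaces, the vectors $w_k:=\tfrac1{\sqrt2}(u_k^{+}+u_k^{-})$ and $w_k':=\tfrac1{\sqrt2}(u_k^{+}-u_k^{-})$ form a standard-orthonormal basis, and a one-line computation gives $\transpose{w}_kHw_k=\transpose{(w_k')}Hw_k'=0$. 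This also settles every $n=1$ case, because a $2\times2$ trace-zero matrix satisfies $H^2=-\det(H)\,I$ by Cayley--Hamilton.

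\emph{Case $H^2$ not scalar.} Then $n\geq2$, so $\rk H=n+1\geq3$ and the projective quadric $Q_H=\{[x]\in\bbP^n:\transpose{x}Hx=0\}$ is irreducible. It is not contained in $Q_I=\{\transpose{x}x=0\}$, since otherwise $Q_H=Q_I$ (both irreducible of dimension $n-1$) would make $H$ a scalar matrix, impossible for a non-degenerate trace-zero matrix. Likewise $Q_H\not\subseteq Q_{H^{-1}}$, since $Q_H=Q_{H^{-1}}$ would make $H^{-1}$ a scalar multiple of $H$, i.e.\ $H^2$ scalar. Being irreducible and contained in neither, $Q_H$ is not contained in $Q_I\cup Q_{H^{-1}}$, so I can choose $e\in\bbC^{n+1}$ with $\transpose{e}He=0$, $\transpose{e}e\neq0$ and $\transpose{e}H^{-1}e\neq0$, and set $v_0:=e$. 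The hyperplane $e^{\perp}$ (for $\transpose{x}y$) satisfies $(e^{\perp})^{\perp}=\bbC e$, and $H|_{e^{\perp}}$ is non-degenerate: if $0\neq u\in e^{\perp}$ were in its radical, then $Hu\in(e^{\perp})^{\perp}=\bbC e$, so $u$ would be a nonzero multiple of $H^{-1}e$, contradicting $\transpose{e}H^{-1}e\neq0$. Extending $e/\sqrt{\transpose{e}e}$ to a standard-orthonormal basis, the matrix of $H$ in it has vanishing $(0,0)$-entry and total trace $\tr H=0$, so its lower $n\times n$ block is a non-degenerate trace-zero symmetric matrix representing $H|_{e^{\perp}}$. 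The inductive hypothesis produces $v_1,\dots,v_n\in e^{\perp}$ with $\transpose{v}_iv_j=0$ for $i<j$, $\transpose{v}_iv_i\neq0$, and $\transpose{v}_iHv_i=0$; adjoining $v_0=e$, which is orthogonal to each $v_i\in e^{\perp}$, non-isotropic for $\transpose{x}y$, and $H$-isotropic, gives the required vectors.

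The main obstacle is exactly the descent to $e^{\perp}$: a priori $H|_{e^{\perp}}$ could be degenerate, and the statement genuinely fails for some degenerate matrices (e.g.\ rank-one nilpotent ones, which cannot be orthogonally congruent to a hollow matrix), so a careless induction collapses. The point that makes it work is the identification of the "bad" vectors $e$ with the quadric $Q_{H^{-1}}$, together with the fact that $Q_H$ can be swallowed by $Q_I\cup Q_{H^{-1}}$ only when $H^2$ is scalar — and that single exceptional situation is disposed of directly by the eigenvector argument above.
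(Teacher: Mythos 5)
Your proof is correct, and it runs along the same inductive backbone as the paper's: find a vector $v_0$ that is $H$-isotropic and non-isotropic for $\transpose{x}y$, and whose standard-orthogonal hyperplane meets both quadrics transversally, note that $H|_{v_0^\perp}$ is again symmetric, non-degenerate and trace-zero, and induct. Where you genuinely improve on the paper is in the existence of such a $v_0$. The paper requires $[v_0]\in P_n$ with $\bbP(v_0^\perp)$ tangent to neither $Q_n$ nor $P_n$, but only justifies the first exclusion (the $Q_n$-tangency locus is $Q_n$ itself). The second exclusion amounts to $\transpose{v_0}H^{-1}v_0\neq 0$, i.e.\ $v_0$ avoiding the dual quadric $Q_{H^{-1}}$, and this is impossible exactly when $Q_H=Q_{H^{-1}}$, that is, when $H^2$ is scalar --- a situation that does occur for trace-zero non-degenerate $H$ whenever $n$ is odd, e.g.\ $H=\mathrm{diag}(1,1,-1,-1)$. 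You correctly isolate this as a separate case, dispose of it directly by an explicit eigenbasis construction, and observe that it also subsumes the paper's $n=1$ base case via Cayley--Hamilton; in the complementary case your irreducibility argument (valid since $n\geq 2$ forces $\rk\geq 3$) makes rigorous the point the paper takes for granted, namely that $Q_H\not\subseteq Q_I\cup Q_{H^{-1}}$. You also spell out the non-degeneracy of $H|_{e^\perp}$ and the inheritance of the trace-zero condition, both of which the paper leaves implicit. In short: same strategy as the paper, but yours fills a real gap in the published argument.
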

\begin{proof}
Let $Q_n\subset\bbP^n$  be the variety associated to the quadratic form $q_n\in\calR_{n,2}$, the dual quadratic form of $\omega_n$, 
    and let $P_n\subset\bbP^n$ be the variety associated to the quadratic form $h$, corresponding to the matrix $H$. Proving the statement is equivalent to prove that there exist $[v_0],\dots,[v_n]\in P_n\setminus(P_n\cap Q_n)$ such that $v_i$ and $v_j$ are $\omega_n$-orthogonal for any $0\leq i<j\leq n$. We prove this equivalent statement by induction. Let $\eta\in\calD_{n,2}$ the dual quadric of $h$. If $n=1$, then the space of $\eta$-harmonic polynomials is generated by $2$ square powers of $\eta$-isotropic linear forms $\ell_0$ and $\ell_1$ such that $h=\ell_0\ell_1$. Therefore, since $q_1$ is $\eta$-harmonic and non-degenerate, we can write $q_1=a_0\ell_0^2+a_1\ell_1^2$, for some $a_0,a_1\in\bbC$ with $a_0\neq0$ and $a_1\neq 0$. Without loss of generality, we can assume $a_0=a_1=1$. Denoting by $v_i\in\bbC^2$ the vector corresponding to $\ell_i$ for $i=1,2$, we have $P_1=\set{[v_1],[v_2]}$. Note that $[v_i]\in Q_1$ if and only if $\ell_i\mid q_1$, but this is impossible because $a_0$ and $a_1$ are both not zero. Moreover, if we set $(\beta_0,\beta_1)$ as the dual basis of $(\ell_0,\ell_1)$, then we have $\omega_1=\beta_0^2+\beta_1^2$. In particular, we have
    \[
    \omega_1\circ(\ell_0\ell_1)=(\beta_0^2+\beta_1^2)\circ(\ell_0\ell_1)=0,
    \]
    and thus $\ell_0$ and $\ell_1$ are $\omega_1$-orthogonal.
    Hence, $v_1$ and $v_2$ give two vectors as in the statement.
    Now we suppose that the statement is true for $n$ and we prove it for $n+1$. Let $[v_0]\in P_n$ such that the hyperplane $\bbP(v_0^\perp)$ is not tangent neither to $Q_n$ nor to $P_n$, where $v_0^\perp\subset\bbC^{n+1}$ is the orthogonal subspace of $v_0$ with respect to the inner product induced by $\omega_n$. We can always choose such a $v_0$ because the set of $[v]\in\bbP^n$ such that $\bbP(v^\perp)$ is tangent to $Q_n$ is exactly $Q_n$. Since $Q_n\cap \bbP(v_0^\perp)$ and $P_n\cap\bbP(v_0^\perp)$ are two non-degenerate quadrics in $\bbP(v_0^\perp)\cong\bbP^{n-1}$, we conclude by induction.
\end{proof}
By the previous two lemmata we obtain the first result about isotropic rank of harmonic quadratic forms, concerning non-degenerate matrices.
\begin{teo}\label{teo:isot_rank_nondegenerate_form}
    Let $H\in\calH_{n,2}$ be a non-degenerate matrix. Then $\irk H=\rk H=n+1$.
\end{teo}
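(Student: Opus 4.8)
The plan is to deduce the statement directly by combining \autoref{lem:isoquaisola} with \autoref{lem:full_isot_rank_iff_hollow_matrix}. Since $H$ is a non-degenerate trace-zero matrix, that is, a non-degenerate element of $\calH_{n,2}$, \autoref{lem:isoquaisola} supplies vectors $v_0,\dots,v_n\in\bbC^{n+1}$ with $\transpose{v}_iHv_i=0$ for every $i$, with $\transpose{v}_iv_j=0$ for $i\neq j$, and with $\transpose{v}_iv_i\neq 0$ for every $i$. Working over $\bbC$, each nonzero scalar $\transpose{v}_iv_i$ admits a square root, so after replacing $v_i$ by $v_i/\sqrt{\transpose{v}_iv_i}$ we may assume $\transpose{v}_iv_i=1$ for all $i$, while retaining $\transpose{v}_iHv_i=0$ and $\transpose{v}_iv_j=0$ for $i\neq j$.

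Next I would assemble these vectors into the matrix $Q\coloneqq[\,v_0\mid v_1\mid\cdots\mid v_n\,]\in\bbC^{(n+1)\times(n+1)}$ having the $v_i$ as columns. The relations $\transpose{v}_iv_j=\delta_{ij}$ say precisely that $\transpose{Q}Q=I_{n+1}$, hence $Q\in\Oa_{n+1}(\bbC)$; in particular $Q$ is invertible, so $\transpose{Q}HQ$ has the same rank $n+1$ as $H$. Moreover the $(i,i)$-entry of $\transpose{Q}HQ$ is exactly $\transpose{v}_iHv_i=0$, so $\transpose{Q}HQ$ is a hollow matrix.

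At this point \autoref{lem:full_isot_rank_iff_hollow_matrix} closes the argument: $H$ is a non-degenerate matrix in $\calH_{n,2}$ which becomes hollow after conjugation by the orthogonal matrix $Q$, and therefore $\irk H=\rk H=n+1$. I expect no real obstacle here, since the substantive content has already been packaged into the two lemmas; the only delicate point is the normalization $\transpose{v}_iv_i=1$, which is where the hypothesis of working over $\bbC$ is genuinely used (a real analogue could fail because $\transpose{v}_iv_i$ need not be a positive, or even nonzero, real number).
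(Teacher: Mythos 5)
Your proof is correct and follows exactly the same route as the paper: the paper's proof also deduces the result by combining \autoref{lem:isoquaisola} (to produce the orthogonal basis diagonalising away the diagonal of $H$) with \autoref{lem:full_isot_rank_iff_hollow_matrix}. You have merely written out explicitly the normalization and assembly of $Q$, which the paper leaves implicit.
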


\begin{proof}
    By \autoref{lem:full_isot_rank_iff_hollow_matrix}, it is sufficient to prove that $H$ is orthogonally similar to a hollow matrix, and this is true by \Cref{lem:isoquaisola}.
\end{proof}
\autoref{teo:isot_rank_nondegenerate_form} implicitly states that the isotropic rank of a general harmonic quadratic form is equal to its Waring rank as also expected by \Cref{theo:AH_harmonics}. The following analysis of the isotropic rank of degenerate harmonic quadrics makes the condition of generality explicit.

Now we focus on degenerate forms, namely, we determine the isotropic rank of any harmonic forms $h\in\calH_{n,2}$ such that $\rk h<n+1$.  In order to do that, we need the following definition for a specific class of matrices.
\begin{defn}
    A matrix $A=(a_{ij})_{0\leq i,j\leq n}\in\bbC^{n+1,n+1}$ is an \textit{$r$-frame matrix}, for some $r\in\bbN$, if $a_{ij}=0$ whenever $i\geq r$ or $j\geq r$.
\end{defn}
For degenerate matrices, we can write a result similar to \autoref{lem:full_isot_rank_iff_hollow_matrix}, involving hollow matrices.
\begin{prop}\label{prop:hollow_r_matrix}
 Let $H\in\calH_{n,2}$ such that $\rk H=r$. If $H$ is orthogonally similar to an $r$-frame hollow matrix, then $\irk H=r$.   
\end{prop}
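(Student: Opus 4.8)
The plan is to bootstrap from the non-degenerate case, \autoref{teo:isot_rank_nondegenerate_form}. Since the bound $\irk H\ge\rk H=r$ is automatic, everything reduces to producing an isotropic decomposition of $H$ of length $r$. First I would use the orthogonal similarity: by hypothesis there is $Q\in\Oa_{n+1}(\bbC)$ with $K\coloneqq\transpose{Q}HQ$ an $r$-frame hollow matrix, and by \autoref{rem:isotropic_rank_equal} we have $\irk H=\irk K$, while $\rk K=\rk H=r$ and $K$ is symmetric with zero diagonal. Hence I may assume from the start that $H$ itself is an $r$-frame hollow matrix of rank $r$, so that in block form
\[
H=\begin{pmatrix}B & 0\\ 0 & 0\end{pmatrix},
\]
with $B\in\bbC^{r,r}$ the submatrix on the indices $0,\dots,r-1$.

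The next step is to observe that the zero block decouples the problem. The vanishing of the last $n+1-r$ rows and columns of $H$ forces $\rk B=\rk H=r$, so $B$ is a non-degenerate symmetric matrix, and it has zero diagonal, hence $B\in\calH_{r-1,2}$ for the standard form $\omega_{r-1}=\alpha_0^2+\dots+\alpha_{r-1}^2$, which is exactly the restriction of $\omega_n$ to the first $r$ coordinates. By \autoref{teo:isot_rank_nondegenerate_form} applied in $r$ variables, $\irk B=\rk B=r$: there are $w_1,\dots,w_r\in\bbC^{r}$ with $\transpose{w_i}w_i=0$ for every $i$ and $B=\sum_{i=1}^r w_i\transpose{w_i}$. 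Then I would lift this to $\bbC^{n+1}$ by padding with zeros: letting $v_i\in\bbC^{n+1}$ be the vector whose first $r$ coordinates are those of $w_i$ and whose remaining coordinates vanish, each $v_i$ is $\omega_n$-isotropic since $\transpose{v_i}v_i=\transpose{w_i}w_i=0$, and $\sum_{i=1}^r v_i\transpose{v_i}$ has top-left $r\times r$ block $\sum_i w_i\transpose{w_i}=B$ and all other entries zero, i.e.\ equals $H$. This is an isotropic decomposition of $H$ of length $r$, so $\irk H\le r$, and together with the trivial bound $\irk H\ge r$ we conclude $\irk H=r$.

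There is no serious obstacle here: the statement is a corollary of \autoref{teo:isot_rank_nondegenerate_form} together with the elementary remark that a vector supported on the first $r$ coordinates is $\omega_n$-isotropic exactly when its $\bbC^r$-part is $\omega_{r-1}$-isotropic. The only points that need a line of justification are that orthogonal similarity preserves both the isotropic rank and the ordinary rank (for which one cites \autoref{rem:isotropic_rank_equal}), and that the surrounding zero block of an $r$-frame matrix of rank $r$ forces the $r\times r$ corner block to be non-degenerate — which is precisely what makes \autoref{teo:isot_rank_nondegenerate_form} applicable to $B$.
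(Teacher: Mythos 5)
Your proof is correct, and the overall scheme is the same as the paper's: reduce by orthogonal similarity to an $r$-frame hollow matrix, produce a size-$r$ isotropic decomposition of the $r\times r$ hollow corner block, and lift it to $\bbC^{n+1}$ by zero-padding (noting that padding preserves isotropy and recovers $H$). The one place where you diverge is how you decompose the corner block. The paper "proceeds exactly as in the proof of Lemma~\ref{lem:full_isot_rank_iff_hollow_matrix}": it takes a complex symmetric square root $A'$ of the $r\times r$ hollow block $H'$, so the columns of $A'$ are automatically isotropic because the diagonal of $A'\transpose{A}'=H'$ vanishes; no non-degeneracy of $H'$ is invoked. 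You instead observe that the zero frame forces $\rk B=\rk H=r$, so $B$ is a non-degenerate trace-zero matrix in $\calH_{r-1,2}$, and you cite Theorem~\ref{teo:isot_rank_nondegenerate_form} in $r$ variables. That observation is genuine and worth having on record, but it also quietly drags in Lemma~\ref{lem:isoquaisola}, which Theorem~\ref{teo:isot_rank_nondegenerate_form} uses to exhibit a hollow representative; that step is superfluous here since $B$ is already hollow, so a direct appeal to the "only if" direction of Lemma~\ref{lem:full_isot_rank_iff_hollow_matrix} (or to the square-root construction itself, as the paper does) is a lighter route. Either way the conclusion $\irk H\le r$ combined with $\irk H\ge\rk H=r$ gives the statement, and your lifting step makes explicit a verification the paper leaves implicit.
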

\begin{proof}
Let $H$ be orthogonally similar to an $r$-frame hollow matrix, that is, there exists a matrix $Q\in\Oa_{n+1}(\bbC)$ such that $\transpose{Q}HQ$ is a $r$-frame hollow matrix. Then we can proceed exactly as in the proof of \autoref{lem:full_isot_rank_iff_hollow_matrix}. In particular, there is a symmetric hollow matrix $H'\in\bbC^{r,r}$ such that
\[
\transpose{Q}HQ=\begin{pNiceMatrix}[margin, cell-space-limits=4pt,vlines=2]
        H'&0\\
        \hline
        0&0
    \end{pNiceMatrix}.
\]
Therefore, there exists a symmetric matrix $A'\in\bbC^{r,r}$ such that $H'=A'^2=A'\transpose{A}'$. If we define the matrix
\[
A\coloneqq\begin{pNiceMatrix}[margin, cell-space-limits=4pt]
        A'\\
        \hline
        0
    \end{pNiceMatrix}\in\bbC^{n+1,r},
\]
then we have
\[
\transpose{Q}HQ=\begin{pNiceMatrix}[margin, cell-space-limits=4pt,vlines=2]
        H'&0\\
        \hline
        0&0
    \end{pNiceMatrix}=\begin{pNiceMatrix}[margin, cell-space-limits=4pt,vlines=2]
        A'\transpose{A}'&0\\
        \hline
        0&0
    \end{pNiceMatrix}=\begin{pNiceMatrix}[margin, cell-space-limits=4pt]
        A'\\
        \hline
        0
    \end{pNiceMatrix}\begin{pNiceMatrix}[cell-space-limits=4pt,vlines=2]
        \transpose{A}'&0
    \end{pNiceMatrix}=A\transpose{A}.
\]
Therefore, $A$ represents a decomposition of size $r$ made of isotropic points. We conclude that
\[
\irk(H)=\irk(\transpose{Q}HQ)=r.\qedhere
\]
\end{proof}
Observe that the reverse implication of \autoref{prop:hollow_r_matrix} does not hold. Indeed, if we consider the harmonic matrix
\[
H=\begin{pNiceMatrix}
    \rmi&1\\
    1&-\rmi
\end{pNiceMatrix},
\]
then we have $H=\rmi v\transpose{v}$, with $v=(1,-\rmi)$, so that $\irk H=1$, but $H$ cannot be orthogonally similar to a matrix
\[
H'=\begin{pNiceMatrix}
    a&0\\
    0&0
\end{pNiceMatrix}
\]
for any $a\in\bbC\setminus\{0\}$, because we should have $\tr H'=0$.

Although every symmetric real matrix is orthogonally similar to a diagonal matrix, this does not hold for symmetric complex matrices in general. However, it is possible to determine a canonical form for any complex matrix, up to orthogonal transformation. This classification is provided by F.~R.~Gantmacher in \cite{Gan98b}*{Chapter XI, \S 3} and can be summarised in the following theorem.
\begin{teo}[\cite{Gan98b}*{Corollary 2}]\label{teo:canonical_form}
    Any complex symmetric matrix $S\in\bbC^{n+1,n+1}$ is orthogonally similar to a blocks diagonal matrix
    \[
    \tilde{S}=\begin{pNiceMatrix}[margin,columns-width = 14pt,cell-space-limits=8pt]
\Block[borders={top,left,right,bottom}]{1-1}{}S_{s_1}(\lambda_1)&0& & &0\\
0&\Block[borders={top,left,right,bottom}]{1-1}{}S_{s_2}(\lambda_2)& 0& 0&\\[2ex]
 &0&&0& \\[2ex]
 &0&0&\Block[borders={top,left,right,bottom}]{1-1}{}S_{s_{r-1}}(\lambda_{r-1})&0\\
0& & &0&\Block[borders={top,left,right,bottom}]{1-1}{}S_r(\lambda_r)
 \CodeAfter 
       \line[shorten=14pt]{1-2}{1-5} 
       \line[shorten=14pt]{2-3}{2-4}
       \line[shorten=9pt]{4-2}{4-3} 
       \line[shorten=14pt]{5-1}{5-4}
       \line[shorten=15pt]{2-2}{4-4}
       \line[shorten=4pt]{2-1}{5-1} 
       \line[shorten=0pt]{3-2}{4-2}
       \line[shorten=0pt]{2-4}{3-4} 
       \line[shorten=4pt]{1-5}{4-5}
        \end{pNiceMatrix},
        \]
        where $\lambda_1,\dots,\lambda_r\in\bbC$ are eigenvalues of $S$ and, for every $j=1,\dots,r$, the block $S_{s_j}(\lambda_j)\in\bbC^{s_j,s_j}$ is the matrix
        \begin{equation}\label{formula:block_canonical_form}
        S_{s_j}(\lambda_j)=\lambda_jI_{s_j}+\frac{1}{2}J_{s_j}+\frac{\rmi}{2}K_{s_j},
        \end{equation}
        where $I_{s_j}$ is the $s_j\times s_j$ identity matrix and
\[
    J_{s_j}=
\begin{pNiceMatrix}[margin,columns-width = 3pt,cell-space-limits=5pt]
0&1&0& &0\\
1& & & & \\
0& & & &0\\
 & & & &1\\
0& &0&1&0\\
\CodeAfter
       \line[shorten=7pt]{1-3}{1-5} 
       \line[shorten=3pt]{1-5}{3-5}
       \line[shorten=9pt]{1-3}{3-5}
       \line[shorten=9pt]{1-2}{4-5}      
       \line[shorten=9pt]{1-1}{5-5}
       \line[shorten=9pt]{2-1}{5-4}
       \line[shorten=9pt]{3-1}{5-3}
       \line[shorten=3pt]{3-1}{5-1} 
       \line[shorten=7pt]{5-1}{5-3}
\end{pNiceMatrix}\in\bbC^{s_j,s_j},\qquad 
K_{s_j}=\begin{pNiceMatrix}[margin,columns-width = 3pt,cell-space-limits=5pt]
0& &0&1&0\\
 & & & &-1\\
0& & & &0\\
1& & & & \\
0&-1&0& &0\\
\CodeAfter
       \line[shorten=7pt]{1-3}{1-1} 
       \line[shorten=3pt]{1-1}{3-1}
       \line[shorten=9pt]{1-3}{3-1}
       \line[shorten=9pt]{1-4}{4-1}      
       \line[shorten=9pt]{1-5}{5-1}
       \line[shorten=9pt]{2-5}{5-2}
       \line[shorten=9pt]{3-5}{5-3}
       \line[shorten=3pt]{3-5}{5-5} 
       \line[shorten=7pt]{5-5}{5-3}
\end{pNiceMatrix}\in\bbC^{s_j,s_j}.
        \]
Moreover, $\tilde{S}$ is unique up to the order of the blocks.        
\end{teo}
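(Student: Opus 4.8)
The statement is a classical theorem of Gantmacher, and one natural route to it is to translate everything into the language of the standard symmetric bilinear form $\beta(\bfx,\bfy)=\transpose{\bfx}\bfy$ on $\bbC^{n+1}$ together with the operator $A$ whose matrix (in the fixed basis) is $S$. The plan is to exploit that orthogonal similarity $S\mapsto\transpose{Q}SQ=Q^{-1}SQ$ with $Q\in\Oa_{n+1}(\bbC)$ is exactly a change of basis by a $\beta$-isometry, and that the symmetry $S=\transpose{S}$ says precisely that $A$ is \emph{self-adjoint} for $\beta$, i.e.\ $\beta(A\bfx,\bfy)=\beta(\bfx,A\bfy)$. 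Thus the theorem becomes: classify $\beta$-self-adjoint operators up to isometry, and the blocks $S_{s_j}(\lambda_j)$ are to be identified with the matrices of the indecomposable pieces in a $\beta$-orthonormal basis.

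First I would split off generalised eigenspaces. Writing $V_\lambda=\Ker(A-\lambda\Id)^{n+1}$ we have $\bbC^{n+1}=\bigoplus_\lambda V_\lambda$, and this decomposition is $\beta$-orthogonal: for $v\in V_\lambda$, $w\in V_\mu$ with $\lambda\neq\mu$, self-adjointness gives $\beta\bigl((A-\mu)^kv,w\bigr)=\beta\bigl(v,(A-\mu)^kw\bigr)=0$ for $k\gg 0$, while $(A-\mu)$ acts invertibly on $V_\lambda$, forcing $\beta(v,w)=0$. A dimension count against the non-degeneracy of $\beta$ then shows each $\beta|_{V_\lambda}$ is non-degenerate. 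Since over $\bbC$ every non-degenerate symmetric bilinear form admits an orthonormal basis (Gram--Schmidt, using that one can always find a vector of nonzero norm and take square roots), choosing such bases on the $V_\lambda$ and concatenating reduces the problem to the case $A=\lambda\Id+N$ with $N$ nilpotent and $\beta$-self-adjoint on a non-degenerate space.

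Next I would peel off cyclic blocks. For $N$ nilpotent and self-adjoint the scalars $c_k:=\beta(v,N^kv)$ govern everything, since $\beta(N^iv,N^jv)=c_{i+j}$, so the Gram matrix of a cyclic string $v,Nv,\dots,N^{s-1}v$ is the Hankel matrix $(c_{i+j})$, which is anti-triangular with $c_{s-1}$ along the anti-diagonal. A polarisation argument shows that if $N^{s-1}\neq 0$ then $c_{s-1}=\beta(v,N^{s-1}v)\neq 0$ for some $v$ (otherwise $N^{s-1}v\perp$ everything, hence $N^{s-1}=0$); for $s$ the nilpotency index this produces a cyclic subspace of maximal dimension $s$ whose Gram matrix is non-degenerate. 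Its $\beta$-orthogonal complement is again $N$-invariant and non-degenerate, so induction yields an orthogonal direct sum of cyclic pieces, one for each Jordan block of $A$. On a single cyclic piece, writing the Hankel Gram matrix $G$ as $G=\transpose{H}H$ over $\bbC$ and changing basis by $H^{-1}$ gives a $\beta$-orthonormal basis, and a direct computation identifies $H(\lambda I+N_{\mathrm{shift}})H^{-1}$, after rescaling so that $c_{s-1}$ is normalised, with $\lambda_jI_{s_j}+\tfrac12 J_{s_j}+\tfrac{\rmi}{2}K_{s_j}$ — the factors $\tfrac12$ and the imaginary unit being precisely the square-root artefacts of orthonormalising the anti-diagonal Hankel form. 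Uniqueness up to the order of the blocks is automatic, since the multiset $\{(\lambda_j,s_j)\}$ is the Jordan type of $A$, an invariant of similarity.

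The main obstacle is the cyclic step: one must show the cyclic subspaces can be chosen non-degenerate even when several Jordan blocks of $A$ share the same size (selecting the cyclic vectors from the tops of independent Jordan chains so that the relevant Hankel determinants remain nonzero), and one must carry out the explicit matrix identification showing that the normalised pair $(N,G)$ produces exactly the $J_{s_j}/K_{s_j}$ pattern rather than merely \emph{some} symmetric nilpotent matrix. This is essentially the content of Gantmacher's argument in \cite{Gan98b}*{Chapter XI, \S 3}.
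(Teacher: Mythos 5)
This theorem is stated in the paper as a direct citation of Gantmacher (\cite{Gan98b}*{Corollary 2}); the paper offers no proof of its own, only the one-line remark that it rests on \cite{Gan98b}*{Theorem 5}, which shows that each block $S_{s_j}(\lambda_j)$ is orthogonally similar to a Jordan block. Your sketch is a faithful reconstruction of that classical argument, and the framework you lay out — interpreting symmetric matrices as $\beta$-self-adjoint operators, the $\beta$-orthogonality of generalized eigenspaces, the reduction to a nilpotent self-adjoint $N$, the Hankel Gram matrix $\bigl(c_{i+j}\bigr)$ of a cyclic string, the polarisation argument forcing some $c_{s-1}\neq 0$, and the peeling off of one non-degenerate cyclic block at a time — is exactly how the result is proved in Gantmacher. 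One small point: the concern you raise at the end about several Jordan blocks of the same size requiring special care is a non-issue; the inductive argument you already gave handles it automatically, since at each stage you only need \emph{one} vector $v$ with $\beta(v,N^{s-1}v)\neq 0$, and the $\beta$-orthogonal complement of the resulting cyclic block is again $N$-invariant and non-degenerate, so the recursion proceeds regardless of the Jordan multiplicities. The genuinely nontrivial content that your sketch defers — the explicit identification of the orthonormalised cyclic block with $\lambda I+\tfrac{1}{2}J+\tfrac{\rmi}{2}K$ — is precisely the computation carried out in Gantmacher's Theorem~5, which is the only step the paper flags.
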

The proof of \autoref{teo:canonical_form} is based on \cite{Gan98b}*{Theorem 5}, where it is proved that each block of the form \eqref{formula:block_canonical_form} is orthogonally similar to a block of the canonical Jordan form of the matrix. In particular, 
$\det S_{k}(\lambda)=(-1)^k\lambda^{k}$ for $\lambda\in\bbC$ and $k\in\bbN$ and 
$\rk S_{k}(0)=k-1$ for every $k\in\bbN$.  We refer to the blocks of the normal form of a symmetric matrix $S$ as the \textit{normal blocks} of $S$. Since by \autoref{rem:isotropic_rank_equal} we can consider matrices up to orthogonal transformations, we can work only by considering normal forms of matrices. We define a \textit{normal sequence} of a symmetric matrix $S$, whose normal blocks are $S_{s_1}(\lambda_1),\dots, S_{s_r}(\lambda_r)$, as the sequence
$\smash{(\lambda_1^{(s_1)},\dots,\lambda_r^{(s_r)})}$. Since the normal sequences of $S$ differ only by the order of the blocks, in the following we abuse of notation and we write {\it the normal sequence of $S$}.
For any $h\in\calH_{n,2}$, the {\it normal sequence of $h\in\calH_{n,2}$} is the normal sequence of its associated matrix.

By \autoref{prop:hollow_r_matrix}, we immediately get the following result, concerning frame matrices decomposed as direct sum of non-degenerate normal blocks.
\begin{cor}\label{cor:block_non_degen}
    Let $h\in\calH_{n,2}$ be a harmonic quadric with $\rk h=k$ and normal sequence $\smash{(\lambda_1^{(s_1)},\dots,\lambda_r^{(s_r)})}$. If $s_j=1$ for any $j$ such that $\lambda_j=0$, then $\irk h=k$.
\end{cor}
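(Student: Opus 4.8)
The plan is to reduce the statement to \autoref{prop:hollow_r_matrix} by showing that $h$ is orthogonally similar to a $k$-frame hollow matrix. By \autoref{rem:isotropic_rank_equal} the isotropic rank only depends on the orthogonal similarity class, so I would replace the matrix of $h$ by its normal form $H$, a block-diagonal matrix with normal blocks $S_{s_1}(\lambda_1),\dots,S_{s_r}(\lambda_r)$ as in \autoref{teo:canonical_form}. After reordering the blocks (which leaves $H$ in the same orthogonal similarity class), I may assume $\lambda_1,\dots,\lambda_p\neq 0$ and $\lambda_{p+1}=\dots=\lambda_r=0$; by hypothesis $s_{p+1}=\dots=s_r=1$, so the last $r-p$ blocks are $1\times 1$ zero blocks. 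Setting $H'\coloneqq S_{s_1}(\lambda_1)\oplus\dots\oplus S_{s_p}(\lambda_p)\in\bbC^{m,m}$ with $m\coloneqq s_1+\dots+s_p$, we have $H=\diag(H',0_{n+1-m})$.

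The first thing to check is that $H'$ is a non-degenerate harmonic matrix and that $m=k$. Non-degeneracy is immediate since $\det S_{s_j}(\lambda_j)=(-1)^{s_j}\lambda_j^{s_j}\neq 0$ for $j\leq p$; hence $\rk H=\rk H'=m$ and so $m=k$. For the harmonicity, each normal block satisfies $\tr S_{s_j}(\lambda_j)=s_j\lambda_j$ because $J_{s_j}$ and $K_{s_j}$ are hollow, and therefore $\tr H'=\sum_{j\leq p}s_j\lambda_j=\sum_{j\leq r}s_j\lambda_j=\tr H=0$ since $h$ is harmonic. Thus $H'\in\calH_{k-1,2}$ is a non-degenerate trace-zero matrix.

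Finally I would invoke \autoref{teo:isot_rank_nondegenerate_form} together with \autoref{lem:full_isot_rank_iff_hollow_matrix}: since $H'$ is non-degenerate and trace-zero, there exists $Q'\in\Oa_k(\bbC)$ with $\transpose{Q'}H'Q'$ hollow. Then $Q\coloneqq\diag(Q',I_{n+1-k})\in\Oa_{n+1}(\bbC)$ gives $\transpose{Q}HQ=\diag(\transpose{Q'}H'Q',0_{n+1-k})$, which is a $k$-frame hollow matrix. Hence $h$ is orthogonally similar to a $k$-frame hollow matrix and \autoref{prop:hollow_r_matrix} yields $\irk h=k$. There is no substantial obstacle: the only point requiring care is the verification that the submatrix $H'$ cut out by the nonzero normal blocks is still trace-zero, which is precisely where the hypothesis $s_j=1$ whenever $\lambda_j=0$ is used, ensuring that the discarded blocks contribute neither to the trace nor to the rank.
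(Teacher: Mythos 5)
Your proof is correct and is exactly the argument the paper intends: the normal form is a $k$-frame matrix whose non-degenerate trace-zero top-left $k\times k$ block can, by \autoref{teo:isot_rank_nondegenerate_form} and \autoref{lem:full_isot_rank_iff_hollow_matrix}, be orthogonally brought into hollow form, so \autoref{prop:hollow_r_matrix} applies. The paper states this as an immediate consequence of \autoref{prop:hollow_r_matrix}; you have simply spelled out the (correct and straightforward) details, including the key observation that discarding the $1\times 1$ zero blocks does not disturb the trace.
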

By \autoref{cor:block_non_degen}, in order to get the full classification of isotropic ranks of harmonic matrices, we just need to consider the normal sequences of harmonic matrices having at least one block $S_k(0)$, with $k\geq 2$. In the following, we analyse the normal sequences of all of these possible cases. As we have already seen in \autoref{exam:rank_vz}, in $\calH_{2,2}$ we have
\begin{equation*}
\irk\bigl((x_0-\rmi x_1)x_2\bigr)=4.
\end{equation*}
In particular, since the matrix 
\[
S_3(0)=\dfrac{1}{2}\begin{pNiceMatrix}
    0&1+\rmi&0\\
    1+\rmi&0&1-\rmi\\
    0&1-\rmi&0
\end{pNiceMatrix}
\]
corresponds to the harmonic quadratic form $(1+\rmi)(x_0-\rmi x_2)x_1$,
we immediately get
\begin{equation}\label{cor:rank_S3(0)}
\irk S_3(0)=4.
\end{equation}
The matrix $S_3(0)$ is indeed the unique normal block whose isotropic rank is different from its Waring rank. We start by considering the odd-size matrices.
\begin{prop}\label{prop:odd_blocks}
For every $k\in\bbN$, $\irk \bigl(S_{2k+5}(0)\bigr)=\rk \bigl(S_{2k+5}(0)\bigr)=2(k+2)$.
\end{prop}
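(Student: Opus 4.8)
The plan is to establish the upper bound $\irk(S_{2k+5}(0)) \le 2(k+2)$; the reverse inequality $\irk h \ge \rk h$ holds for every harmonic form, and $\rk(S_{2k+5}(0)) = (2k+5)-1 = 2(k+2)$ since, by Gantmacher's normal form (\autoref{teo:canonical_form}), $S_{2k+5}(0)$ is similar to a single nilpotent Jordan block of size $2k+5$.

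Write $N := 2k+5$ and $r := 2k+4 = \rk H$ for $H := S_{2k+5}(0)$, and factor $H = M\transpose M$ with $M \in \bbC^{N\times r}$ of full column rank (possible because $H$ is symmetric of rank $r$); set $B := \transpose M M \in \bbC^{r\times r}$. The first step is the reduction: $\irk H \le r$ if and only if $B$ is orthogonally similar to a hollow matrix. Indeed, if $H = L\transpose L$ with $L\in\bbC^{N\times r}$ of full column rank, then $\operatorname{col}L=\operatorname{col}H=\operatorname{col}M$, so $L = MO$ for a unique $O\in\GL_r(\bbC)$, and cancelling the left‑invertible $M$ in $MO\transpose O\transpose M = M\transpose M$ gives $O\in\Oa_r(\bbC)$; the columns of $L$ form an isotropic decomposition of $H$ exactly when $\transpose L L = \transpose O B O$ is hollow, and conversely any such $O$ produces one. (A length‑$r$ isotropic decomposition of $H$ must use linearly independent forms, since $\rk H = r$, hence arises this way.)

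Next I would identify $B$. From the identity $\det(\lambda I_N - M\transpose M) = \lambda^{N-r}\det(\lambda I_r - \transpose M M)$ and $\det(\lambda I_N - H) = \lambda^{N}$ one gets $\det(\lambda I_r - B) = \lambda^{r}$, so $B$ is nilpotent; and computing $\Ker B = \{x : Mx\in\Ker\transpose M\}$, using that $H$ is a single Jordan block, so $\Ker H \subseteq \operatorname{col} H = \operatorname{col} M$ while $\Ker\transpose M = \Ker H$ is one‑dimensional, shows $\operatorname{corank} B = 1$, i.e. $\rk B = r-1$. Thus $B$ is a single nilpotent Jordan block of size $r$, so by \autoref{teo:canonical_form} and the uniqueness of the normal form $B$ is orthogonally similar to $S_{2k+4}(0)$. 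Combining with the reduction, it suffices to exhibit a hollow symmetric $r\times r$ matrix that is nilpotent of rank $r-1$: by uniqueness such a matrix is orthogonally similar to $S_{2k+4}(0)\cong B$, whence $\irk S_{2k+5}(0) = r = 2(k+2)$.

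The construction of this hollow matrix is the main obstacle, and here the parity of $r=2k+4$ is essential: a tridiagonal hollow matrix of even size can never be a single nilpotent Jordan block, since its determinant equals $\pm(s_1 s_3\cdots s_{r-1})^2$, forcing one of its off‑diagonal entries to vanish. One therefore needs a genuinely non‑tridiagonal pattern; for instance for $r=4$ one can take $K = \left(\begin{smallmatrix} 0 & \transpose{\mathbf 1}\\ \mathbf 1 & K'\end{smallmatrix}\right)$ with $\mathbf 1 = (1,1,1)$ and $K'$ a hollow symmetric $3\times 3$ matrix whose three entries are tuned so that the characteristic polynomial of $K$ becomes $\lambda^4$, after which a direct check gives $\rk K = 3$. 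In general one verifies that the affine variety cut out by the nilpotency equations inside the space of hollow symmetric $r\times r$ matrices is non‑empty and meets the (everywhere‑dense) rank‑$(r-1)$ locus — equivalently, that $S_{2k+4}(0)$ lies in the $\Oa_{2k+4}(\bbC)$‑orbit of a hollow matrix; this single computational point may be handled by an explicit construction (e.g. the bordered pattern above for larger even sizes) or by induction on $k$, the remainder of the argument being purely formal.
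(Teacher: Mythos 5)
Your argument takes a genuinely different route from the paper's proof, which simply writes the quadratic form $h$ associated to $S_{2k+5}(0)$ and exhibits a closed-form isotropic decomposition of size $2(k+2)$ (the lower bound $\rk h\leq\irk h$ being trivial, so \autoref{teo:two_times_rank} is not even needed — only the easy inequality). Your reduction — factor $H=M\transpose M$ with $M\in\bbC^{N\times r}$ of full column rank, set $B=\transpose{M}M$, and show that $\irk H\leq r$ if and only if $B$ is $\Oa_r(\bbC)$-similar to a hollow matrix — is correct, and your identification of $B$ as a single nilpotent Jordan block of size $r=2k+4$ is also correct (nilpotency via the determinant identity, corank $1$ from $\Ker\transpose{M}=\Ker H\subseteq\mathrm{col}\,M$). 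This is an elegant structural reformulation that the paper does not state.

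However, the proof is not complete: the step you describe as ``this single computational point'' is in fact the substance of the argument, and it is not carried out. You need, for every even $r\geq 4$, a hollow symmetric $r\times r$ matrix that is nilpotent of rank exactly $r-1$, and you only sketch the case $r=4$ — where even there one must still check that the system $\tr K^2=\tr K^3=\det K=0$ on $(a,b,c)$ admits a solution with $\rk K=3$ rather than $\leq 2$. The dimension count shows only that the hollow nilpotent locus, \emph{if} non-empty, has codimension at most $r-1$ inside the $\binom{r}{2}$-dimensional hollow space; but the zero matrix is the obvious hollow nilpotent element and it has rank $0$, so whether the rank-$(r-1)$ stratum is non-empty is precisely the question, and calling that stratum ``everywhere-dense'' inside the nilpotent hollow variety begs it. The alternative ``or by induction on $k$'' is also unsupported: it is not clear how to pass from an $r\times r$ hollow witness to an $(r+2)\times(r+2)$ one while preserving both nilpotency and maximal rank. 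In short, you have faithfully translated $\irk S_{2k+5}(0)=2(k+2)$ into the equivalent assertion that $S_{2k+4}(0)$ lies in the $\Oa_{2k+4}(\bbC)$-orbit of a hollow matrix, but this reformulation is not easier to establish without producing an explicit witness — which is exactly what the paper's direct decomposition supplies, and what your proposal stops short of.
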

\begin{proof}
Observe that, in general,
    the matrix \[
    \setcounter{MaxMatrixCols}{20}
    S_{2k+5}(0)=\dfrac{1}{2}
\begin{pNiceMatrix}[margin,columns-width = auto,cell-space-limits=6pt]
0   &1    &0&    &      &      &      &     &0&\rmi&0\\
1   &     & &    &      &      &      &     & &    &-\rmi\\
0   &     & &    &      &      &      &     & &    &0\\
    &     & &0   &1     &0     &\rmi  &0    & &    & \\
    &     & &1   &0     &1+\rmi&0     &-\rmi& &    & \\
    &     & &0   &1+\rmi&0     &1-\rmi&0    & &    & \\
    &     & &\rmi&0     &1-\rmi&0     &1    & &    & \\
    &     & &0   &-\rmi &0     &1     &0    & &    & \\
0   &     & &    &      &      &      &     & &    &0\\
\rmi&     & &    &      &      &      &     & &    &1 \\
0   &-\rmi&0&    &      &      &      &     &0&1   &0\\
\CodeAfter
       \line[shorten=10pt]{1-1}{4-4}
       \line[shorten=10pt]{1-2}{4-5}
       \line[shorten=10pt]{1-3}{4-6}
       \line[shorten=10pt]{2-1}{5-4}
       \line[shorten=10pt]{3-1}{6-4}
       \line[shorten=10pt]{1-11}{4-8}
       \line[shorten=10pt]{1-10}{4-7}
       \line[shorten=10pt]{1-9}{4-6}
       \line[shorten=10pt]{2-11}{5-8}
       \line[shorten=10pt]{3-11}{6-8}
       \line[shorten=10pt]{11-1}{8-4}
       \line[shorten=10pt]{10-1}{7-4}
       \line[shorten=10pt]{9-1}{6-4}
       \line[shorten=10pt]{11-2}{8-5}
       \line[shorten=10pt]{11-3}{8-6}
       \line[shorten=10pt]{11-11}{8-8}
       \line[shorten=10pt]{11-10}{8-7}
       \line[shorten=10pt]{11-9}{8-6}
       \line[shorten=10pt]{10-11}{7-8}
       \line[shorten=10pt]{9-11}{6-8}
       \line[shorten=14pt]{1-3}{1-9}
       \line[shorten=10pt]{3-1}{9-1}
       \line[shorten=14pt]{11-3}{11-9}
       \line[shorten=10pt]{3-11}{9-11}
\end{pNiceMatrix}
    \] 
    corresponds to the quadratic form
    \begin{align*}
    h&=x_{k+1}\bigl(x_k+(1+\rmi)x_{k+2}-\rmi x_{k+4})+\rmi x_{k+3}\bigl(x_k-(1+\rmi)x_{k+2}-\rmi x_{k+4}\bigr)\\
    &\hphantom{{}={}}+\sum_{j=1}^k (x_{k-j}-\rmi x_{k+j+4})(x_{k-j+1}+\rmi x_{k+j+3}).
    \end{align*}
    The first $2$ summands represent to the middle $(5\times 5)$-block of $S_{2k+5}(0)$ and each of the successive $k$ summands correspond to the $k$ external frames of the matrix. 
    We can write
     \begin{align*}
    h&=\dfrac{1+\rmi}{8}\Bigl(\bigl(x_k-\rmi x_{k+4}+(1+\rmi)(x_{k+2}-\rmi x_{k+1})\bigr)^2-\bigl(x_k-\rmi x_{k+4}+(1+\rmi)(x_{k+2}+\rmi x_{k+1})\bigr)^2\Bigr)\\[1ex]
    &\hphantom{{}={}}+\dfrac{1-\rmi}{8}\Bigl(\bigl(x_k-\rmi x_{k+4}-(1+\rmi)(x_{k+2}-\rmi x_{k+3})\bigr)^2-\bigl(x_k-\rmi x_{k+4}-(1+\rmi)(x_{k+2}+\rmi x_{k+3})\bigr)^2\Bigr)\\
    &\hphantom{{}={}}+\dfrac{1}{4}\sum_{j=1}^k\bigl((x_{k-j}-\rmi x_{k+j+4}+x_{k-j+1}+\rmi x_{k+j+3})^2-(x_{k-j}-\rmi x_{k+j+4}-x_{k-j+1}-\rmi x_{k+j+3})^2\bigr),
    \end{align*}
    which is an isotropic decomposition of size $2(k+2)$. Since $\rk S_{2k+5}(0)=2(k+2)$, we conclude by \Cref{teo:two_times_rank}.
\end{proof}
We can do a similar procedure for the even-size normal blocks.
\begin{prop}\label{prop:blocchipari}
    For every $k\in\bbN$, $\irk \bigl(S_{2(k+1)}(0)\bigr)=\rk \bigl(S_{2(k+1)}(0)\bigr)=2k+1$.
\end{prop}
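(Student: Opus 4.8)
The plan is to mirror the argument of \autoref{prop:odd_blocks}. Since $\rk h\leq\irk h$ always holds and, by the remarks following \autoref{teo:canonical_form}, $\rk\bigl(S_{2(k+1)}(0)\bigr)=2(k+1)-1=2k+1$, it suffices to exhibit an isotropic decomposition of $S_{2(k+1)}(0)$ of length $2k+1$; the equality $\irk=\rk=2k+1$ then follows at once (one could also invoke \autoref{teo:two_times_rank}). By \autoref{rem:isotropic_rank_equal} we may replace $S_{2(k+1)}(0)$ by any orthogonally similar matrix, so the first step is to write it in a basis displaying a \emph{nested frame} structure: a central $2\times 2$ block equal to $S_2(0)$, surrounded by $k$ successive frames — exactly the even-size analogue of the matrix written out in the proof of \autoref{prop:odd_blocks}, except that the odd case had a central $5\times5$ block $S_5(0)$ whereas here, peeling off $k$ frames from a block of size $2(k+1)=2k+2$ leaves a block of size $2$. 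Concretely, one checks that $S_{2(k+1)}(0)$ corresponds, after an orthogonal change of coordinates, to the quadratic form
$$
h=c\,\ell_0^{2}+\sum_{j=1}^{k}(x_{a_j}-\rmi x_{b_j})(x_{c_j}+\rmi x_{d_j}),
$$
where the four indices $a_j,b_j,c_j,d_j$ are pairwise distinct across all $j$ and disjoint from the two variables occurring in $\ell_0$, and $c\ell_0^2$ is the quadratic form of the central $S_2(0)$ block. Here $\ell_0$ is automatically isotropic: any nonzero trace-zero symmetric matrix of rank one is of the form $c\,v\transpose v$ with $\transpose v v=0$, so $S_2(0)$ already has isotropic rank $1$ (this is the base case $k=0$). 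Producing this explicit matrix and confirming it is orthogonally similar to $S_{2(k+1)}(0)$ — it is enough, by the uniqueness in \autoref{teo:canonical_form}, to verify that it is symmetric, nilpotent, and has a single Jordan block of size $2(k+1)$ (e.g.\ via the ranks of its powers) — is the routine but slightly delicate bookkeeping step.

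Once $h$ is in this shape, each frame summand is split by the elementary identity $\ell\ell'=\tfrac14\bigl((\ell+\ell')^2-(\ell-\ell')^2\bigr)$. For $\ell=x_{a_j}-\rmi x_{b_j}$ and $\ell'=x_{c_j}+\rmi x_{d_j}$ with the four indices distinct, both $\ell\pm\ell'$ are $\omega_n$-isotropic: writing $\ell\pm\ell'=\sum_i \gamma_i x_i$, the nonzero coefficients are $1,-\rmi,\pm1,\pm\rmi$, whence $\omega_n\circ(\ell\pm\ell')^2=2\sum_i\gamma_i^2=2\bigl(1+(-1)+1+(-1)\bigr)=0$. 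Therefore
$$
h=c\,\ell_0^{2}+\sum_{j=1}^{k}\frac14\Bigl(\bigl(\ell^{(j)}+\ell'^{(j)}\bigr)^2-\bigl(\ell^{(j)}-\ell'^{(j)}\bigr)^2\Bigr)
$$
is a sum of $1+2k=2k+1$ squares of isotropic linear forms, so $\irk\bigl(S_{2(k+1)}(0)\bigr)\leq 2k+1$. Combined with $\irk\geq\rk=2k+1$ this gives $\irk\bigl(S_{2(k+1)}(0)\bigr)=\rk\bigl(S_{2(k+1)}(0)\bigr)=2k+1$.

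The main obstacle is the first step: writing down the explicit frame matrix with the correct coefficients and signs, and checking it realizes $S_{2(k+1)}(0)$. The point requiring care is that the telescoping between consecutive frames must reproduce precisely the off-(anti)diagonal entries of the normal block while leaving no diagonal — hence non-isotropic — contribution. Unlike the odd case, the central contribution is the single rank-one term $c\ell_0^2$ rather than two pairs of isotropic squares, which is exactly why the final count is $2k+1$ instead of $2(k+1)$.
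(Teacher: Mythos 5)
Your overall strategy matches the paper's: write $S_{2(k+1)}(0)$ in a nested-frame shape with a rank-one core, split each frame product via $\ell\ell'=\tfrac14\bigl((\ell+\ell')^2-(\ell-\ell')^2\bigr)$, verify isotropy of the resulting linear forms, and close with $\rk\leq\irk$. Your isotropy check for $\ell\pm\ell'$ and the count $1+2k=2k+1$ are both correct, as is the observation that a nonzero trace-zero symmetric rank-one matrix is automatically $c\,v\transpose{v}$ with $\transpose{v}v=0$ (so the base case $k=0$ is immediate).

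There is, however, a gap that you flag yourself but do not close: you never actually write down the frame form — the indices $a_j,b_j,c_j,d_j$ and the core coefficient $c$ are left unspecified — and this identity is the entire computational content of the proposition. The paper supplies it explicitly: in the given coordinates, $S_{2(k+1)}(0)$ corresponds to
\[
h=\frac{\rmi}{2}(x_k-\rmi x_{k+1})^2+\sum_{j=0}^{k-1}(x_{k-j-1}-\rmi x_{k+j+2})(x_{k-j}+\rmi x_{k+j+1}).
\]
Moreover, the detour you propose through Jordan type and the uniqueness in \autoref{teo:canonical_form} is a misdirection. No orthogonal change of coordinates is needed, and there is nothing to verify about nilpotency or ranks of powers: the Gantmacher block $S_{2(k+1)}(0)=\tfrac12 J_{2(k+1)}+\tfrac{\rmi}{2}K_{2(k+1)}$ already displays the nested-frame structure in its defining presentation, since for each $j$ the matrix $J$ supplies the two super-/sub-diagonal entries of the $j$-th frame while $K$ supplies the two off-antidiagonal entries; one simply reads the quadratic form off. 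The bookkeeping you worry about is therefore considerably lighter than you anticipate — the paper just unpacks the definition of $S_{2(k+1)}(0)$ and then applies the splitting identity you describe.
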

\begin{proof}
    Observe that, for every $k\in\bbN$,
    the matrix \[
    \setcounter{MaxMatrixCols}{20}
    S_{2(k+1)}(0)=\dfrac{1}{2}
\begin{pNiceMatrix}[margin,columns-width = auto,cell-space-limits=6pt]
0   &1    &0&    &      &      &      &     & &0&\rmi&0\\
1   &     & &    &      &      &      &     & & &    &-\rmi\\
0   &     & &    &      &      &      &     & & &    &0\\
    &     & &    &      &0     &0     &     & & &    & \\
    &     & &    &0     &1     &\rmi  &0    & & &    & \\
    &     & &0   &1     &\rmi  &1     &-\rmi&0& &    & \\
    &     & &0   &\rmi  &1     &-\rmi &1    &0& &    & \\
    &     & &    &0     &-\rmi &1     &0    & & &    & \\
    &     & &    &      &0     &0     &     & & &    & \\
0   &     & &    &      &      &      &     & & &    &0\\
\rmi&     & &    &      &      &      &     & & &    &1 \\
0   &-\rmi&0&    &      &      &      &     & &0&1   &0\\
\CodeAfter
       \line[shorten=10pt]{1-1}{5-5}
       \line[shorten=10pt]{1-2}{5-6}
       \line[shorten=10pt]{2-1}{6-5}
       \line[shorten=10pt]{1-3}{4-6}
       \line[shorten=10pt]{3-1}{6-4}
       \line[shorten=10pt]{1-12}{5-8}
       \line[shorten=10pt]{1-11}{5-7}
       \line[shorten=10pt]{2-12}{6-8}
       \line[shorten=10pt]{1-10}{4-7}
       \line[shorten=10pt]{3-12}{6-9}
      \line[shorten=10pt]{12-1}{8-5}
       \line[shorten=10pt]{11-1}{7-5}
       \line[shorten=10pt]{12-2}{8-6}
       \line[shorten=10pt]{10-1}{7-4}
       \line[shorten=10pt]{12-3}{9-6}
       \line[shorten=10pt]{12-12}{8-8}
       \line[shorten=10pt]{11-12}{7-8}
       \line[shorten=10pt]{10-12}{7-9}
       \line[shorten=10pt]{12-11}{8-7}
       \line[shorten=10pt]{12-10}{9-7}
       \line[shorten=14pt]{1-3}{1-10}
       \line[shorten=10pt]{3-1}{10-1}
       \line[shorten=14pt]{12-3}{12-10}
       \line[shorten=10pt]{3-12}{10-12}
\end{pNiceMatrix}
    \]     
    corresponds to the quadratic form
    \[
    h=\dfrac{\rmi}{2} (x_k-\rmi x_{k+1})^2+\sum_{j=0}^{k-1}(x_{k-j-1}-\rmi x_{k+j+2})(x_{k-j}+\rmi x_{k+j+1}),
    \]
    where the first summand corresponds to the middle $(2\times 2)$-block of the matrix and each of the successive $k$ summands correspond to the $k$ external frames of the matrix. So we can do as in the proof of \autoref{prop:odd_blocks} and write
    \begin{align*}
    h=\dfrac{\rmi}{2} (x_k-\rmi x_{k+1})^2+\dfrac{1}{4}\sum_{j=0}^{k-1}\bigl(&(x_{k-j-1}-\rmi x_{k+j+2}+x_{k-j}+\rmi x_{k+j+1})^2\\
    &-(x_{k-j-1}-\rmi x_{k+j+2}-x_{k-j}-\rmi x_{k+j+1})^2\bigr),
    \end{align*}
    which gives a decomposition of size $2k+1$. Since $\rk S_{2(k+1)}(0)=2k+1$, we conclude by \Cref{teo:two_times_rank}.
\end{proof}
Now, we consider the case where the normal form consists of a direct sum of multiple copies of $S_3(0)$.
\begin{prop}\label{prop:S3(0)_blocks}
    For every $k\geq 2$, $\irk\bigl(S_3(0)^{\oplus k}\bigr)=\rk\bigl(S_3(0)^{\oplus k}\bigr)=2k$.
\end{prop}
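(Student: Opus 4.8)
The plan is to match an explicit isotropic decomposition of size $2k$ against the Waring-rank lower bound. Since the Waring rank of a quadratic form equals the rank of its associated matrix, $\rk\bigl(S_3(0)^{\oplus k}\bigr)=k\cdot\rk S_3(0)=2k$, so \autoref{teo:two_times_rank} already gives $\irk\bigl(S_3(0)^{\oplus k}\bigr)\geq 2k$; hence it suffices to write the quadratic form attached to $S_3(0)^{\oplus k}$ as a sum of $2k$ squares of isotropic linear forms. From the identification of $S_3(0)$ with $(1+\rmi)(x_0-\rmi x_2)x_1$, that form is
\[
h=(1+\rmi)\sum_{m=1}^{k}(x_{3m-3}-\rmi x_{3m-1})x_{3m-2}=(1+\rmi)\sum_{m=1}^{k}a_m z_m,
\]
where I set $a_m:=x_{3m-3}-\rmi x_{3m-1}$ and $z_m:=x_{3m-2}$. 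Here each $a_m$ is isotropic, the $z_m$ are mutually orthogonal with $z_m$ not isotropic, the span of the $a_m$ is totally isotropic, and $\langle z_i,a_j\rangle=0$ for all $i,j$. All indices of $a$ and $z$ will be read cyclically modulo $k$, and this is the only place the hypothesis $k\geq 2$ is used.

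Then I would introduce, for $m=1,\dots,k$, the $2k$ linear forms
\[
\ell_m^{\pm}:=\bigl(z_m\pm\rmi z_{m+1}\bigr)+\tfrac{1+\rmi}{8}\,a_m\pm\tfrac{1-\rmi}{8}\,a_{m+1}.
\]
Each $\ell_m^{\pm}$ is isotropic: its $z$-component $z_m\pm\rmi z_{m+1}$ has $\omega$-square $1+(\pm\rmi)^2=0$ (using that $z_m$ and $z_{m+1}$ are distinct orthogonal forms, which holds since $k\geq 2$), its $a$-component lies in a totally isotropic subspace, and the two components are orthogonal. Squaring and summing, the pure $z$-part is $\sum_{m=1}^{k}\bigl((z_m+\rmi z_{m+1})^2+(z_m-\rmi z_{m+1})^2\bigr)=\sum_{m=1}^{k}\bigl(2z_m^2-2z_{m+1}^2\bigr)=0$ by cyclic telescoping; the pure $a$-part is $2\bigl(\bigl(\tfrac{1+\rmi}{8}\bigr)^2+\bigl(\tfrac{1-\rmi}{8}\bigr)^2\bigr)\sum_{m=1}^{k}a_m^2=0$; and the mixed part equals $\sum_{m=1}^{k}\bigl(4\cdot\tfrac{1+\rmi}{8}\,a_m z_m+4\rmi\cdot\tfrac{1-\rmi}{8}\,a_{m+1}z_{m+1}\bigr)$, which after cyclic reindexing is $\bigl(\tfrac{1+\rmi}{2}+\tfrac{1+\rmi}{2}\bigr)\sum_{m=1}^{k}a_m z_m=(1+\rmi)\sum_{m=1}^{k}a_m z_m=h$. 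Hence $\sum_{m=1}^{k}\bigl((\ell_m^{+})^2+(\ell_m^{-})^2\bigr)=h$, so $\irk\bigl(S_3(0)^{\oplus k}\bigr)\leq 2k$, and together with the lower bound this is the assertion, the conclusion following exactly as in \autoref{prop:odd_blocks} and \autoref{prop:blocchipari}.

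The verification is elementary once the forms are written down; the conceptual content, and the reason the statement fails for a single block (where $\irk S_3(0)=4>2$), is that a lone form $z_1$ that is not isotropic cannot be made isotropic, so $a_1 z_1$ on its own is not a sum of two isotropic squares, whereas for $k\geq 2$ one can pair coordinates from different blocks into the isotropic combinations $z_m\pm\rmi z_{m+1}$. The only point requiring care in the write-up is the cyclic bookkeeping and checking that the single choice of coefficients $\tfrac{1+\rmi}{8}$ and $\pm\tfrac{1-\rmi}{8}$ simultaneously annihilates the pure $a$-part and reproduces the mixed part; I would isolate these as the three identities $\sum_{m=1}^{k}(2z_m^2-2z_{m+1}^2)=0$, $\bigl(\tfrac{1+\rmi}{8}\bigr)^2+\bigl(\tfrac{1-\rmi}{8}\bigr)^2=0$, and $4\cdot\tfrac{1+\rmi}{8}+4\rmi\cdot\tfrac{1-\rmi}{8}=1+\rmi$.
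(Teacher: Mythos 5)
Your proof is correct, and it takes a genuinely different and arguably cleaner route than the paper. The paper first reduces to the base cases $k=2$ and $k=3$ via subadditivity of the isotropic rank (every $k\geq 2$ is a nonnegative combination $2a+3b$), and then exhibits separate, somewhat ad hoc explicit decompositions of size $4$ and $6$ for those two cases; the $k=3$ decomposition in particular is an opaque list of six squares. You instead produce a single cyclic formula
\[
\ell_m^{\pm}=\bigl(z_m\pm\rmi z_{m+1}\bigr)+\tfrac{1+\rmi}{8}\,a_m\pm\tfrac{1-\rmi}{8}\,a_{m+1},\qquad m\in\bbZ/k\bbZ,
\]
that works uniformly for every $k\geq 2$, and whose verification reduces to the three clean identities you isolate (cyclic telescoping of $z_m^2-z_{m+1}^2$, $\bigl(\tfrac{1+\rmi}{8}\bigr)^2+\bigl(\tfrac{1-\rmi}{8}\bigr)^2=0$, and $4\cdot\tfrac{1+\rmi}{8}+4\rmi\cdot\tfrac{1-\rmi}{8}=1+\rmi$). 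I checked the isotropy of each $\ell_m^\pm$ (the $z$-part has $\omega$-square $1+(\pm\rmi)^2=0$ because $z_m\neq z_{m+1}$ when $k\geq2$; the $a$-part lies in a totally isotropic space; the two parts are orthogonal) and the decomposition identity, and both are right. Your approach buys a uniform construction that avoids the numerical-semigroup reduction and, as you point out, makes transparent exactly why $k=1$ is exceptional: one lone non-isotropic $z_1$ cannot be paired into an isotropic combination, whereas for $k\geq 2$ the cyclic pairing $z_m\pm\rmi z_{m+1}$ does the job. The lower bound $\irk\geq\rk=2k$ is immediate (as in the paper, via \autoref{teo:two_times_rank} or just the trivial inequality $\rk\leq\irk$).
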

\begin{proof}
    The matrix 
    \[
    \setcounter{MaxMatrixCols}{20}
    S_3(0)^{\oplus k}=
\begin{pNiceMatrix}[margin,columns-width = auto,cell-space-limits=6pt]
\Block[borders={right,bottom}]{3-3}{}
0     &1+\rmi&0     &0&   &      &      &0     \\
1+\rmi&0     &1-\rmi&0&   &      &0     &      \\
0     &1+\rmi&0     &0&   &0     &      &      \\
0     &0     &0     &\Block[borders={top,left,right,bottom}]{2-2}{} &   &      &      &      \\
      &      &      & &   &0     &0     &0     \\
      &      &0     & &0  &\Block[borders={top,left}]{3-3}{}0     &1+\rmi&0     \\
      &0     &      & &0  &1+\rmi&0     &1-\rmi\\
0     &      &      & &0  &0     &1-\rmi&0     \\
\CodeAfter
       \line[shorten=27pt]{3-3}{6-6}
       \line[shorten=14pt]{1-4}{1-8}
       \line[shorten=14pt]{2-4}{2-7}
       \line[shorten=14pt]{3-4}{3-6}
       \line[shorten=5pt]{5-8}{1-8}
       \line[shorten=5pt]{5-7}{2-7}
       \line[shorten=5pt]{5-6}{3-6}
       \line[shorten=5pt]{4-1}{8-1}
       \line[shorten=5pt]{4-2}{7-2}
       \line[shorten=5pt]{4-3}{6-3}
       \line[shorten=14pt]{8-5}{8-1}
       \line[shorten=14pt]{7-5}{7-2}
       \line[shorten=14pt]{6-5}{6-3}
\end{pNiceMatrix}
    \] 
    corresponds in $\calH_{3k-1,2}$ to the quadratic form
    \[
    h=(1+\rmi)\sum_{j=0}^{k-1}(x_{3j}-\rmi x_{3j+2})x_{3j+1}.
    \]
    To prove the statement, it is sufficient to prove it for the cases where $k=2$ and $k=3$.  
    In the first case, we have
    \[
    h=(1+\rmi)\bigl((x_0-\rmi x_2)x_1+(x_3-\rmi x_5)x_4\bigr),
    \]
    which can be decomposed as
    \begin{align*}
    h&
    =\dfrac{1+\rmi}{8}\bigl((x_0+x_1-\rmi x_2-\rmi x_3+\rmi x_4-x_5)^2-(x_0-x_1-\rmi x_2-\rmi x_3-\rmi x_4-x_5)^2\bigr)\\
    &\hphantom{{}={}}+\dfrac{1-\rmi}{8}\bigl( (x_0+\rmi x_1 -\rmi x_2 + \rmi x_3+x_4+ x_5)^2- (x_0-\rmi x_1 -\rmi x_2 + \rmi x_3-x_4+ x_5)^2\bigr),
    \end{align*}
    which is an isotropic decomposition.
    Therefore, by \Cref{teo:two_times_rank},
    $\irk h=4$.
    For the case where $k=3$, instead, we have
    \[
    h=(1+\rmi)\bigl((x_0-\rmi x_2)x_1+(x_3-\rmi x_5)x_4+(x_6-\rmi x_8)x_7\bigr),
    \]
    which can be decomposed as
    \begin{align*}
h
&=\rmi(x_0+x_1-\rmi x_2-x_3+\rmi x_4+\rmi x_5-\rmi x_6-x_8)^2\\
&\hphantom{{}={}}-\rmi(x_0-x_1-\rmi x_2-x_3-\rmi x_4+\rmi x_5-\rmi x_6-x_8)^2\\
&\hphantom{{}={}}+(x_0-\rmi x_2+\rmi x_3+x_4+ x_5-\rmi x_6+\rmi x_7- x_8)^2\\
&\hphantom{{}={}}-(x_0-\rmi x_2+\rmi x_3-x_4+ x_5-\rmi x_6-\rmi x_7- x_8)^2\\
&\hphantom{{}={}}-\rmi(x_0+\rmi x_1-\rmi x_2+\rmi x_3+ x_5- x_6+x_7+\rmi x_8)^2\\
&\hphantom{{}={}}+\rmi(x_0-\rmi x_1-\rmi x_2+\rmi x_3+ x_5- x_6-x_7+\rmi x_8)^2.
    \end{align*}
    Hence, by \Cref{teo:two_times_rank}, $\irk h=6$.
\end{proof}
\autoref{prop:S3(0)_blocks} allows to do some considerations on the additivity of the isotropic rank. One of the main topics about the Waring rank is the so-called symmetric Strassen conjecture, which states that, given two polynomials $f\in\bbC[x_0,\dots,x_n]$ and $g\in\bbC[y_0,\dots,y_m]$ of the same degree $d$, we have
\[
\rk(f+g)=\rk f+\rk g.
\]
Symmetric Strassen conjecture is a specific case of Strassen conjecture, which is stated in the more general setting of tensors (see \cite{Str73}). Y.~Shitov recently proved in \cite{Shi19} that Strassen conjecture is false in general (see also \cite{BFPSS25} for an alternative and more specific version for the field complex numbers). Also for the Waring rank, the conjecture has been proved to be false, see \cite{Shi24}. However, there are some \textit{small} cases for which the statement holds, see \cite{CCC15}. In particular, we have the following case for rank one polynomials or, equivalently, polynomials in one variable.
\begin{prop}[\cite{CCC15}*{Proposition 3.1}]\label{prop:Strassen_rank_one}
    Let $f\in\bbC[x_0,\dots,x_n]$ and $g\in\bbC[y_0,\dots,y_m]$ be homogeneous polynomials of degree $d$. If either $n=0$ or $m=0$, then 
    \[
\rk(f+g)=\rk f+\rk g.
\]
\end{prop}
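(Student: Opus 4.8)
The plan is as follows. By the symmetry of the statement we may assume $m=0$, so $g=g(y_0)=c\,y_0^{d}$ for a scalar $c\in\bbC$; write the remaining variables as $x_0,\dots,x_n$ and set $F:=f+c\,y_0^{d}$. If $c=0$ the claim is trivial, since then $F=f$ and $\rk g=0$; so assume $c\neq0$, whence $\rk g=1$. The inequality $\rk F\le\rk f+1$ is immediate: a minimal Waring decomposition of $f$ together with $(c^{1/d}y_0)^{d}$ expresses $F$ as a sum of $\rk f+1$ $d$-th powers. Conversely, substituting $y_0\mapsto0$ in any decomposition $F=\sum_{i=1}^{r}m_i^{d}$ gives $f=\sum_{i=1}^{r}(m_i|_{y_0=0})^{d}$, so $\rk F\ge\rk f$. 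Hence $\rk F\in\{\rk f,\rk f+1\}$ and it remains only to exclude $\rk F=\rk f$. The case $d=1$ is trivial in the relevant situations, and for $d=2$ the result follows at once by passing to associated symmetric matrices: the matrix of $F$ is the block sum of the matrix of $f$ with the $1\times1$ matrix $(c)$, so $\rk F=\rk f+1$. From now on assume $d\ge3$.

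I would argue by induction on $\rho:=\rk f$. The cases $\rho\le1$ are immediate; for instance if $\rho=1$ then $f=\ell^{d}$ for a nonzero linear form $\ell$ in $x_0,\dots,x_n$, and $F=\ell^{d}+(c^{1/d}y_0)^{d}$ is a sum of two $d$-th powers of linearly independent linear forms, hence $\rk F=2$. For the inductive step, suppose for contradiction that $\rk F=\rho$, and fix a minimal decomposition $F=\sum_{i=1}^{\rho}m_i^{d}$ with $m_i=L_i+b_iy_0$, $L_i\in\calR_{n,1}$, $b_i\in\bbC$. Setting $y_0=0$ yields $f=\sum_{i=1}^{\rho}L_i^{d}$, a decomposition of length $\rho=\rk f$, hence minimal; therefore every $L_i\neq0$ (dropping a vanishing one would shorten it) and the points $[L_i]\in\bbP(\calR_{n,1})$ are pairwise distinct (merging two coincident ones would shorten it). Moreover every $b_i\neq0$: if $b_{i_0}=0$, then $F-L_{i_0}^{d}=\sum_{i\neq i_0}m_i^{d}$ is a sum of $\rho-1$ $d$-th powers, while $F-L_{i_0}^{d}=(f-L_{i_0}^{d})+c\,y_0^{d}$ with $\rk(f-L_{i_0}^{d})\le\rho-1<\rho$; the induction hypothesis then gives $\rk\bigl((f-L_{i_0}^{d})+c\,y_0^{d}\bigr)=\rk(f-L_{i_0}^{d})+1$, so $\rk(f-L_{i_0}^{d})\le\rho-2$ and hence $\rk f\le\rho-1$, a contradiction.

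Comparing coefficients of the powers of $y_0$ in $\sum_{i=1}^{\rho}(L_i+b_iy_0)^{d}=f+c\,y_0^{d}$ gives the relations
\[
\sum_{i=1}^{\rho}b_i^{\,j}L_i^{\,d-j}=0\quad(1\le j\le d-1),\qquad \sum_{i=1}^{\rho}L_i^{d}=f,\qquad \sum_{i=1}^{\rho}b_i^{d}=c.
\]
Differentiating $F$ with respect to $y_0$ (equivalently, the relation for $j=1$) yields
\[
c\,y_0^{\,d-1}=\sum_{i=1}^{\rho}b_i\,m_i^{\,d-1}=\sum_{i=1}^{\rho}\bigl(b_i^{1/(d-1)}m_i\bigr)^{d-1},
\]
which, since $d-1\ge2$, exhibits the rank-one form $c\,y_0^{d-1}$ as a sum of $(d-1)$-st powers supported at the $\rho$ pairwise distinct points $Q_i:=[L_i:b_i]\in\bbP(\calR_{n+1,1})$, none of which equals $P:=[y_0]$ (because each $L_i\neq0$). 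The remaining task is to turn this into a contradiction. Here one uses that $\{m_i^{d}\}$ is a \emph{minimal} decomposition of the degree-$d$ form $F$, so that the $m_i^{d}$ are linearly independent; since moreover all $b_i\neq0$, the $Q_i$ avoid the coordinate hyperplane attached to $y_0$, and feeding this into the Castelnuovo/residual exact sequences for $\{Q_1,\dots,Q_\rho\}$ propagates the independence downward, forcing the Veronese images $\nu_{n+1,d-1}(Q_1),\dots,\nu_{n+1,d-1}(Q_\rho)$ to be linearly independent. On the other hand the displayed identity says $c\,\nu_{n+1,d-1}(P)=\sum_i b_i\,\nu_{n+1,d-1}(Q_i)$, so $\nu_{n+1,d-1}(P)$ lies in their $(\rho-1)$-dimensional span; combining this with the remaining relations $\sum_i b_i^{\,j}L_i^{\,d-j}=0$ for $2\le j\le d-1$ (which similarly constrain the lower Veronese images of the $Q_i$) and with the minimality of $f=\sum_i L_i^{d}$ should force an impossible configuration of points on a Veronese variety, completing the induction.

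I expect this last step to be the main obstacle. The subtlety is that, unlike a rational normal curve, a higher-dimensional Veronese variety can meet a $(\rho-1)$-plane in more than $\rho$ points, so one cannot conclude directly from "$\rho+1$ Veronese points inside a $(\rho-1)$-plane"; the way around it is to exploit the full chain of relations $j=1,\dots,d-1$ at once (or to restrict all the identities to a generic pencil of linear forms through $y_0$, thereby reducing to the rational normal curve, where $\rho+1$ distinct points already cannot lie in a $(\rho-1)$-plane), together with the non-redundancy of the minimal decomposition $f=\sum_i L_i^{d}$. Everything else --- the reduction to $g=c\,y_0^{d}$, the two easy inequalities, the cases $d\le2$, the base of the induction, and the reduction to ``all $b_i\neq0$'' --- is routine.
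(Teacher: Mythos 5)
The paper does not prove this statement: it is cited directly from \cite{CCC15}*{Proposition 3.1} and used as a black box, so there is no in-paper proof to compare against; your attempt has to stand on its own.

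Up to the final step the argument is sound. The reduction to $m=0$, $g=c\,y_0^{d}$, $c\neq 0$, $d\geq 3$; the two inequalities $\rk f\le \rk F\le \rk f+1$ via specialisation at $y_0=0$ and superposition; the $d=2$ case via block-diagonal symmetric matrices; the relations $\sum_i b_i^{\,j}L_i^{\,d-j}=0$ for $1\le j\le d-1$; and the induction on $\rho=\rk f$ that forces all $b_i\ne 0$ and the $[L_i]$ to be distinct (the step applying the inductive hypothesis to $f-L_{i_0}^{d}$ is correct) --- all of this works. The problem is that the final contradiction is never produced, and you flag this yourself.

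Two concrete defects in the closing sketch. First, the claim that minimality ``propagates downward'' through the residual exact sequence, forcing $\nu_{n+1,d-1}(Q_1),\dots,\nu_{n+1,d-1}(Q_\rho)$ to be independent, runs the wrong way: for $Z=\{Q_1,\dots,Q_\rho\}$ disjoint from $H=\{y_0=0\}$ the Castelnuovo sequence gives a surjection $H^1\bigl(\calI_{Z}(d-1)\bigr)\twoheadrightarrow H^1\bigl(\calI_{Z}(d)\bigr)$, so $h^1\bigl(\calI_{Z}(\cdot)\bigr)$ is non-increasing and vanishing in degree $d$ (independence of the $\nu_{n+1,d}(Q_i)$, which minimality does give) tells you nothing in degree $d-1$; independence propagates upward, not downward. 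Second, even granting what you want, the pencil reduction only shows $\rho\ge d$: restricting the identity $c\,y_0^{d-1}=\sum_i b_i m_i^{\,d-1}$ to a generic pencil through $y_0$ produces $\rho+1$ distinct points of a rational normal curve of degree $d-1$ in a $(\rho-1)$-plane, whence $\rho\ge d$, and the analogous identities $c\,y_0^{d-j}=\sum_ib_i^{\,j}m_i^{\,d-j}$ for $j\ge 2$ give strictly weaker bounds of the same kind. Since $\rk f$ can be far larger than $d$, this is not a contradiction. ``Exploiting the full chain of relations together with the non-redundancy of $f=\sum_iL_i^{d}$'' is the right intuition, but no argument is given, and the two mechanisms you name do not supply one. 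As written, the proof is incomplete; the published proof closes this step by a different route (via the structure of the apolar ideal of $F$ in low degree and the Apolarity Lemma) rather than by Vandermonde-type independence on a curve.
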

Thanks to \autoref{cor:rank_S3(0)} and \autoref{prop:S3(0)_blocks} together, we can immediately prove that also the harmonic version of the Strassen conjecture does not hold, providing the explicit counterexample
\begin{equation}
    \irk\bigl(S_3(0)\oplus S_3(0)\bigr)=4<2\irk\bigl(S_3(0)\bigr)=8.
\end{equation}
We want to show that in the analysis of the remaining cases we can assume that our matrices do not have zero blocks, that is, their normal sequences do not contain elements $0^{(1)}$. This is a direct consequence of the following statement, which is a weaker version for isotropic rank of \cite{CCO17}*{Proposition 2.3} for Waring decompositions. 
\begin{lem}\label{lem:variables_extra}
    Let $h\in\bbC[x_0,\dots,x_k]\subseteq \bbC[x_0,\dots,x_n]$ be a harmonic form of degree $d$ for some $k\leq n$ and let $r\coloneqq \irk h\leq \rk h+2$. Let
    \[
    h=\sum_{i=1}^r\ell_i^d=\sum_{j=1}^s\ell_j'^d,
    \]
    for some $\ell_1,\dots,\ell_r\in\bbC[x_0,\dots,x_k]$ such that $\omega_k\circ\ell_i^2=0$ for every $i=1,\dots,r$, and some $\ell_1',\dots,\ell_s'\in\bbC[x_0,\dots,x_n]$ such that $\omega_n\circ\ell_j'^2=0$ for every $j=1,\dots,s$. If there exists $j$ with $k<j\leq n$, such that $\alpha_j\circ\ell_i'\neq 0$ for some $1\leq i\leq s$, then $r\leq s$.
\end{lem}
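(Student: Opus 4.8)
The plan is to reduce to two facts about the Waring rank together with an explicit substitution device. Set $\rho\coloneqq\rk h$. First, any identity $h=\sum_{j=1}^{s}(\ell_j')^{d}$ is in particular a Waring decomposition of $h$ inside $\bbC[x_0,\dots,x_n]$; extracting the component that does not involve $x_{k+1},\dots,x_n$ turns it into a Waring decomposition of $h$ of size at most $s$ inside $\bbC[x_0,\dots,x_k]$, so $s\ge\rho$ and the Waring rank is not affected by adjoining variables (see \cite{CCO17}*{Proposition 2.3}). Second, by \cite{CCO17}*{Proposition 2.3} a minimal Waring decomposition of $h$ uses only the essential variables of $h$, all of which lie in $\{x_0,\dots,x_k\}$; hence if $s=\rho$ the decomposition $h=\sum_j(\ell_j')^{d}$ cannot involve any $x_j$ with $j>k$. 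Combining these with $r\le\rho+2$: when $r\le\rho+1$ one has $s\ge\rho\ge r-1$ and $s=r-1$ is excluded, so $s\ge r$; when $r=\rho+2$ one has $s\ge\rho=r-2$ and $s=r-2$ is excluded, so the only case still open is $r=\rho+2$, $s=\rho+1$.

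In that case I would argue by contradiction, producing from the length-$(\rho+1)$ decomposition an $\omega_k$-isotropic decomposition of $h$ of length $\le\rho+1<r$. The idea is to delete the superfluous variables one at a time. Perform first an orthogonal change of coordinates in the block $x_{k+1},\dots,x_n$ (which fixes $h$ and preserves harmonicity and isotropy), then pick a variable occurring in the decomposition, say $x_n$, and write $\ell_j'=w_j+c_jx_n$ with $w_j\in\bbC[x_0,\dots,x_{n-1}]_1$, so that $\omega_n$-isotropy reads $\langle w_j,w_j\rangle_{\omega_{n-1}}=-c_j^{2}$. For any $v\in\bbC[x_0,\dots,x_{n-1}]_1$ the substitution $x_n\mapsto v$ leaves $h$ unchanged and gives $h=\sum_{j}(w_j+c_jv)^{d}$, and $w_j+c_jv$ is $\omega_{n-1}$-isotropic exactly when $c_j\bigl(c_j(\langle v,v\rangle-1)+2\langle w_j,v\rangle\bigr)=0$ --- which is automatic if $c_j=0$. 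One therefore needs $v$ to solve, over the indices $j$ with $c_j\neq0$, the affine quadrics $\langle v,v\rangle+2\langle w_j/c_j,\,v\rangle=1$; subtracting them pairwise, their common zero locus is cut, inside the $\omega_{n-1}$-orthogonal complement of the span $A$ of the differences $w_j/c_j-w_{j'}/c_{j'}$, by a single inhomogeneous quadratic equation, and so is non-empty unless $A^{\perp}$ is totally isotropic and, simultaneously, the linear part of that equation vanishes on $A^{\perp}$. As long as a solution $v$ exists, $h=\sum_j(w_j+c_jv)^{d}$ is an $\omega_{n-1}$-isotropic decomposition of size $\le s$ using fewer superfluous variables, and induction on $n-k$ (a decomposition with no superfluous variable being already $\omega_k$-isotropic) gives $\irk_{\omega_k}h\le s$, a contradiction. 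A parallel reduction, stripping off via subadditivity of the isotropic rank the terms already lying in $\bbC[x_0,\dots,x_k]$ and inducting on $s$, lets one assume in addition that every $\ell_j'$ involves a superfluous variable.

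The hard part is precisely the degenerate configuration of the substitution step: the situation in which, for every choice of superfluous variable and after every orthogonal change of coordinates, no single $v$ restores isotropy of all terms, which forces the relevant span $A$ to be ``large'' (so that $A^{\perp}$ is totally isotropic). Excluding this configuration, or disposing of it directly, is where the hypothesis $r\le\rk h+2$ is used, through the bound $s=\rk h+1$ it puts on the number of terms and hence on $\dim A$; I expect this to need a short case analysis, possibly obtained by regrouping the terms of the decomposition and applying \autoref{teo:two_times_rank}.
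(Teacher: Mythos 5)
Your reduction is sound up to the last case: $s\geq\rk h$ always, and a decomposition of size exactly $\rk h$ would, by \cite{CCO17}*{Proposition 2.3}, use only essential variables of $h$, which by hypothesis it does not, so $s\geq\rk h+1$; combined with $r\leq\rk h+2$ this leaves only $r=\rk h+2$, $s=\rk h+1$. That case is not closed. Your substitution device reduces it to finding $v\in A^{\perp}$ solving one inhomogeneous quadric, and you correctly identify the obstruction ($A^{\perp}$ totally isotropic with the linear part of the equation vanishing there), but you only remark that you ``expect a short case analysis'' to exclude it --- this is left as a gap, not a proof. The parallel reduction you invoke, stripping off terms $\ell_j'^d$ with $\ell_j'\in\bbC[x_0,\dots,x_k]$ and inducting on $s$, is also not set up: after removing such a term, the hypothesis $\irk\leq\rk+2$ is not known to hold for $h-\ell_j'^d$, so the lemma cannot simply be reinvoked on the remainder. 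As written the argument has a genuine hole precisely in the case where the hypothesis $r\leq\rk h+2$ must do work.

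The paper never needs the reduction at all. Suppose $s\leq r-1$ and pick $\ell_1'$ with $\alpha_j\circ\ell_1'\neq0$ for some $j>k$. Then $\ell_1'\notin\langle x_0,\dots,x_k\rangle$, so $(x_0,\dots,x_k,\ell_1')$ extends to a basis in which $h$ and $\ell_1'^d$ involve disjoint sets of variables; the single-term Strassen result \autoref{prop:Strassen_rank_one} (from \cite{CCC15}) then gives $\rk(h-\ell_1'^d)=\rk h+1\geq(\irk h-2)+1=r-1\geq s$. On the other hand $h-\ell_1'^d=\sum_{j=2}^{s}\ell_j'^d$ is a harmonic form with an $\omega_n$-isotropic decomposition of length $s-1$, so $\rk(h-\ell_1'^d)\leq\irk(h-\ell_1'^d)\leq s-1<s$, a contradiction. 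This covers all $s\leq r-1$ in one stroke, uses $\irk h\leq\rk h+2$ exactly once and transparently, and needs neither the essential-variables fact, nor the case split, nor the substitution machinery; the ingredient you were missing is not \autoref{teo:two_times_rank} but \autoref{prop:Strassen_rank_one}.
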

\begin{proof}
    Let us assume that $s\leq r-1$ and, without loss of generality, that $\alpha_{k+1}\circ\ell_1'\neq 0$. Then, we have
    \[
    \rk(h-\ell_1'^d)\leq\irk(h-\ell_1'^d)\leq s-1,
    \]
    but since \autoref{prop:Strassen_rank_one} implies that
    \[
    \rk(h-\ell_1'^d)=\rk(h)+\rk(\ell_1'^d)\geq\irk(h)-2+1=r-1\geq s>s-1,
    \]
    we have a contradiction.
\end{proof}
Since the isotropic rank of a harmonic quadratic form is at most the sum of the isotropic ranks of the blocks of its normal form, we can use \autoref{teo:isot_rank_nondegenerate_form}, \autoref{cor:rank_S3(0)}, \autoref{prop:odd_blocks}, \autoref{prop:blocchipari} and \autoref{prop:S3(0)_blocks} together to conclude that 
\[
\irk(h)\leq \rk(h)+2,
\]
for any $h\in\calH_{n,2}$.
Therefore, using \autoref{lem:variables_extra}, we can always reduce to the cases where in the normal sequence $\smash{(\lambda_1^{(s_1)},\dots,\lambda_r^{(s_r)})}$ of a quadratic form, we do not have blocks of type $0^{(1)}$. We can gather the results obtained so far in the following proposition.
\begin{prop}\label{prop: partial class}
Let $h\in\calH_{n,2}$ have normal sequence $(\lambda_1^{(s_1)},\dots,\lambda_r^{(s_r)})$. If either $s_j\neq 3$ whenever $\lambda_j=0$, or there are $1\leq j_1<j_2\leq r$  such that $\lambda_{j_1}^{(s_{j_1})}=\lambda_{j_2}^{(s_{j_2})}=0^{(3)}$, then $\irk(h)=\rk(h)$.
\end{prop}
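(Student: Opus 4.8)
The plan is to derive the statement from the block-by-block computations already carried out, after putting $h$ into normal form. By \autoref{rem:isotropic_rank_equal} I would replace $h$ by an orthogonally similar form and assume that its matrix is the block-diagonal canonical form of \autoref{teo:canonical_form}, with blocks $S_{s_1}(\lambda_1),\dots,S_{s_r}(\lambda_r)$; since $\irk\leq\rk+2$ holds on all of $\calH_{n,2}$, \autoref{lem:variables_extra} lets me also assume that $0^{(1)}$ does not appear in the normal sequence, a zero block being a free variable that affects neither $\rk$ nor $\irk$. Two elementary observations drive the proof: the matrix rank is additive over a block-diagonal direct sum, while the isotropic rank is \emph{sub}-additive over it, because concatenating isotropic decompositions of forms supported on disjoint sets of variables yields a decomposition of their sum that is still isotropic with respect to the standard quadric of the ambient space. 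Since $\rk h\leq\irk h$ always holds, it therefore suffices to group the normal blocks into harmonic summands for which the equality $\irk=\rk$ is already known.

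First I would treat the case in which $s_j\neq 3$ whenever $\lambda_j=0$, that is, $h$ has no block $S_3(0)$. Collect all blocks with $\lambda_j\neq 0$ into a single matrix $H_0$; since every block $S_{s_j}(0)$ is hollow, hence traceless, and $h$ is traceless, $H_0$ is traceless, hence harmonic, and it is non-degenerate, so $\irk H_0=\rk H_0$ by \autoref{teo:isot_rank_nondegenerate_form}. Each remaining block is an $S_k(0)$ with $k\geq 2$ and $k\neq 3$: if $k$ is even then $\irk S_k(0)=\rk S_k(0)$ by \autoref{prop:blocchipari}, and if $k$ is odd (hence $k\geq 5$) the same equality follows from \autoref{prop:odd_blocks}. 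Summing the corresponding isotropic decompositions gives
\[
\irk h\leq\irk H_0+\sum_{\lambda_j=0}\irk S_{s_j}(0)=\rk H_0+\sum_{\lambda_j=0}\rk S_{s_j}(0)=\rk h\leq\irk h,
\]
whence $\irk h=\rk h$.

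Next I would treat the case in which $0^{(3)}$ occurs at least twice, say $m\geq 2$ times. Group the $m$ copies of $S_3(0)$ into $S_3(0)^{\oplus m}$, for which $\irk=\rk=2m$ by \autoref{prop:S3(0)_blocks}; collect the blocks with $\lambda_j\neq 0$ into a non-degenerate harmonic matrix $H_0$, so that $\irk H_0=\rk H_0$ by \autoref{teo:isot_rank_nondegenerate_form}; and note that every remaining block is an $S_k(0)$ with $k\geq 2$ and $k\neq 3$, hence $\irk S_k(0)=\rk S_k(0)$ by \autoref{prop:blocchipari} or \autoref{prop:odd_blocks}. Concatenating all these decompositions yields $\irk h\leq 2m+\rk H_0+\sum_{\lambda_j=0}\rk S_{s_j}(0)=\rk h$, and the reverse inequality is automatic, so again $\irk h=\rk h$.

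The hard part here is not conceptual but bookkeeping: one has to verify that the blocks permitted by the two hypotheses are exactly those covered by \autoref{teo:isot_rank_nondegenerate_form}, \autoref{prop:odd_blocks}, \autoref{prop:blocchipari} and \autoref{prop:S3(0)_blocks} — in particular that the even $S_k(0)$ with $k\geq 2$, the odd $S_k(0)$ with $k\geq 5$, and groupings of at least two blocks $S_3(0)$ together exhaust all $S_k(0)$ with $k\neq 3$, while a single $S_3(0)$ is deliberately left out — and one has to check that the non-degenerate summand $H_0$ peeled off in each case is genuinely traceless, so that \autoref{teo:isot_rank_nondegenerate_form} really applies to it. The configuration excluded by the proposition, namely exactly one block $S_3(0)$, is the one for which $\irk h=\rk h+2$, and it is dealt with in the subsequent propositions.
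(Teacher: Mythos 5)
Your proposal is correct and takes the same approach the paper uses implicitly: the paper states the proposition without a separate proof, relying on the paragraph preceding it, which notes that the isotropic rank is subadditive over direct sums of normal blocks and that \autoref{teo:isot_rank_nondegenerate_form}, \autoref{prop:odd_blocks}, \autoref{prop:blocchipari}, and \autoref{prop:S3(0)_blocks} cover every block configuration permitted by the hypothesis; you simply make this bookkeeping explicit by peeling off the traceless non-degenerate summand $H_0$ and treating the remaining zero blocks block-by-block (or bundling the $S_3(0)$'s when there are at least two). One small inaccuracy, harmless here: the blocks $S_k(0)$ are traceless but not in general hollow — $S_2(0)$, for instance, has $\rmi/2$ and $-\rmi/2$ on its diagonal — and tracelessness is all your argument actually uses to conclude that $H_0$ is itself a harmonic matrix.
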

It remains to analyse the cases where the normal form of a matrix has just one degenerate $3\times 3$ block. First, we consider the case where there exists another odd degenerate block.
\begin{prop}\label{prop: partial class1}
    Let $h\in\calH_{n,2}$ have normal sequence $(\lambda_1^{(s_1)},\dots,\lambda_r^{(s_r)})$, such that there exists a unique $1\leq j\leq r$ such that $\lambda_j^{(s_j)}=0^{(3)}$. If there exists $1\leq t\leq r$ such that $\lambda_t=0$ and $s_t=2k+5$ for some $k\in\bbN$, then $\irk h=\rk h$. 
\end{prop}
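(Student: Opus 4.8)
The plan is to peel off, using the Gantmacher normal form and the subadditivity of the isotropic rank over orthogonal direct sums, everything except the two relevant blocks, and then to establish one explicit decomposition. By \autoref{teo:canonical_form} and \autoref{rem:isotropic_rank_equal} we may assume $h$ is already in normal form, so that, in disjoint sets of variables,
\[
h=S_3(0)\oplus S_{2k+5}(0)\oplus h',
\]
where $h'$ is the direct sum of all the remaining normal blocks of $h$. Since $S_3(0)$ and $S_{2k+5}(0)$ are traceless, $h'$ is again a harmonic (trace-zero) form, and by the uniqueness assumption on the block $0^{(3)}$ it has no $0^{(3)}$ block; hence $\irk h'=\rk h'$ by \autoref{prop: partial class}. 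Concatenating isotropic decompositions of forms in disjoint variables again produces an isotropic decomposition, so
\[
\irk h\le\irk\!\left(S_3(0)\oplus S_{2k+5}(0)\right)+\rk h',
\]
while $\irk h\ge\rk h=\rk\!\left(S_3(0)\oplus S_{2k+5}(0)\right)+\rk h'$ by \Cref{teo:two_times_rank}. Thus the proposition is reduced to the single identity
\[
\irk\!\left(S_3(0)\oplus S_{2k+5}(0)\right)=\rk\!\left(S_3(0)\oplus S_{2k+5}(0)\right)=2+(2k+4)=2k+6.
\]

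By \Cref{teo:two_times_rank} again it suffices to exhibit an isotropic decomposition of length $2k+6$ of the quadratic form attached to $S_3(0)\oplus S_{2k+5}(0)$. Here I would start from the explicit decomposition of $S_{2k+5}(0)$ used in the proof of \autoref{prop:odd_blocks}, namely the four ``central'' squares (supported on the five middle variables) together with the $2k$ squares coming from the $k$ telescoping hyperbolic frames, and write the $S_3(0)$-summand as $(1+\rmi)(y_0-\rmi y_2)y_1$, with $y_0-\rmi y_2$ isotropic. The goal is to keep the $2k$ frame-squares and to replace the four central squares by \emph{six} squares of isotropic linear forms (each now mixing $y_0,y_1,y_2$ with the five central $x$-variables) whose combination reproduces the central part of $S_{2k+5}(0)$ together with the extra form $(1+\rmi)(y_0-\rmi y_2)y_1$, and which still interact correctly with the frame supported closest to the centre. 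A short computation shows that one cannot obtain these six forms by merely adding isotropic linear forms in $y_0,y_1,y_2$ to the four central forms of \autoref{prop:odd_blocks}: the linear independence of those four forms forces all such perturbations to vanish. So a genuinely new arrangement is needed, obtained by solving the associated small linear system explicitly — exactly in the spirit of the case-by-case decompositions in \autoref{prop:S3(0)_blocks}. The cleanest presentation is probably a single formula, uniform in $k$, mirroring the one in the proof of \autoref{prop:odd_blocks}; its base instance is the eight-variable, rank-$6$ form $S_3(0)\oplus S_5(0)$, for which one needs $6$ isotropic squares.

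The main obstacle is precisely this explicit $(2k+6)$-term decomposition, concentrated in the base case $S_3(0)\oplus S_5(0)$: the naive perturbation fails, and the form $S_3(0)\oplus S_5(0)$ admits no finer orthogonal block decomposition (its normal form is $\{0^{(3)},0^{(5)}\}$), so the six forms must be produced by a direct calculation. An alternative, more conceptual route would use the apolarity reformulation of \autoref{rem: apofacile}: one would exhibit a six-dimensional subspace on which the restriction of $\omega_n$ is non-degenerate and on which the form is harmonic, and then invoke \autoref{teo:isot_rank_nondegenerate_form} inside that $\bbP^5$; however, checking the non-degeneracy of that restriction is a computation of comparable size, so the explicit decomposition — in line with the rest of this section — appears preferable.
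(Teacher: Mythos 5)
Your reduction to the single identity $\irk\bigl(S_3(0)\oplus S_{2k+5}(0)\bigr)=2k+6$ is correct and is essentially what the paper does: both rely on \autoref{teo:canonical_form} and \autoref{rem:isotropic_rank_equal} to pass to normal form, on \autoref{prop: partial class} to dispose of the remaining blocks, on the subadditivity of $\irk$ over orthogonal direct sums, and on $\irk\geq\rk$ to close the estimate from below. But the proposal stops at exactly the step carrying the mathematical content: you do not exhibit the length-$(2k+6)$ isotropic decomposition, and you yourself identify the missing explicit computation as "the main obstacle." As written this is therefore a strategy outline with a genuine gap, not a proof.

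The paper closes the gap via a regrouping that you do not consider. Instead of perturbing the four central squares of $S_{2k+5}(0)$ by isotropic linear forms in the $S_3(0)$-variables (the route you observe cannot work), the paper first rewrites the central $5\times 5$ block of $S_{2k+5}(0)$ as
\[
(x_k-\rmi x_{k+4})(x_{k+1}+\rmi x_{k+3})+(1+\rmi)(x_{k+1}-\rmi x_{k+3})x_{k+2},
\]
i.e.\ as one additional hyperbolic frame, which merges with the telescoping sum (now $j=0,\dots,k$), plus a term that is literally another copy of the $S_3(0)$ quadric. The two $S_3(0)$-type pieces are then decomposed together into four isotropic squares using precisely the $k=2$ case of \autoref{prop:S3(0)_blocks}, and the $k+1$ frames contribute $2(k+1)$ more squares, giving $4+2(k+1)=2k+6$ uniformly in $k$. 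So the relevant base computation is not $S_3(0)\oplus S_5(0)$ done from scratch, as you propose, but the already established $S_3(0)\oplus S_3(0)$ decomposition, augmented by one extra frame. That regrouping is the idea your argument is missing.
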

\begin{proof}
By \Cref{prop: partial class} and the subadditivity of the isotropic rank, it is sufficient to prove the statement for the matrix $S_{2k+5}(0)\oplus S_3(0)$, corresponding to the form
\begin{align*}
    h&=x_{k+1}\bigl(x_k+(1+\rmi)x_{k+2}-\rmi x_{k+4})+\rmi x_{k+3}\bigl(x_k-(1+\rmi)x_{k+2}-\rmi x_{k+4}\bigr)\\
    &\hphantom{{}={}}+\sum_{j=1}^k\bigl((x_{k-j}-\rmi x_{k+j+4})(x_{k-j+1}+\rmi x_{k+j+3})\bigr)+(1+\rmi)x_{2k+6}(x_{2k+5}-\rmi x_{2k+7})\\
    &=(x_k-\rmi x_{k+4})(x_{k+1}+\rmi x_{k+3})+(1+\rmi)x_{k+2}(x_{k+1}-\rmi x_{k+3})\\
    &\hphantom{{}={}}+\sum_{j=1}^k\bigl((x_{k-j}-\rmi x_{k+j+4})(x_{k-j+1}+\rmi x_{k+j+3})\bigr)+(1+\rmi)x_{2k+6}(x_{2k+5}-\rmi x_{2k+7})\\
    &=(1+\rmi)\bigl((x_{k+1}-\rmi x_{k+3})x_{k+2}+(x_{2k+5}-\rmi x_{2k+7})x_{2k+6}\bigr)\\
    &\hphantom{{}={}}+\sum_{j=0}^k\bigl((x_{k-j}-\rmi x_{k+j+4})(x_{k-j+1}+\rmi x_{k+j+3})\bigr).
    \end{align*}
    In a similar way as in the proof of \autoref{prop:S3(0)_blocks}, we write
    \begin{align*}
    h&=\dfrac{1+\rmi}{8}\bigl((x_{k+1}+x_{k+2}-\rmi x_{k+3}-\rmi x_{2k+5}+\rmi x_{2k+6}-x_{2k+7})^2\\[1ex]
    &\hphantom{{}={}}\qquad\ -(x_{k+1}-x_{k+2}-\rmi x_{k+3}-\rmi x_{2k+5}-\rmi x_{2k+6}-x_{2k+7})^2\bigr)\\[1ex]
    &\hphantom{{}={}}+\dfrac{1-\rmi}{8}\bigl( (x_{k+1}+\rmi x_{k+2} -\rmi x_{k+3} + \rmi x_{2k+5}+x_{2k+6}+ x_{2k+7})^2\\[1ex]
    &\hphantom{{}={}}\qquad\quad\ - (x_{k+1}-\rmi x_{k+2} -\rmi x_{k+3} + \rmi x_{2k+5}-x_{2k+6}+x_{2k+7})^2\bigr)\\
    &\hphantom{{}={}}+\dfrac{1}{4}\sum_{j=0}^k\bigl((x_{k-j}-\rmi x_{k+j+4}+x_{k-j+1}+\rmi x_{k+j+3})^2-(x_{k-j}-\rmi x_{k+j+4}-x_{k-j+1}-\rmi x_{k+j+3})^2\bigr),
    \end{align*}
    which is a decomposition of size $2(k+3)$, proving the statement by \Cref{teo:two_times_rank}.
\end{proof}
The same argument holds if an even-size degenerate block appears in the normal form, but only if it is at least a $4\times 4$ block. However, an explicit decomposition is not trivial.
\begin{prop}\label{prop: partial class 2}
    Let $h\in\calH_{n,2}$ have normal sequence $(\lambda_1^{(s_1)},\dots,\lambda_r^{(s_r)})$, such that there exists a unique $1\leq j\leq r$ such that $\lambda_j^{(s_j)}=0^{(3)}$. If there exists $1\leq t\leq r$ such that $\lambda_t=0$ and $s_t=2(k+1)$ for some $k\geq 1$, then $\irk h=\rk h$. 
\end{prop}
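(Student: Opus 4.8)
The plan is to follow the pattern of the proof of \Cref{prop: partial class1}. By \Cref{rem:isotropic_rank_equal} we may assume the matrix of $h$ is in normal form, i.e.\ an orthogonal direct sum of normal blocks; then, by subadditivity of the isotropic rank over orthogonal direct sums together with \Cref{prop: partial class} applied to the direct sum of all the blocks other than $S_{2(k+1)}(0)$ and the unique $S_3(0)$ (none of which is $S_3(0)$), it suffices to prove
\[
\irk\bigl(S_{2(k+1)}(0)\oplus S_3(0)\bigr)=\rk\bigl(S_{2(k+1)}(0)\oplus S_3(0)\bigr)=(2k+1)+2=2k+3 .
\]
Concretely, using the explicit quadratic form of $S_{2(k+1)}(0)$ from the proof of \Cref{prop:blocchipari} and the fact that $S_3(0)$ is the form $(1+\rmi)(x_{2k+2}-\rmi x_{2k+4})x_{2k+3}$ (as in the derivation of \eqref{cor:rank_S3(0)}), one must write
\[
h=\frac{\rmi}{2}(x_k-\rmi x_{k+1})^2+\sum_{j=0}^{k-1}(x_{k-j-1}-\rmi x_{k+j+2})(x_{k-j}+\rmi x_{k+j+1})+(1+\rmi)(x_{2k+2}-\rmi x_{2k+4})x_{2k+3}
\]
as a sum of $2k+3$ squares of $\omega_n$-isotropic linear forms.

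Next I would regroup the summands of $h$. For $1\le j\le k-1$ the $j$-th summand is a product $\ell_1\ell_2$ of two $\omega_n$-orthogonal isotropic linear forms, so it splits as $\tfrac14\bigl((\ell_1+\ell_2)^2-(\ell_1-\ell_2)^2\bigr)$ into two isotropic squares (exactly as in the proofs of \Cref{prop:odd_blocks} and \Cref{prop:blocchipari}, since $\omega_n\circ(\ell_1\pm\ell_2)^2=0$). The remaining part of $h$, namely
\[
\Bigl(\tfrac{\rmi}{2}(x_k-\rmi x_{k+1})^2+(x_{k-1}-\rmi x_{k+2})(x_k+\rmi x_{k+1})\Bigr)+(1+\rmi)(x_{2k+2}-\rmi x_{2k+4})x_{2k+3},
\]
becomes, after the variable relabelling $x_{k-1},x_k,x_{k+1},x_{k+2}\mapsto x_0,x_1,x_2,x_3$ (an orthogonal change of coordinates) and using that $k\ge 1$ forces the two groups of variables to be disjoint, orthogonally equivalent to $S_4(0)\oplus S_3(0)$ — here one uses that $\tfrac{\rmi}{2}(x_1-\rmi x_2)^2+(x_0-\rmi x_3)(x_1+\rmi x_2)$ is the defining form of $S_4(0)$ (proof of \Cref{prop:blocchipari} with $k=1$). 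Adding the two-term splittings of the $k-1$ outer products to an isotropic decomposition of the remaining part yields $\irk(h)\le 2(k-1)+\irk\bigl(S_4(0)\oplus S_3(0)\bigr)$, so everything reduces to the single claim $\irk\bigl(S_4(0)\oplus S_3(0)\bigr)=5$; then, since $\rk\bigl(S_4(0)\oplus S_3(0)\bigr)=3+2=5$ and $\rk\le\irk$ by \Cref{teo:two_times_rank}, one obtains $\irk(h)\le 2(k-1)+5=2k+3=\rk(h)\le\irk(h)$, hence equality and the Proposition.

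The core claim — and the main obstacle — is to exhibit an explicit isotropic decomposition of length $5$ of $g:=\tfrac{\rmi}{2}(y_1-\rmi y_2)^2+(y_0-\rmi y_3)(y_1+\rmi y_2)+(1+\rmi)(z_0-\rmi z_2)z_1$; since $\rk g=5$, such a decomposition gives $\irk g=5$ by \Cref{teo:two_times_rank}. This is delicate because the naive grouping fails in an instructive way: $\tfrac{\rmi}{2}(y_1-\rmi y_2)^2+(1+\rmi)(z_0-\rmi z_2)z_1$ is by itself orthogonally equivalent to $S_2(0)\oplus S_3(0)$, which is precisely the defective matrix and has isotropic rank $5$, not $3$ — and this is exactly why the hypothesis $k\ge 1$, i.e.\ an even nilpotent block strictly larger than $S_2(0)$, is needed. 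Consequently no length‑$5$ isotropic decomposition of $g$ can have each of its five linear forms supported on only the $y$‑variables or only the $z$‑variables (otherwise one would need at least $\irk S_4(0)+\irk S_3(0)=3+4>5$ of them): the five forms must genuinely mix the two groups. To produce them I would use the apolarity criterion of \Cref{rem: apofacile}: the degree‑one part of the apolar ideal of $g$ is the two‑dimensional space corresponding to the isotropic forms $y_0-\rmi y_3$ and $\rmi z_0+z_2$, so one searches for five points on the quadric $\Omega_n$ lying in the associated linear subspace and apolar to $g$. (Note that $S_4(0)\oplus S_3(0)$, being nilpotent of rank $5$, cannot be orthogonally similar to a $5$‑frame hollow matrix, so \Cref{prop:hollow_r_matrix} does not apply directly.) Carrying out this explicit, somewhat intricate identity — in the spirit of the decompositions displayed in the proofs of \Cref{prop:S3(0)_blocks} and \Cref{prop: partial class1} — completes the proof.
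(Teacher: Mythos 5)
Your overall reduction follows the paper's proof closely and is correct: by subadditivity and \Cref{prop: partial class} you reduce to $S_{2(k+1)}(0)\oplus S_3(0)$; you peel off the $j\geq 1$ outer summands of the even block, each of which is a product of two orthogonal isotropic linear forms and hence splits into two isotropic squares as $\tfrac14\bigl((\ell_1+\ell_2)^2-(\ell_1-\ell_2)^2\bigr)$; and you correctly observe that the remaining core is orthogonally equivalent to $S_4(0)\oplus S_3(0)$, for which one must produce a $5$-term isotropic decomposition. Your remark that the naive grouping — trying to treat the middle $2\times 2$ piece together with $S_3(0)$ — fails precisely because $S_2(0)\oplus S_3(0)$ is the defective case, and that this is the reason the hypothesis $k\geq 1$ is needed, is a genuinely clarifying observation that the paper does not spell out.

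However, there is a real gap: you never actually produce the length-$5$ isotropic decomposition of $g=\tfrac{\rmi}{2}(y_1-\rmi y_2)^2+(y_0-\rmi y_3)(y_1+\rmi y_2)+(1+\rmi)(z_0-\rmi z_2)z_1$. You correctly identify that this is the crux and the main obstacle, correctly note that \Cref{prop:hollow_r_matrix} does not apply to the nilpotent matrix $S_4(0)\oplus S_3(0)$, and sketch a plausible route via the apolarity criterion of \Cref{rem: apofacile} (and your computation of the two linear forms $y_0-\rmi y_3$ and $\rmi z_0+z_2$ in the apolar ideal is correct), but you end with "Carrying out this explicit, somewhat intricate identity... completes the proof" without carrying it out. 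Since every other step is an easy reduction, exhibiting this identity \emph{is} the proof, and leaving it undone means the argument is not complete. The paper closes this gap by writing down the five isotropic squares explicitly (as a sum of five squares of linear forms mixing the $y$- and $z$-variables), which is the nontrivial content the proposition requires.
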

\begin{proof}
    By \Cref{prop: partial class} and the subadditivity of the isotropic rank, it is sufficient to prove the statement for the matrix $S_{2(k+1)}(0)\oplus S_3(0)$, corresponding to the form
    \begin{align*}
    h&=\rmi (x_k-\rmi x_{k+1})^2+\sum_{j=0}^{k-1}\bigl((x_{k-j-1}-\rmi x_{k+j+2})(x_{k-j}+\rmi x_{k+j+1})\bigr)+(1+\rmi)(x_{2k+2}-\rmi x_{2k+4})x_{2k+3}\\
    &=\rmi (x_k-\rmi x_{k+1})^2+(1+\rmi)(x_{2k+2}-\rmi x_{2k+4})x_{2k+3}+(x_{k-1}-\rmi x_{k+2})(x_{k}+\rmi x_{k+1})\\
    &\hphantom{{}={}}+\frac{1}{4}\sum_{j=1}^{k-1}\bigl((x_{k-j-1}-\rmi x_{k+j+2}+x_{k-j}+\rmi x_{k+j+1})^2-(x_{k-j-1}-\rmi x_{k+j+2}-x_{k-j}-\rmi x_{k+j+1})^2\bigr).
    \end{align*}
    If we determine a decomposition of size $5$ of the form 
    \[
    h'=\rmi (x_k-\rmi x_{k+1})^2+2(x_{k-1}-\rmi x_{k+2})(x_{k}+\rmi x_{k+1})+2(1+\rmi)(x_{2k+2}-\rmi x_{2k+4})x_{2k+3},
    \]
    given by the first three summands, then the statement follows by \Cref{teo:two_times_rank}. We can write
    \begin{align*}
    h'
    &=\dfrac{1}{8}\bigl(4(1+\rmi)(x_{2k+3}+\rmi x_{k})+2(x_{2k+2}-\rmi x_{2k+4})\bigr)^2-4\rmi(x_{2k+3}+\rmi x_{k})^2\\
    &\hphantom{{}={}}-\dfrac{\rmi}{4}\bigl((1-\rmi)(x_{2k+2}-\rmi x_{2k+4})+2\rmi (x_k-\rmi x_{k+1})\bigr)^2\\
    &\hphantom{{}={}}+\frac{1}{4}\bigl((1-\rmi)(x_{2k+2}-\rmi x_{2k+4})+(x_{k-1}-\rmi x_{k+2})+(x_{k}+\rmi x_{k+1})\bigr)^2\\
    &\hphantom{{}={}}+\frac{1}{4}\bigl((1-\rmi)(x_{2k+2}-\rmi x_{2k+4})+(x_{k-1}-\rmi x_{k+2})-(x_{k}+\rmi x_{k+1})\bigr)^2,
    \end{align*}
    which gives the required decomposition and conclude the statement.
    \end{proof}
    The last case where isotropic rank and Waring rank coincide is obtained for matrices whose normal form contains at least one non-degenerate block.
\begin{prop}\label{prop: partial class 3}
    Let $h\in\calH_{n,2}$ have normal sequence $(\lambda_1^{(s_1)},\dots,\lambda_r^{(s_r)})$, such that there exists a unique $1\leq j\leq r$ such that $\lambda_j^{(s_j)}=0^{(3)}$. If there exists $1\leq t\leq r$ such that $\lambda_t\neq 0$, then $\irk h=\rk h$. 
\end{prop}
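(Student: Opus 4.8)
The plan is to reduce the statement, using the subadditivity of the isotropic rank together with the results already proved, to a single explicit construction. First I would apply \autoref{lem:variables_extra} to discard the blocks of type $0^{(1)}$, so that the normal sequence of $h$ may be assumed to contain none. Let $S_3(0)$ be the unique normal block with $\lambda_j^{(s_j)}=0^{(3)}$, let $N$ be the direct sum of \emph{all} the non-degenerate normal blocks of $h$ (non-empty by hypothesis), and let $H_2$ be the direct sum of the remaining blocks, which are zero blocks of size $\neq 3$. Then $h$ is orthogonally similar to $H_1\oplus H_2$ with $H_1\coloneqq S_3(0)\oplus N$. Since $\tr S_3(0)=\tr H_2=0$ and $\tr h=0$, also $\tr N=0$, so $N$ is a non-degenerate harmonic matrix, and $\rk N\geq 2$ because a single block $S_1(\lambda)$ would have trace $\lambda\neq 0$. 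By \autoref{prop: partial class} we have $\irk H_2=\rk H_2$, and by subadditivity $\irk h\leq\irk H_1+\rk H_2$; combined with $\rk h=\rk H_1+\rk H_2$ and $\rk h\leq\irk h$, it remains to prove that
\[
\irk\bigl(S_3(0)\oplus N\bigr)=\rk\bigl(S_3(0)\oplus N\bigr)=\rk N+2 .
\]

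For the construction, recall that $S_3(0)$ is the form $(1+\rmi)ab$, where $a\coloneqq x_0-\rmi x_2$ is isotropic and $b\coloneqq x_1$ satisfies $\langle b,b\rangle_{\omega_n}\neq 0$. By \autoref{teo:isot_rank_nondegenerate_form} and its proof we may write $N=\ell_1'^{\,2}+\dots+\ell_{\rk N}'^{\,2}$ with $\ell_1',\dots,\ell_{\rk N}'$ linearly independent isotropic linear forms in the block of variables of $N$. Their Gram matrix $\bigl(\langle\ell_i',\ell_j'\rangle_{\omega_n}\bigr)$ has zero diagonal and cannot vanish identically (otherwise $\omega_n$ would be zero on the span of the $\ell_i'$, i.e.\ on the whole block of variables of $N$), so after reordering we may assume $\rho\coloneqq\langle\ell_1',\ell_2'\rangle_{\omega_n}\neq 0$. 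Consider $W\coloneqq\langle a,b,\ell_1',\ell_2'\rangle$, a $4$-dimensional subspace of $\calR_{n,1}$. Since $a$ is isotropic and $\omega_n$-orthogonal to $b,\ell_1',\ell_2'$, the restriction of $\langle\cdot,\cdot\rangle_{\omega_n}$ to $W$ has one-dimensional radical $\langle a\rangle$ and is non-degenerate of rank $3$ on a complement; hence there is a basis $(P,Q,S,T)$ of $W$ with $P=a$ spanning the radical, in which the isotropy condition for $pP+qQ+sS+tT$ reads $q^2+s^2+t^2=0$. Performing the linear change of variables carrying $(a,b,\ell_1',\ell_2')$ to $(P,Q,S,T)$, the quadratic form $(1+\rmi)ab+\ell_1'^{\,2}+\ell_2'^{\,2}$ becomes $\kappa\,PQ+\mu(S^2-T^2)$ for suitable nonzero constants $\kappa,\mu$ (concretely, $\kappa$ is a nonzero multiple of $1+\rmi$ and $\mu$ a nonzero multiple of $\rho$).

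The heart of the argument is then the following explicit identity: for scalars $\alpha_1,\alpha_2,\alpha_3$ determined by $\alpha_1-\alpha_2=\kappa/2$, $\alpha_1+\alpha_2=\kappa/(2\mu)$, $\alpha_3=-\rmi\kappa/(2\mu)$, the forms
\[
m_1=\alpha_1P+Q+\rmi S,\qquad m_2=\alpha_2P+Q+\rmi S,\qquad m_3=\alpha_3P+S+\rmi T,\qquad m_4=\alpha_3P+S-\rmi T
\]
are isotropic for $\omega_n|_W$ — hence $\omega_n$-isotropic —, span $W$, and satisfy
\[
m_1^2-m_2^2+\tfrac{\mu}{2}m_3^2+\tfrac{\mu}{2}m_4^2=\kappa\,PQ+\mu(S^2-T^2)=(1+\rmi)ab+\ell_1'^{\,2}+\ell_2'^{\,2},
\]
as one checks by direct expansion, the radical direction $P=a$ being precisely what cancels the spurious $P^2$ and $PS$ terms. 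Substituting back yields
\[
h=S_3(0)\oplus N=m_1^2-m_2^2+\tfrac{\mu}{2}m_3^2+\tfrac{\mu}{2}m_4^2+\sum_{i=3}^{\rk N}\ell_i'^{\,2},
\]
an isotropic decomposition of length $4+(\rk N-2)=\rk N+2=\rk h$. Therefore $\irk(S_3(0)\oplus N)\leq\rk(S_3(0)\oplus N)$, and equality holds since $\rk\leq\irk$ always, which completes the reduction and hence the proof.

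I expect the only genuine obstacle to be the discovery and verification of the displayed identity — guessing the right shape of $m_1,\dots,m_4$ and confirming that the one-dimensional radical of $\omega_n|_W$ supplies exactly the freedom needed to absorb the unwanted quadratic terms — whereas the reduction step is routine once one has \autoref{lem:variables_extra}, \autoref{prop: partial class}, and the subadditivity of the isotropic rank.
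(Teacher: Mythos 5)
Your proof is correct and follows essentially the same strategy as the paper's: reduce via subadditivity of $\irk$ (together with \autoref{prop: partial class} and \autoref{lem:variables_extra}) to exhibiting a four-term isotropic decomposition of $(1+\rmi)(x_0-\rmi x_2)x_1+\ell_1'^{\,2}+\ell_2'^{\,2}$, where $\ell_1',\ell_2'$ are isotropic with $\omega_n\circ\ell_1'\ell_2'\neq 0$, and then write one down explicitly. The only cosmetic difference is that you first pass to a normal form $\kappa PQ+\mu(S^2-T^2)$ on the $4$-dimensional space $W$ (with radical $\langle a\rangle$) before constructing the four isotropic squares, whereas the paper's proof produces the decomposition directly in the original coordinates after choosing $a\in\bbC$ so that $(1+\rmi)x_1-a(\ell_1'+\ell_2')$ is isotropic; the two identities are equivalent.
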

\begin{proof}
    Observe that since $h$ is harmonic, the sum of the dimensions of non-degenerate blocks must be at least $2$, since the trace must be equal to $0$. By the subadditivity of the isotropic rank, we just need to prove that, for any non-degenerate harmonic quadratic form $q$ of rank at least $2$, we have
    \[\irk\bigl(q+(1+\rmi)(x_0-\rmi x_2)x_1\bigr)=\rk\bigl(q+(1+\rmi)(x_0-\rmi x_2)x_1\bigr).\]
    In particular, we just need to prove that, given two isotropic linear forms $\ell_1,\ell_2$ such that $\omega_n\circ\ell_1\ell_2\neq 0$, we have
    \[
    \irk\bigl((1+\rmi)(x_0-\rmi x_2)x_1+\ell_1^2+\ell_2^2\bigr)=4.
    \]
    Since $\omega_n\circ \ell_1\ell_2\neq 0$, it follows that $\omega_n\circ(\ell_1+\ell_2)^2\neq 0$, that is, $\ell_1+\ell_2$ is not isotropic. Therefore, there exists some $a\in\bbC$ such that $(1+\rmi)x_1-a(\ell_1+\ell_2)$ is isotropic. 
    Then, we can write
    \begin{align*}
    (1+\rmi)(x_0-\rmi x_2)x_1+\ell_1^2+\ell_2^2
        &=\dfrac{1}{4}\bigl(a(x_0-\rmi x_2)+2\ell_1\bigr)^2+\dfrac{1}{4}\bigl(a(x_0-\rmi x_2)+2\ell_2\bigr)^2\\
        &\hphantom{{}={}}+\dfrac{1}{4}\biggl((1+\rmi)x_1-a\ell_1-a\ell_2+\dfrac{2-a^2}{2}(x_0-\rmi x_2)\biggr)^2\\
        &\hphantom{{}={}}-\dfrac{1}{4}\biggl((1+\rmi)x_1-a\ell_1-a\ell_2-\dfrac{2+a^2}{2}(x_0-\rmi x_2)\biggr)^2,
    \end{align*}
    which gives an isotropic decomposition of size $4$.
\end{proof}
We analyse now the unique remaining case.
\begin{prop}\label{prop: partial class 4}
    If $h\in\calH_{n,2}$ has normal sequence $(0^{(3)},0^{(2)},\dots,0^{(2)})$, then
    $\irk h=\rk h+2$.
\end{prop}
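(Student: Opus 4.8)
The plan is to reduce, via orthogonal similarity, to an explicit block form of $h$, establish the upper bound $\irk h\le\rk h+2$ by subadditivity, and then prove the matching lower bound by combining the apolarity reformulation \autoref{rem: apofacile} with a linear‑span argument on the quadric $\Omega_n$.

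First I would normalize. By \autoref{rem:isotropic_rank_equal} the isotropic rank depends only on the orthogonal‑similarity class of the associated matrix, so I may assume $h$ is the quadratic form of $S_3(0)\oplus S_2(0)^{\oplus k}$ for some $k\ge 1$; concretely, in variables $x_0,\dots,x_{2k+2}$,
\[
h=(1+\rmi)(x_0-\rmi x_2)x_1+\frac{\rmi}{2}\sum_{j=1}^{k}(x_{2j+1}-\rmi x_{2j+2})^2,
\]
with $n=2k+2$ and $\rk h=k+2$. The inequality $\irk h\le\rk h+2=k+4$ is then immediate: it follows from the general bound $\irk h\le\rk h+2$ established before \autoref{prop: partial class}, or directly from subadditivity together with $\irk S_3(0)=4$ (equation \eqref{cor:rank_S3(0)}) and $\irk S_2(0)=1$ (\autoref{prop:blocchipari}). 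So the whole content is the reverse inequality $\irk h\ge k+4$.

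For the lower bound I would argue as follows. Let $r=\irk h$. By \autoref{rem: apofacile} (equivalently the harmonic apolarity lemma), there is a reduced $0$-dimensional scheme $X\subset\bbP^n$ with $|X|=r$, with $X\subset\Omega_n$ (because $\omega_n\in I_X$), and with $I_X\subseteq (h)^{-1}$; moreover, writing $\ell_1,\dots,\ell_r$ for the isotropic linear forms dual to the points of $X$, one has $h=\ell_1^2+\dots+\ell_r^2$ with all summands genuinely present and the $\ell_i$ pairwise non-proportional, by minimality. I would then use only the degree‑one part of the inclusion $I_X\subseteq (h)^{-1}$. A direct computation of the partial derivatives of $h$ gives
\[
(h)^{-1}_1=\bigl\langle\, \alpha_0-\rmi\alpha_2,\ \alpha_{2j+1}-\rmi\alpha_{2j+2}\ \ (j=1,\dots,k)\,\bigr\rangle,
\]
a subspace of $\calD_{n,1}$ of dimension $k+1$ (its codimension equals $\rk h$). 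Set $V_0:=V\bigl((h)^{-1}_1\bigr)\subset\bbP^n$, a linear subspace of dimension $k+1$. The condition $(I_X)_1\subseteq (h)^{-1}_1$ forces $\langle X\rangle\supseteq V_0$. Now restrict $\omega_n=\sum_l\alpha_l^2$ to $V_0$: on $V_0$ we have $\alpha_0=\rmi\alpha_2$ and $\alpha_{2j+1}=\rmi\alpha_{2j+2}$, so every one of these paired squares cancels and $\omega_n|_{V_0}=\alpha_1^2$. Hence $\Omega_n\cap V_0$ is, set‑theoretically, the hyperplane $\{\alpha_1=0\}$ of $V_0$, a $\bbP^{k}$. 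If we had $X\subseteq V_0$, then $X\subseteq\Omega_n\cap V_0$ would give $\langle X\rangle\subseteq\bbP^{k}$, contradicting $\langle X\rangle\supseteq V_0\cong\bbP^{k+1}$; therefore $X\not\subseteq V_0$, so $\dim\langle X\rangle\ge k+2$ and $r=|X|\ge k+3$.

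Finally I would exclude $r=k+3$ by a rank count. If $r=k+3$, then $\dim\langle X\rangle=k+2=r-1$, i.e.\ the $r$ coordinate vectors of the points of $X$ are linearly independent, hence so are $\ell_1,\dots,\ell_r$. But a sum of squares of $r$ linearly independent linear forms is a quadratic form of rank exactly $r$ (choose those forms as the first $r$ coordinates), so $\rk h=k+3$, contradicting $\rk h=k+2$. Thus $\irk h\ge k+4$, and combined with the upper bound, $\irk h=k+4=\rk h+2$. Taking $k=0$ in this argument also re-derives \eqref{cor:rank_S3(0)}, the base input. The step I expect to be the crux is the identity $\omega_n|_{V_0}=\alpha_1^2$: this perfect‑square phenomenon is precisely what singles this normal form out from the cases treated in \autoref{prop: partial class}–\autoref{prop: partial class 3}, where $\omega_n|_{V_0}$ has rank at least $2$ and the same span argument only yields $\irk h\ge\rk h$. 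No serious obstacle is anticipated beyond carrying out the computation of $(h)^{-1}_1$ and of $\omega_n|_{V_0}$ for the correct explicit $h$ and checking the small‑$k$ edge cases.
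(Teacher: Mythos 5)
Your proof is correct, and it reaches the conclusion by a genuinely different route from the paper. The paper argues entirely in the matrix language: for any isotropic decomposition $H=\sum v_i\transpose{v}_i$ of size $k+2$ or $k+3$, the span $\langle v_1,\dots,v_r\rangle$ is pinned down to the column space $\langle e_1,w_0,\dots,w_k\rangle$ of $H$, and then $\transpose{e}_1e_1\neq 0$, $\transpose{w}_jw_j=0$ together with the isotropy of the $v_i$ force each $v_i$ into $\langle w_0,\dots,w_k\rangle$, contradicting $\rk H=k+2$; the two lengths $k+2$ and $k+3$ are handled by separate case analyses. You instead work in the dual, apolar picture via \autoref{rem: apofacile}: $V_0=V\bigl((h)^{-1}_1\bigr)\cong\bbP^{k+1}$ is the projectivization of $\ker H$, and the computation $\omega_n|_{V_0}=\alpha_1^2$ — the dual manifestation of the paper's observation that only $e_1$ is non-isotropic in the column space, while $e_1\perp w_j$ and $\transpose{w_j}w_j=0$ — shows $\Omega_n\cap V_0$ is merely a hyperplane of $V_0$. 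Hence any apolar reduced $X\subset\Omega_n$ must span strictly more than $V_0$, so $\dim\langle X\rangle\geq k+2$ and $|X|\geq k+3$; the case $|X|=k+3$ is then excluded because it forces $\dim\langle X\rangle=|X|-1$, i.e.\ linear independence of the $\ell_i$, which would give $\rk h=k+3$. Both proofs rest on the same structural feature of $S_3(0)$, but your version has two small advantages: a single span argument delivers the entire lower bound in one stroke rather than two separate case analyses, and it is uniform in $k\geq 0$, so that the input \eqref{cor:rank_S3(0)} is re-derived rather than consumed as a base case.
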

\begin{proof}
   For any $k\in\bbN$, we have to determine the isotropic rank of the matrix
   \[
    \setcounter{MaxMatrixCols}{20}
    H=S_3(0)\oplus S_2(0)^{\oplus k}=\dfrac{1}{2}
\begin{pNiceMatrix}[margin,columns-width = auto,cell-space-limits=6pt]
\Block[borders={right,bottom}]{3-3}{}
0     &1+\rmi&0     &0&   &      &      &0     \\
1+\rmi&0     &1-\rmi&0&   &      &0     &      \\
0     &1+\rmi&0     &0&   &0     &      &      \\
0     &0     &0     &\Block[borders={top,left,right,bottom}]{2-2}{}\rmi &1 &      &      &      \\
      &      &      &1 &-\rmi   &0     &     &     \\
      &      &0     & &0  &\Block[borders={top,left,right,bottom}]{1-1}{}     &0&0     \\
      &0     &      & &  &0&\Block[borders={top,left}]{2-2}{}\rmi     &1\\
0     &      &      & &  &0     &1&-\rmi     \\
\CodeAfter
       \line[shorten=14pt]{1-4}{1-8}
       \line[shorten=14pt]{2-4}{2-7}
       \line[shorten=14pt]{3-4}{3-6}
       \line[shorten=5pt]{6-8}{1-8}
       \line[shorten=5pt]{6-7}{2-7}
       \line[shorten=5pt]{5-6}{3-6}
       \line[shorten=5pt]{4-1}{8-1}
       \line[shorten=5pt]{4-2}{7-2}
       \line[shorten=5pt]{4-3}{6-3}
       \line[shorten=14pt]{8-6}{8-1}
       \line[shorten=14pt]{7-6}{7-2}
       \line[shorten=14pt]{6-5}{6-3}
       \line[shorten=23pt]{5-5}{7-7}
\end{pNiceMatrix}.
    \] 
    First observe that $\rk H=k+2$ 
    and a minimal Waring decomposition of $h$ is given by 
    \[
    h=\dfrac{1+\rmi}{4}(x_0+x_1-\rmi x_2)^2-\dfrac{1+\rmi}{4}(x_0-x_1-\rmi x_2)^2+\rmi\sum_{j=1}^{k}(x_{2j+1}-\rmi x_{2j+2})^2.
    \]
    We first assume that also $\irk H=k+2$. 
    In this case we can determine a decomposition
    \[
    S_3(0)\oplus S_2(0)^{\oplus k}=v_1\transpose{v}_1+\cdots+v_{k+2}\transpose{v}_{k+2},
    \]
    where $v_1,\dots,v_{k+2}\in\bbC^{2k+3}$ are isotropic. Let $w_0$ be the isotropic vector corresponding to the linear form $x_0-\rmi x_2$, let $w_j$ be the isotropic vector corresponding to $x_{2j+1}-\rmi x_{2j+2}$ for every $j=1,\dots,k$, and let $e_1$ be the vector corresponding to $x_1$. Then, since $\{v_1,\dots,v_{k+2}\}$ and $\{e_1,w_0,\dots,w_{k}\}$ are two sets of linearly independent vectors whose squares span the same quadratic form, we have
    \[
    \langle v_1,\dots,v_{k+2}\rangle=\langle e_1,w_0,\dots,w_{k}\rangle
    \]
    Therefore, for every $i=1,\dots,k+2$, there exists $\beta_1,\alpha_0,\dots,\alpha_k\in\bbC$ such that
    \[
    v_i=\beta_1e_1+\alpha_0w_0+\cdots+\alpha_kw_k.
    \]
    However, since $v_i$ is isotropic for every $i=1,\dots,k+2$, we have
    \[
\transpose{v}_iv_i=\beta_1^2\transpose{e}_1e_1+\alpha_0\transpose{w}_0w_0+\cdots+\alpha_k\transpose{w}_kw_k=\beta_1^2=0.
    \]
    In particular
    \[
    \langle v_1,\dots,v_{k+2}\rangle\subseteq\langle w_0,\dots,w_{k}\rangle,
    \]
    but this means that $\rk H\leq k+1$, which is a contradiction. Let us assume now that $\irk H=k+3$.
    In this case we can determine a decomposition
    \[
    S_3(0)\oplus S_2(0)^{\oplus k}=v_1\transpose{v}_1+\cdots+v_{k+3}\transpose{v}_{k+3},
    \]
    where $v_1,\dots,v_{k+3}\in\bbC^{2k+3}$ are isotropic.
     By construction, we must have
    \[
    Hw_j=(v_1\transpose{v}_1+\cdots+v_{k+3}\transpose{v}_{k+3})w_j=(\transpose{v}_1w_j)v_1+\cdots+(\transpose{v}_{k+3}w_j)v_{k+3}=0
    \]
    for every $j=0,\dots,k$. Fix $j=1$. Since $\dim\langle w_0,\dots,w_k\rangle^{\perp}=k+2$, there exist $1\leq i\leq k+3$ such that $\transpose{v}_iw_1\neq 0$. That is, there exist $a_1,\dots,a_{k+3}\in\bbC$, not all zero elements, such that
    \[
    a_1v_1+\cdots+a_{k+3}v_{k+3}=0.
    \]
    This means that $v_1,\dots,v_{k+3}$ are not linearly independent. This implies that \[
    \dim\langle v_1,\dots,v_{k+3}\rangle\leq k+2,
    \]
    and hence we must have
    \[
     \langle v_1,\dots,v_{k+3}\rangle\subseteq \langle e_1,w_0,\dots,w_k\rangle.
    \]
Proceeding like in the previous case, we obtain
    \[
    \langle v_1,\dots,v_{k+3}\rangle\subseteq\langle w_0,\dots,w_{k}\rangle,
    \]
    but this means that $\rk H\leq k+1$, which is a contradiction. Therefore, by the subadditivity of the isotropic rank we obtain $\irk H=k+4$.
\end{proof}
We are finally ready to provide the complete classification for the isotropic rank of harmonic quadratic forms.
\begin{teo}\label{teo: quadrics}
Let $h\in\calH_{n,2}$ a harmonic quadratic form with normal sequence. Then
$$\irk h=\begin{cases}
    \rk h+2, &\text{if $(\lambda_1^{(s_1)},\dots,\lambda_r^{(s_r)})=(0^{(3)},0^{(2)},\dots,0^{(2)})$,}\\
    \rk h,& \text{otherwise.}
\end{cases}$$
\end{teo}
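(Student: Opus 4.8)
The plan is to assemble the theorem from the block-by-block analysis already carried out above, so the argument is essentially bookkeeping over Gantmacher normal forms. First, since $\rk h\le\irk h$ always holds, it suffices to compute $\irk h$ and compare it with $\rk h$. By \autoref{rem:isotropic_rank_equal} the isotropic rank is invariant under $\Oa_{n+1}(\bbC)$, so I may replace $h$ by its normal form; and by \autoref{lem:variables_extra}, together with the bound $\irk h\le\rk h+2$ established above for every harmonic quadric, the normal blocks of the form $0^{(1)}$ may be discarded without affecting either $\rk h$ or $\irk h$. Hence from now on the normal sequence $(\lambda_1^{(s_1)},\dots,\lambda_r^{(s_r)})$ of $h$ contains no entry $0^{(1)}$.

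Let $m$ be the number of blocks equal to $0^{(3)}$, i.e.\ the number of copies of $S_3(0)$ in the normal form. If $m=0$, then $s_j\ne 3$ for every $j$ with $\lambda_j=0$, and \autoref{prop: partial class} yields $\irk h=\rk h$ (when $h$ is moreover non-degenerate this is also \autoref{teo:isot_rank_nondegenerate_form}). If $m\ge 2$, then there are two indices $j_1<j_2$ with $\lambda_{j_1}^{(s_{j_1})}=\lambda_{j_2}^{(s_{j_2})}=0^{(3)}$, so again \autoref{prop: partial class} gives $\irk h=\rk h$. It remains to treat $m=1$.

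When $m=1$, I would distinguish according to the shape of the normal sequence. If, apart from the single $S_3(0)$, all the remaining blocks are copies of $S_2(0)$, then the normal sequence is exactly $(0^{(3)},0^{(2)},\dots,0^{(2)})$ and \autoref{prop: partial class 4} gives $\irk h=\rk h+2$, the first alternative of the statement. Otherwise there is at least one block that is neither the $S_3(0)$ nor an $S_2(0)$; since the block $S_1(0)=0^{(1)}$ has been excluded, such a block is of exactly one of three kinds: a zero-eigenvalue block of odd size $\ge 5$, a zero-eigenvalue block of even size $\ge 4$, or a non-degenerate block $S_s(\lambda)$ with $\lambda\ne 0$. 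In these three cases \autoref{prop: partial class1}, \autoref{prop: partial class 2}, and \autoref{prop: partial class 3} respectively give $\irk h=\rk h$. As these subcases are exhaustive, the classification follows.

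The genuine content of the theorem lies entirely in the propositions invoked above: the explicit isotropic decompositions of the normal blocks, the absorption tricks that cancel the surplus $+2$ of a lone $S_3(0)$ against an extra block of each admissible type, and the linear-algebra argument of \autoref{prop: partial class 4} showing that no such absorption is possible when only $S_2(0)$ blocks remain. Consequently the one point deserving care in the final assembly is checking that, once the $0^{(1)}$ blocks are removed, the case split on $m$ and on the type of the extra block is genuinely exhaustive; this is the step I would spell out most carefully, and I expect no further obstacle.
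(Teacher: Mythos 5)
Your proof is correct and takes essentially the same approach as the paper's, which simply cites \Cref{prop: partial class}, \Cref{prop: partial class1}, \Cref{prop: partial class 2}, \Cref{prop: partial class 3}, and \Cref{prop: partial class 4}. You usefully spell out the steps the paper leaves implicit — invariance under $\Oa_{n+1}(\bbC)$, discarding $0^{(1)}$ blocks via the bound $\irk h\le\rk h+2$ together with \Cref{lem:variables_extra}, and the exhaustive case split on the number of $0^{(3)}$ blocks and the type of any extra block — so the assembly is the same, just made explicit.
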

\begin{proof}
It follows by \Cref{prop: partial class}, \Cref{prop: partial class1}, \Cref{prop: partial class 2}, \Cref{prop: partial class 3} and \Cref{prop: partial class 4}.
\end{proof}
The previous theorem can be stated in a more concise and equivalent formulation. Indeed, the normal sequences $(\lambda_1^{(s_1)},\dots,\lambda_r^{(s_r)})=(0^{(3)},0^{(2)},\dots,0^{(2)})$ correspond exactly to symmetric nilpotent matrices whose square power is a rank-one matrix, which, in particular, have order of nilpotency equal to $3$. Therefore, we obtain the formulation of \Cref{teo: quadrics}.
\section{Harmonic monomials}\label{sec:monomials}
In the section, we consider the case of harmonic monomials. In particular, by monomial, we mean a product of powers of at most $n+1$ linearly independent linear forms.
Differently from the case of the Waring rank, if we want to preserve a quadratic form $\omega_{n,2}$ we can modify the monomials only using $\omega_n$-orthogonal transformations. Therefore, once the quadratic form $\omega_n$ is fixed, we cannot always assume the linear forms to be variables. In order to do that, we have to modify the quadratic form $\omega_n$ through a linear transformation. 
The next proposition characterise the linear forms which can appear in a harmonic monomial. 
\begin{prop}\label{prop:orthogonal_iff_monomial}
    Let $h=\ell_{1}\cdots\ell_{d}\in\calR_{n,d}$ be a monomial. Then $h\in\calH_{n,d}^{\omega_n}$ if and only if $\ell_j$ and $\ell_k$ are $\omega_n$-orthogonal whenever $1\leq j<k\leq d$.
\end{prop}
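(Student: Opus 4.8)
The plan is to reduce the condition $\omega_n\circ h=0$ to an explicit polynomial identity and then extract the pairwise orthogonality relations by a linear‑independence argument. The crux is the following computation: since every factor $\ell_m$ is linear, applying the second‑order operator $\omega_n$ to the product $h=\ell_1\cdots\ell_d$ produces no ``diagonal'' contributions, and one gets
\[
\omega_n\circ h=2\sum_{1\le k<l\le d}\langle\ell_k,\ell_l\rangle_{\omega_n}\prod_{m\ne k,l}\ell_m .
\]
I would prove this either by choosing coordinates in which $\omega_n=\alpha_0^2+\cdots+\alpha_n^2$, so that $\omega_n\circ(-)=\Delta$ and $\langle\ ,\,\rangle_{\omega_n}$ is the standard inner product, in which case the formula is a direct Leibniz computation (with $\partial_i\partial_j\ell_m=0$ killing all same‑factor terms); note that this change of coordinates is harmless because the statement is coordinate‑free and preserves both linear independence of the distinct $\ell_m$ and the monomial hypothesis. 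Alternatively one argues by induction on $d$, using for a linear form $f$ the identity $\omega_n\circ(fg)=f(\omega_n\circ g)+2\,L_{\omega_n}(f)\circ g$ together with $\omega_n\circ(\ell_1\ell_2)=2\langle\ell_1,\ell_2\rangle_{\omega_n}$ from \autoref{defn:isotropic} and the discussion following it. Granting the displayed formula, the implication ``$\Leftarrow$'' is immediate: if $\ell_j$ and $\ell_k$ are $\omega_n$‑orthogonal for all $j<k$, then $\langle\ell_k,\ell_l\rangle_{\omega_n}=0$ for every pair and hence $\omega_n\circ h=0$.

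For ``$\Rightarrow$'' I would use crucially that $h$ is a monomial. Let $y_1,\dots,y_r$ be the distinct linear forms occurring among $\ell_1,\dots,\ell_d$; by hypothesis these are linearly independent, so $\bbC[y_1,\dots,y_r]$ is a genuine polynomial ring. Let $a_s$ be the multiplicity of $y_s$, so $h=y_1^{a_1}\cdots y_r^{a_r}$ and $\sum_s a_s=d$. Grouping the sum above according to which of the $y$'s the indices $k,l$ select, and writing $B_{st}:=\langle y_s,y_t\rangle_{\omega_n}$, one obtains
\[
\omega_n\circ h=2\sum_{s<t}a_sa_tB_{st}\,\frac{h}{y_sy_t}+2\sum_{s}\binom{a_s}{2}B_{ss}\,\frac{h}{y_s^2}.
\]
The monomials appearing here are linearly independent in $\calR_{n,d-2}$: each $h/(y_sy_t)$ is obtained from $h$ by lowering exactly the two exponents indexed by $s$ and $t$ by one, while each $h/y_s^2$ is obtained by lowering exactly the exponent indexed by $s$ by two, so the exponent vectors are pairwise distinct, and distinct monomials in $\bbC[y_1,\dots,y_r]$ are linearly independent. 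Hence $\omega_n\circ h=0$ forces $a_sa_tB_{st}=0$ for all $s<t$ and $\binom{a_s}{2}B_{ss}=0$ for all $s$. Since each $a_s\ge 1$ this gives $B_{st}=0$ for $s\ne t$, and since $\binom{a_s}{2}\ne 0$ as soon as $a_s\ge 2$ it gives $B_{ss}=0$ whenever $y_s$ is repeated. Finally, for any $j\ne k$ in $\{1,\dots,d\}$ with $\ell_j=y_s$ and $\ell_k=y_t$: if $s\ne t$ then $\langle\ell_j,\ell_k\rangle_{\omega_n}=B_{st}=0$; if $s=t$ then $y_s$ occurs at least twice, so $a_s\ge 2$ and $\langle\ell_j,\ell_k\rangle_{\omega_n}=B_{ss}=0$. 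In either case $\ell_j$ and $\ell_k$ are $\omega_n$‑orthogonal.

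I do not expect a serious obstacle: the argument is essentially bookkeeping once the formula for $\omega_n\circ h$ is in hand. The one point to carry out with care is the combinatorial rearrangement of $\omega_n\circ h$ into the displayed form and the verification that the monomials $h/(y_sy_t)$ and $h/y_s^2$ are genuinely distinct, hence linearly independent. It is also worth recording explicitly, to avoid confusion, that ``$\ell_j$ and $\ell_k$ are $\omega_n$‑orthogonal'' means ``$\ell_j$ is $\omega_n$‑isotropic'' when $\ell_j=\ell_k$; in particular the proposition forces every repeated factor of a harmonic monomial to be isotropic, in agreement with \autoref{teo:harmonics_monomials}.
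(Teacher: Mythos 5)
Your argument is correct and follows the same route as the paper: establish the Leibniz identity $\omega_n\circ h=\sum_{j<k}\omega_n\circ(\ell_j\ell_k)\prod_{t\neq j,k}\ell_t$ and then read off the vanishing of each $\omega_n\circ(\ell_j\ell_k)$ from the linear independence of the degree-$(d-2)$ monomials on the right. Your explicit regrouping by distinct linear forms $y_s$, with multiplicity coefficients $a_sa_t$ and $\binom{a_s}{2}$, spells out a point the paper's proof leaves implicit---when some $\ell_i$ coincide the family $\{\prod_{t\neq j,k}\ell_t\}_{j<k}$ has repeated entries, so one must group the (equal) coefficients over pairs producing the same monomial before invoking independence---but the underlying idea is identical.
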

\begin{proof}
First, observe that, if 
\[
\omega_n=\xi_0^2+\cdots+\xi_n^2,
\]
for some linear forms $\xi_0,\dots,\xi_n\in\calD_{n,1}$, then
\[
\omega_n\circ(fg)=(\omega_n\circ f)g+f(\omega_n\circ g)+2\sum_{i=0}^n(\xi_i\circ f)(\xi_i\circ g),
\]
for every $f,g\in\calR_n$. In particular, if $\ell_j\ell_k\in\calR_{n,1}$, we have
\[
\omega_n\circ(\ell_j\ell_k)=2\sum_{i=0}^n(\xi_i\circ \ell_j)(\xi_i\circ \ell_k)
\]
and thus, by Leibniz's rule, given any $\ell_1,\dots,\ell_d\in\calR_{n,1}$, we have
\begin{equation}\label{formula:quadric_Leibniz}
\omega_n\circ(\ell_1\cdots\ell_d)=2\sum_{1\leq j< k\leq d}\biggl(\sum_{i=0}^n(\xi_i\circ \ell_j)(\xi_i\circ \ell_k)\biggr)\prod_{\substack{1\leq t\leq d\\ t\neq j,k}}\ell_t=\sum_{1\leq j< k\leq d}\omega_n\circ(\ell_j\ell_k)\prod_{\substack{1\leq t\leq d\\ t\neq j,k}}\ell_t.
\end{equation}
The statement follows directly from formula \eqref{formula:quadric_Leibniz}, since the set of monomials
   \[
   \Set{\prod_{\substack{1\leq t\leq d\\ t\neq j,k}}\ell_t}_{1\leq j< k\leq d}
   \]
is a set of linearly independent forms in $\calR_{n,d}$.
\end{proof}
As a consequence of \autoref{prop:orthogonal_iff_monomial}, we can explicitly describe how a harmonic monomial is made in terms of isotropic linear forms.
\begin{cor}\label{cor:structure_monomials}
    Let $\rmm=\ell_0^{a_0}\cdots\ell_r^{a_r}\in\calR_{n,d}$.Then $h\in\calH_{n,d}^{\omega_n}$ if and only if $\ell_i$ and $\ell_j$ are $\omega_n$-orthogonal whenever $0\leq i< j\leq r$ and $\ell_k$ is $\omega_n$-isotropic for every $k$ such that $0\leq k\leq r$ and $a_k>1$.
\end{cor}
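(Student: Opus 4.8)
The plan is to deduce this directly from \autoref{prop:orthogonal_iff_monomial} by rewriting $\rmm=\ell_0^{a_0}\cdots\ell_r^{a_r}$ as a product of $d=a_0+\dots+a_r$ linear forms counted with multiplicity. Concretely, I would fix a list $m_1,\dots,m_d\in\calR_{n,1}$ in which the form $\ell_k$ occurs exactly $a_k$ times, so that $\rmm=m_1\cdots m_d$; recall that $\ell_0,\dots,\ell_r$ are linearly independent by the convention on monomials fixed at the beginning of this section, which is exactly the setting in which \autoref{prop:orthogonal_iff_monomial} applies. The key elementary observation to record is that a linear form $\ell$ is $\omega_n$-isotropic precisely when it is $\omega_n$-orthogonal to itself, since both conditions read $\omega_n\circ\ell^2=0$.

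For the forward implication, assume $\rmm\in\calH_{n,d}^{\omega_n}$. By \autoref{prop:orthogonal_iff_monomial}, every pair $m_s,m_t$ with $1\le s<t\le d$ satisfies $\omega_n\circ(m_sm_t)=0$. If $0\le i<j\le r$, pick indices $s\ne t$ with $m_s=\ell_i$ and $m_t=\ell_j$ (possible because $i\ne j$); then $\omega_n\circ(\ell_i\ell_j)=0$, i.e.\ $\ell_i$ and $\ell_j$ are $\omega_n$-orthogonal. If $a_k>1$, then $\ell_k$ fills at least two slots of the list, so there are $s<t$ with $m_s=m_t=\ell_k$, whence $\omega_n\circ\ell_k^2=0$ and $\ell_k$ is $\omega_n$-isotropic. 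For the converse, assume the two stated conditions; I claim every pair $m_s,m_t$ with $s<t$ is $\omega_n$-orthogonal. Either $m_s$ and $m_t$ are copies of the same form $\ell_k$, in which case $a_k\ge 2$, so $\ell_k$ is $\omega_n$-isotropic and $\omega_n\circ(m_sm_t)=\omega_n\circ\ell_k^2=0$; or $m_s=\ell_i$, $m_t=\ell_j$ with $i\ne j$, and then $\omega_n\circ(m_sm_t)=\omega_n\circ(\ell_i\ell_j)=0$ by hypothesis. In both cases the pair is orthogonal, so \autoref{prop:orthogonal_iff_monomial} yields $\rmm\in\calH_{n,d}^{\omega_n}$.

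There is no genuine obstacle here: all of the content sits in \autoref{prop:orthogonal_iff_monomial}, and the only points demanding a little care are the bookkeeping between the presentation $\ell_0^{a_0}\cdots\ell_r^{a_r}$ and the flat list $m_1,\dots,m_d$, and the edge case $a_k=1$, for which neither an isotropy hypothesis on $\ell_k$ is assumed nor one obtained — consistent with the statement, which only requires isotropy of those $\ell_k$ with $a_k>1$.
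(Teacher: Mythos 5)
Your proof is correct and takes exactly the route the paper intends: the corollary is stated as an immediate consequence of \autoref{prop:orthogonal_iff_monomial}, and your argument simply carries out the routine bookkeeping of flattening $\ell_0^{a_0}\cdots\ell_r^{a_r}$ into a list of $d$ linear forms with multiplicity and reading off the pairwise-orthogonality conditions, noting that self-orthogonality of $\ell_k$ is exactly isotropy and only arises when $a_k\geq 2$.
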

Recall that a form $f\in\calR_{n,d}$ is called \textit{concise} if there are no linear forms in $(f)^{-1}$.
\begin{cor}
    For any $n\in\bbN$ and for any non-degenerate quadratic form $\omega_n$, there exists a unique concise $\omega_n$-harmonic monomial , up to $\omega_n$-orthogonal transformations.
\end{cor}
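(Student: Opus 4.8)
The plan is to first determine the shape of every concise $\omega_n$-harmonic monomial, and then show that two such monomials always differ by an $\omega_n$-orthogonal change of variables (up to a nonzero scalar). Let $\rmm=\ell_0^{a_0}\cdots\ell_r^{a_r}$ be a concise $\omega_n$-harmonic monomial; by the definition of monomial the forms $\ell_0,\dots,\ell_r$ are linearly independent, and each $a_i\geq 1$. If $r<n$, then the span $W=\langle\ell_0,\dots,\ell_r\rangle\subsetneq\calR_{n,1}$ has a nonzero annihilator in $\calD_{n,1}$, since the apolar pairing is perfect; any $\varphi$ in this annihilator satisfies $\varphi\circ\ell_i=0$ for all $i$, hence $\varphi\circ\rmm=0$ by the Leibniz rule, producing a linear form in $(\rmm)^{-1}$ and contradicting conciseness. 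Therefore $r=n$ and $(\ell_0,\dots,\ell_n)$ is a basis of $\calR_{n,1}$. By \autoref{cor:structure_monomials}, since $\rmm$ is harmonic the forms $\ell_0,\dots,\ell_n$ are pairwise $\omega_n$-orthogonal, so $(\ell_0,\dots,\ell_n)$ is an orthogonal basis for the non-degenerate bilinear form $\langle\ ,\,\rangle_{\omega_n}$. Its Gram matrix is then $\diag\bigl(\langle\ell_0,\ell_0\rangle_{\omega_n},\dots,\langle\ell_n,\ell_n\rangle_{\omega_n}\bigr)$, and non-degeneracy forces every diagonal entry to be nonzero; in particular no $\ell_i$ is $\omega_n$-isotropic. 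But \autoref{cor:structure_monomials} also forces $\ell_k$ to be isotropic whenever $a_k>1$, so $a_k=1$ for every $k$. Hence $\rmm=\ell_0\ell_1\cdots\ell_n$ is the product of an orthogonal basis of $\calR_{n,1}$, and in particular $\deg\rmm=n+1$.

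For existence, recall that every non-degenerate symmetric bilinear form over $\bbC$ admits an orthogonal basis (classical diagonalisation of quadratic forms). If $(\ell_0,\dots,\ell_n)$ is an orthogonal basis of $\langle\ ,\,\rangle_{\omega_n}$, then $\ell_0\cdots\ell_n$ is concise, since the $\ell_i$ span $\calR_{n,1}$, and, by \autoref{cor:structure_monomials}, $\omega_n$-harmonic: the only hypotheses to check are that the $\ell_i$ are pairwise orthogonal, which holds, and that each $\ell_k$ with exponent strictly bigger than $1$ is isotropic, which is vacuous here. When $\omega_n=\alpha_0^2+\cdots+\alpha_n^2$ this recovers $x_0x_1\cdots x_n$.

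For uniqueness, let $\rmm=\ell_0\cdots\ell_n$ and $\rmm'=\ell'_0\cdots\ell'_n$ be two concise $\omega_n$-harmonic monomials, with $(\ell_i)$ and $(\ell'_i)$ orthogonal bases as above. Rescaling $\ell_i\mapsto\ell_i/\sqrt{\langle\ell_i,\ell_i\rangle_{\omega_n}}$, and similarly the $\ell'_i$ — legitimate since every nonzero complex number has a square root — turns both bases into $\langle\ ,\,\rangle_{\omega_n}$-orthonormal bases and multiplies $\rmm$, resp.\ $\rmm'$, by a nonzero scalar. Since $\Oa_{\omega_n}(\bbC)$ acts transitively on orthonormal bases, there is $Q\in\Oa_{\omega_n}(\bbC)$ sending the first to the second, hence sending $\rmm$ to a nonzero scalar multiple of $\rmm'$; as monomials are considered up to a nonzero scalar, this gives uniqueness. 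The only step that is not pure bookkeeping is the observation in the first paragraph that conciseness together with harmonicity and non-degeneracy forces all exponents to equal $1$: this rigidity of the concise case is in sharp contrast with the general harmonic monomials treated in \autoref{teo:harmonics_monomials}, where higher powers of isotropic forms are the norm.
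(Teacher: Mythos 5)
The paper leaves this corollary unproved, so there is no argument to compare against; your deduction --- conciseness forces $r=n$, \autoref{cor:structure_monomials} together with non-degeneracy of the Gram matrix forces every $a_i=1$, and transitivity of $\Oa_{\omega_n}(\bbC)$ on $\langle\ ,\,\rangle_{\omega_n}$-orthonormal bases gives uniqueness --- is correct and is surely the one the authors intended. Your closing observation that uniqueness only holds up to a nonzero scalar is not mere bookkeeping: already $x_0x_1$ and $2x_0x_1$ are both concise $\omega_1$-harmonic monomials but lie in distinct $\Oa_{\omega_1}(\bbC)$-orbits (an orthogonal substitution sending a product of an orthonormal basis to a monomial can only change it by a factor of $\pm1$), so the ``up to nonzero scalar'' convention is a tacit hypothesis of the corollary as stated, and you are right to surface it.
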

\begin{rem}
Observe that, in general, a product of linear forms can be harmonic even if these are not pairwise $\omega_n$-orthogonal. For instance, if $\omega_1=\alpha_0^2+\alpha_1^2$, the element
\[
x_0^3x_1-x_0x_1^3=x_0x_1(x_0-x_1)(x_0+x_1)
\]
is harmonic, but it is not a monomial in $\bbC[x_0,x_1]$ and it is not a product of $\omega_n$-orthogonal linear forms.
The following two propositions give the isotropic rank of any harmonic monomial.
\end{rem}
\begin{prop}\label{prop:monomi1}
    Let $\rmm=\ell_0^{a_0}\cdots\ell_r^{a_r}\in\calH_{n,d}^{\omega_n}$ be a $\omega_n$-harmonic monomial of degree $d$, with $\ell_i\neq\ell_j$ for $i\neq j$ and $1\leq a_0\leq\cdots\leq a_r\leq d$. 
If 
\[
\abs{\Set{\ell_i|0\leq i\leq r,\,\omega_n\circ\ell_i^2\neq 0}}\neq 1,
\]
then
\[
\irk_{\omega_n}\rmm=\rk\rmm=\prod_{i=1}^r(a_i+1).
\]
\end{prop}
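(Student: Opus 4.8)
The plan is to prove the two inequalities $\rk\rmm\le\irk_{\omega_n}\rmm\le\prod_{i=1}^r(a_i+1)$; since $\rk\rmm=\prod_{i=1}^r(a_i+1)$ by the Carlini--Catalisano--Geramita formula for monomials (\cite{CCG12}; the Waring rank is a $\GL$-invariant, hence unchanged upon replacing $\ell_0,\dots,\ell_r$ by independent variables) and $\rk\le\irk$ always, all three numbers then coincide. So everything reduces to producing an \emph{isotropic} decomposition of $\rmm$ of size $\prod_{i=1}^r(a_i+1)$, which by the criterion of \Cref{rem: apofacile} amounts to finding a reduced $0$-dimensional scheme $X\subset\bbP(\calD_{n,1})$ with $I_X\subseteq(\rmm)^{-1}$ (classical apolar ideal), $\omega_n\in I_X$, and $\ell(X)=\prod_{i=1}^r(a_i+1)$.

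First I would fix adapted coordinates. By \Cref{cor:structure_monomials} the $\ell_i$ are pairwise $\omega_n$-orthogonal and $\ell_k$ is $\omega_n$-isotropic whenever $a_k>1$, so every non-isotropic $\ell_k$ has exponent $1$; after a reordering that keeps $a_0\le\dots\le a_r$ (possible since $1$ is the minimal exponent) I may assume $\ell_0,\dots,\ell_s$ are exactly the non-isotropic linear forms, with $a_0=\dots=a_s=1$, where $s=-1$ if there are none — the hypothesis of the proposition is precisely $s\neq 0$. Completing $\ell_0,\dots,\ell_n$ to a basis of $\calR_{n,1}$ and taking the dual basis $\varphi_0,\dots,\varphi_n$ of $\calD_{n,1}$, one has $(\rmm)^{-1}=(\varphi_0^2,\dots,\varphi_s^2,\varphi_{s+1}^{a_{s+1}+1},\dots,\varphi_r^{a_r+1},\varphi_{r+1},\dots,\varphi_n)$, and reading off the Gram matrix of $\omega_n$ in this basis, orthogonality together with the isotropy of $\ell_{s+1},\dots,\ell_r$ gives $\omega_n=\sum_{i=0}^s\beta_i\varphi_i^2+\Theta$, where $\beta_i:=\tfrac12(\omega_n\circ\ell_i^2)\neq 0$ for $i\le s$ and every monomial of $\Theta$ involves some $\varphi_m$ with $m>r$.

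Next I would write down a perturbed Carlini--Catalisano--Geramita apolar subscheme. The key point is to choose scalars $t_1,\dots,t_s\in\bbC\setminus\{0\}$ with $\beta_0+\sum_{i=1}^s\beta_i t_i^2=0$: this is solvable exactly because $s\neq 0$ — the affine quadric $\{\sum_{i=1}^s\beta_i T_i^2=-\beta_0\}$ is non-empty (as $-\beta_0\neq 0$) and is contained in no coordinate hyperplane, so a solution with all $t_i\neq 0$ exists; when $s=-1$ there is nothing to choose. Let $J\subseteq\calD_n$ be the ideal generated by $\varphi_{r+1},\dots,\varphi_n$, by $\varphi_i^2-t_i^2\varphi_0^2$ for $1\le i\le s$, and by $\varphi_j^{a_j+1}-\varphi_0^{a_j+1}$ for $s<j\le r$; it is a complete intersection of $n$ forms in $\bbP^n=\bbP(\calD_{n,1})$ cutting out the reduced $0$-dimensional scheme $X$ of the $2^s\prod_{j=s+1}^r(a_j+1)=\prod_{i=1}^r(a_i+1)$ distinct points $[1:\pm t_1:\dots:\pm t_s:\mu_{s+1}:\dots:\mu_r:0:\dots:0]$ with $\mu_j^{a_j+1}=1$, so $I_X=J$ has the required length. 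It then remains to check the two containments: $J\subseteq(\rmm)^{-1}$ is immediate because $a_i\ge a_0$ forces $\varphi_i^{a_i+1},\varphi_0^{a_i+1}\in(\rmm)^{-1}$ and the linear generators lie there too; and $\omega_n\in J$ because $\Theta\in(\varphi_{r+1},\dots,\varphi_n)\subseteq J$ while $\sum_{i=0}^s\beta_i\varphi_i^2\equiv(\beta_0+\sum_{i=1}^s\beta_i t_i^2)\varphi_0^2=0\pmod J$ by the choice of the $t_i$. Then \Cref{rem: apofacile} yields $\irk_{\omega_n}\rmm\le\ell(X)=\prod_{i=1}^r(a_i+1)$, which finishes the proof.

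The verification that $\omega_n$ has the shape $\sum\beta_i\varphi_i^2+\Theta$ and that $V(J)$ is reduced with $I_X=J$ of the stated length are routine; the genuinely load-bearing step — the one that isolates the role of the hypothesis — is the solvability of $\beta_0+\sum\beta_i t_i^2=0$ with all $t_i$ nonzero, which is exactly what fails when $\rmm$ has a single non-isotropic factor (that is the excluded case, where $\irk=2\rk$). Note finally that for $s=-1$, i.e.\ when all $\ell_i$ are isotropic, $J$ is literally the classical Carlini--Catalisano--Geramita apolar scheme of $\rmm$ and $\omega_n\in J$ holds automatically since $\omega_n$ has no $\varphi_k\varphi_l$ term with $k,l\le r$; equivalently, in that case the classical minimal Waring decomposition of $\rmm$ already uses only powers of $\omega_n$-isotropic forms, being built from linear combinations of pairwise-orthogonal isotropic forms.
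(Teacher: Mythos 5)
Your argument is correct and follows essentially the same route as the paper's: build a complete-intersection apolar scheme of length $\prod_{i=1}^r(a_i+1)$ contained in $(\rmm)^{-1}$ that also contains $\omega_n$, then invoke \Cref{rem: apofacile}, \cite{CCG12}, and \Cref{teo:two_times_rank}. The only real difference is presentational: the paper splits into the two cases (all $\ell_i$ isotropic, or at least two non-isotropic) and normalizes $\omega_n$ via transitivity of $\Oa_{\omega_n}(\bbC)$, whereas you handle both cases uniformly in coordinates adapted to the $\ell_i$ by solving $\beta_0+\sum\beta_it_i^2=0$ — which, in the paper's normalized frame, is exactly the choice $t_i^2=-1/k$ underlying the generators $\alpha_0^2+k\alpha_i^2$.
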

\begin{proof}
    Let us start by the case where all the linear forms $\ell_0,\dots,\ell_r$ are isotropic. Since, by \autoref{cor:structure_monomials}, all the linear forms must also be pairwise $\omega_n$-orthogonal, then we must have\[
    r+1\leq \biggl\lfloor \frac{n+1}{2}\biggr\rfloor.
    \]
    In particular, by the transitivity of $\Oa_{\omega_n}(\bbC)$, we can assume that
    \[
\omega_n=\alpha_0\alpha_{r+1}+\cdots+\alpha_{r}\alpha_{2r+1}+\alpha_{2(r+1)}^2+\cdots+\alpha_{n}^2
    \]
    and the forms $\ell_0,\dots,\ell_r$ to be $x_0,\dots,x_{r}$, so that
    \[
    \rmm=x_0^{a_0}\cdots x_{r}^{a_{r}}.
    \]
The inverse system of $\rmm$, with respect to classical apolar action, is given by
    \[
    (\rmm)^{-1}=(\alpha_{r+1},\dots,\alpha_{n},\alpha_0^{a_0+1},\dots,\alpha_{r}^{a_{r}+1}).
    \]
    If we consider the ideal
    \[
    I_X=(\alpha_{r+1},\dots,\alpha_{n},\alpha_0^{a_1+1}-\alpha_1^{a_1+1},\dots,\alpha_0^{a_{r}+1}-\alpha_{r}^{a_{r}+1})\subset(\rmm)^{-1},
    \]
    then clearly $\omega_n\in I_X$ and $I_X$ is a complete intersection ideal with
    \[
    \ell\bigl(V(I_X)\bigr)=\rk\rmm=\prod_{i=1}^r(a_i+1).
    \]
    \Cref{rem: apofacile} implies $\irk_{\omega_n}\rmm\leq\ell\bigl(V(I_X)\bigr)$ and, since by \cite{CCG12} $\rk\rmm=\ell\bigl(V(I_X)\bigr)$, we can conclude using \Cref{teo:two_times_rank}.
    Let us now consider the case where there are at least two non-$\omega_n$-isotropic linear forms. By \autoref{cor:structure_monomials}, there exists $0<k\leq r$ such that $a_0=\cdots=a_k=1$, the linear forms $\ell_0,\dots,\ell_k$ are not $\omega_n$-isotropic, and $\ell_{k+1},\dots,\ell_r$ are $\omega_n$-isotropic. In this case, we may assume that
    \[
\omega_n=\alpha_0^2+\cdots+\alpha_{k}^2+\alpha_{k+1}\alpha_{r+1}+\cdots+\alpha_{r}\alpha_{2r-k}+\alpha_{2r-k+1}^2+\cdots+\alpha_n^2
    \]
    and also
    \[
    \rmm=x_0\cdots x_{k}x_{k+1}^{a_{k+1}}\cdots x_{r}^{a_{r}}.
    \]
The inverse system of $\rmm$ with respect to classical apolarity is
\[
    (\rmm)^{-1}=(\alpha_{r+1},\dots,\alpha_{n},\alpha_0^2,\dots,\alpha_{k}^2,\alpha_{k+1}^{a_{k+1}+1},\dots,\alpha_{r}^{a_{r}+1}).
    \]
    Then, if we consider the ideal 
    \[
    I_X=(\alpha_{r+1},\dots,\alpha_{n},\alpha_0^2+k\alpha_1^2,\dots,\alpha_0^2+k\alpha_{k}^2,\alpha_0^{a_{k+1}+1}-\alpha_{k+1}^{a_{k+1}+1},\dots,\alpha_0^{a_{r}+1}-\alpha_{r}^{a_{r}+1})\subset (\rmm)^{-1},
    \]
    since $k\neq 0$, we have
    \[
    \omega_n=\frac{1}{k}\sum_{i=1}^{k}\bigl(\alpha_0^2+k\alpha_i^2\bigr)+\alpha_{k+1}\alpha_{r+1}+\cdots+\alpha_{r}\alpha_{2r-k}+\alpha_{2r-k+1}^2+\cdots+\alpha_n^2\in I_X.
    \]
    Proceeding as in the previous case, we conclude that $\irk_{\omega_n}(\rmm)=\rk\rmm$.
    \end{proof}
\begin{prop}\label{prop:monomi2}
Let $\rmm=\ell_0^{a_0}\cdots\ell_r^{a_r}\in\calH_{n,d}^{\omega_n}$ be a $\omega_n$-harmonic monomial of degree $d$, with $\ell_i\neq\ell_j$ for any $i\neq j$ and $1\leq a_0\leq\cdots\leq a_r\leq d$. If $a_0=1$, $\ell_0$ is not $\omega_n$-isotropic and $\ell_i$ is $\omega_n$-isotropic for any $1\leq i\leq r$, then
\[
\irk_{\omega_n}\rmm=2\rk\rmm=2\prod_{i=1}^r(a_i+1).
\]
\end{prop}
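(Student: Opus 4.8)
The plan is to prove the two inequalities separately. The equality $\rk\rmm=\prod_{i=1}^{r}(a_i+1)$ is the monomial case of the Carlini--Catalisano--Geramita theorem \cite{CCG12} (applied after bringing $\ell_0,\dots,\ell_r$ to coordinates by a linear change of variables), and the upper bound $\irk_{\omega_n}\rmm\le 2\rk\rmm$ is immediate from \Cref{teo:two_times_rank}. Hence the whole content is the lower bound $\irk_{\omega_n}\rmm\ge 2\prod_{i=1}^{r}(a_i+1)$.

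First I would normalize. By \Cref{cor:structure_monomials} the forms $\ell_1,\dots,\ell_r$ are isotropic and pairwise $\omega_n$-orthogonal, and all orthogonal to the non-isotropic $\ell_0$; since $\Oa_{\omega_n}(\bbC)$ acts transitively on such frames we may take $\ell_i=x_i$ and $\omega_n=\alpha_0^2+\sum_{i=1}^{r}\alpha_i\alpha_{r+i}+\sum_{j=2r+1}^{n}\alpha_j^2$, so that $\rmm=x_0x_1^{a_1}\cdots x_r^{a_r}$ and its classical apolar ideal is the complete intersection
\[
\Ann(\rmm)=(\alpha_0^2,\alpha_1^{a_1+1},\dots,\alpha_r^{a_r+1},\alpha_{r+1},\dots,\alpha_n).
\]
By \Cref{rem: apofacile} it then suffices to show that every reduced $0$-dimensional scheme $X\subset\bbP(\calD_{n,1})$ with $I_X\subseteq\Ann(\rmm)$ and $\omega_n\in I_X$ has length at least $2\prod_{i=1}^{r}(a_i+1)$. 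The first observation, which already uses that $\ell_0$ is \emph{not} isotropic, is that such an $X$ cannot be contained in the span $\Lambda\cong\bbP^{r}$ of the essential variables $\langle\alpha_0,\dots,\alpha_r\rangle$: on $\Lambda$ the quadric $\omega_n$ restricts to $\alpha_0^2$, so $X\subseteq\Lambda$ together with $\omega_n\in I_X$ would force $\alpha_0^2\in I_{X,\Lambda}$, and hence, $X$ being reduced, $X\subseteq\{\alpha_0=0\}$ and $\alpha_0\in I_X\subseteq\Ann(\rmm)$, contradicting $\alpha_0\circ\rmm=x_1^{a_1}\cdots x_r^{a_r}\ne 0$. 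Thus reducedness forces $X$ to spread onto the quadric $\Omega_n=\{\omega_n=0\}$ transversally to $\Lambda$, and the point is to quantify the extra length this costs.

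The model case $r=1$ is transparent: there $\rmm=x_0x_1^{a}$, $\Omega_n$ contains the smooth conic $\Omega_2\cong\bbP^1$ cut by $\alpha_0^2+\alpha_1\alpha_2$ on $\langle\alpha_0,\alpha_1,\alpha_2\rangle$, and the linear form $\alpha_2\in\Ann(\rmm)$ cuts a \emph{double} point on it; parametrizing $\Omega_2$ by the degree-$2$ Veronese of $\bbP^1$, the condition $I_X\subseteq\Ann(\rmm)$ becomes the statement that the binary form $F$ defining the points of $X$ on $\Omega_2$ lies in $(t^{2},s^{2a+2})$, and since $\deg F<2a+2$ would force $t^{2}\mid F$, contradicting squarefreeness of $F$ (i.e. reducedness of $X$), one gets $\ell(X)=\deg F\ge 2a+2=2\rk\rmm$. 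For general $r$ the dimension count already predicts the answer: working in $\calD_n/(\omega_n)$, the ideal $\bar J$ generated there by $\Ann(\rmm)$ has artinian quotient of $\bbC$-dimension $\prod_{i=0}^{r}(a_i+1)=2\prod_{i=1}^{r}(a_i+1)$ — the factor $2$ being the generator $\alpha_0^{2}$, that is $a_0+1=2$ — whereas $\rk\rmm=\prod_{i=1}^{r}(a_i+1)$ is only \emph{half} of this colength. So one must prove that a reduced $0$-dimensional $X\subset\Omega_n$ with $I_{X,\Omega_n}\subseteq\bar J$ necessarily has $\ell(X)\ge\dim\bigl(\calD_n/(\omega_n)\big/\bar J\bigr)$, a bound that — unlike mere ideal containment — is genuinely special to this configuration. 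The strategy is an induction on $r$, removing one isotropic variable $x_r$ at a time with a first-order derivation dual to $x_r$, invoking \Cref{prop:monomi1} for the resulting all-isotropic monomial $x_1^{a_1}\cdots x_r^{a_r}$, and exploiting the generator $\alpha_0^{2}$ together with the reducedness of $X$ (which, by the argument above, keeps $X$ off $\Lambda$) to propagate the extra factor.

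The main obstacle is exactly this last step when $r\ge 2$: there $\Omega_n$ is positive-dimensional, the hyperplane sections coming from $\alpha_{r+1},\dots,\alpha_{2r}\in\Ann(\rmm)$ are quadric cones (their hyperplanes being tangent to $\Omega_n$) rather than double points, so the one-variable squarefreeness shortcut is unavailable, and one has to track by hand how the single non-isotropic direction $\ell_0$ survives each restriction so that the doubling persists. A further subtlety is uniformity in the ambient dimension $n$: the variables $\alpha_{2r+1},\dots,\alpha_n\in\Ann(\rmm)$ have hyperplanes that are \emph{not} tangent to $\Omega_n$, and one must rule out that letting $X$ spread into those directions produces a shorter apolar scheme. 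Once this bookkeeping is carried out the lower bound follows, and together with the upper bound it yields $\irk_{\omega_n}\rmm=2\rk\rmm=2\prod_{i=1}^{r}(a_i+1)$.
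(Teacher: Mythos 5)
Your proposal correctly handles the easy half: the identity $\rk\rmm=\prod_{i=1}^r(a_i+1)$ from \cite{CCG12}, the upper bound $\irk_{\omega_n}\rmm\le 2\rk\rmm$ from \Cref{teo:two_times_rank}, the normalization of $\omega_n$ and $\rmm$, and the preliminary observation that a reduced apolar $X\subset\Omega_n$ cannot lie in the essential linear span $\Lambda$ (since $\omega_n|_\Lambda=\alpha_0^2$ and reducedness would force $\alpha_0\in I_X\subseteq\Ann(\rmm)$, which is false). The $r=1$ base case via a squarefree binary form on the conic is also correct, and you accurately diagnose why it does not generalize directly.

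However, for the crux — the lower bound $\irk_{\omega_n}\rmm\ge 2\rk\rmm$ for arbitrary $r$ — you only sketch a plan (``induction on $r$, removing one isotropic variable at a time, invoking \Cref{prop:monomi1}, and propagating the extra factor'') and then explicitly concede that the two essential difficulties are not resolved: the positive-dimensional tangent sections for $r\ge 2$, and controlling how $X$ can spread into the extra ambient variables $\alpha_{2r+1},\dots,\alpha_n$. This is precisely where the theorem lives, and ``once this bookkeeping is carried out'' is not a proof. It is also not clear that the proposed induction would close: restricting by a derivation dual to $x_r$ does not obviously produce a reduced apolar scheme of the right form in one fewer essential variable, and \Cref{prop:monomi1} concerns the all-isotropic case rather than the mixed one, so the inductive hypothesis you would actually need is again the statement being proved. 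The paper's argument is quite different and self-contained: writing $I_X=(\omega_n)+I_X'$ with $I_X'\subset(\rmm')^{-1}$, noting that $V(I_X')$ is a $1$-dimensional cone through the non-isotropic vertex $P=[1,0,\dots,0]$, observing that each of its (non-tangent) lines meets $\Omega_n$ in two distinct reduced points so that $\abs{V(I_X)}=2\cdot(\text{number of lines})$, and then bounding the number of lines from below by $\rk\rmm'=\prod(a_i+1)$ via a regular-element exact sequence and the surjection $\calD_{n-1}/(I_X'+(\alpha_n))\twoheadrightarrow\calD_{n-1}/(\rmm')^{-1}$. The ambient-dimension issue you raise is exactly what the paper's Claim 2 handles, by showing that one may always assume $(\alpha_{r+1},\dots,\alpha_{n-1})\subset I_X'$ without increasing the line count and while keeping all lines off the tangent locus. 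Without an analogue of these two steps your outline remains an outline, and the lower bound is not established.
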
    
\begin{proof}    
By the transitivity of $\Oa_{\omega_n}(\bbC)$ we may assume that
    \[
    \omega_n=\alpha_0^2+\alpha_{1}\alpha_{r+1}+\cdots+\alpha_{r}\alpha_{2r}+\alpha_{2r+1}^2+\cdots+\alpha_n^2
    \]
    and also
    \[
    \rmm=x_0x_1^{a_1}\cdots x_{r}^{a_{r}}.
    \]
The inverse system of $\rmm$ with respect to classical apolarity is    
\begin{align*}
(\rmm)^{-1}&=(\alpha_{r+1},\dots,\alpha_{n},\alpha_0^2,\alpha_1^{a_1+1},\dots,\alpha_{r}^{a_{r}+1})=(\alpha_{r+1},\dots,\alpha_{n},\omega_n,\alpha_1^{a_1+1},\dots,\alpha_{r}^{a_{r}+1})\\
&=(\omega_n)+(\alpha_{r+1},\dots,\alpha_{n},\alpha_1^{a_1+1},\dots,\alpha_{r}^{a_{r}+1}).
    \end{align*}
    If we set the monomial $\rmm'=x_1^{a_1}\cdots x_{r}^{a_{r}}$, then the inverse system of $\rmm'$ in $\bbC[\alpha_1,\dots,\alpha_n]$ is
    \[
    (\rmm')^{-1}=(\alpha_{r+1},\dots,\alpha_{n},\alpha_1^{a_1+1},\dots,\alpha_{r}^{a_{r}+1})\subseteq \bbC[\alpha_1,\dots,\alpha_n]\subseteq \bbC[\alpha_0,\dots,\alpha_n].
    \]
    In the rest of the proof we consider $(\rmm')^{-1}$ as an ideal of $\bbC[\alpha_0,\dots,\alpha_n]$.
    In particular, we have \[
    (\rmm)^{-1}=(\omega_n)+(\rmm')^{-1}\subset\bbC[\alpha_0,\dots,\alpha_n].
    \]
    Let $I_X\subset(\rmm)^{-1}$ be the ideal associated to a reduced scheme of points $X\coloneqq{V(I_X)}\subset\Omega_n$, where $\Omega_n=V(\omega_n)$. Then, we can write
    \[
    I_X=(\omega_n)+I_X'\subset (\omega_n)+(\rmm')^{-1},
    \]
    for some ideal $I_X'\subset (\rmm')^{-1}$. Now, we have
    \[
    X=V(I_X)=V\bigl((\omega_n)+I_X'\bigr)=V(\omega_n)\cap V(I_X').
    \]
    In particular, we have
    \[
    0=\dim V(I_X)\geq \dim V(I_X')-1, 
    \]
    so that $\dim V(I_X')\leq 1$. Moreover, since the generators of $I_X'$ do not involve $\alpha_0$, for any point $Q\in V(I_X')$ we have 
    \[
    \langle p,[1,0,\dots,0]\rangle\subseteq V(I_X').
    \]
    Hence, we conclude that $\dim V(I_X')=1$ and $V(I_X')$ is a finite union of lines passing through the vertex $P=[1,0,\dots,0]$. Now, if $l,l'\subseteq V(I_X')$ are two distinct lines, we have $l\cap l'=P\notin\Omega_n$, and thus
    \[
    \bigl(l\cap \Omega_n\bigr)\cap\bigl(l'\cap \Omega_n\bigr)=\varnothing.
    \]
Finally, any line of $l\subseteq V(I_X')$ must intersect $\Omega_n$ in two distinct points, as $I_X=(\omega_n)+I_X'$ is the ideal of a reduced scheme of points. In particular, $\abs{V(I_X)}$ is the double of the number of lines of $V(I_X')$ or, equivalently,
    $$\abs{V(I_X)}=2\abs{V(I_X')\cap V(\alpha_0)}.$$
    In order to complete the proof we have to find the minimal number of lines of such a $V(I_X')$. To do that, we need the following two claims.\\
    \textbf{Claim 1}: Let $l\subset V(I_X')$ a line. Then $(\alpha_{r+1},\dots,\alpha_{n})\not\subset I_l$.
    \begin{proof}[Proof of Claim 1]\let\qed\relax
    Suppose by contradiction that $(\alpha_{r+1},\dots,\alpha_{n})\subset I_l$. Then, parametric equations of $l$ are
    $$l:\begin{cases}
    \alpha_0=u,\\
    \alpha_1=c_1v,\\
    \vdots\\
    \alpha_{r}=c_{r}v,\\
    \alpha_{r+1}=0,\\
    \vdots\\
    \alpha_n=0,
    \end{cases}\qquad [u,v]\in\bbP^1,$$ and thus $l\cap\Omega_n$ is given by $\omega_n(u,c_1v,\dots,c_{r}v,0,\dots,0)=0$, i.e.~$u^2=0$. Hence, $l$ is tangent to $\Omega_n$, a contradiction.
    \end{proof}
    \textbf{Claim 2}: {\it Let $e\in\bbN$ be the minimal number of lines of a 1-dimensional cone $Y$ of vertex $P$ such that $I_Y\subset(\rmm')^{-1}$ and any line of $Y$ is not tangent to $\Omega_n$. Then, there exists $Z\subset\bbP^n$ such that $I_Z\subset(\rmm')^{-1}$, $Z$ is a 1-dimensional cone of vertex $P$ made of $e$ lines which are not tangent to $\Omega_n$ and $(\alpha_{r+1},\dots,\alpha_{n-1})\subset I_Z$.}
    \begin{proof}[Proof of Claim 2]\let\qed\relax
    We set $a=a_1+\dots+a_{r}$. Let $Y$ such in the statement of Claim 2 and let 
    $$\rmm'=\sum_{i=1}^e\ell_i^a$$
    the corresponding decomposition of $\rmm'$, with $$\ell_i=q_{i,1}x_1+\dots+q_{i,n}x_n\in\bbC[x_1,\dots,x_n]\subset\bbC[x_0,\dots,x_n].$$ Since $\rmm'$ does not involve $x_{r+1},\dots,x_{n}$, we have
    $$\rmm'=\sum_{i=1}^e\ell_i^a(0,x_1,\dots,x_{r},c_{i,r+1}x_n,\dots,c_{i,n-1}x_{n},x_n),$$
    for any $c_{i,j}\in\bbC$, $1\leq i\leq e$ and $r+1\leq j\leq n-1$.
    Let 
    $$Q_i=[0,q_{i,1},\dots,q_{i,r},0,\dots,0,c_{i,r+1}q_{i,r+1}+\dots+c_{i,n-1}q_{i,n-1}+q_{i,n}]\in\bbP^n$$
    the point corresponding to $\ell_i(0,x_1,\dots,x_{r},c_{i,r+1}x_n,\dots,c_{i,n-1}x_{n},x_n)$ and $Z\subset\bbP^n$ the cone of vertex $P$ and base $\{Q_1,\dots,Q_e\}$. Clearly, we have $I_Z\subset(\rmm')^{-1}$ and $(\alpha_{r+1},\dots,\alpha_{n-1})\subset I_Z$. Moreover, $Z$ is made of $e$ distinct lines. It remains to prove that, for suitable $c_{i,j}$, each of these lines is not tangent to $\Omega_n$. Parametric equations of the line $l_i\coloneqq\langle PQ_i\rangle$ are
    $$l_i\colon\begin{cases}
    \alpha_0=u,\\
    \alpha_1=q_{i1}v,\\
    \vdots\\
    \alpha_{r}=q_{i,r}v,\\
    \alpha_{r+1}=0,\\
    \vdots\\
    \alpha_{n-1}=0,\\
    \alpha_n=(c_{i,r+1}q_{i,r+1}+\dots+c_{i,n-1}q_{i,n-1}+q_{i,n})v,
    \end{cases}\quad [u,v]\in\bbP^1,$$ and thus $l_i\cap\Omega_n$ is given by 
    $$u^2+(c_{i,r+1}q_{i,r+1}+\dots+c_{i,n-1}q_{i,n-1}+q_{i,n})^2v^2=0.$$
    In particular, $l_i$ is tangent to $\Omega_n$ if and only if $$c_{i,r+1}q_{i,r+1}+\dots+c_{i,n-1}q_{i,n-1}+q_{in}=0.$$ 
    By Claim 1, there exists at least a non-zero element in $\{q_{i,r+1},\dots,q_{i,n}\}$ and, as a consequence, it is possible to choose $c_{i,r+1},\dots,c_{i,n-1}$ such that $l_i$ is not tangent to $\Omega_n$.
    \end{proof}
    Since the number of lines of $V(I_X')$ equals the number of points of $V(I_X')\cap V(\alpha_0)$, we abuse of notation and we consider now $I_X'$ as an ideal of $\calD_{n-1}=\bbC[x_1,\dots,x_n]$ and we write $V(I_X')$ instead of $V(I_X')\cap V(\alpha_0)$. By Claim 2, we can suppose that $(\alpha_{r+1},\dots,\alpha_{n-1})\subset I_X'$ and thus, by Claim 1, we have that the class of $\alpha_n$ is not a zero-divisor in $\calD_n/I_X'$. As a consequence, the sequence
    $$\begin{tikzcd}
	0 \arrow[r] & \calD_{n-1}/I_X'(-1) \arrow[r, "\cdot\alpha_n"] & \calD_{n-1}/I_X' \arrow[r] & \calD_{n-1}/\bigl(I_X'+(\alpha_n)\bigr) \arrow[r] & 0
\end{tikzcd}$$
is exact, and, for $t\gg 0$, we have
$$\abs{V(I_X')}=H_{\calD_{n-1}/I_X'}(t)=\sum_{i=0}^tH_{\calD_{n-1}/(I_X'+(\alpha_n))}(i).$$
Now, recall that $I_X'\subset(\rmm')^{-1}$ and note that $(\rmm')^{-1}+(\alpha_n)=(\rmm')^{-1}$. Thus, $$I_X'+(\alpha_n)\subset(\rmm')^{-1}\subset\calD_{n-1}$$ and, for $t\gg0$, we have
$$\sum_{i=0}^tH_{\calD_{n-1}/(I_X'+(\alpha_n))}(i)\geq\sum_{i=0}^t H_{\Ann(\rmm')}(i)=\prod_{i=1}^{r-1}(a_i+1)=\rk\rmm.$$
Hence, $\irk_{\omega_n}\rmm\geq 2\rk\rmm$ and, by \autoref{teo:two_times_rank}, we get the statement.
\end{proof}
\Cref{prop:monomi1} and \Cref{prop:monomi2} together imply \Cref{teo:harmonics_monomials}.
\begin{exam}
    Let us consider the case of the monomial $\rmm=x_0x_1^{d-1}$ and the quadratic form $\omega_2=\alpha_0^2+\alpha_1\alpha_2$. The linear form $x_0$ is not $\omega_2$-isotropic and the linear form $x_1$ is $\omega_2$-isotropic. Hence, by \Cref{prop:monomi2} we have $\irk\rmm=2d$. Since, by \cite{CCG12} we know that $\rk\rmm=d$, we have found an infinite family of forms for which the upper bound of \Cref{teo:two_times_rank} is sharp.
\end{exam}



\section*{Acknowledgements}
The authors would like to thank Cristiano Bocci, Jaros{\l}aw Buczyński, Enrico Carlini, Fulvio Gesmundo, Alessandro Gimigliano, Monica Idà, Joachim Jelisiejew, Giorgio Ottaviani, Pierpaola Santarsiero, and Ettore Teixeira Turatti for very helpful comments and Paolo Lella for helping them with Macaulay2 code. Part of this work was written while the authors were assistant professors at the University of Warsaw. Part of this work was written when the second author was a research fellow at the {University of Florence}. The first author has been partially founded by the Italian Ministry of University and Research in the framework of the Call for Proposals for scrolling of final rankings of the PRIN 2022 call - Protocol no. 2022NBN7TL. The second author has been partially supported by the scientific project \textit{Multilinear Algebraic Geometry} of the program \textit{Progetti di ricerca di Rilevante Interesse Nazionale} (PRIN), Grant Assignment Decree No.~973, adopted on 06/30/2023 by the Italian Ministry of University and Research (MUR). The authors have been partially supported by the project \textit{Thematic Research Programmes}, Action I.1.5 of the program \textit{Excellence Initiative -- Research University} (IDUB) of the Polish Ministry of Science and Higher Education.

\bibliographystyle{amsalpha}
\bibliography{Bibliography_BCF.bib}
\pagebreak 
\appendix
\section{Macaulay2 code for base cases of inductions }
In this appendix we provide an example of Macaulay2 code to check the base cases of inductions of \autoref{lem:cubic1}, \autoref{lem:cubic2}, \autoref{lem:cubic3}, \autoref{lem:cubic4} and \autoref{teo:post_fin}.
\medskip

\noindent {\bf\small Code for the base case $n=16$ in \autoref{lem:cubic1}}:
\begin{lstlisting}[basicstyle=\ttfamily\scriptsize\linespread{0.8}\selectfont]
Macaulay2, version 1.24.11
R=ZZ/101[x_0..x_16];
L=ideal(x_0..x_5);
M=ideal(x_6..x_11);
N=ideal(x_12..x_16,random(1,R));
Q=ideal(random(2,R));
I=saturate(intersect(L,M,N)+Q);
hilbertFunction(2,R)-hilbertFunction(2,I)
1
\end{lstlisting}
\medskip

\noindent {\bf\small Code for the base case $n=16$ in \autoref{lem:cubic2}}:
\begin{lstlisting}[basicstyle=\ttfamily\scriptsize\linespread{0.8}\selectfont]
KK=ZZ/3;
R=KK[x_0..x_16]; -- the coordinate ring
Q=ideal(random(2,R)); -- a general quadric
L'=ideal(x_0..x_5); -- a codimension 6 linear subspace
M'=ideal(x_6..x_11); -- a codimension 6 linear subspace
N'=ideal(x_12..x_16,random(1,R)); -- a codimension 6 linear subspace
L=saturate(L'+Q); -- the section of Q defined by L'
M=saturate(M'+Q); -- the section of Q defined by M'
N=saturate(N'+Q); -- the section of Q defined by N'

gbL = gens gb L; -- a Groebner basis of L
gbM = gens gb M; -- a Groebner basis of M
gbN = gens gb N; -- a Groebner basis of N

-- a list of ideals of 6 couples of double points of Q on L embedded in P^n
PL = for i from 0 to 5 list (saturate(Q+(L+ideal(random(1,R),random(1,R),
random(1,R),random(1,R),random(1,R),random(1,R),
random(1,R),random(1,R),random(1,R)))^2));

-- a Groebner basis of the ideals of PL up to degree 3
gbPL = for I in PL list gens gb (I,DegreeLimit=>{3}); 

-- a list of ideals of 6 couples of double points of Q on M embedded in P^n
PM = for i from 0 to 5 list (saturate(Q+(M+ideal(random(1,R),random(1,R),
random(1,R),random(1,R),random(1,R),random(1,R),
random(1,R),random(1,R),random(1,R)))^2));

-- a Groebner basis of the ideals of PM up to degree 3
gbPM = for I in PM list gens gb (I,DegreeLimit=>{3}); 

-- a list of ideals of 6 couples of double points of Q on L embedded in P^n
PN = for i from 0 to 5 list (saturate(Q+(N+ideal(random(1,R),random(1,R),
random(1,R),random(1,R),random(1,R),random(1,R),
random(1,R),random(1,R),random(1,R)))^2)); 

-- a Groebner basis of the ideals of PN up to degree 3
gbPN = for I in PN list gens gb (I,DegreeLimit=>{3}); 

-- construction of a generic cubic
dimCubics = hilbertFunction(3,R);
A = KK[Variables=>dimCubics];
AR = A[gens R];
B3 = rsort flatten entries basis(3,AR);
genericCubic = sum for i from 0 to dimCubics-1 list A_i*B3#i;


-- imposing the containment of the generic cubic in the ideals
gbList = {gbL, gbM, gbN} | gbPL | gbPM | gbPN;
equations = {};
for G in gbList do (
    gbG = forceGB sub(G,AR);
    r = genericCubic%gbG;
    equations = equations |  (for t in terms r list leadCoefficient t); 
);

E = gens gb ideal equations;
hF = dimCubics - numColumns E -- number of cubics of P^n containing the subscheme
17
\end{lstlisting}
\noindent {\bf\small Code for the base cases $n=12,\dots,17$ in \autoref{lem:cubic3}}:
\begin{lstlisting}[basicstyle=\ttfamily\scriptsize\linespread{0.8}\selectfont]
KK=ZZ/3;
n=12; -- for n=13,...,17 change n here
R=KK[x_0..x_n]; -- the coordinate ring
Q=ideal(random(2,R)); -- a general quadric
L'=ideal(x_0..x_5); -- a codimension 6 linear subspace
M'=ideal(x_6..x_11); -- a codimension 6 linear subspace
L=saturate(L'+Q); -- the section of Q defined by L'
M=saturate(M'+Q); -- the section of Q defined by M'

gbL = gens gb L; -- a Groebner basis of L
gbM = gens gb M; -- a Groebner basis of M


-- a list of ideals of n-6 couples of double points of Q on L embedded in P^n
PL ={};
for i from 0 to n-7 do(
    I=L;
    for j from 0 to n-8 do(I=I+random(1,R));
    I=saturate(Q+I^2);
    PL=append(PL,I);
    );

-- a Groebner basis of the ideals of PL up to degree 3
gbPL = for I in PL list gens gb (I,DegreeLimit=>{3}); 

-- a list of ideals of n-6 couples of double points of Q on L embedded in P^n
PM ={};
for i from 0 to n-7 do(
    I=M;
    for j from 0 to n-8 do(I=I+random(1,R));
    I=saturate(Q+I^2);
    PM=append(PM,I);
    );

-- a Groebner basis of the ideals of PM up to degree 3
gbPM = for I in PM list gens gb (I,DegreeLimit=>{3}); 

-- a list of ideals of 6 couples of double points of Q embedded in P^n
PQ ={};
for i from 0 to 5 do(
    I=Q;
    for j from 0 to n-2 do(I=I+random(1,R));
    I=saturate(Q+I^2);
    PQ=append(PQ,I);
    );

-- a Groebner basis of the ideals of PM up to degree 3
gbPQ = for I in PQ list gens gb (I,DegreeLimit=>{3});


-- construction of a generic cubic
dimCubics = hilbertFunction(3,R);
A = KK[Variables=>dimCubics];
AR = A[gens R];
B3 = rsort flatten entries basis(3,AR);
genericCubic = sum for i from 0 to dimCubics-1 list A_i*B3#i;


-- imposing the containment of the generic cubic in the ideals
gbList = {gbL, gbM} | gbPL | gbPM | gbPQ;
equations = {};
for G in gbList do (
    gbG = forceGB sub(G,AR);
    r = genericCubic%gbG;
    equations = equations |  (for t in terms r list leadCoefficient t); 
);

E = gens gb ideal equations;
hF = dimCubics - numColumns E -- number of cubics of P^n containing the subscheme
13
\end{lstlisting}
{\bf\small Code for base cases $n=6,\dots,11$ in \autoref{lem:cubic4}}:
\begin{lstlisting}[basicstyle=\ttfamily\scriptsize\linespread{0.8}\selectfont]
-- Base case n=6
restart
KK=ZZ/17;
n=6;
R=KK[x_0..x_n]; -- the coordinate ring
q=x_5*x_6+sum for i from 0 to n-2 list x_i^2;
Q=ideal(q); -- the ideal of the quadric, L is empty in this case

-- a list of ideals of 6  couples of double points of Q embedded in P^n
PQ ={};
for i from 0 to n-1 do(
    I=Q;
    for j from 0 to n-2 do(I=I+random(1,R));
    I=saturate(Q+I^2);
    PQ=append(PQ,I);
    );

-- a Groebner basis of the ideals of PM up to degree 3
gbPQ = for I in PQ list gens gb (I,DegreeLimit=>{3});

-- construction of the scheme eta
eta'=saturate(Q+(ideal(x_0..x_5))^2);
eta=eta'+ideal(x_0);

-- a Groebner basis of eta

gbeta = gens gb eta;

-- construction of a generic cubic
dimCubics = hilbertFunction(3,R);
A = KK[Variables=>dimCubics];
AR = A[gens R];
B3 = rsort flatten entries basis(3,AR);
genericCubic = sum for i from 0 to dimCubics-1 list A_i*B3#i;


-- imposing the containment of the generic cubic in the ideals
gbList = {gbeta} | gbPQ;
equations = {};
for G in gbList do (
    gbG = forceGB sub(G,AR);
    r = genericCubic%gbG;
    equations = equations |  (for t in terms r list leadCoefficient t); 
);

E = gens gb ideal equations;
hF = dimCubics - numColumns E -- number of cubics of P^n containing the subscheme
7

-- Base case n=7
restart
KK=ZZ/17;
n=7;
R=KK[x_0..x_n]; -- the coordinate ring
Q=ideal(random(2,R)); -- the ideal of the quadric
L'=ideal(x_0..x_5); -- a codimension 6 linear subspace
L=saturate(L'+Q); -- the section of Q defined by L'

gbL = gens gb L; -- a Groebner basis of L

-- a list of ideals of 1 couples of double points of Q on L embedded in P^n
PL ={};
for i from 0 to 0  do(
    I=L;
    for j from 0 to n-8 do(I=I+random(1,R));
    I=saturate(Q+I^2);
    PL=append(PL,I);
    );

-- a Groebner basis of the ideals of PL up to degree 3
gbPL = for I in PL list gens gb (I,DegreeLimit=>{3}); 

-- a list of ideals of 7  couples of double points of Q embedded in P^n
PQ ={};
for i from 0 to n-1 do(
    I=Q;
    for j from 0 to n-2 do(I=I+random(1,R));
    I=saturate(Q+I^2);
    PQ=append(PQ,I);
    );

-- a Groebner basis of the ideals of PM up to degree 3
gbPQ = for I in PQ list gens gb (I,DegreeLimit=>{3});

-- construction of a generic cubic
dimCubics = hilbertFunction(3,R);
A = KK[Variables=>dimCubics];
AR = A[gens R];
B3 = rsort flatten entries basis(3,AR);
genericCubic = sum for i from 0 to dimCubics-1 list A_i*B3#i;


-- imposing the containment of the generic cubic in the ideals
gbList = {gbL} | gbPL | gbPQ;
equations = {};
for G in gbList do (
    gbG = forceGB sub(G,AR);
    r = genericCubic%gbG;
    equations = equations |  (for t in terms r list leadCoefficient t); 
);

E = gens gb ideal equations;
hF = dimCubics - numColumns E -- number of cubics of P^n containing the subscheme
8

-- Base case n=8
restart
KK=ZZ/17;
n=8;
R=KK[x_0..x_n]; -- the coordinate ring
q=x_7*x_8+sum for i from 0 to n-2 list x_i^2;
Q=ideal(q); -- the ideal of the quadric
L'=ideal(x_0..x_5); -- a codimension 6 linear subspace
L=saturate(L'+Q); -- the section of Q defined by L'

gbL = gens gb L; -- a Groebner basis of L

-- a list of ideals of 1 couples of double points of Q on L embedded in P^n
PL ={};
for i from 0 to 0  do(
    I=L;
    for j from 0 to n-8 do(I=I+random(1,R));
    I=saturate(Q+I^2);
    PL=append(PL,I);
    );

-- we construct the 3rd double point of Q on L embedded in P^n
I=L'+ideal(x_6,x_8);
I=saturate(Q+I^2);
PL=append(PL,I);

-- a Groebner basis of the ideals of PL up to degree 3
gbPL = for I in PL list gens gb (I,DegreeLimit=>{3}); 

-- a list of ideals of 8  couples of double points of Q embedded in P^n
PQ ={};
for i from 0 to n-1 do(
    I=Q;
    for j from 0 to n-2 do(I=I+random(1,R));
    I=saturate(Q+I^2);
    PQ=append(PQ,I);
    );

-- a Groebner basis of the ideals of PM up to degree 3
gbPQ = for I in PQ list gens gb (I,DegreeLimit=>{3});

-- construction of the scheme eta
eta'=saturate(Q+(ideal(x_0..x_7))^2);
eta=eta'+ideal(x_0,x_1,x_2,x_6,x_7);

-- a Groebner basis of eta

gbeta = gens gb eta;

-- construction of a generic cubic
dimCubics = hilbertFunction(3,R);
A = KK[Variables=>dimCubics];
AR = A[gens R];
B3 = rsort flatten entries basis(3,AR);
genericCubic = sum for i from 0 to dimCubics-1 list A_i*B3#i;


-- imposing the containment of the generic cubic in the ideals
gbList = {gbL,gbeta} | gbPL | gbPQ;
equations = {};
for G in gbList do (
    gbG = forceGB sub(G,AR);
    r = genericCubic%gbG;
    equations = equations |  (for t in terms r list leadCoefficient t); 
);

E = gens gb ideal equations;
hF = dimCubics - numColumns E -- number of cubics of P^n containing the subscheme
9

-- Base case n=9
restart
KK=ZZ/17;
n=9;
R=KK[x_0..x_n]; -- the coordinate ring
q=x_8*x_9+sum for i from 0 to n-2 list x_i^2;
Q=ideal(q); -- the ideal of the quadric
L'=ideal(x_0..x_5); -- a codimension 6 linear subspace
L=saturate(L'+Q); -- the section of Q defined by L'

gbL = gens gb L; -- a Groebner basis of L

-- a list of ideals of 2 couples of double points of Q on L embedded in P^n
PL ={};
for i from 0 to 1  do(
    I=L;
    for j from 0 to n-8 do(I=I+random(1,R));
    I=saturate(Q+I^2);
    PL=append(PL,I);
    );

-- we construct the 5th double point of Q on L embedded in P^n
I=L'+ideal(x_6,x_7,x_9);
I=saturate(Q+I^2);
PL=append(PL,I);

-- a Groebner basis of the ideals of PL up to degree 3
gbPL = for I in PL list gens gb (I,DegreeLimit=>{3}); 

-- a list of ideals of 9  couples of double points of Q embedded in P^n
PQ ={};
for i from 0 to n-1 do(
    I=Q;
    for j from 0 to n-2 do(I=I+random(1,R));
    I=saturate(Q+I^2);
    PQ=append(PQ,I);
    );

-- a Groebner basis of the ideals of PM up to degree 3
gbPQ = for I in PQ list gens gb (I,DegreeLimit=>{3});

-- construction of the scheme eta
eta'=saturate(Q+(ideal(x_0..x_8))^2);
eta=eta'+ideal(x_0,x_1,x_2,x_3,x_6,x_7,x_8);

-- a Groebner basis of eta

gbeta = gens gb eta;

-- construction of a generic cubic
dimCubics = hilbertFunction(3,R);
A = KK[Variables=>dimCubics];
AR = A[gens R];
B3 = rsort flatten entries basis(3,AR);
genericCubic = sum for i from 0 to dimCubics-1 list A_i*B3#i;


-- imposing the containment of the generic cubic in the ideals
gbList = {gbL,gbeta} | gbPL | gbPQ;
equations = {};
for G in gbList do (
    gbG = forceGB sub(G,AR);
    r = genericCubic%gbG;
    equations = equations |  (for t in terms r list leadCoefficient t); 
);

E = gens gb ideal equations;
hF = dimCubics - numColumns E -- number of cubics of P^n containing the subscheme
10

-- Base case n=10
restart
KK=ZZ/3;
n=10;
R=KK[x_0..x_n]; -- the coordinate ring
q=x_9*x_10+sum for i from 0 to n-2 list x_i^2;
Q=ideal(q); -- the ideal of the quadric
L'=ideal(x_0..x_5); -- a codimension 6 linear subspace
L=saturate(L'+Q); -- the section of Q defined by L'

gbL = gens gb L; -- a Groebner basis of L


-- a list of ideals of 3 couples of double points of Q on L embedded in P^n
PL ={};
for i from 0 to 2  do(
    I=L;
    for j from 0 to n-8 do(I=I+random(1,R));
    I=saturate(Q+I^2);
    PL=append(PL,I);
    );

-- we construct the 7th double point of Q on L embedded in P^n
I=L'+ideal(x_6,x_7,x_8,x_10);
I=saturate(Q+I^2);
PL=append(PL,I);

-- a Groebner basis of the ideals of PL up to degree 3
gbPL = for I in PL list gens gb (I,DegreeLimit=>{3}); 

-- a list of ideals of 10  couples of double points of Q embedded in P^n
PQ ={};
for i from 0 to n-1 do(
    I=Q;
    for j from 0 to n-2 do(I=I+random(1,R));
    I=saturate(Q+I^2);
    PQ=append(PQ,I);
    );

-- a Groebner basis of the ideals of PM up to degree 3
gbPQ = for I in PQ list gens gb (I,DegreeLimit=>{3});

-- construction of the scheme eta
eta'=saturate(Q+(ideal(x_0..x_9))^2);
eta=eta'+ideal(x_0,x_1,x_2,x_6,x_7);

-- a Groebner basis of eta

gbeta = gens gb eta;


-- construction of a generic cubic
dimCubics = hilbertFunction(3,R);
A = KK[Variables=>dimCubics];
AR = A[gens R];
B3 = rsort flatten entries basis(3,AR);
genericCubic = sum for i from 0 to dimCubics-1 list A_i*B3#i;


-- imposing the containment of the generic cubic in the ideals
gbList = {gbL,gbeta} | gbPL | gbPQ;
equations = {};
for G in gbList do (
    gbG = forceGB sub(G,AR);
    r = genericCubic%gbG;
    equations = equations |  (for t in terms r list leadCoefficient t); 
);

E = gens gb ideal equations;
hF = dimCubics - numColumns E -- number of cubics of P^n containing the subscheme
11

-- Base case n=11
restart
KK=ZZ/3;
n=11;
R=KK[x_0..x_n]; -- the coordinate ring
Q=ideal(random(2,R)); -- a general quadric
L'=ideal(x_0..x_5); -- a codimension 6 linear subspace
L=saturate(L'+Q); -- the section of Q defined by L'

gbL = gens gb L; -- a Groebner basis of L


-- a list of ideals of 5 couples of double points of Q on L embedded in P^n
PL ={};
for i from 0 to 4  do(
    I=L;
    for j from 0 to n-8 do(I=I+random(1,R));
    I=saturate(Q+I^2);
    PL=append(PL,I);
    );

-- a Groebner basis of the ideals of PL up to degree 3
gbPL = for I in PL list gens gb (I,DegreeLimit=>{3}); 

-- a list of ideals of 11  couples of double points of Q embedded in P^n
PQ ={};
for i from 0 to n-1 do(
    I=Q;
    for j from 0 to n-2 do(I=I+random(1,R));
    I=saturate(Q+I^2);
    PQ=append(PQ,I);
    );

-- a Groebner basis of the ideals of PM up to degree 3
gbPQ = for I in PQ list gens gb (I,DegreeLimit=>{3});


-- construction of a generic cubic
dimCubics = hilbertFunction(3,R);
A = KK[Variables=>dimCubics];
AR = A[gens R];
B3 = rsort flatten entries basis(3,AR);
genericCubic = sum for i from 0 to dimCubics-1 list A_i*B3#i;


-- imposing the containment of the generic cubic in the ideals
gbList = {gbL} | gbPL | gbPQ;
equations = {};
for G in gbList do (
    gbG = forceGB sub(G,AR);
    r = genericCubic%gbG;
    equations = equations |  (for t in terms r list leadCoefficient t); 
);

E = gens gb ideal equations;
hF = dimCubics - numColumns E -- number of cubics of P^n containing the subscheme
12
\end{lstlisting}
{\b\small Code for base cases $n=2,\dots,6$ in \autoref{thm:post-3}}
\begin{lstlisting}[basicstyle=\ttfamily\scriptsize\linespread{0.8}\selectfont]
-- Case n=2
restart
n=2;
KK=ZZ/11;
R=KK[x_0..x_n];
Q=ideal(x_0*x_1+x_2^2); --The quadric
P=ideal(x_1,x_2); --One point on the quadric
E=ideal(x_0,x_2);--One point on the quadric
L=ideal(random(1,R));
P12=saturate(L+Q);-- Two points on the quadric
K=intersect(saturate(P^2+Q),saturate(P12^2+Q),E);
hilbertFunction(3,R)-hilbertFunction(3,K)
3

--Case n=3
restart
n=3;
KK=ZZ/11;
R=KK[x_0..x_n];
Q=ideal(x_0*x_1+x_2^2+x_3^2); --The quadric
P=ideal(x_1,x_2,x_3); --One point on the quadric
E=ideal(x_0,x_2,x_3);--One point on the quadric
L1=ideal(random(1,R),random(1,R));
L2=ideal(random(1,R),random(1,R));

P12=saturate(L1+Q);-- Two points on the quadric
P34=saturate(L2+Q);-- Two points on the quadric
K=intersect(saturate(P^2+Q),saturate(P12^2+Q),saturate(P34^2+Q),E);
hilbertFunction(3,R)-hilbertFunction(3,K)
4

--Case n=4
restart
n=4;
KK=ZZ/11;
R=KK[x_0..x_n];
Q=ideal(x_0*x_1+x_2^2+x_3^2+x_4^2); --The quadric
P=ideal(x_1,x_2,x_3,x_4); --One point on the quadric
E=ideal(x_0,x_2,x_3,x_4);--One point on the quadric
L1=ideal(random(1,R),random(1,R),random(1,R));
L2=ideal(random(1,R),random(1,R),random(1,R));
L3=ideal(random(1,R),random(1,R),random(1,R));

P12=saturate(L1+Q);-- Two points on the quadric
P34=saturate(L2+Q);-- Two points on the quadric
P56=saturate(L3+Q);-- Two points on the quadric
K=intersect(saturate(P^2+Q),saturate(P12^2+Q),saturate(P34^2+Q),
saturate(P56^2+Q),saturate(E^2+Q+ideal(x_3,x_4)));
hilbertFunction(3,R)-hilbertFunction(3,K)
5

--Case n=5
restart
n=5;
KK=ZZ/11;
R=KK[x_0..x_n];
Q=ideal(x_0*x_1+x_2^2+x_3^2+x_4^2+x_5^2); --The quadric
L1=ideal(random(1,R),random(1,R),random(1,R),random(1,R));
L2=ideal(random(1,R),random(1,R),random(1,R),random(1,R));
L3=ideal(random(1,R),random(1,R),random(1,R),random(1,R));
L4=ideal(random(1,R),random(1,R),random(1,R),random(1,R));
L5=ideal(random(1,R),random(1,R),random(1,R),random(1,R));

P12=saturate(L1+Q);-- Two points on the quadric
P34=saturate(L2+Q);-- Two points on the quadric
P56=saturate(L3+Q);-- Two points on the quadric
P78=saturate(L4+Q);-- Two points on the quadric
P910=saturate(L5+Q);-- Two points on the quadric
K=intersect(saturate(P12^2+Q),saturate(P34^2+Q),saturate(P56^2+Q),
saturate(P78^2+Q),saturate(P910^2+Q));
hilbertFunction(3,R)-hilbertFunction(3,K)
6

--Case n=6
restart
n=6;
KK=ZZ/11;
R=KK[x_0..x_n];
Q=ideal(x_0*x_1+x_2^2+x_3^2+x_4^2+x_5^2+x_6^2); --The quadric
L1=ideal(random(1,R),random(1,R),random(1,R),random(1,R),random(1,R));
L2=ideal(random(1,R),random(1,R),random(1,R),random(1,R),random(1,R));
L3=ideal(random(1,R),random(1,R),random(1,R),random(1,R),random(1,R));
L4=ideal(random(1,R),random(1,R),random(1,R),random(1,R),random(1,R));
L5=ideal(random(1,R),random(1,R),random(1,R),random(1,R),random(1,R));
L6=ideal(random(1,R),random(1,R),random(1,R),random(1,R),random(1,R));

P12=saturate(L1+Q);-- Two points on the quadric
P34=saturate(L2+Q);-- Two points on the quadric
P56=saturate(L3+Q);-- Two points on the quadric
P78=saturate(L4+Q);-- Two points on the quadric
P910=saturate(L5+Q);-- Two points on the quadric
P1112=saturate(L6+Q);-- Two points on the quadric
P=ideal(x_0,x_2,x_3,x_4,x_5,x_6); --A point on the quadric
K=intersect(saturate(P12^2+Q),saturate(P34^2+Q),saturate(P56^2+Q),
saturate(P78^2+Q),saturate(P910^2+Q),saturate(P1112^2+Q),saturate(P^2+Q+ideal(x_2)));
hilbertFunction(3,R)-hilbertFunction(3,K)
7
\end{lstlisting}
{\b\small Code for base cases $n=4,5,6$ in \autoref{teo:post_fin}}
\begin{lstlisting}[basicstyle=\ttfamily\scriptsize\linespread{0.8}\selectfont]
-- Case n=4
restart
n=4;
KK=ZZ/11;
R=KK[x_0..x_n];
Q=ideal(x_0*x_1+x_2^2+x_3^2+x_4^2);

P = for i from 0 to 6 list 
(saturate(Q+(saturate(Q+ideal(random(1,R),random(1,R),random(1,R))))^2));
gbP = for I in P list gens gb (I,DegreeLimit=>{4});

-- generic quartic
dimQuartics = hilbertFunction(4,R);

A = KK[Variables=>dimQuartics];
AR = A[gens R];
B4 = rsort flatten entries basis(4,AR);
genericQuartic = sum for i from 0 to dimQuartics-1 list A_i*B4#i;

-- conditions
equations = {};
for G in gbP do (
    gbG = forceGB sub(G,AR);
    r = genericQuartic%gbG;
    equations = equations |  (for t in terms r list leadCoefficient t);
);

E = gens gb ideal equations;
hF = dimQuartics - numColumns E
15

--Case n=5
restart
n=5;
KK=ZZ/11;
R=KK[x_0..x_n];
Q=ideal(x_0*x_1+x_2^2+x_3^2+x_4^2+x_5^2);

P = for i from 0 to 9 list 
(saturate(Q+(saturate(Q+ideal(random(1,R),random(1,R),random(1,R))))^2));
P=append(P,saturate(Q+(ideal(x_1,x_2,x_3,x_4,x_5))^2));
gbP = for I in P list gens gb (I,DegreeLimit=>{4});

-- generic quartic
dimQuartics = hilbertFunction(4,R);

A = KK[Variables=>dimQuartics];
AR = A[gens R];
B4 = rsort flatten entries basis(4,AR);
genericQuartic = sum for i from 0 to dimQuartics-1 list A_i*B4#i;

-- conditions
equations = {}; -- condizioni per l'appartenenza
for G in gbP do (
    gbG = forceGB sub(G,AR);
    r = genericQuartic%gbG;
    equations = equations |  (for t in terms r list leadCoefficient t);
);

E = gens gb ideal equations;
hF = dimQuartics - numColumns E
21

--Case n=6
restart
n=6;
KK=ZZ/101;
R=KK[x_0..x_n];
Q=ideal(x_0*x_1+x_2^2+x_3^2+x_4^2+x_5^2+x_6^2);

P = for i from 0 to 14 list 
(saturate(Q+(saturate(Q+ideal(random(1,R),random(1,R),random(1,R))))^2));
P=append(P,saturate(Q+(ideal(x_1,x_2,x_3,x_4,x_5))^2));
gbP = for I in P list gens gb (I,DegreeLimit=>{4});

-- generic quartic
dimQuartics = hilbertFunction(4,R);

A = KK[Variables=>dimQuartics];
AR = A[gens R];
B4 = rsort flatten entries basis(4,AR);
genericQuartic = sum for i from 0 to dimQuartics-1 list A_i*B4#i;

-- conditions
equations = {}; -- condizioni per l'appartenenza
for G in gbP do (
    gbG = forceGB sub(G,AR);
    r = genericQuartic%gbG;
    equations = equations |  (for t in terms r list leadCoefficient t); 
);

E = gens gb ideal equations;
hF = dimQuartics - numColumns E
28
\end{lstlisting}
\end{document}